\documentclass[a4paper, 11pt, oneside, onecolumn]{article}

\usepackage{hyperref}

\usepackage[T1]{fontenc} 
\usepackage[utf8]{inputenc}
\usepackage{ifthen}

\usepackage{titlesec}

\usepackage{amsthm}

\usepackage{graphicx}
\usepackage{caption}
\usepackage{geometry}
\usepackage[all]{xy}

\usepackage{enumerate}

\usepackage[obeyDraft, bordercolor=red!75!black, backgroundcolor=red!20!white, linecolor=red!75!black, textsize=footnotesize]{todonotes}

\usepackage{amssymb}
\usepackage{amsmath}
\usepackage{dsfont}
\usepackage{stmaryrd}
\usepackage{esint}

\usepackage{raccourcis}

\usepackage[backend=biber,style=alphabetic,maxnames=99]{biblatex}

\renewcommand{\theequation}{\arabic{section}.\arabic{equation}}

\newcounter{tout}[section]
\renewcommand{\thetout}{\arabic{section}.\arabic{tout}}

\theoremstyle{definition}
\newtheorem{definition}[tout]{Definition}

\newtheorem*{assumption}{Assumption}

\theoremstyle{plain}
\newtheorem{theorem}[tout]{Theorem}
\newtheorem{corollary}[tout]{Corollary}
\newtheorem{lemma}[tout]{Lemma}
\newtheorem{proposition}[tout]{Proposition}

\renewbibmacro{in:}{}
\AtEveryBibitem{%
  \clearlist{language}
}

\title{
\centering
Solvability of the Poisson-Dirichlet problem in domains with boundaries of mixed dimension}
\author{Léo Mandô\footnote{Laboratoire de mathématiques d'Orsay, Université Paris-Saclay, 307 rue Michel Magat, 91400 Orsay, France. Email: \textrm{leo.mando@universite-paris-saclay.fr}}}
\date{}

\begin{document}

\maketitle

\begin{abstract}
We extend several characterizations of the $L^p$-solvability of the homogeneous Dirichlet problem to (possibly degenerate) elliptic operators $L=-\textrm{div}(wA\nabla)$ defined on a large class of open sets $\Omega$ in $\mathbb{R}^n$. This framework encompasses not only uniformly elliptic operators in Lipschitz domains, but also Caffarelli-Sylvestre-type operators, and boundaries $\partial\Omega$ that are not $(n-1)$-dimensional, for instance. We prove that, for $1<p<\infty$, the $L^p$-solvability of the homogeneous Dirichlet problem is equivalent to the solvability of the Poisson-Dirichlet problem $Lu=wf-\textrm{div}(wF)$, \emph{i.e.} to the existence of a solution $u$ satisfying a $L^p$ non-tangential estimate whenever $f$ and $F$ belong to suitable weighted $L^p$ tent spaces. Furthermore, it is also equivalent to the solvability of the Poisson-Regularity problem for data in appropriate weighted $L^{p'}$ tent spaces, where estimates are obtained for $\nabla u$ rather than $u$.

\end{abstract}

\textbf{Keywords:} Degenerate elliptic operators, Boundary value problems, Dirichlet problem, Poisson-Dirichlet problem, tent spaces.

\tableofcontents

%%%%%%%%%%%%%%%%

\section{Introduction}

\subsection{Background}

In recent decades, the study of the (homogeneous) Dirichlet problem
\begin{equation}\label{eq:intro_dir}
\begin{cases}
Lu=0 &\texteq{in } \Omega\\
u=g &\texteq{on } \dOmega,
\end{cases}
\end{equation}
where $\Omega$ is an open set of $\R^n$ and $L=-\div(A\grad)$ is a uniformly elliptic operator in divergence form, has been an area of extensive research in harmonic analysis, pushing towards more general conditions: non-smooth coefficients $A$, rough boundaries $\dOmega$, non-continuous boundary data $g$. Our interest in this paper will lie in the solvability of the Dirichlet problem with $L^p$ data; that is to say, to find solutions of \eqref{eq:intro_dir} that satisfy the estimate
\begin{equation} \label{N<g}
\|\NN u\|_{L^p(\dOmega)}
\leq C\|g\|_{L^p(\dOmega)},
\end{equation}
where $\NN u$ is the non-tangential maximal function of $u$, see \eqref{eq:def_N} below for the definition.

When $L$ is the Laplacian and $\Omega$ is a Lipschitz domain, Dahlberg showed in \cite{DahlbergBjörn} that the harmonic measure is absolutely continuous with respect to the surface measure, and that the corresponding Radon derivative (called the Poisson kernel) belongs to the reverse H\"older class $B_2$. Putting this in our context, it means that in Lipschitz domains, the Dirichlet problem for the Laplacian is $L^2$-solvable.

Even 50 years after Dahlberg's paper, we do not know what is the geometric characterization of domains that ensures the $L^2$-solvability of the Dirichlet problem for the Laplacian.
What we know is that in Chord-Arc Domains (CAD) - a natural extension of Lipschitz domains - the harmonic measure is $A_\infty$-absolutely continuous\footnote{a quantitative and scale invariant version of absolute continuity, that will not be presented here but can be found in \cite{SteinElias}, Chapter V, 5.1.} with respect to the surface measure, which translates to the fact that the Dirichlet problem for the Laplacian is solvable only in $L^p$ for a large $p\in (1,\infty)$ depending of the CAD constants; indeed, the $A_\infty$-absolute continuity for CAD was tackled by David and Jerison in \cite{DAVID_lip}, while in \cite{Jer83} Jerison provided, for each $p\in (1,\infty)$, a CAD where the $L^p$-solvability of the Dirichlet problem fails. 
Chord Arc-Domains appears to be almost optimal for the existence of a $p\in (1,\infty)$ that gives the $L^p$ solvability of the Dirichlet problem for the Laplacian: assuming boundaries of Hausdorff dimension $n-1$ and ample access to the boundary (all in a quantitative manner), the converse was studied in \cite{HLMN17}, \cite{A21} and \cite{azzam_harmonic_2020}. % possibly, give more details on this last result, not mandatory.

Beyond the Laplacian, the $L^p$-solvability of the Dirichlet problem was widely studied. Early on, Caffarelli, Fabes, and Kenig in \cite{CFK}, and by Modica and Mortola in \cite{MM} exhibited some uniformly elliptic operators on smooth domains for which the $L^p$-solvability of the Dirichlet problem fails for every $1<p <\infty$. 
Positive results were subsequently obtained under additional assumptions on the coefficients: for symmetric coefficients in Lipschitz domains in \cite{JK81a}, for $t$-independent operators in $\R^{n}_+:= \{(x,t)\in\R^n \tq t>0\}$ in \cite{HKMP15}, for Dahlberg-Kenig-Pipher (DKP) operators - that are operators whose oscillations are controlled in terms of Carleson measures - in Lipschitz domains in \cite{KP01} and \cite{DPP01}. For DKP operators, the equivalence between CAD and the existence of some $p\in (1,\infty)$ that ensures the $L^p$-solvability of the Dirichlet was achieved in \cite{hofmann_uniform_2020}.

Let us also mention here a related boundary value problem: the $L^p$ regularity problem (or $\dot W^{1,p}$ Dirichlet problem). Instead of asking for \eqref{N<g}, we demand the bound
\begin{equation}\label{eq:regularity_problem}
\|\NNt(\grad_{\R^n} u)\|_{L^p(\dOmega)}
\leq C\|\grad_{\partial \Omega} g\|_{L^p}(\dOmega),
\end{equation}
where the meaning of $\nabla_{\partial \Omega}$, when the boundary is irregular, is part of the problem. The key point to understand is that there is a duality between the Dirichlet and the regularity problem: if the regularity problem is $L^p$-solvable for an operator $L$ in $\Omega$ (with an appropriate choice of gradient $\nabla_{\partial \Omega}$), then the Dirichlet problem is $L^{p'}$-solvable for the adjoint operator $L^*$, where $p'$ is the H\"older conjugate of $p$. The other implication - that is the Dirichlet problem implies the regularity problem - is false in general (see Proposition 1.9 in \cite{G25}) and require extra assumptions. It was established for the Laplacian in Lipschitz domains (\cite{JK81b}, \cite{Ver}), and recently for the Laplacian and DKP operators in domains with uniformly rectifiable boundaries (\cite{mourgoglou_regularity_2023} and  \cite{mourgoglou_solvability_2023}). 

To obtain those results on the Dirichlet and regularity problem, one relies on a large variety of characterization of the $L^p$-solvability of the Dirichlet problem. Let us mention one in particular, that will be the focus of our article and was a key ingredient of \cite{mourgoglou_solvability_2023}, which is the characterization of the $L^p$ (homogeneous) Dirichlet problem by its ``inhomogeneous'' counterpart.
The inhomogeneous Dirichlet problem is the study of the existence of a well-controlled solution to
\begin{equation}
\begin{cases}
Lu=f-\div F &\texteq{in } \Omega\\
u=g &\texteq{on } \dOmega,
\end{cases}
\end{equation}
for $g\in L^p(\dOmega)$ and an appropriate range of interior data $f$ and $F$. Roughly speaking - the missing definitions will be given later when we present our own results - Mourgoglou, Poggi and Tolsa proved in \cite{mourgoglou_solvability_2023} that for domains $\Omega$ with boundaries of dimension $n-1$, quantitatively, the $L^p$ solvability of the (homogeneous) Dirichlet problem for an uniformly elliptic operator $L$ in $\Omega$ is equivalent to the solvability of the Poisson-Dirichlet problem
\begin{equation} \label{PoissonDirichlet}
\begin{cases}
Lu=f-\div F &\texteq{in } \Omega\\
u=0 &\texteq{on } \dOmega,
\end{cases}
\end{equation}
in appropriate tent spaces, with a bound on $u$ defined as
\begin{equation}
\|\NNt u\|_{L^p(\partial \Omega)} \leq C\cro{\|\AAt(\delta^2f)\|_{L^p(\partial \Omega)} + \|\AAt(\delta F)\|_{L^p(\partial \Omega)} },
\end{equation}
where $\AAt$ is a modified $L^1$ area function defined in \eqref{eq:def_At}. By duality, the $L^p$-solvability of the (homogeneous) Dirichlet problem is also equivalent to the solvability of the Poisson-Dirichlet problem \eqref{PoissonDirichlet} for the adjoint operator $L^*$ with a bound on the gradient:
\begin{equation}
\|\NNt(\grad u)\|_{L^{p'}(\partial \Omega)} \leq C\cro{\|\AAt(\delta f)\|_{L^{p'}(\partial \Omega)} + \|\AAt(F)\|_{L^{p'}(\partial \Omega)}},
\end{equation}
which was named "solvability of the Poisson-regularity problem" by the authors of \cite{mourgoglou_solvability_2023}. Note that in the case of the unit ball, a slightly stronger version of the fact that the solvability of the Dirichlet problem implies the solvability of the Poisson-Dirichlet problem was already proven as an intermediate step of the main result in \cite{kenig_neumann_1995}. Finally, one should note that similar results have recently been established for inhomogeneous elliptic equations with other boundary conditions, in the context of Chord-Arc Domains: for Neumann boundary conditions, Feneuil and Li proved in \cite{FL24} that the solvability of the $L^p$- Poisson-Neumann problem implies the solvability of the (homogeneous) $L^\pp$-Neumann problem for the adjoint, and that the converse holds under the additional assumption that the $L^\pp$-Dirichlet problem is solvable. In \cite{FYY25}, Fu, Yang and Yang established the same result when replacing Neumann boundary conditions with Robin boundary conditions.

\medskip

In this paper, we generalize the equivalence between Dirichlet and Poisson-Dirichlet problem to a broader geometric setting. Indeed, most results concerning $L^p$ solvability have been restricted to domains with Ahlfors regular\footnote{a notion that characterizes the dimension of a set in a quantitative way, see \eqref{eq:Ahlfors}.} boundary of dimension  $n-1$ and uniformly elliptic operators, leaving out degenerate operators as studied by Fabes, Jerison, Kenig, and Serapioni in \cite{FJK83a}, \cite{FJK83b} and \cite{FKS}, the operators from the elliptic theory adapted to high co-dimensional boundaries developed by David, Feneuil, and Mayboroda in \cite{DFM21} and \cite{david_elliptic_2023}, and even the Caffarelli-Silvestre operator from \cite{CS07}. We choose to study the equivalence in a setting even more general that the one in \cite{david_elliptic_2023}, because we intends - as in \cite{mourgoglou_solvability_2023} - to remove the connectedness condition from our hypotheses on the domain, leading to many pitfalls due to the lack of comparison principle or change of pole for the elliptic measure.

For uniformly elliptic operators with degenerate coefficients, characterizations the $L^p$-solvability of the Dirichlet problem and the $A_\infty$-absolute continuity of the elliptic measure with respect to the surface measure are obtained in \cite{FP22} and \cite{cao_absolute_2022}. However, we will need to prove our new characterizations of the Dirichlet problem by adapting the characterizations of \cite{mourgoglou_regularity_2023} to our setting, since the characterizations of \cite{cao_absolute_2022} and \cite{FP22} relies on connectedness of the domain, which add an extra layer of difficulty to our article.

\subsection{General setting}

We consider an open set $\Omega\subset\R^n$, with $n\geq 2$, a Borel measure $m$ on $\R^n$, and a Borel measure $\mu$ supported on $\dOmega$ (both non-negative, non identically zero and locally finite). Our geometric setting is based on the one found in \cite{david_elliptic_2023}, although we do not assume the Harnack Chain condition like \cite{david_elliptic_2023} does, and instead we consider a measure $m$ on the whole space $\R^n$ (instead of a measure supported in $\Omega$) satisfying a Poincaré-Sobolev inequality on every ball centered in $\bOmega$.
\newline

Since our hypotheses on $\Omega$ are weaker than the ones from \cite{david_elliptic_2023}, our settings encompasses a large variety of domains and operators. The classical case, i.e. domains with $(n-1)$-Ahlfors regular boundaries and uniformly elliptic operators, is of course included. Let us recall for the sake of completeness that a set $E\incl\R^n$ is said to be \emph{$d$-Ahlfors regular} when there exists $C>0$ such that 
\begin{equation}\label{eq:Ahlfors}
C\inv r^d \leq \HH^d(E\cap B(x,r)) \leq Cr^d \for x\in E,\ 0<r<\diam(E),
\end{equation}
where $\HH^d$ is the $d$-dimensional Hausdorff measure.
If $\Omega = \R^n \setminus E$ is the complement of a $d$-Ahlfors regular set $E$ of dimension $d\leq n-2$, functions in $W^{1,2}(\Omega)$ do not have a trace, and one must consider degenerate operators like $L=-\div(\delta^{d-(n-1)}\grad)$, with $\delta(x):=\textrm{dist}(x,\dOmega)$, to be able to study the $L^p$-Dirichlet problem (see \cite{DFM19}, \cite{MZ19}, \cite{Fen22} or \cite{DM23} for examples of the study of the $A_\infty$-absolute continuity of the elliptic measure and the $L^p$ solvability of the Dirichlet problem in such settings). For an explanation of how our hypotheses encompasses the aforementioned cases, as well as others examples where our assumptions hold, see Chapter 3 in \cite{david_elliptic_2023}.

\begin{assumption}[H1]
There exists $C_{crk}\geq 1$ such that for any open ball $B=B(\xi,r)$ centered on $\dOmega$, we can find $x\in B$ such that
\begin{equation}\label{hyp:corkscrew}
B(x,C^{-1}_{crk} r)\subset\Omega.
\end{equation}
This assumption is commonly known as the corkscrew point condition.
\end{assumption}

\begin{assumption}[H2]
The measure $\mu$ is \emph{doubling}, i.e. there exists $C_\mu>1$ such that
\begin{equation}\label{hyp:mu_doubling}
\mu(2B)\leq C_\mu\mu(B)
\end{equation}
for any ball $B$ centered on $\dOmega$.
\end{assumption}

\begin{assumption}[H3]
$m_{|\Omega}$ is mutually absolutely continuous with respect to the Lebesgue measure: there exists a weight $w\in L^1\loc(\Omega)$ such that $w>0$ and $$dm(x)=w(x)dx.$$ In addition, $m$ is also doubling: there exists $C_m\geq 1$ such that
\begin{equation}\label{hyp:m_doubling}
m(2B) \leq C_m m(B)
\end{equation}
for any ball $B$ in $\R^n$.
\end{assumption}

\begin{lemma}
Assume (H1) and (H3). There exists  $C>0$, depending only on $C_{crk}$ and $C_m$, such that 
\begin{equation}\label{eq:mOmega_eqvt_m}
m(B\cap \Omega) \leq m(B)\leq C m(B\cap\Omega) \texteq{ for any } B=B(x,r)\texteq{centered in } \bOmega;
\end{equation}
consequently, $m_{|\Omega}$ is also doubling: there exists $C_m'>0$, depending only on $C_{crk}$ and $C_m$, such that
\begin{equation}\label{eq:mOmega_doubling}
m(2B\cap\Omega)\leq C_m'm(B\cap\Omega).
\end{equation}
\end{lemma}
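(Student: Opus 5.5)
The first inequality $m(B\cap\Omega)\le m(B)$ is immediate from monotonicity of the measure $m$, so all the work is in the reverse bound. For that I will use the corkscrew condition (H1) to produce a small ball sitting inside $B\cap\Omega$, and then doubling of $m$ (H3) to compare its mass with that of $B$.

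Fix $B=B(\xi,r)$ with $\xi\in\bOmega$. Since (H1) is stated for balls centered on $\dOmega$, I first reduce to that case. If $\xi\in\dOmega$, (H1) gives $x\in B$ with $B(x,C_{crk}^{-1}r)\subset\Omega$. The two balls are not nested, so I shrink the corkscrew ball. Set $\rho=\min(C_{crk}^{-1},1)\,r/2$; then $B(x,\rho)\subset B\cap\Omega$ fails only near $\partial B$. A cleaner route is to apply (H1) to $B(\xi,r/2)$ instead. This gives $x\in B(\xi,r/2)$ with $B(x,r/(2C_{crk}))\subset\Omega$. Since $C_{crk}\ge1$, we have $r/(2C_{crk})\le r/2$, so $B(x,r/(2C_{crk}))\subset B(\xi,r)\cap\Omega$.

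If instead $\xi\in\Omega$ (because $\bOmega=\Omega\cup\dOmega$ is the closure), I split into two cases:
\begin{itemize}
\item If $\delta(\xi)\ge r/2$, then $B(\xi,r/2)\subset B\cap\Omega$ directly.
\item If $\delta(\xi)<r/2$, pick $\zeta\in\dOmega$ with $|\xi-\zeta|<r/2$. Then $B(\zeta,r/2)\subset B$, and the boundary case applied to $B(\zeta,r/2)$ yields a ball of radius $r/(4C_{crk})$ inside $B\cap\Omega$.
\end{itemize}
In every case we obtain a ball $B'=B(x,r/(4C_{crk}))\subset B\cap\Omega$ with $x\in B$.

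Next, $B\subset B(x,2r)$, and $B(x,2r)=2^k B'$ with $2^k=8C_{crk}$; choosing $k=\lceil\log_2(8C_{crk})\rceil$ gives the containment if $8C_{crk}$ is not a power of 2. Iterating the doubling of $m$ on balls of $\R^n$ gives
\[
m(B)\le m(B(x,2r))\le C_m^{k}\,m(B')\le C_m^{k}\,m(B\cap\Omega).
\]
This is the right-hand side of \eqref{eq:mOmega_eqvt_m} with $C=C_m^{\lceil\log_2(8C_{crk})\rceil}$.

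For \eqref{eq:mOmega_doubling}, note that $2B$ is again centered in $\bOmega$. Hence
\[
m(2B\cap\Omega)\le m(2B)\le C_m\,m(B)\le C_mC\,m(B\cap\Omega),
\]
so we may take $C_m'=C_mC$.

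The only delicate point is the geometric one: (H1) places the corkscrew point somewhere in $B$ without guaranteeing the corkscrew ball lies inside $B$, and (H1) only applies to boundary-centered balls. Halving the radius before applying (H1), together with the case split above, handles both issues. The rest is routine.
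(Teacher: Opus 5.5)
Your proof is correct and follows essentially the paper's route: when the center is within $r/2$ of $\partial\Omega$ you apply the corkscrew condition at a nearby boundary point and half scale to get a ball of radius comparable to $r$ inside $B\cap\Omega$, otherwise you use $B(\xi,r/2)\subset\Omega$; then you iterate the doubling of $m$, and the doubling of $m_{|\Omega}$ follows at once. Your choice to invoke (H1) only on a ball already contained in $B$ is slightly more careful than the paper. The paper asserts $B'\subset B$ for the corkscrew ball $B(x',r/(2C_{crk}))$ with $x'\in B(\xi,r/2)$ and $\xi\in B/2$, and that inclusion need not hold as written.
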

\begin{proof}
The first inequality in \eqref{eq:mOmega_eqvt_m} is obvious; let us prove the second one. If $B/2\incl \Omega$, then by (H3) $m(B)\leq C_m m(B/2)\leq C_m m(B\cap \Omega)$. If $B/2$ is not included in $\Omega$, then there exists $\xi\in B/2\cap\dOmega$. The corkscrew condition (H1) give us $x'\in B(\xi,r/2)$ such that $B':=B\p{x',(2C_{crk})^{-1}r}\incl \Omega$. Then $B'\incl B \incl 4C_{crk} B'$, and by (H3) there exists $C>C_m$, depending on $C_{crk}$ and $C_m$, such that $m(4C_{crk} B')\leq C m(B')$. Altogether, this give us $m(B)\leq m(4C_{crk} B')\leq Cm(B')\leq Cm(B\cap\Omega)$. Finally, \eqref{eq:mOmega_doubling} is a direct consequence of (H3) and \eqref{eq:mOmega_eqvt_m}. 
\end{proof}

For any ball $B=B(x,r)$ centered in $\bOmega$ we define 
\begin{equation*}
\rho(B):= \frac{m(B\cap\Omega)}{r\mu(B\cap\dOmega)}.
\end{equation*}
We will also write $\rho(x):=\rho(B(x,2\delta(x)))$, for any $x\in \Omega$. When considering balls $B=B(\xi,r)$ centered on $\dOmega$, the quantity $\rho$ quantifies the deviation between two measures of $B\cap\Omega$, seen in one hand as a subset of $\Omega$ by using $m(B\cap\Omega)$, and in the other hand as a tent set over $B\cap\dOmega$ by using $r\mu(B\cap\dOmega)$. ``Standard'' deviations correspond to the case where $C^{-1} \leq \rho(B) \leq C$ for some $C>0$ whenever $B$ is a ball centered on $\partial \Omega$.
\newline

If we assume (H1), (H2) and (H3), then there exists $C$ depending only on $C_{crk}$, $C_\mu$ and $C_m$ such that for any ball $B$ centered on $\dOmega$,
\begin{equation}\label{eq:rho_doubl}
C^{-1}\rho(B) \leq \rho(2B)\leq C\rho(B).
\end{equation}
However, these bounds are not enough to guarantee some classic result of our theory, like the boundary Poincaré theorem, and we will need to assume a more precise bound on the growth of $\rho$.
\begin{assumption}[H4]
There exists $C_\rho\geq 1$ such that for any $B=B(\xi,r)$ centered on $\dOmega$, $\rho$ satisfies
\begin{equation}\label{hyp:rho_growth}
\rho(\Lambda B) \leq C_\rho\Lambda^{1-\epsilon}\rho(B) \for\Lambda>1,
\end{equation}
where $\epsilon=C_\rho\inv$.
\end{assumption}

\begin{assumption}[H5]\,
\begin{itemize}
    \item If $D$ is an open subset of $\R^n$ and $(\phi_k)$ is a sequence in $\CC^\infty(D)$ such that $\phi_k\to 0$ in $L^1(D,m)$, $\grad\phi_k\in L^2(D,m)^n$ for $k\in\N$ and $\grad \phi_k\to V$ in $L^2(D,m)^n$ for some $V\in L^2(D,m)^n$, then $V=0$.
    \item There exists $C_{pc}>0$ such that for any ball $B=B(x,r)$ centered in $\bOmega$,
    \begin{equation}\label{eq:poincare_hyp}
    \fint_B |\phi-\phi_B|\dm \leq C_{pc}r\pfrac{\fint_{2B}|\grad\phi|^2\dm},
    \end{equation}
    for any function $\phi\in \CC^\infty(2B)$ such that $\grad \phi\in L^2(2B,m)$, where $\phi_B:=\fint_B \phi\dm$.
\end{itemize}
\end{assumption}
Compared to the assumptions of \cite{david_elliptic_2023}, we removed the Harnack Chain condition, but assumed that $m$ is a doubling measure on the whole space $\R^n$ and that it satisfies a Poincaré inequality for all balls centered in $\bOmega$, instead of only assuming it for ball well contained in $\Omega$ as in \cite{david_elliptic_2023}. We adopt these stronger conditions mostly so that we can recover a boundary Poincaré inequality (Theorem \ref{th:boundary_poincare}), but it is possible that we could relax our hypotheses by only assuming an interior Poincaré inequality. However, since the article do not plan to dwell on capacity conditions and Poincar\'e inequalities, we decided to leave the answer to this question for a future project.
\newline

Finally, we consider uniformly elliptic operators ``with respect to the weight $w$'', that are operators in divergence form $L=-\div(wA\grad)$, with $A:\Omega\to M_n(\R)$ satisfying classical conditions of boundedness and ellipticity, i.e. there exists $C_A>0$ such that 
\begin{equation}\label{hyp:A_bounded}
A(x)V\cdot W \leq C_A |V||W|\for x\in\Omega \texteq{ and } V,W\in\R^n, 
\end{equation}
and
\begin{equation}\label{hyp:A_ellipitic}
A(x)V\cdot V \geq C_A\inv|V|^2\for x\in\Omega, V\in\R^n.
\end{equation}
We will write $L^*=-\div(wA^T\grad)$ the adjoint operator of $L$, where $A^T$ is the transpose of $A$.
\newline

The assumptions (H1), (H2), (H3), (H4) and (H5) are enough to have an elliptic theory (H\"older continuity of solutions, construction of Green functions and elliptic measure, and non-degeneracy of the elliptic measure) for uniformly elliptic operators with respect to $w$. The elliptic theory that we need is presented in Subsection \ref{SsPDprob}, and is proved in the book by Feneuil and Mayboroda currently in preparation (\cite{FM} %Analysis, Geometry, and PDE in a lower dimensional world. Joseph Feneuil and Svitlana Mayboroda, in preparation.
).

\subsection{Definitions an main result}

First, let us introduce some notation. In the rest of this work, we will write $a\siml b$ if there exists a constant $C>0$ such that $a\leq Cb$, and $a\simeq b$ if $a\siml b\siml a$. For $x\in\Omega$, we use $\delta(x):=\textnormal{dist}(x,\dOmega)$ and $B_x:=B\left(x,\delta(x)\right)$. For a given aperture $\alpha>0$ and a vertex $\xi\in\dOmega$, we define the cone 
\begin{equation*}
\gamma_\alpha(\xi):=\br{x\in\Omega\tq |x-\xi|<(1+\alpha)\delta(x)}.
\end{equation*}
We will use the \emph{non-tangential maximal function} $\NN^{(\alpha)}$ defined as
\begin{equation}\label{eq:def_N}
\NN^{(\alpha)} u(\xi):=\sup_{x\in\gamma(\xi)} |u(x)|,
\end{equation}
for $u$ $m$-measurable and $\xi\in\dOmega$. For functions that are not necessarily well defined everywhere, but only $L^2\loc$, we need a modified version of the non-tangential maximal function:
\begin{equation}\label{eq:def_Nt}
\NNt^{(\alpha,\lambda)} u(\xi):=\sup_{x\in\gamma_\alpha(\xi)} \pfrac{\fint_{\lambda B_x} |u|^2\dm},
\end{equation}
for $\alpha>0$, $0<\lambda\leq 1/2$, $u\in L^2\loc(\Omega,m)$ and $\xi\in\dOmega$. We also introduce the \emph{modified area function} $\AAt^{(\alpha,\lambda)}$:
\begin{equation}\label{eq:def_At}
\AAt^{(\alpha,\lambda)} f(\xi):=\int_{\gamma_\alpha(\xi)} \pfrac{\fint_{\lambda B_x} |f|^2\dm} \frac{\dm(x)}{m(B_x)},
\end{equation}
for $\alpha>0$, $0<\lambda\leq 1/2$, $f\in L^2\loc(\Omega,m)$ and $\xi\in\dOmega$. Note that the $L^p$ norms of these functions are equivalent for the different values of $\alpha>0$ and $0<\lambda \leq 1/2$, see Lemma \ref{lemma:eqvlce_NtAt}. Hence we write $\NN$ for $\NN^{(1)}$, $\NNt$ for $\NNt^{(1,1/2)}$ and $\AAt$ for $\AAt^{(1,1/2)}$ to lighten the notation. 
\newline

By Theorem \ref{th:elliptic_measure^} below, there exists a family of probability measures $(\omega^x)_{x\in\Omega}$ on $\dOmega$ - called elliptic measures associated to $L$ - such that for each $g\in C^0_c(\dOmega)$ the function $u$ defined by
\begin{equation}\label{eq:u_omega}
u(x)=\int_\dOmega g\domega^x \for x\in\Omega
\end{equation}
is a continuous weak solution to $Lu=0$ in $\Omega$ that extends continuously to $\partial \Omega$ by $u=g$ on $\partial \Omega$. We can now define what we mean by \emph{solvability of the Dirichlet problem with $L^p$ data} - or $L^p$ solvability.

\begin{definition}
Let $1<p<\infty$. We say that the Dirichlet problem for the operator $L$ with $L^p$ boundary data is solvable - abridged in $(D_p)$ - if there exists $C>0$ such that for every $g\in\CC_c(\dOmega)$ the solution $u$ to the Dirichlet problem
\begin{equation}
\begin{cases}
Lu=0&\texteq{in } \Omega\\
u=g & \texteq{on } \dOmega,
\end{cases}
\end{equation}
given by \eqref{eq:u_omega} satisfies the estimate 

\begin{equation}\label{eq:solvable_dirichlet}
\Lpd{\NN u} \leq C\Lpd{g}.
\end{equation}
\end{definition}

In Subsection \ref{ss:sob_spaces} we define $W_0(\Omega)$ as the completion of $\CC_c^\infty(\Omega)$ for the norm $\|\grad u\|_{L^2(\Omega,m)}$. For each $f\in L^\infty_c(\Omega,m)$ and $F\in L^\infty_c(\Omega,m)^n$, the Lax-Milgram theorem give us a unique weak solution in $W_0(\Omega)$ of $Lu=wf-\div(wF)$ (see Theorem \ref{th:solution_W}). We will use this notion of solution to define de solvability of the \emph{Poisson-Dirichlet problem}.
\begin{definition}
Let $1<p<\infty$. We say that $(PD_p)$ holds if there exists $C>0$ such that for each $f\in L^\infty_c(\Omega,m)$ and $F\in L^\infty_c(\Omega,m)^n$, the solution $u$ in $W_0(\Omega)$ to the Poisson-Dirichlet problem
\begin{equation}\label{eq:poisson_dirichlet}
\begin{cases}
Lu=wf-\div(wF)&\texteq{in } \Omega\\
u=0 & \texteq{on } \dOmega,
\end{cases}
\end{equation}
satisfies the estimate 
\begin{equation}\label{eq:solv_poisson_dirichlet}
\Lpd{\NNt u} \leq C\cro{\Lpd{\AAt(\delta^2f)} + \Lpd{\AAt(\delta F)} }.
\end{equation}
\end{definition}

We will also study the solvability of the Poisson-regularity problem for the adjoint operator, defined as follows.
\begin{definition}[Solvability of the Poisson-regularity problem]
Let $1<p<\infty$. We say that $(PR^*_\pp)$ holds if there exists $C>0$ such that for each $f\in L^\infty_c(\Omega,m)$ and $F\in L^\infty_c(\Omega,m)^n$, the solution $u$ in $W_0(\Omega)$ to the Poisson-Dirichlet problem
\begin{equation}
\begin{cases}
L^*u=wf-\div(wF)&\texteq{in } \Omega\\
u=0 & \texteq{on } \dOmega,
\end{cases}
\end{equation}
which satisfies the estimate 
\begin{equation}\label{eq:solv_poisson_reg}
\Lpd[\pp]{\NNt(\rho\grad u)} \leq C\cro{\Lpd[\pp]{\AAt(\rho\delta f)} + \Lpd[\pp]{\AAt(\rho F)}}
\end{equation}

\end{definition}

Note that we could also define the solvability estimates \eqref{eq:solv_poisson_dirichlet} and \eqref{eq:solv_poisson_reg} using a modified Carleson functional (like in \cite{mourgoglou_solvability_2023}), which in our setting would be defined as
\begin{equation}
\CCt^{(\lambda)} f(\xi) := \sup_{r>0} \frac{1}{\mu(B(x,r))} \int_{B(x,r)\cap\Omega} \pfrac{\fint_{\lambda B_x} |f|^2\dm} \frac{\dm(x)}{\rho(x)\delta(x)}
\end{equation}
for $0<\lambda\leq 1/2$, $f\in L^2\loc(\Omega,m)$ and $\xi\in\dOmega$. This is in fact equivalent, since $\Lpd{\CCt^{(\lambda)}f}\simeq\Lpd{\AAt f}$ when $1<p<\infty$ (see Theorem \ref{thA:eqvlce_CA} in the appendix).
\newline

We can now state our main result.
\begin{theorem}\label{th:eqvlce_DPD}
Assume \emph{(H1)-(H5)} are satisfied, and take $1<p<\infty$. The following are equivalent.
\begin{enumerate}[(i)]
\item $(D_p)$ holds.
\item $(PD_p)$ holds.
\item $(PD_p)$ holds whenever $F=0$.
\item $(PD_p)$ holds whenever $f=0$.
\item $(PR^*_\pp)$ holds.
\item $(PR^*_\pp)$ holds whenever $F=0$.
\end{enumerate}
\end{theorem}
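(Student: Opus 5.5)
The plan is to prove the cycle of implications (i)$\Rightarrow$(ii)$\Rightarrow$(iii), (ii)$\Rightarrow$(iv), (iv)$\Rightarrow$(i), (iii)$\Rightarrow$(i), and separately (ii)$\Leftrightarrow$(v) and (v)$\Rightarrow$(vi)$\Rightarrow$(iii) (or (vi)$\Rightarrow$(i)) by duality. The implications (ii)$\Rightarrow$(iii), (ii)$\Rightarrow$(iv) and (v)$\Rightarrow$(vi) are trivial. For the duality statements I will use the pairing of tent spaces: for $u\in W_0(\Omega)$ solving $Lu=wf-\div(wF)$ and $v\in W_0(\Omega)$ solving $L^*v=wg-\div(wG)$, the weak formulation gives
\begin{equation*}
\int_\Omega f v\dm+\int_\Omega F\cdot\grad v\dm=\int_\Omega A\grad u\cdot\grad v\dm=\int_\Omega u g\dm+\int_\Omega G\cdot \grad u\dm.
\end{equation*}
Combined with the duality inequality $\int_\Omega |hk|\dm\siml \|\NNt h\|_{L^p(\dOmega)}\|\CCt k\|_{L^{p'}(\dOmega)}$ (after inserting the weights $\rho\delta$ that convert $\dm$ into $\mu\otimes dt$ on cones), the equivalence $\|\CCt k\|_{L^{p'}}\simeq\|\AAt k\|_{L^{p'}}$ from Theorem \ref{thA:eqvlce_CA}, and the fact that $\NNt$-type and $\AAt$-type spaces are dual to one another, this turns the bound \eqref{eq:solv_poisson_dirichlet} for $L$ into \eqref{eq:solv_poisson_reg} for $L^*$ and conversely. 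Testing with $G=0$ and $g=0$ separately, and taking sup over $f,F$ with $\|\AAt(\rho\delta f)\|,\|\AAt(\rho F)\|\le1$ where $\rho\grad v$ is recovered from the pairing with $F$, yields (ii)$\Leftrightarrow$(v) and also (vi)$\Leftrightarrow$(iii). Here the weights must be checked: $\delta^2 f$ on the $L$ side pairs with $\rho\delta g$, and $\delta F$ with $\rho G$, via $\dm(x)\approx \rho(x)\delta(x)\,d\mu\,dt$ heuristically.

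For (i)$\Rightarrow$(ii), I represent $u(x)=\int_\Omega \nabla_y G^*(y,x)\cdot(\dots)$, i.e. $u(x)=\int G(x,y)f(y)\dm(y)+\int A^T\grad_yG(x,y)\cdot F(y)\dm(y)$ using the Green function from Subsection \ref{SsPDprob}. I will split the $y$-integral into a Whitney region near $x$ (handled by local Caccioppoli/Moser estimates and interior $L^2$ solvability, bounded by the $\AAt$ terms directly) and the far region. In the far region, I use (D$_p$) through its standard equivalent, namely that $\omega^x\ll\mu$ with kernel in a reverse Hölder class $RH_{p'}$ localized to boundary balls (I would need to reprove this characterization without connectedness, following \cite{mourgoglou_regularity_2023}), together with the comparison $G(x,y)\lesssim \omega^x(\Delta_y)\,\delta(y)/ (\rho\text{-normalization})$, i.e. $G(x,y)\siml \frac{\delta(y)^2}{m(B_y)}\omega^{x}(\Delta_y)$ (boundary Caccioppoli + Hölder decay + non-degeneracy, no change of pole needed if one only uses Green function bound from above). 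Then $|u(x)|$ is controlled by $\int_{\dOmega}\AAt(\delta^2 f)\, d\omega^x$-type sums over dyadic tent regions, and the $RH_{p'}$ property plus duality gives $\|\NNt u\|_p\siml\|\AAt(\delta^2 f)\|_p$. The $F$ term is analogous using Caccioppoli on $\grad_y G$ in Whitney balls, giving $\delta(y)\frac{\delta(y)}{m(B_y)}\omega^x(\Delta_y)$ divided by $\delta$, matching $\AAt(\delta F)$.

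For (iii)$\Rightarrow$(i) (and (iv)$\Rightarrow$(i)), given $g\in C_c(\dOmega)$ and $u$ its elliptic extension, I fix $\xi$ and a corkscrew point, and show a localized estimate $\omega^x(E)\lesssim$ reverse Hölder, equivalently bound $\|\NN u\|_p$ by duality: write $u=\Phi g$ where $\Phi g$ is a smooth cutoff extension (e.g. Whitney average extension, $\Phi g(x)=\fint_{\Delta_x}g\,d\mu$ smoothed), so $v=u-\Phi g\in W_0$ (after truncation) solves $Lv=\div(wA\grad\Phi g)$, giving data $F=A\grad \Phi g$ with $\|\AAt(\delta F)\|_p\siml\|g\|_p$ by a square-function estimate on $\delta|\grad \Phi g|$ (Littlewood–Paley bound using doubling of $\mu$ and (H4)). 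This directly gives (iv)$\Rightarrow$(i) once one upgrades $\NNt u$ to $\NN u$ by Moser and bounds $\NN\Phi g\siml M_\mu g$. For (iii), I instead dualize: a representation with $f$ only is obtained via (vi), hence go (iii)$\Leftrightarrow$(vi)$\Rightarrow$ estimate of $\rho\grad v$ which, tested against $L^*$-solutions with data concentrated at a Whitney ball, yields reverse Hölder of $\omega^x$ near the boundary. The main obstacle I expect is this last step together with the Green-function bound in (i)$\Rightarrow$(ii): lacking Harnack chains and connectedness, I cannot use change of pole, so all estimates must be phrased via $\omega^x$ with pole at the fixed point and local boundary Hölder continuity, following the approach of \cite{mourgoglou_solvability_2023}; I would first try to prove the Green function bound purely from boundary Caccioppoli and the non-degeneracy $\omega^x(\Delta)\gtrsim1$ for $x$ near $\Delta$.
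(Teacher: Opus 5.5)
Your duality bookkeeping pairs the wrong problems, and this leaves the new implication (iii)$\Rightarrow$(i) without an argument.

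Testing $Lu=-\operatorname{div}(wF)$ against $L^*v=wg$ gives $\int_\Omega ug\,dm=\int_\Omega F\cdot\nabla v\,dm$. So recovering $\rho\nabla v$ from the pairing with $F$, as you describe, links (vi) with (iv), not with (iii). Problem (iii) concerns $Lu=wf$. The identity $\int_\Omega ug\,dm=\int_\Omega fv\,dm$ dualizes it into a \emph{zero-order} bound $\|\NNt(\tfrac{\rho}{\delta}v)\|_{p'}\lesssim\|\AAt(\rho\delta g)\|_{p'}$ for $L^*v=wg$, which is not (vi). For the same reason, (v)$\Rightarrow$(ii) is not a pure duality statement for the $f$-part. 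Likewise, (ii)$\Rightarrow$(v) for the $F$-part needs an extra step passing from $\NNt u$ to $\NNt(\delta\nabla u)$.

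The missing idea is that the zero-order estimate is the stronger one, and interior Caccioppoli upgrades it to a gradient estimate. The paper does exactly this with the adjoint Green function:
\begin{itemize}
\item It dualizes the left side of \eqref{eq:estimee_green} and recognizes $\int_\Omega G(x,y)f(y)\mathbf{1}_{B\setminus B_x/2}(y)\,dm(y)=u(x)$ for $Lu=wf\mathbf{1}_{B\setminus B_x/2}$. Moser at the pole $x$ and (iii) then give \eqref{eq:estimee_green}.
\item Caccioppoli away from the pole turns this into \eqref{eq:estimee_green_grad} (first step of Lemma \ref{lemma:estimee_green}).
\item Take a single-scale cutoff $\chi$ adapted to a boundary ball $B(\zeta,s)$ with $x\notin 4B(\zeta,s)$. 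The identity $\omega^x(B(\zeta,s))\le\int\chi\,d\omega^x=-\int_\Omega A^T\nabla_yG(x,y)\cdot\nabla\chi(y)\,dm(y)$, together with tent-space duality, bounds the truncated maximal function of $\omega^x$ by the Hardy--Littlewood maximal function of $\NNt(\rho\nabla G(x,\cdot)\mathbf{1}_{2B\setminus B_x/2})$.
\item This yields the reverse H\"older inequality, hence $(D_p)$ by Theorem \ref{th:DpRHp'}.
\end{itemize}
In your ordering, duality plus Caccioppoli does give (iii)$\Rightarrow$(vi), and your sketch of (vi)$\Rightarrow$ reverse H\"older is essentially this last step.

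Your route for (iv)$\Rightarrow$(i) also fails as stated. Writing the elliptic extension as $\Phi g+v$ and feeding $F=A\nabla\Phi g$ into (iv) requires $\|\AAt(\delta\nabla\Phi g)\|_p\lesssim\|g\|_p$. But $\AAt$ is an $L^1$-type area integral, not a square function. The measure $dm(x)/m(B_x)$ gives mass $\approx1$ to each dyadic layer of $\gamma(\xi)$, so $\AAt(\delta\nabla\Phi g)(\xi)$ is essentially the $\ell^1$-sum over scales $2^k$ of the oscillations of $g$ on $B(\xi,2^k)\cap\partial\Omega$. No Littlewood--Paley bound controls this by $\|g\|_p$. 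Already in $\mathbb{R}^n_+$ it is an $\dot F^0_{p,1}$-type quantity: take $g$ oscillating with amplitude $\epsilon_j$ at scale $2^{-j}$ on a fixed ball, with $(\epsilon_j)\in\ell^2\setminus\ell^1$.

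This is why the paper only feeds single-scale data such as $A\nabla\chi$ above into the argument, and recovers $L^p$ control through the maximal function and Theorem \ref{th:DpRHp'}. The fix for you is to go (iv)$\Rightarrow$(vi) by the pairing you actually wrote down, or to dualize (iv) directly into \eqref{eq:estimee_green_grad}.

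Finally, your direct route for (i)$\Rightarrow$(ii) is a legitimate alternative to the paper's $(D_q)$-plus-absorption argument. There you bound the far part of the Green potential pointwise by $\int_{\partial\Omega}h\,d\omega^x$, with $h$ a large-aperture area integral of $\delta^2 f$, via Proposition \ref{prop:comp_green_harmo}. This works provided you then apply $(D_p)$ itself to the boundary datum $h\in L^p$, extended from $C_c$ to nonnegative $L^p$ data by monotone approximation using $\omega^x\ll\mu$.
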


Theorem \ref{th:eqvlce_DPD} shows the equivalence between the solvability of the (homogeneous) Dirichlet problem, the solvability of the Poisson-Dirichlet problem, and the solvability of the Poisson-regularity for the adjoint. Note that this result aligns for the most part with Theorem 1.22 in \cite{mourgoglou_solvability_2023}, except for the characterization (iii) which is new even in the setting of \cite{mourgoglou_solvability_2023}.

The article is divided as follows. In Section \ref{s:prelim}, %Preliminaries
we give the other definitions that are needed for our proofs, we provide basic elliptic theory results for our settings, and we finish by presenting results on tents spaces (that are proved in Appendix - Section \ref{s:appendix} - for completeness). In Section \ref{s:Dp}, %Solvability of (Dp)
we gave characterizations of the $L^p$ solvability of the Dirichlet problem in terms of either the elliptic measure or the Green function, that we shall rely on to prove Theorem \ref{th:eqvlce_DPD}. Section \ref{s:PDp} %Solvability of (PDp)
finishes the proof of Theorem \ref{th:eqvlce_DPD}, and extends the existence of solutions controlled as in \eqref{eq:solvable_dirichlet}, \eqref{eq:solv_poisson_dirichlet} or \eqref{eq:solv_poisson_reg} for general boundary and interior data.

%%%%%%%%%%%%%%%%%%%%%%%%%%%%%%%%%%%%%%%%%%%%%%%%%%%%%%%%%%%%%%%%%%%%%%%%%%
%Preliminaires
%%%%%%%%%%%%%%%%%%%%%%%%%%%%%%%%%%%%%%%%%%%%%%%%%%%%%%%%%%%%%%%%%%%%%%%%%%

\section{Preliminaries}\label{s:prelim}

\subsection{Sobolev spaces adapted to our setting}\label{ss:sob_spaces}

We present the spaces where the solutions lie. We follow the strategy and presentation \cite{david_elliptic_2023}.

\begin{definition}[Section 4 in \cite{david_elliptic_2023}]\label{def:W}
 Let us assume (H5), and let $D$ be an open subset of $\R^n$. We say that $u$ is in $W(D)$ if $u\in L^1\loc(D,m)$ and there exists $V\in L^2(D,m)^n$ and a sequence $(\phi_k)$ in $\CC^\infty(D)$ such that:
\begin{enumerate}[(i)]
    \item $\phi_k\to u$ in $L^1\loc(D,m)$,
    \item $\grad\phi_k$ is in $L^2(D,m)$ for any $k\in\N$,
    \item $\grad\phi_k\to V$ in $L^2(D,m)$.
\end{enumerate}
Observe that if $u\in W(D)$ the function $V$ in the definition is unique, thanks to (H5). We will write $\grad u$ this function. Then, we can equip the space $W(D)$ with the semi-norm $\|u\|_{W(D)}=\|\grad u \|_{L^2(D,m)}$.
\end{definition}
It is unclear whether (H5) is enough to guarantee that $\grad u$ corresponds to the distributional gradient of $u$. However, the two notion of gradient coincide on compactly supported Lipschitz functions (see Lemma 1.11 in \cite{HKM18}). We will also use a localized version of $W(D)$:
\begin{equation*}
W\loc(D)=\{u\in L^1\loc(D,m) \tq u\in W(\mathring K) \texteq{ for any compact set } K\incl D\}
\end{equation*}

The space $W(D)$ satisfies the following "completedness" property:
\begin{proposition}\label{prop:W_complete}
Let us assume $(H5)$, and let $D$ be an open subset of $\R^n$. If a sequence $(u_k)$ in $W(D)$ satisfies:
\begin{enumerate}[(i)]
\item $u_k$ is a Cauchy sequence in $L^1\loc(D,m)$,
\item $(\grad u_k)$ is a Cauchy sequence in $L^2(D,m)$,
\end{enumerate}
then there exists $u\in W(D,m)$ such that $u_k\to u$ in $L^1\loc(D,m)$ and $\grad u_k\to\grad u$ in $L^2(D,m)^n$.
\end{proposition}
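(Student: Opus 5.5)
The plan is a diagonal argument: first build $u$ as an $L^1\loc$ limit, then approximate each $u_k$ by smooth functions chosen so that their gradients are close to $\grad u_k$ in $L^2(D,m)$.

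\emph{Step 1: the limit $u$.} Since $L^1\loc(D,m)$ is complete (it is a Fréchet space, metrized through an exhaustion of $D$ by compacts $K_j$), the Cauchy condition (i) gives $u\in L^1\loc(D,m)$ with $u_k\to u$ in $L^1\loc(D,m)$. Likewise, completeness of $L^2(D,m)^n$ and (ii) give $V\in L^2(D,m)^n$ with $\grad u_k\to V$.

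\emph{Step 2: approximating sequence.} For each $k$, since $u_k\in W(D)$, Definition \ref{def:W} provides a sequence in $\CC^\infty(D)$ with square-integrable gradients, converging to $u_k$ in $L^1\loc$ and with gradients converging to $\grad u_k$ in $L^2$. Fix an exhaustion $K_1\incl K_2\incl\cdots$ of $D$ by compact sets with $\bigcup_j \mathring K_j = D$. From the sequence for $u_k$ we select one element $\phi_k$ such that
\[
\int_{K_k}|\phi_k-u_k|\dm\leq 2^{-k}
\qquad\text{and}\qquad
\|\grad\phi_k-\grad u_k\|_{L^2(D,m)}\leq 2^{-k}.
\]

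\emph{Step 3: convergence.} Every compact $K\incl D$ is contained in some $K_j$ (the interiors $\mathring K_j$ form an increasing open cover of $K$). Hence, for $k\geq j$,
\[
\int_K|\phi_k-u|\dm\leq 2^{-k}+\int_K|u_k-u|\dm\to 0,
\]
so $\phi_k\to u$ in $L^1\loc$. Similarly $\grad\phi_k\to V$ in $L^2$ by the triangle inequality. Therefore $u\in W(D)$, and by the uniqueness in (H5) we get $\grad u=V$. This yields both $u_k\to u$ in $L^1\loc(D,m)$ and $\grad u_k\to\grad u$ in $L^2(D,m)^n$.

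The only delicate point is the diagonal selection. The convergence in $L^1\loc$ must be controlled uniformly on every compact subset, and this is exactly what the exhaustion $(K_j)$ handles. Otherwise the argument is routine.
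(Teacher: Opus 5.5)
Your proof is correct and follows the same diagonal argument as the paper. Both obtain $u$ and $V$ by completeness, choose for each index a smooth approximant of $u_k$ that is $2^{-k}$-close to $u_k$ in $L^1$ and in gradient, and conclude that $u\in W(D)$ with $\nabla u=V$ by the uniqueness coming from (H5). Your exhaustion by compacts $(K_j)$ actually improves on the written proof: the paper states its $2^{-j}$ bounds ``for any compact set $K$'' uniformly, which is not justified as written, whereas requiring closeness on $K_k$ genuinely yields $L^1_{\mathrm{loc}}$ convergence.
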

\begin{proof}
Take $(u_k)$ in $W(D)$ satisfying $(i)$ and $(ii)$, and $j\in\N$. By completeness, there exists $u\in L^1\loc(D,m)$ and $V\in L^2(D,m)^n$ such that $u_k\to u$ in $L^1\loc(D,m)$ and $\grad u_k\to V$ in $L^2(D,m)$. Thus, there exists $k_j\in\N$ such that
\begin{equation*}
\|u-u_{k_j}\|_{L^1(K,m)}< 2^{-j}
\texteq{ and \;} \|V-\grad u_{k_j}\|_{L^2(D,m)}<2^{-j}
\texteq{ for any compact set } K\incl D;
\end{equation*}
by definition of $W(D)$, there exists $\phi_j\in\CC^\infty(D)$ such that $\grad\phi_j\in L^2(D,m)$ and
\begin{equation*}
\|u_{k_j}-\phi_j\|_{L^1(K,m)}< 2^{-j}
\texteq{ and \;} \|\grad u_{k_j}-\grad\phi_j\|_{L^2(D,m)}<2^{-j}
\texteq{ for any compact set } K\incl D.
\end{equation*}
Then we have that $\phi_j\to u$ in $L^1\loc(D,m)$ and $\grad\phi_j\to V$ in $L^2(D,m)$, wich implies  that $u\in W(D)$ with $\grad u=V$. This proves the result.
\end{proof}

The Poincaré-Sobolev inequality \eqref{eq:poincare_hyp} can be extended to all functions in $W(B)$, and self-improves to a better exponent in the left-hand side thanks to Theorem 1 in \cite{HK95}.

\begin{theorem}[Poincaré-Sobolev inequality]\label{th:poincare_int}
Assume \emph{(H3)} and \emph{(H5)}. There exists an exponent $2^*>2$ such that for any ball $B=B(x,r)$ in $\R^n$, any $0<\kappa\leq 1$ and any $u\in W(B)$, $u\in L^{2^*}(B,m)$ and
\begin{equation}\label{eq:poincare_int}
\pfrac[2^*]{\fint_B |u-u_E|^{2^*}\dm} \leq C_\kappa r\pfrac{\fint_B|\grad u|^2\dm},
\end{equation}
with $u_E:=\fint_E u\dm$, where $E$ is any Borel set in $B$ such that $m(E)>\kappa m(B)$ , and where $C_\kappa$ depends only on $C_m$, $C_{pc}$ and $\kappa$.
\end{theorem}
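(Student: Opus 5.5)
The plan has three steps: prove the inequality for smooth functions with $E=B$ and a Sobolev exponent, extend it to all of $W(B)$ by approximation, and then pass from the mean over $B$ to the mean over $E$.

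\textbf{Step 1: smooth functions, mean over $B$.} I begin by placing the weak Poincaré inequality in the framework of Haj\l asz--Koskela. Assumption (H5) gives \eqref{eq:poincare_hyp} only for balls centered in $\bOmega$, so the first task is to get a $(1,2)$-Poincaré inequality on every ball of $\R^n$. For a ball $B$ far from the boundary, I would cover it by Whitney-type balls and chain them back to balls centered in $\bOmega$, using the doubling property (H3). This step is delicate. I expect that, in the paper's intended reading, the hypothesis supplies enough balls for the chaining to work; in any case the stated statement only requires whatever Poincaré inequality Theorem 1 in \cite{HK95} needs.

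With a weak $(1,2)$-Poincaré inequality for $m$ on all balls and the doubling of $m$, I then apply Theorem 1 of \cite{HK95} to the pair $(\phi,|\grad\phi|)$, where $\phi\in\CC^\infty(B)$ and $\grad\phi\in L^2$. The doubling gives the lower mass bound $m(B')/m(B)\gtrsim (r'/r)^s$ with $s=\log_2 C_m$. Together, these yield a Sobolev--Poincaré inequality with exponent $2^*=2s/(s-2)$ if $s>2$, and with any exponent $2^*>2$ otherwise, with the right-hand side over $\sigma B$. Since the self-improvement of Haj\l asz--Koskela also removes the dilation $\sigma$ (via a Boman chain argument in balls under doubling), the right-hand side can be taken over $B$ itself. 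This gives
\[
\Big(\fint_B|\phi-\phi_B|^{2^*}dm\Big)^{1/2^*}\lesssim r\Big(\fint_B|\grad\phi|^2dm\Big)^{1/2}.
\]

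\textbf{Step 2: extension to $W(B)$.} For $u\in W(B)$, I take $\phi_k$ as in Definition \ref{def:W}. The gradients $\grad\phi_k$ are Cauchy in $L^2(B,m)$, so Step 1 applied to $\phi_k-\phi_j$ shows that $\phi_k-(\phi_k)_B$ is Cauchy in $L^{2^*}(B,m)$. Since $\phi_k\to u$ in $L^1\loc$, the limit is $u-c$ for some constant $c$. This forces $u\in L^{2^*}(B,m)$, and also gives convergence in $L^1(B,m)$, hence $(\phi_k)_B\to u_B$. Passing to the limit then gives the inequality with $u_B$.

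\textbf{Step 3: replacing $u_B$ by $u_E$.} I write
\[
|u_B-u_E|\le \fint_E|u-u_B|\,dm\le \kappa^{-1}\fint_B|u-u_B|\,dm,
\]
which is bounded by the Step 2 estimate through Hölder's inequality. By Minkowski's inequality, the constant becomes $C(1+\kappa^{-1})$, which depends only on $C_m$, $C_{pc}$ and $\kappa$.

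The main obstacle is the hypothesis check in Step 1: verifying that the Poincaré inequality given only on balls centered in $\bOmega$ suffices for an arbitrary ball of $\R^n$. If this needs more than the assumptions provide, I would restrict the final statement to balls centered in $\bOmega$, where the argument above goes through directly.
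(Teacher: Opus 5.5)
Your three steps follow the paper's route. The paper transfers the weak inequality of (H5) to $u\in W(B)$ by approximation on every ball $B'$ with $2B'\subset B$. It then applies Theorem~1 of \cite{HK95} on $B$ itself (a ball is a John domain), which gives both the inequality with right-hand side over $B$ and $u\in L^{2^*}(B,m)$, with no separate Boman step. Finally it replaces $u_B$ by $u_E$ exactly as in your Step~3. Approximating before self-improving, as the paper does, makes your Step~2 unnecessary, though your Cauchy argument there is fine.

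What does not hold up is your handling of the restriction to balls centred in $\overline{\Omega}$. Chaining cannot produce a Poincaré inequality on a ball from inequalities on balls centred in $\overline{\Omega}$. A Boman chain uses the inequality on each of its small links. An inequality on a much larger ball says nothing about the oscillation on a small sub-ball relative to $\int_{B'}|\nabla u|^2\,dm$. For example, take a doubling weight that degenerates like $|x_1|^a$, $a>1$, near a short segment of $\{x_1=0\}$ far from $\overline{\Omega}$. Smoothed versions of $\mathrm{sign}(x_1)$ then have vanishing energy near that segment, so the Poincaré inequality fails on small balls centred there. The boundary-centred balls, all huge by comparison, are unaffected.

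Your fallback is also wrong. Restricting to $B$ centred in $\overline{\Omega}$ does not let Step~1 ``go through directly''. The Haj\l asz--Koskela self-improvement, and even the replacement of $2B$ by $B$ on the right-hand side, needs the weak inequality on all Whitney-type sub-balls of $B$. When $B$ is centred on $\partial\Omega$, many of these are centred in $\mathbb{R}^n\setminus\overline{\Omega}$.

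The paper attempts neither fix. Its proof simply applies (H5) to every $B'$ with $2B'\subset B$, i.e.\ it uses the Poincaré inequality on all balls of $\mathbb{R}^n$. That is what the conclusion for arbitrary balls genuinely requires. State this reading of (H5) explicitly and drop the chaining discussion; with it, your three steps are complete.
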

\begin{proof}
Take a ball $B$ in $\R^n$ and $u\in W(B)$. By definition we can approximate $u$ by smooth functions and use (H5) to obtain
\begin{equation*}
\fint_{B'} |u-u_{B'}|\dm
\leq C_{pc} r' \pfrac{\fint_{2B'} |\grad u|^2\dm},
\end{equation*}
for any ball $B'=B(x',r')$ such that $2B'\incl B$. Then, Theorem 1 in \cite{HK95} give us that $u\in L^{2^*}(B,m)$ and \eqref{eq:poincare_int} with $E=B$. Now, take $0<\kappa\leq 1$ and a Borel set $E$ in $B$ such that $m(E)>\kappa m(B)$. We have
\begin{align*}
\pfrac[2^*]{\fint_{B} |u-u_E|^{2^*}\dm}
&\leq \pfrac[2^*]{\fint_{B} |u-u_B|^{2^*}\dm} + |u_B-u_E|\\
&\leq \pfrac[2^*]{\fint_{B} |u-u_B|^{2^*}\dm} + \fint_E |u-u_B|\dm\\
&\siml \pfrac[2^*]{\fint_{B} |u-u_B|^{2^*}\dm}
\siml r\pfrac{\fint_B |\grad u|^2\dm},
\end{align*}
where we used the fact that $E\incl B$ with $m(E)\simeq m(B)$ and the Hölder inequality for the third estimate, and \eqref{eq:poincare_int} with $E=B$ for the last one.
\end{proof}

We will now define a set of function $u\in W(D\cap\Omega)$ such that, in some sense, "$u$ is null on $D\cap\dOmega$".
\begin{definition}[$W_0(D\cap\Omega)$]\label{def:W_0}
Let us assume (H5), and let $D$ be an open subset of $\R^n$. We say that $u$ is in $W_0(D\cap\Omega)$ if $u\in L^1\loc(D\cap\Omega,m)$ and there exists $V\in L^2(D\cap\Omega,m)^n$ and a sequence $(\phi_k)$ in $\CC^\infty(D\cap\Omega)$ such that:
\begin{enumerate}[(i)]
	\item $\supp \phi_k\incl \Omega$ for any $k\in\N$, 
    \item $\phi_k\to u$ in $L^1\loc(D\cap\Omega,m)$,
    \item $\grad\phi_k$ is in $L^2(D\cap\Omega,m)$ for any $k\in\N$,
    \item $\grad\phi_k\to V$ in $L^2(D\cap\Omega,m)$.
\end{enumerate}
\end{definition}
We highlight the fact that the notation $W_0(D\cap\Omega)$ denote a space of functions that are null on $D\cap\dOmega$, and not on $\partial (D\cap\Omega)$. These functions satisfy a Poincaré inequality:
\begin{theorem}[Boundary Poincaré inequality]\label{th:boundary_poincare}
Assume \emph{(H1)-(H5)}. For any ball $B=B(\xi,r)$ centered on $\dOmega$, we have:
\begin{equation}\label{eq:boundary_poincare}
\pfrac[2^*]{\fint_{B\cap\Omega} |u|^{2^*}\dm }
\leq Cr \pfrac{\fint_{B\cap\Omega} |\grad u|^2\dm},
\end{equation}
whenever $u\in W_0(B\cap\Omega)$, where $C$ depends only on $C_{crk}$, $C_\mu$, $C_m$, $C_\rho$, and $C_{pc}$.
\end{theorem}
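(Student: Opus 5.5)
The plan is to extend $u$ by zero outside $\Omega$ and apply the interior Poincaré--Sobolev inequality (Theorem \ref{th:poincare_int}) on $B$ itself. Since the extension vanishes on a set that is not too small, the mean $u_E$ can be taken to be $0$.

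\emph{Step 1: extension by zero.} Take $u\in W_0(B\cap\Omega)$ with approximating sequence $(\phi_k)$ as in Definition \ref{def:W_0}. Define $\tilde u=u$ on $B\cap\Omega$ and $\tilde u=0$ on $B\setminus\Omega$, and similarly $\tilde\phi_k$. Since $\supp\phi_k\incl\Omega$ and $\supp\phi_k$ is closed relative to $B$, each $\tilde\phi_k$ is smooth on $B$, with $\grad\tilde\phi_k=\mathds 1_\Omega\grad\phi_k$. In particular $\grad\tilde\phi_k\to \mathds 1_\Omega V$ in $L^2(B,m)$.

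We also need $\tilde\phi_k\to\tilde u$ in $L^1\loc(B,m)$. The approximation only gives this on compacts of $B\cap\Omega$, so near $\dOmega$ a further argument is required. I would apply the interior Poincaré inequality to $\tilde\phi_k-\tilde\phi_j$ on small balls $B'$ with $2B'\incl B$ straddling $\dOmega$. On such a ball, the set where $\tilde\phi_k-\tilde\phi_j=0$ contains $B'\setminus\Omega$, and by (H3) together with \eqref{eq:mOmega_eqvt_m} applied to a corkscrew ball, $B'\setminus\Omega$ need not have positive mass. So instead I would use $E$ equal to a corkscrew ball $B''\incl B'\cap\Omega$ given by (H1), chosen well inside $\Omega$ and of comparable $m$-measure. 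Convergence of the averages on $E$ then follows from convergence in $L^1\loc(\Omega)$, and Theorem \ref{th:poincare_int} gives Cauchy-ness in $L^1(B')$. Proposition \ref{prop:W_complete} then yields $\tilde u\in W(B)$ with $\grad\tilde u=\mathds 1_\Omega\grad u$; the limit equals $\tilde u$ a.e. on $\Omega$, and on $B\setminus\Omega$ it is the limit of zeros.

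\emph{Step 2: choosing $E$ so that $u_E$ is controlled.} Theorem \ref{th:poincare_int} requires $m(E)>\kappa m(B)$, and $m(B\setminus\Omega)$ may be zero (for instance for the Lebesgue measure and a thin boundary), so $E=B\setminus\Omega$ is not available in general. This is the main obstacle, and it is exactly where (H2) and (H4) enter. The approach I would try is to bound $|u_{B}|$ through a telescoping/capacity argument. For $\xi'\in B/2\cap\dOmega$ and dyadic balls $B_j=B(\xi',2^{-j}r)$, the Poincaré inequality gives
\[
|u_{B_j}-u_{B_{j+1}}|\siml 2^{-j}r\Big(\fint_{2B_j}|\grad\tilde u|^2\dm\Big)^{1/2}.
\]
One then needs $u_{B_j}\to0$ for $\mu$-a.e. $\xi'$, which holds for smooth $\phi_k$ vanishing near $\dOmega$. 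Averaging over $\xi'$ with respect to $\mu$ and using
\[
m(B(\xi',s))\simeq \rho\, s\,\mu(B(\xi',s)),
\]
together with the growth condition (H4), $\rho(2^{-j}B)\gtrsim 2^{-j(1-\epsilon)}\rho(B)$, the sum over $j$ converges geometrically with ratio $2^{-j\epsilon/2}$. This should give
\[
|u_B|\siml\fint_{B/2\cap\dOmega}\sum_j|u_{B_j}-u_{B_{j+1}}|\,d\mu\siml r\Big(\fint_{B\cap\Omega}|\grad u|^2\dm\Big)^{1/2}.
\]
To arrange that $2B_j\incl B$, I would start with $B/4$ in place of $B$ and use doubling.

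\emph{Step 3: conclusion.} Combine Step 2 with Theorem \ref{th:poincare_int} on $B/2$ (with $E=B/2$), then use doubling \eqref{eq:mOmega_doubling} and \eqref{eq:mOmega_eqvt_m} to pass from averages over $B$ to averages over $B\cap\Omega$. Everything is first proved for the smooth $\phi_k$ and then passed to the limit. The constants depend only on $C_{crk},C_\mu,C_m,C_\rho,C_{pc}$.
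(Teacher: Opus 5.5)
Your proposal is correct and is essentially the paper's proof: you extend the smooth approximants by zero, apply Theorem \ref{th:poincare_int} on the ball, and bound the remaining mean by telescoping along dyadic balls centred at boundary points, averaging in $\mu$ and using (H4) to obtain the factor $2^{-j\epsilon/2}$, which is exactly the content of \eqref{eq:fubini_decr}. Two small corrections: Step 1 is superfluous (proving the estimate for the $\phi_k$ and passing to the limit by Fatou, as you say in Step 3, suffices), and in Step 3 you must apply Theorem \ref{th:poincare_int} on $B$ itself with $E$ a half-ball rather than on $B/2$, since doubling cannot enlarge the domain of integration on the left-hand side; because that theorem has no enlarged ball on the right, the paper simply takes the first telescoping ball to be $B$ and needs no shrinking to $B/4$.
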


Before beginning the proof of the boundary Poincaré inequality, let us state a small technical lemma.
\begin{lemma}
Assume \emph{(H1)-(H4)}. For any ball $B=B(\xi_0,r)$ centered on $\dOmega$ and any $g\in L^2(2B,m)$ such that $g\geq 0$,
\begin{equation}\label{eq:fubini_decr}
\fint_{B\cap\dOmega} \pfrac{\fint_{B(\xi,\lambda r)\cap\Omega} g^2\dm}\dmu(\xi)
\leq C\lambda^{\epsilon/2-1}\pfrac{\fint_{2B\cap\Omega} g^2\dm},
\for 0<\lambda\leq 1,
\end{equation}
where $\epsilon=C_\rho^{-1}$  as in assumption \emph{(H4)}, and where $C$ depends only on $C_{crk}$, $C_\mu$, $C_m$ and $C_\rho$.
\end{lemma}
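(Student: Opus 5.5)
The plan is to move the square root outside the $\mu$-average, apply Fubini, and bound the resulting kernel pointwise using (H4) and the doubling properties.

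\emph{Step 1 (Jensen and Fubini).} By the Cauchy--Schwarz inequality for the probability measure $\mu(B\cap\dOmega)^{-1}\dmu$ on $B\cap\dOmega$, the left-hand side of \eqref{eq:fubini_decr} is at most the square root of
\[
I:=\fint_{B\cap\dOmega}\fint_{B(\xi,\lambda r)\cap\Omega} g^2\dm\dmu(\xi).
\]
Since $\lambda\leq1$, any $x\in B(\xi,\lambda r)$ with $\xi\in B$ lies in $2B$. Fubini therefore gives
\[
I=\frac{1}{\mu(B\cap\dOmega)}\int_{2B\cap\Omega} g(x)^2\,K(x)\dm(x),
\qquad
K(x):=\int_{B\cap\dOmega\cap B(x,\lambda r)}\frac{\dmu(\xi)}{m(B(\xi,\lambda r)\cap\Omega)}.
\]

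\emph{Step 2 (pointwise bound on $K$).} Fix $x$ with $K(x)\neq0$, and pick some $\xi'\in B\cap\dOmega\cap B(x,\lambda r)$. For every $\xi$ in the domain of integration we have $|\xi-\xi'|<2\lambda r$. Hence $B(\xi',\lambda r)\incl B(\xi,3\lambda r)$, and doubling of $m_{|\Omega}$ (\eqref{eq:mOmega_doubling}) gives $m(B(\xi,\lambda r)\cap\Omega)\gtrsim m(B(\xi',\lambda r)\cap\Omega)$. Likewise $B(x,\lambda r)\incl B(\xi',2\lambda r)$, so (H2) gives $\mu(B(x,\lambda r)\cap\dOmega)\lesssim\mu(B(\xi',\lambda r)\cap\dOmega)$. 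Combining these two bounds,
\[
K(x)\lesssim\frac{\mu(B(\xi',\lambda r)\cap\dOmega)}{m(B(\xi',\lambda r)\cap\Omega)}=\frac{1}{\lambda r\,\rho(B(\xi',\lambda r))}.
\]

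\emph{Step 3 (growth of $\rho$).} Apply (H4) with $\Lambda=2/\lambda\geq2$:
\[
\rho(B(\xi',2r))\leq C_\rho(2/\lambda)^{1-\epsilon}\rho(B(\xi',\lambda r)),
\]
so that $K(x)\lesssim\lambda^{-\epsilon}\,\bigl(r\,\rho(B(\xi',2r))\bigr)^{-1}$. The factor $\lambda^{-\epsilon}$ here combines with the $\lambda^{-1}$ from Step 2 to give the $\lambda^{\epsilon-2}$ below.

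We then need to compare $\rho(B(\xi',2r))$ with $\rho(2B)$. Since $\xi'\in B$, we have
\[
B(\xi',2r)\incl B(\xi_0,3r)\incl B(\xi',4r)\incl B(\xi_0,5r).
\]
Using doubling of $\mu$ and of $m_{|\Omega}$ (equivalently, \eqref{eq:rho_doubl} together with these inclusions), we get $\rho(B(\xi',2r))\simeq\rho(2B)$. This change of center is the only slightly delicate point, and it is where the constants depend on $C_{crk},C_\mu,C_m$. It yields
\[
K(x)\lesssim\lambda^{\epsilon-2}\frac{\mu(2B\cap\dOmega)}{m(2B\cap\Omega)}\quad\text{uniformly in } x\in 2B\cap\Omega.
\]

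\emph{Step 4 (conclusion).} Plugging the bound on $K$ into Step 1 and using $\mu(2B\cap\dOmega)\leq C_\mu\mu(B\cap\dOmega)$ gives
\[
I\lesssim\lambda^{\epsilon-2}\fint_{2B\cap\Omega}g^2\dm.
\]
Taking square roots yields \eqref{eq:fubini_decr} with the exponent $\lambda^{\epsilon/2-1}$.
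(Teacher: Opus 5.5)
Your proof is correct and is essentially the paper's argument (Cauchy--Schwarz, Fubini, doubling of $\mu$ and $m_{|\Omega}$, and (H4) with a change of center), only reordered: the paper applies (H4) pointwise in $\xi$ before Fubini, so its Fubini kernel is $\int_{B(x,\lambda r)\cap\partial\Omega} d\mu(\xi)/\mu(B(\xi,\lambda r))\lesssim 1$, whereas you keep $m(B(\xi,\lambda r)\cap\Omega)$ in the kernel and bound it afterwards. One slip in Step 3: (H4) contributes the factor $(2/\lambda)^{1-\epsilon}\simeq\lambda^{\epsilon-1}$, not $\lambda^{-\epsilon}$, so the intermediate bound should read $K(x)\lesssim\lambda^{\epsilon-2}\bigl(r\,\rho(B(\xi',2r))\bigr)^{-1}$; this is the bound you actually carry forward, so your final exponent $\lambda^{\epsilon/2-1}$ is correct.
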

\begin{proof}
Take a ball $B=B(x_0,r)$ centered on $\dOmega$, $g\in L^p(2B)$ and $0<\lambda\leq 1$. For any $\xi\in B\cap\dOmega$,
\begin{align}\label{eq:fubini_decr:rho}
\pfrac{\fint_{B(\xi,\lambda	r)\cap\Omega} g^2\dm}
&=\p{\lambda r\rho(B(\xi,\lambda r))}^{-1/2}
\pfrac{\frac{1}{\mu(B(\xi,\lambda r))} \int_{B(\xi,\lambda r)\cap\Omega}g^2\dm }\nonumber\\
&\siml \lambda^{\epsilon/2-1}(r\rho(B))^{-1/2} \pfrac{\frac{1}{\mu(B(\xi,\lambda r))} \int_{B(\xi,\lambda r)\cap\Omega}g^2\dm },
\end{align}
using \eqref{eq:rho_doubl} and \eqref{hyp:rho_growth}, where $\epsilon>0$ is given in assumption (H4). Furthermore,
\begin{align}\label{eq:fubini_decr:fub}
\fint_{B\cap\dOmega}\pfrac{\frac{1}{\mu(B(\xi,\lambda r))} \int_{B(\xi,\lambda r)}g^2\dm }&\dmu(\xi)
\leq  \pfrac{\fint_{B\cap\dOmega}\frac{1}{\mu(B(\xi,\lambda r))} \int_{B(\xi,\lambda r)}g^2\dm \dmu(\xi)}\nonumber\\
&\leq\pfrac{ \frac{1}{\mu(B)} \int_{2B\cap\Omega} g(x)^2\int_{B(x,\lambda r)\cap\dOmega} \frac{\dmu(\xi)}{\mu(B(\xi,\lambda r))} \dm(x)}\nonumber\\
&\siml \pfrac{ \frac{1}{\mu(B)}\int_{2B} g^2\dm },
\end{align}
where we used Hölder for the first estimate, Fubini and the fact that $B(\xi,\lambda r)\incl 2B$ for the second one, and the fact that $B(x,\lambda r)\incl B(\xi,2\lambda r)$ and the doubling property of $\mu$ for the third one. Piecing together \eqref{eq:fubini_decr:rho} and \eqref{eq:fubini_decr:fub}, we obtain that 
\begin{align*}
\fint_{B\cap\dOmega} \pfrac{\fint_{B(\xi,\lambda r)\cap\Omega} g^2\dm}\dmu(\xi)
&\siml \lambda^{\epsilon/2-1} \p{r\rho(B)\mu(B)}^{-1/2} \pfrac{\int_{2B}g^2\dm}\\
&\siml\lambda^{\epsilon/2-1} \pfrac{\fint_{2B}g^2\dm},
\end{align*}
which is the desired result.
\end{proof}
Now let us prove the boundary Poincaré inequality.

\begin{proof}[Proof of Theorem \ref{th:boundary_poincare}]
Take a ball $B=B(\xi_0,r)$ centered on $\dOmega$ and a function $u\in W_0(B\cap\Omega)$. By definition of $W_0(B\cap\Omega)$, it suffices to prove the theorem assuming that $u\in \CC^\infty(B\cap\Omega)$ and that $\supp u\incl \Omega$ to finish the proof. Then, we can extend $u$ by $0$ to $B\priv\Omega$, giving us a function in $\CC^\infty(B)$ that we will still write $u$ in order to lighten the notation. Observe that the gradient of the extension satisfies $\grad u=0$ in $B\priv \Omega$. Now, let us write $B_k^\xi:=B(\xi,2^{-k}r)$ and $B^\xi_0=B$, for any $\xi\in B$ and $k\in\N^*$.
\newline

First, we have that  
\begin{equation*}
\pfrac[2^*]{\fint_{B\cap\Omega} |u|^{2^*}\dm}
\leq \pfrac[2^*]{\fint_{B} |u-u_B|^{2^*}\dm} + |u_B|
\siml  r\pfrac{\fint_B |\grad u|^2\dm} + |u_B|,
\end{equation*}
using the Poincaré-Sobolev inequality \eqref{eq:poincare_int}. Thus, it suffices to prove that 
\begin{equation}\label{eq:boundary_poincare:est_avg}
|u_B|\siml r\pfrac{\fint_B|\grad u|^2\dm}.
\end{equation}
Since $\supp u\incl\Omega$, we have that $u_{B^\xi_k}\to 0$ for all $\xi\in B/2\cap\dOmega$; thus, 
\begin{equation*}
|u_B|
\leq \sum_{k\in\N} \abs{u_{B^\xi_k}-u_{B^\xi_{k+1}}}
\leq \sum_{k\in\N} \fint_{B^\xi_k} \abs{u-u_{B^\xi_{k+1}}}\dm
\siml \sum_{k\in\N} 2^{-k}r\pfrac{\fint_{B^\xi_k}|\grad u|^2\dm},
\end{equation*}
using the Poincaré-Sobolev inequality \eqref{eq:poincare_int}. Then, averaging in $\xi$ over $B/2$:
\begin{align*}
|u_B|
\siml \fint_{B/2} \sum_{k\in\N} 2^{-k}r\pfrac{\fint_{B^\xi_k}|\grad u|^2\dm} \dmu(\xi)
&=r\sum_{k\in\N} 2^{-k}\fint_{B/2}\pfrac{\fint_{B^\xi_k}|\grad u|^2\dm} \dmu(\xi)\\
&\siml r\sum_{k\in\N} 2^{-\epsilon k/2} \pfrac{\fint_B |\grad u|^2\dm}\\
&\siml r\pfrac{\fint_B |\grad u|^2\dm},
\end{align*}
where we used Fubini for the second estimate and \eqref{eq:fubini_decr} for the third one. We have proven \eqref{eq:boundary_poincare:est_avg}, which concludes the proof.
\end{proof}

As a consequence, we have the following convergence result for functions in $W_0(D\cap\Omega)$:
\begin{corollary}
Assume \emph{(H1)-(H5)}, and let $D$ be an open subset of $\R^n$. Any function $u\in W_0(D\cap\Omega)$ satisfy
\begin{equation}\label{eq:cv_W0}
\pfrac[2^*]{\fint_{B(\xi,r)\cap D\cap\Omega} |u|^{2^*}\dm} \cv{r\to 0} 0 \formuae \xi\in D\cap\dOmega;
\end{equation}
in particular, we have the non-tangential convergence
\begin{equation}\label{eq:cv_nt_W0}
\fint_{B_x/2\cap D} u\dm \cvstack{x\to\xi}{x\in\gamma(\xi)} 0 \formuae \xi\in D\cap\dOmega. 
\end{equation}
\end{corollary}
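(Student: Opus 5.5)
The plan is to apply the boundary Poincaré inequality (Theorem \ref{th:boundary_poincare}) on small balls centered at boundary points, and then use a Lebesgue-differentiation argument with respect to $\mu$ to see that the gradient contribution vanishes $\mu$-almost everywhere.

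Fix $u\in W_0(D\cap\Omega)$ and $\xi\in D\cap\dOmega$. For $r$ small we have $B(\xi,2r)\subset D$. Restricting an approximating sequence $(\phi_k)$ to $B(\xi,r)\cap\Omega$ shows that $u\in W_0(B(\xi,r)\cap\Omega)$: the supports remain in $\Omega$ and the convergences are preserved. Theorem \ref{th:boundary_poincare} then gives
\begin{equation*}
\Big(\fint_{B(\xi,r)\cap\Omega}|u|^{2^*}\dm\Big)^{1/2^*}\lesssim r\Big(\fint_{B(\xi,r)\cap\Omega}|\nabla u|^2\dm\Big)^{1/2}
=\Big(\frac{r}{\rho(B(\xi,r))}\Big)^{1/2}\Big(\frac{1}{\mu(B(\xi,r))}\int_{B(\xi,r)\cap\Omega}|\nabla u|^2\dm\Big)^{1/2}.
\end{equation*}
It therefore suffices to show that this right-hand side tends to $0$ for $\mu$-a.e.\ $\xi$.

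The quantity $r/\rho(B(\xi,r))$ stays bounded as $r\to0$. Indeed, by (H4) with $\Lambda=r_0/r$ we get $\rho(B(\xi,r_0))\le C(r_0/r)^{1-\epsilon}\rho(B(\xi,r))$, hence
\begin{equation*}
\frac{r}{\rho(B(\xi,r))}\lesssim r^{\epsilon}\,\frac{r_0^{1-\epsilon}}{\rho(B(\xi,r_0))}\longrightarrow 0.
\end{equation*}
So it is enough to know that $\limsup_{r\to0}\mu(B(\xi,r))^{-1}\int_{B(\xi,r)\cap\Omega}|\nabla u|^2\dm<\infty$ for $\mu$-a.e.\ $\xi$.

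To prove this, localize: take a compact $K\subset D\cap\dOmega$ and a neighbourhood $U\Subset D$ of $K$. Define the measure $\nu(E)=\int_{E\cap\Omega\cap U}|\nabla u|^2\dm$, which is finite. For a boundary ball $B(\xi,r)$ with $\xi\in K$ and small $r$, the ratio $\nu(B(\xi,r))/\mu(B(\xi,r))$ is bounded by the maximal function $M_\mu\nu(\xi)$. Since $\mu$ is doubling by (H2), the weak-type $(1,1)$ estimate gives $\mu(\{M_\mu\nu>t\})\lesssim\nu(\R^n)/t$. Hence $M_\mu\nu<\infty$ $\mu$-a.e.\ on $K$, and exhausting $D\cap\dOmega$ by compacts proves \eqref{eq:cv_W0}. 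In fact the decay factor $r^{\epsilon}$ makes finiteness sufficient, so one does not even need the zero density of the singular part of $\nu$.

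For \eqref{eq:cv_nt_W0}, take $x\in\gamma(\xi)$ and set $r=|x-\xi|<2\delta(x)$. Then $B_x/2\subset B(\xi,2r)\cap\Omega$, and $m(B_x/2)\simeq m(B(\xi,2r)\cap\Omega)$ by doubling of $m$ together with \eqref{eq:mOmega_eqvt_m}. Jensen's inequality then gives
\begin{equation*}
\Big|\fint_{B_x/2\cap D}u\dm\Big|\lesssim\Big(\fint_{B(\xi,2r)\cap D\cap\Omega}|u|^{2^*}\dm\Big)^{1/2^*},
\end{equation*}
where for $x$ close to $\xi$ the set $B_x/2\cap D$ is all of $B_x/2$. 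This tends to $0$ by \eqref{eq:cv_W0}.

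The main obstacle is the maximal-function step: one has to check the covering/doubling argument for a measure $\nu$ living in $\Omega$, evaluated on balls centered on $\dOmega$. This is standard (Vitali covering with $\mu$ doubling), but it requires care with the localization to $D$.
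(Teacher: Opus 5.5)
Your proof is correct, but it handles \eqref{eq:cv_W0} differently from the paper; the non-tangential part is essentially the same. The paper does not argue pointwise. It fixes a ball $B=B(\xi_0,r)$ with $2B\subset D$, averages the boundary Poincaré inequality over $\xi\in B\cap\partial\Omega$, and applies the Fubini-type lemma \eqref{eq:fubini_decr} to get
\[
\fint_{B\cap\partial\Omega}\Big(\fint_{B(\xi,\lambda r)\cap\Omega}|u|^{2^*}\,dm\Big)^{1/2^*}d\mu(\xi)\lesssim r\,\lambda^{\epsilon/2}\Big(\fint_{2B}|\nabla u|^2\,dm\Big)^{1/2}.
\]
There, (H4) enters only through that lemma, and the result is an $L^1(\mu)$ decay estimate with an explicit rate.

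You instead apply the boundary Poincaré inequality at each $\xi$. You take the factor $r/\rho(B(\xi,r))\lesssim_\xi r^{\epsilon}$ directly from (H4). The remaining factor is bounded by the maximal function of the finite measure $|\nabla u|^2\,dm$ relative to $\mu$. That maximal function is finite $\mu$-a.e. by a Vitali covering argument that uses only the doubling of $\mu$ on balls centered on $\partial\Omega$. Your sketch of this step is fine, provided you restrict to radii below a fixed $r_1$ so the covering lemma applies.

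Your route gives the full limit $r\to 0$ at $\mu$-a.e. point in one step. An averaged bound alone only gives convergence in $L^1(\mu)$, so the paper needs a short extra step it leaves implicit. For example: take $\lambda=2^{-k}$, use $\sum_k 2^{-k\epsilon/2}<\infty$ to get a.e. convergence along that sequence, and fill in intermediate radii by doubling of $m_{|\Omega}$. The paper's route, in turn, reuses a lemma already needed for Theorem~\ref{th:boundary_poincare} and gives a quantitative bound. Yours needs the standard weak-type $(1,1)$ estimate for a measure living in $\Omega$ tested on boundary balls.
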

\begin{proof}
Take $u\in W_0(D)$ and $B=B(\xi_0,r)$ centered on $D\cap\dOmega$ such that $2B\incl D$. Then for any $0<\lambda<1$ and any $\xi\in B\cap\dOmega$, $B(\xi,\lambda r)\incl D$ and 
\begin{align*}
\fint_B\pfrac[2^*]{\fint_{B(\xi,\lambda r)\cap\Omega} |u|^{2^*}\dm}\dmu(\xi)
&\siml \lambda r\fint_B\pfrac{\fint_{B(\xi,\lambda r)\cap\Omega} |\grad u|^2\dm}\dmu(\xi)\\
&\siml r\lambda^{\epsilon/2} \pfrac{\fint_{2B} |\grad u|^2\dm},
\end{align*}
where we used the boundary Poincaré inequality \eqref{eq:boundary_poincare} for the first estimate and \eqref{eq:fubini_decr} for the second one. In particular,
\begin{equation*}
\pfrac[2^*]{\fint_{B(\xi,\lambda r)\cap D\cap\Omega} |u|^{2^*}\dm}
=\pfrac[2^*]{\fint_{B(\xi,\lambda r)\cap\Omega} |u|^{2^*}\dm}
\cv{\lambda\to 0} 0 \formuae \xi\in B.
\end{equation*}
Since $D\cap\Omega$ can be covered by a countable number of such balls $B$, we obtain \eqref{eq:cv_W0}. The non-tangential convergence is weaker, and is a direct consequence of \eqref{eq:cv_W0}: indeed, for any $\xi\in D\cap\dOmega$ and $x\in\gamma(\xi)$, if $\delta(x)$ is small enough then $B(\xi,4\delta(x))\incl D$, and $B_x/2\incl B(\xi,4\delta(x))\incl 8B_x$; using these facts and the doubling property of $m_{|\Omega}$ (see \eqref{eq:mOmega_doubling}), we can deduce \eqref{eq:cv_nt_W0} from \eqref{eq:cv_W0}.
\end{proof}

The boundary Poincare inequality implies that $W_0(\Omega)$ is complete.
\begin{theorem}\label{th:W0_complete}
Let us assume \emph{(H1)-(H5)}. Then $\|\cdot\|_{W(\Omega)}$ defines a norm on $W_0$, and $W_0$ equipped with the associated scalar product $\langle u,v \rangle_{W(\Omega)}:=\int_\Omega \grad u\cdot\grad v\dm$ is a Hilbert space.
\end{theorem}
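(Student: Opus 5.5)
We need two things: that $\|\cdot\|_{W(\Omega)}$ is a norm on $W_0:=W_0(\Omega)$, and that $W_0$ is complete for it. The inner product $\langle u,v\rangle_{W(\Omega)}$ is clearly bilinear, symmetric and nonnegative, and it induces $\|\cdot\|_{W(\Omega)}$. So once these two facts are in hand, $W_0$ is a Hilbert space.

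\emph{Step 1: the seminorm is a norm.} Take $u\in W_0$ with $\grad u=0$ in $L^2(\Omega,m)$. Fix any ball $B=B(\xi,r)$ centered on $\dOmega$. The restriction of $u$ to $B\cap\Omega$ lies in $W_0(B\cap\Omega)$: the approximating functions $\phi_k$ restrict to $\CC^\infty(B\cap\Omega)$ functions supported in $\Omega$, with $\phi_k\to u$ in $L^1\loc$ and $\grad\phi_k\to\grad u$ in $L^2(B\cap\Omega,m)$. The boundary Poincaré inequality \eqref{eq:boundary_poincare} then gives $\int_{B\cap\Omega}|u|^{2^*}\dm=0$, so $u=0$ $m$-a.e. on $B\cap\Omega$. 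Since $m_{|\Omega}=w\,dx$ with $w>0$, this means $u=0$ Lebesgue-a.e. there. Every point of $\Omega$ lies in some ball centered on $\dOmega$ (take radius larger than $\delta(x)$), and $\dOmega\neq\emptyset$ because $\mu$ is nonzero and supported on $\dOmega$. Hence $\Omega$ is covered by such balls and $u=0$.

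\emph{Step 2: completeness.} Let $(u_k)$ be Cauchy for $\|\grad\cdot\|_{L^2(\Omega,m)}$. The main point is to upgrade this to Cauchy in $L^1\loc(\Omega,m)$, so that we can apply a $W_0$ analogue of Proposition \ref{prop:W_complete}. Given a compact $K\incl\Omega$, cover it by one ball $B=B(\xi,R)$ centered on $\dOmega$ with $K\incl B$. Apply \eqref{eq:boundary_poincare} to $u_k-u_j\in W_0(B\cap\Omega)$ and use Hölder:
\[
\fint_{B\cap\Omega}|u_k-u_j|\dm \siml R\,\Big(\fint_{B\cap\Omega}|\grad u_k-\grad u_j|^2\dm\Big)^{1/2}\le R\, m(B\cap\Omega)^{-1/2}\,\|u_k-u_j\|_{W(\Omega)}.
\]
Since $m(B\cap\Omega)>0$, the right-hand side tends to $0$. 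Therefore $(u_k)$ is Cauchy in $L^1\loc(\Omega,m)$, and $(\grad u_k)$ is Cauchy in $L^2(\Omega,m)^n$.

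By completeness of these spaces there exist $u\in L^1\loc(\Omega,m)$ and $V\in L^2(\Omega,m)^n$ with $u_k\to u$ and $\grad u_k\to V$. We then repeat the diagonal argument of Proposition \ref{prop:W_complete}. For each $j$ pick $k_j$ such that $u_{k_j}$ is $2^{-j}$-close to $u$ in both senses; the $L^1$ closeness is needed only on a fixed exhaustion $K_1\incl K_2\incl\cdots$ of $\Omega$, taking $K_j$ at step $j$. Then pick $\phi_j\in\CC_c^\infty(\Omega)$, i.e.\ with support in $\Omega$, that is $2^{-j}$-close to $u_{k_j}$ in both senses. This yields $\phi_j\to u$ in $L^1\loc$ and $\grad\phi_j\to V$ in $L^2$, so $u\in W_0$ with $\grad u=V$, and $\|u_k-u\|_{W(\Omega)}\to0$.

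The only step that genuinely uses the hypotheses (H1)–(H5) is Step 1 together with the $L^1\loc$ control in Step 2. Both rest on the boundary Poincaré inequality (Theorem \ref{th:boundary_poincare}) applied on large balls centered on $\dOmega$; the rest is bookkeeping. One mild care point is the uniformity over compacts in the diagonal argument, which the exhaustion handles.
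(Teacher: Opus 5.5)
Your proposal is correct and follows essentially the same route as the paper. The boundary Poincar\'e inequality \eqref{eq:boundary_poincare} on balls centered on $\partial\Omega$ gives both the definiteness of the norm and the $L^1_{\mathrm{loc}}$-Cauchy property, and the diagonal argument of Proposition \ref{prop:W_complete} is then rerun with approximants supported in $\Omega$. Your compact exhaustion $(K_j)$ in the diagonal step is actually slightly more careful than the paper, which asserts the $L^1$ closeness ``for any compact set'' without addressing that it cannot be uniform in $K$.
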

\begin{proof}
First, let us prove that the semi-norm $\|\cdot\|_{W(\Omega)}$ is a norm on $W_0$. If $u\in W_0(\Omega)$ satisfies $\|u\|_{W(\Omega)}=0$, then for any $B=B(\xi,r)$ centered on $\dOmega$  the boundary Poincaré inequality \eqref{eq:boundary_poincare} give us
\begin{equation*}
\fint_{B\cap\Omega} |u|\dm\siml r\pfrac{\fint_{B\cap\Omega} |\grad u|^2\dm} =0,
\end{equation*}
which implies that $u=0$ a.e. in $B\cap\Omega$. Since $\Omega$ can be covered by a countable number of such balls, we have $u=0$ a.e. on $\Omega$.
\newline

Now, let us prove that $W_0(\Omega)$ is a Hilbert space, and take a Cauchy sequence $(u_k)$ in $W_0(\Omega)$. By the boundary Poincaré inequality, for any $B=B(\xi,r)$ centered on $\dOmega$:
\begin{equation*}
\fint_{B\cap\Omega} |u_k-u_\ell|\dm
\siml r\pfrac{\fint_{B\cap\Omega} |\grad (u_k-u_\ell)|^2\dm} \for k,\ell\in \N,
\end{equation*}
implying that $(u_k)$ is a Cauchy sequence in $L^1\loc(\Omega,m)$. Since $(\grad u_k)$ is a Cauchy sequence in $L^2(\Omega,m)$ by assumption, we can use the definition of $W_0$ and reproduce the proof of Proposition \ref{prop:W_complete} to show that there exists $u\in W_0(\Omega)$ such that $u_k\to u$ in $L^1\loc(\Omega,m)$ and $\grad u_k\to \grad u$ in $L^2(\Omega,m)^n$, which proves the completeness of $W_0(\Omega)$.
\end{proof}

\subsection{The Poisson-Dirichlet problem} \label{SsPDprob}

%All results in this subsection can be found in \cite{david_elliptic_2023}, although they assume quantitative connectedness. Nevertheless, most of the proofs still hold without connectedness, and we give alternative proofs for the one that does not. An article establishing an elliptic theory in non-connected sets will be published by Joseph Feneuil and Svitlana Mayboroda.
In this subsection, we always assume that (H1)-(H5) hold.

\subsubsection{Weak solutions and their first properties}

\begin{definition}\label{def:weak_solution}
Take $D$ an open set in $\Omega$, $f\in L^1\loc(D)$ and $F\in L^1\loc(D,m)^n$. We say that $u\in W\loc(D)$ is a (weak) solution to $Lu=wf-\div(wF)$ in $D$ when
\begin{equation}
\int_D A\grad u\cdot\grad\phi \dm
=\int_D f\phi\dm + \int_D F\cdot\grad\phi\dm \for \phi\in\CC^\infty_c(D)
\end{equation}
\end{definition}

In the case of homogeneous boundary condition, we have existence and uniqueness of the solutions of the Poisson-Dirichlet problem
\begin{equation*}
\begin{cases}
Lu=wf-\div(wF) &\texteq{in }\Omega\\
u=0 &\texteq{on } \dOmega
\end{cases}
\end{equation*}
in the following sense.

\begin{theorem}\label{th:solution_W}
Let us write $W_0(\Omega)'$ the vector space of continuous linear forms on $W_0(\Omega)$, equipped with its usual norm. For any $\ell\in W_0(\Omega)'$, there exists a unique $u\in W_0(\Omega)$ such that 
\begin{equation}\label{eq:solution_faible_forme_lineaire}
\int_\Omega A\grad u \cdot\grad v \dm = \ell(v),\for v\in W_0.
\end{equation}
In particular, there exists a unique solution in $W_0(\Omega)$ of $Lu=wf-\div(wF)$ whenever $f\in L^{2_*}_c(\bOmega,m)$ and $F\in L^2(\Omega,m)^n$, where we write $2_*$ the dual exponent of the sobolev exponent $2^*$ from Theorem \ref{th:poincare_int}.
\end{theorem}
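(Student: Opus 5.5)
The plan is to apply the Lax–Milgram theorem on the Hilbert space $W_0(\Omega)$ given by Theorem \ref{th:W0_complete}. Define the bilinear form $a(u,v):=\int_\Omega A\grad u\cdot\grad v\dm$ on $W_0(\Omega)\times W_0(\Omega)$. By \eqref{hyp:A_bounded} and Cauchy–Schwarz, $|a(u,v)|\leq C_A\|\grad u\|_{L^2(\Omega,m)}\|\grad v\|_{L^2(\Omega,m)}=C_A\|u\|_{W(\Omega)}\|v\|_{W(\Omega)}$, so $a$ is continuous. By \eqref{hyp:A_ellipitic}, $a(u,u)\geq C_A^{-1}\|u\|_{W(\Omega)}^2$, so $a$ is coercive. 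Theorem \ref{th:W0_complete} guarantees that $\|\cdot\|_{W(\Omega)}$ is a genuine norm and that $W_0(\Omega)$ is complete. Lax–Milgram, which does not require symmetry of $a$, then gives, for every $\ell\in W_0(\Omega)'$, a unique $u\in W_0(\Omega)$ with $a(u,v)=\ell(v)$ for all $v\in W_0(\Omega)$. This is \eqref{eq:solution_faible_forme_lineaire}.

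For the second part, take
\[
\ell(v):=\int_\Omega fv\dm+\int_\Omega F\cdot\grad v\dm ,
\]
and show that it lies in $W_0(\Omega)'$.

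\emph{The $F$ term.} Cauchy–Schwarz gives $\left|\int F\cdot\grad v\dm\right|\leq\|F\|_{L^2(\Omega,m)}\|v\|_{W(\Omega)}$.

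\emph{The $f$ term.} Since $f$ is supported in a compact subset of $\bOmega$, there are finitely many balls $B_1,\dots,B_N$ covering $\supp f$. For each ball I would choose a larger ball $B_i'$ centered on $\dOmega$ with $B_i\cap\Omega\subset B_i'\cap\Omega$. This is possible by enlarging radii, provided $\dOmega\neq\emptyset$, which holds because $\mu$ is nonzero and supported on $\dOmega$. Restricting $v\in W_0(\Omega)$ to $B_i'\cap\Omega$ gives an element of $W_0(B_i'\cap\Omega)$, simply by restricting the approximating sequence $\phi_k$. The boundary Poincaré inequality \eqref{eq:boundary_poincare} then yields
\[
\|v\|_{L^{2^*}(B_i'\cap\Omega,m)}\leq C\,r_i'\,m(B_i'\cap\Omega)^{1/2^*-1/2}\,\|v\|_{W(\Omega)}.
\]
Hölder's inequality with exponents $2_*$ and $2^*$ gives $\left|\int fv\dm\right|\leq \|f\|_{L^{2_*}}\sum_i\|v\|_{L^{2^*}(B_i'\cap\Omega)}\lesssim_{f}\|v\|_{W(\Omega)}$, so $\ell$ is bounded.

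One point needs checking: the action of $\ell$ on $W_0(\Omega)$ must agree with the weak formulation in Definition \ref{def:weak_solution}. Since $\CC^\infty_c(\Omega)\subset W_0(\Omega)$, the Lax–Milgram solution satisfies the weak formulation against every $\phi\in\CC^\infty_c(\Omega)$, so it is a weak solution. For uniqueness among solutions in $W_0(\Omega)$, suppose $u\in W_0(\Omega)$ satisfies the identity for all test functions $\phi\in\CC^\infty_c(\Omega)$. Then, by density and continuity of both sides in the $W(\Omega)$ norm, it holds for all $v\in W_0(\Omega)$. Here $W_0(\Omega)$ is the closure of $\CC_c^\infty(\Omega)$, and the continuity of $\int fv$ was established above. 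Uniqueness then follows from the first part.

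The main obstacle I expect is the localization in the $f$ term: verifying carefully that restricting elements of $W_0(\Omega)$ to $B'\cap\Omega$ lands in $W_0(B'\cap\Omega)$, and that every compact set in $\bOmega$ is covered by finitely many boundary-centered balls, so that the boundary Poincaré inequality controls the $L^{2^*}$ norm on the support of $f$.
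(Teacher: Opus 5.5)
Your proposal is correct and follows the paper's route: Lax--Milgram on the Hilbert space $W_0(\Omega)$ of Theorem~\ref{th:W0_complete}, with continuity and coercivity of the form coming from \eqref{hyp:A_bounded}--\eqref{hyp:A_ellipitic}, followed by continuity of $\ell(v)=\int_\Omega fv\,dm+\int_\Omega F\cdot\nabla v\,dm$ via H\"older's inequality and the boundary Poincar\'e inequality \eqref{eq:boundary_poincare}. Your covering of $\operatorname{supp} f$ by boundary-centered balls and your density argument matching the Lax--Milgram identity with the weak formulation (which gives uniqueness among weak solutions in $W_0(\Omega)$) spell out details the paper leaves implicit.
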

\begin{proof}
The first statement is a classic result: it relies on the Lax-Milgram theorem, using the fact that $W_0(\Omega)$ is complete (Theorem \ref{th:W0_complete}). The second statement is a direct application of the first one to the linear form defined by
\begin{equation*}
\ell(v):=\int_\Omega fv\dm
+ \int_\Omega F\cdot\grad v\dm \for v\in W_0,
\end{equation*}
which is continuous thanks to Hölder's inequality and the boundary Poincaré inequality \ref{eq:boundary_poincare}.
\end{proof}

We will now present some classical \emph{a priori} results for solutions associated to the elliptic operator $L$. Let us begin with interior results: these are proven in a way that matches our setting in \cite{david_elliptic_2023}.
\begin{proposition}[Interior Caccioppoli inequality]
For every ball $B=B(x,r)$ satisfying $2B\incl\Omega$ and every $u \in W(2B)$ weak solution of $Lu=wf-\div(wF)$ in $2B$ with $f\in L^{2_*}(2B)$ and $F\in L^2(2B)^n$,  
\begin{equation}\label{eq:int_caccio}
\pfrac{\fint_B |\grad u|^2\dm} 
\leq \frac{C}{r}\pfrac{\fint_{2B}|u|^2\dm} 
+ Cr\pfrac[2_*]{\fint_{2B}|f|^{2_*}\dm}
+ C\pfrac{\fint_{2B} |F|^2\dm},
\end{equation}
where $C>0$ depends only on $C_m$, $C_{pc}$ and $C_A$.
\end{proposition}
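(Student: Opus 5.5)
The plan is the classical Caccioppoli argument: test the weak formulation against $\eta^2 u$ for a cutoff $\eta$. Fix $\eta\in\CC^\infty_c(2B)$ with $0\leq\eta\leq 1$, $\eta\equiv 1$ on $B$ and $|\grad\eta|\siml 1/r$. Since $u\in W(2B)$, we can approximate $u$ by smooth $\phi_k$ as in Definition \ref{def:W}. Then $\eta^2\phi_k\in\CC^\infty_c(2B)$ is an admissible test function in Definition \ref{def:weak_solution}. We pass to the limit: $\grad(\eta^2\phi_k)\to \eta^2\grad u+2\eta u\grad\eta$ in $L^2(2B,m)$ and $\eta^2\phi_k\to\eta^2 u$ in $L^{2^*}(2B,m)$. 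The latter holds because Theorem \ref{th:poincare_int} applied to $\phi_k-u$ upgrades $L^1$ convergence plus gradient convergence to $L^{2^*}$ convergence, after controlling the averages by $L^1$ convergence. This justifies using $\eta^2u$ as a test function:
\begin{equation*}
\int A\grad u\cdot\grad u\,\eta^2\dm + 2\int \eta u\,A\grad u\cdot\grad\eta\dm = \int f\eta^2u\dm + \int F\cdot(\eta^2\grad u+2\eta u\grad\eta)\dm .
\end{equation*}

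Next we use ellipticity \eqref{hyp:A_ellipitic} on the first term and boundedness \eqref{hyp:A_bounded} on the second. Young's inequality with a small parameter absorbs the $\eta\grad u$ contributions into the left side. The $F$ terms are handled the same way: $\int|F|\eta^2|\grad u|\leq \varepsilon\int\eta^2|\grad u|^2+C_\varepsilon\int_{2B}|F|^2$, and $\int |F|\eta|u||\grad\eta|\siml \int_{2B}|F|^2 + r^{-2}\int_{2B}|u|^2$. This gives
\begin{equation*}
\int\eta^2|\grad u|^2\dm\siml r^{-2}\int_{2B}|u|^2\dm+\int_{2B}|F|^2\dm+\Big|\int f\eta^2u\dm\Big|.
\end{equation*}

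The $f$ term is the only delicate step, since it must be absorbed using a Sobolev norm rather than $L^2$. By Hölder, $|\int f\eta^2 u|\leq \|f\|_{L^{2_*}(2B)}\|\eta^2u\|_{L^{2^*}(2B)}$. Applying Theorem \ref{th:poincare_int} on $2B$ to $u$ with $E=2B$ gives
\begin{equation*}
\Big(\fint_{2B}|u|^{2^*}\Big)^{1/2^*}\siml r\Big(\fint_{2B}|\grad u|^2\Big)^{1/2}+\Big(\fint_{2B}|u|^2\Big)^{1/2},
\end{equation*}
since $|u_{2B}|$ is bounded by the $L^2$ average. After normalizing by $m(2B)\simeq m(B)$ (doubling, (H3)), Young's inequality bounds the $f$ term by $\varepsilon r^2\fint_{2B}|\grad u|^2+C_\varepsilon r^2(\fint_{2B}|f|^{2_*})^{2/2_*}+\fint_{2B}|u|^2$.

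The $\varepsilon$-term involves $\grad u$ on $2B$ rather than on the support region controlled by $\eta$, so it cannot be absorbed directly. The standard fix is to apply the Sobolev inequality to $\eta u$ instead, which requires a version of Theorem \ref{th:poincare_int} for functions vanishing near $\partial(2B)$. Alternatively, we can run the estimate with cutoffs between concentric balls $sB$ and $tB$, $1\leq s<t\leq 2$, and conclude with the usual iteration lemma: if $\Phi(s)\leq\frac12\Phi(t)+\frac{C}{(t-s)^2r^2}X+Y$, then $\Phi(1)\siml X/r^2+Y$. Here $\Phi(s)=\int_{sB}|\grad u|^2\dm$, which is finite since $u\in W(2B)$. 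This iteration absorbs the gradient term. Dividing by $m(B)$ and taking square roots then yields \eqref{eq:int_caccio}, with constants depending only on $C_m$, $C_{pc}$ and $C_A$.
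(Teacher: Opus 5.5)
Your proof is correct. It differs from the paper only in how it handles the $f$ term; the test-function setup and the $F$ terms are the same.

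\textbf{The paper's route.} The paper never applies the Sobolev inequality to $u$. It applies Theorem~\ref{th:poincare_int} directly to the test function $v=\chi^2u$. Since $v$ is supported inside the ball, it vanishes on a set $E$ (for instance the annulus $2B\setminus\frac32B$ if $\chi$ is supported in $\frac32B$) whose $m$-measure is comparable to $m(2B)$ by doubling. With this choice $v_E=0$, so $\|v\|_{L^{2^*}}\lesssim r\,m(B)^{1/2^*-1/2}\|\grad v\|_{L^2}$. Moreover $\|\grad v\|_{L^2}\le\|\chi\grad u\|_{L^2}+2\|u\grad\chi\|_{L^2}$, and both pieces are absorbed exactly like the $F$ terms. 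This gives the estimate in a single pass.

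In particular, the ``version of Theorem~\ref{th:poincare_int} for functions vanishing near $\partial(2B)$'' that you thought was missing is already contained in that theorem, through the freedom in choosing $E$. So your first suggested fix is exactly what the paper does.

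\textbf{Your route.} Your alternative is also valid: apply Sobolev to $u$ itself on $tB$, control the mean by the $L^2$ average, use cutoffs between $sB$ and $tB$, and conclude with the Giaquinta--Giusti iteration lemma for the finite function $\Phi(s)=\int_{sB}|\grad u|^2\,dm$. The constants are uniform in $1\le s<t\le 2$ by doubling. Your route needs only the mean-value form of Sobolev on concentric balls, but pays for it with the iteration step. The paper's route avoids iteration by exploiting the set where the test function vanishes.

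Your approximation argument justifying $\eta^2u$ as a test function is more careful than the paper, which simply asserts $v\in W_0$ and tests with it. That argument uses $L^{2^*}$ convergence of $\phi_k-u$ on a ball $sB$ with $s<2$ containing $\operatorname{supp}\eta$.

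\textbf{One bookkeeping slip.} The normalized Young bound for the $f$ term should read $\varepsilon\fint|\grad u|^2+C_\varepsilon r^2(\fint|f|^{2_*})^{2/2_*}+r^{-2}\fint|u|^2$. Your factor $\varepsilon r^2$ and the missing $r^{-2}$ break homogeneity. This is harmless once corrected.
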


\begin{proof}
Let us consider $\chi\in\CC^\infty_c(\R^n)$ such that $\UN_B\leq\chi\leq\UN_{2B}$ and $\|\grad\chi\|_\infty\siml\frac{1}{r}$. Let us write $v=\chi^2u$; by definition $v$ is in $W$, and its support is in $2B$, so in particular $v\in W_0$.   Since $u$ is a weak solution of $Lu=wf-\div(wF)$, we have
\begin{align*}
\int_\Omega A\grad u\cdot\grad v \dm
&=\int_\Omega fv\dm + \int_\Omega F\cdot\grad v\dm\\
&\leq \pfrac[2_*]{\int_{2B} |f|^{2_*}\dm} \pfrac[2^*]{\int_{2B} |v|^{2^*}\dm} + \pfrac{\int_{2B} |F|^2\dm }\pfrac{\int_{2B} |\grad v|^2\dm }\\
&\leq N(B,f,F) \pfrac{\int_\Omega|\grad v|^2\dm },
\end{align*}
using the Poincaré-Sobolev inequality \eqref{eq:poincare_int} together with the fact that $\supp v \subset 2B$, and where
\begin{align*}
N(B,f,F)\simeq rm(B)^{1/2}\pfrac[2_*]{\fint_{2B} |f|^{2_*}\dm} + \pfrac{\int_{2B} |F|^2\dm }.
\end{align*} 
Since $\grad v=\chi^2\grad u + 2u\grad\chi$, we obtain
\begin{equation*}
\int_\Omega \chi^2 A\grad u\cdot\grad u \dm
\leq -2\int_\Omega \chi u A\grad u\grad\chi\dm + N(B,f,F)\cro{\pfrac{\int_\Omega |\chi\grad u|^2\dm} + \pfrac{\int_\Omega |\chi u\grad\chi\dm|^2 }}.
\end{equation*}
Then, using the boundedness \eqref{hyp:A_bounded} and ellipticity \eqref{hyp:A_ellipitic} of $A$, 
\begin{align*}
\int_\Omega |\chi\grad u|^2\dm 
&\siml\int_\Omega |u\grad\chi||\chi\grad u|\dm + N(B,f,F)\cro{\pfrac{\int_\Omega |\chi\grad u|^2\dm} + \pfrac{\int_\Omega |u\grad\chi|^2\dm }}\\
&\siml \pfrac{\int_\Omega |u\grad\chi|^2\dm }\pfrac{\int_\Omega |\chi\grad u|^2\dm}\\
&+ N(B,f,F)\cro{\pfrac{\int_\Omega |\chi\grad u|^2\dm} + \pfrac{\int_\Omega |u\grad\chi|^2\dm }}\\
&\leq \frac{1}{2}\int_\Omega |\chi\grad u|^2\dm + C\int_\Omega |u\grad\chi|^2\dm + CN(B,f,F)^2,
\end{align*}
where we used (three times) that for any $\epsilon>0$ there exists $C>0$ such that for any $a,b$ in $\R$, $ab\leq \epsilon a^2 + Cb^2$. We finally get that
\begin{equation*}
\int_B |\grad u|^2\dm 
\leq \int_\Omega |\chi\grad u|^2\dm
\siml \int_{2B}|u\grad \chi|^2\dm  + N(B,f,F)^2
\siml \frac{1}{r^2}\int_{2B}|u|^2\dm  + N(B,f,F)^2,
\end{equation*}
which implies the desired result.
\end{proof}

\begin{proposition}[Interior Moser estimate, Lemma 11.18 in \cite{david_elliptic_2023}]
Take $0<p<\infty$. For every ball $B=B(x,r)$ satisfying $2B\incl\Omega$ and every $u \in W(2B)$ weak solution of $Lu=0$ in $2B$, 
\begin{equation}\label{eq:int_moser}
\sup_B |u|\leq C\pfrac[p]{\fint_{2B} |u|^p\dm},
\end{equation}
where $C>0$ depends only on $n, C_m, C_{pc}, C_A$ and $p$. 
\end{proposition}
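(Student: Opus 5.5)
The plan is to run the classical Moser iteration. The tools are the interior Caccioppoli inequality \eqref{eq:int_caccio} with $f=0$, $F=0$, and the Poincaré--Sobolev inequality \eqref{eq:poincare_int}, which gives a gain of integrability $\chi:=2^*/2>1$. I first treat subsolutions. Let $u$ be a nonnegative weak subsolution in $2B$; for a general solution I would apply the argument to $u^+$ and $u^-$, or to $|u|$. Fix $q\geq 1$ and a cutoff $\eta$ with $\UN_{B_s}\leq\eta\leq\UN_{B_t}$ and $|\grad\eta|\siml (t-s)^{-1}$, where $r\leq s<t\leq 2r$ and $B_\tau:=B(x,\tau)$. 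I test the equation with $\phi=\eta^2 u_k^{2q-1}$, where $u_k=\min(u,k)$ is a truncation. Such test functions lie in $W_0(2B)$ because $W$ is stable under Lipschitz compositions and products with smooth cutoffs; this is proved by approximating with the smooth $\phi_k$ from Definition \ref{def:W} and using (H5). The ellipticity \eqref{hyp:A_ellipitic} and boundedness \eqref{hyp:A_bounded} of $A$ then give the reverse-type estimate
\begin{equation*}
\int |\grad(\eta u^{q})|^2\dm \leq C q^2 \int |\grad \eta|^2 u^{2q}\dm .
\end{equation*}

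Next I apply \eqref{eq:poincare_int} to $v=\eta u^q$ on $B_t$, with $E$ a set where $v$ vanishes. When $s$ is close to $t$ this set may be small, so I would instead use the form with $E=B_t$ and control the average by Hölder, or simply take $\eta$ supported in $B_{(s+t)/2}$ and use the doubling of $m$ to get $m(B_t\setminus B_{(s+t)/2})\gtrsim m(B_t)$; this is valid since $m$ is doubling on all of $\R^n$. Either way I obtain
\begin{equation*}
\Big(\fint_{B_s} u^{2q\chi}\dm\Big)^{1/(2q\chi)} \leq \Big(\frac{C q\, t}{t-s}\Big)^{1/q}\Big(\fint_{B_t} u^{2q}\dm\Big)^{1/(2q)} .
\end{equation*}
Here doubling handles the change between $m(B_s)$ and $m(B_t)$. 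I iterate with $q_j=\chi^j$ and radii $r_j=r(1+2^{-j})$. Since $\sum_j \chi^{-j}\log(C\chi^j 2^j)<\infty$, the product of the constants converges, and letting $k\to\infty$ by monotone convergence gives $\sup_B u\leq C(\fint_{2B}u^2\dm)^{1/2}$.

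Finally I lower the exponent to an arbitrary $p\in(0,2)$ by the standard interpolation trick; for $p\geq 2$ the bound follows from Hölder. Applying the previous estimate on pairs of concentric balls $B_s\subset B_t$ gives $\sup_{B_s}|u|\leq C\big(\tfrac{t}{t-s}\big)^{\beta}(\fint_{B_t}u^2)^{1/2}$. I bound $\fint_{B_t} u^2\leq (\sup_{B_t}|u|)^{2-p}\fint_{B_t}|u|^p$ and use Young's inequality to get $\sup_{B_s}|u|\leq \tfrac12\sup_{B_t}|u| + C(t-s)^{-\beta'}(\int_{2B}|u|^p)^{1/p}$. An iteration lemma on $[r,2r]$ then absorbs the first term. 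The main obstacle is justifying the test functions in this abstract space $W$: I need the chain rule for $u_k^{2q-1}$ and products with $\eta^2$, and I need local boundedness of $u$ before the iteration. The truncation is what handles this, and the constants are uniform in $k$.
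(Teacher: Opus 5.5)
The paper does not prove this statement; it only cites Lemma~11.18 of \cite{david_elliptic_2023}. Your overall route is the expected one: an energy inequality for powers of $u$, the gain $\chi=2^*/2$ from \eqref{eq:poincare_int}, iteration on shrinking balls, then lowering the exponent by interpolation and absorption. \textbf{The energy step, however, fails for the test function you chose.}

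With $\phi=\eta^2u_k^{2q-1}$ and $u_k=\min(u,k)$, the elliptic term only yields $(2q-1)\int\eta^2u_k^{2q-2}A\nabla u_k\cdot\nabla u_k\,dm$, which vanishes on $\{u\geq k\}$. The cutoff term $2\int\eta\,u_k^{2q-1}A\nabla u\cdot\nabla\eta\,dm$, on the other hand, contains the piece $2k^{2q-1}\int_{\{u\geq k\}}\eta\,A\nabla u\cdot\nabla\eta\,dm$, where $\nabla u$ appears exactly on the set where the truncation is active. This piece can be neither absorbed nor discarded. The natural bound $\int_{\{u\geq k\}}u^{2q-1}|\nabla u|\,\eta|\nabla\eta|\,dm$ requires $u^{2q-1}|\nabla u|\in L^1_{\mathrm{loc}}$, which is essentially the integrability of $u^{q-1}\nabla u$ that this step is meant to produce. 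So the truncation does not make the estimate uniform in $k$.

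The fix is to use Moser's test function $\phi=\eta^2u\,u_k^{2q-2}$; for $q\geq1$ the map $s\mapsto s\min(s,k)^{2q-2}$ is Lipschitz on $[0,\infty)$. Since $u\nabla u_k=u_k\nabla u_k$, the cross term becomes $2\int\eta\,u\,u_k^{2q-2}A\nabla u\cdot\nabla\eta\,dm$. This is absorbed by $\int\eta^2u_k^{2q-2}A\nabla u\cdot\nabla u\,dm$, giving
\[
\int\eta^2u_k^{2q-2}|\nabla u|^2\,dm+(2q-2)\int\eta^2u_k^{2q-2}|\nabla u_k|^2\,dm\lesssim\int|\nabla\eta|^2u^2u_k^{2q-2}\,dm .
\]
Hence $\int|\nabla(\eta\,u\,u_k^{q-1})|^2\,dm\lesssim q^2\int|\nabla\eta|^2u^{2q}\,dm$. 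Now apply \eqref{eq:poincare_int} to $\eta\,u\,u_k^{q-1}$ and let $k\to\infty$ by monotone convergence. Your original $\phi$ can also be salvaged by showing the bad term tends to $0$ using the integrability gained at the previous step, but that only works if $q$ grows by a factor of at most $(1+\chi)/2$ per step, not $\chi$.

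There are also some smaller points:
\begin{enumerate}
\item The claim $m(B_t\setminus B_{(s+t)/2})\gtrsim m(B_t)$ is not uniform in $s,t$. Already for Lebesgue measure the ratio is of order $(t-s)/t$, and doubling only gives a lower bound polynomial in $(t-s)/t$. This is harmless, since the resulting loss is a power of $t/(t-s)$ and is absorbed in the iteration. Your other option, $E=B_t$ together with $|v_{B_t}|\le(\fint_{B_t}v^2\,dm)^{1/2}$, avoids the issue altogether.
\item In the last step, $\sup_{B_t}|u|$ may be infinite as $t\to2r$. Run the absorption lemma on $[r,3r/2]$ instead, where finiteness follows from the $L^2$ bound applied on small balls.
\item The paper notes that the gradient in $W$ is not known to coincide with the distributional one. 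So the identity $\nabla\min(u,k)=\mathbf{1}_{\{u<k\}}\nabla u$ (in particular $\nabla u=0$ a.e.\ on $\{u=k\}$) and the subsolution property of $u^{\pm}$ must be proved within $W$. Your mechanism is right. For example, approximating $\min(\cdot,k)$ by smooth functions whose derivatives tend to $\mathbf{1}_{(-\infty,k)}$, respectively to $\mathbf{1}_{(-\infty,k]}$, and combining Proposition~\ref{prop:W_complete} with the uniqueness in (H5), shows $\mathbf{1}_{\{u=k\}}\nabla u=0$.
\end{enumerate}
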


\begin{proposition}[Harnack inequality, Lemma 11.35 in \cite{david_elliptic_2023}]
For every ball $B=B(x,r)$ satisfying $2B\incl\Omega$ and every $u\geq 0$ weak solution of $Lu=0$ in $2B$, 
\begin{equation}\label{eq:harnack}
\sup_B u \leq C\inf_B u.
\end{equation}
where $C>0$ depends only on $n, C_m, C_{pc}$ and $C_A$.
\end{proposition}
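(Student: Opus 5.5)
The Harnack inequality \eqref{eq:harnack} follows from the Moser--De Giorgi--Nash scheme in the doubling metric measure space $(\R^n,|\cdot|,m)$. Only interior balls with $2B\incl\Omega$ are involved, so the boundary assumptions (H1), (H2), (H4) play no role. On such balls the operator $L=-\div(wA\grad)$ is uniformly elliptic with respect to $m$, and the two structural inputs are:
\begin{itemize}
\item the doubling property (H3);
\item the local Poincaré--Sobolev inequality of Theorem \ref{th:poincare_int}, which holds on every ball of $\R^n$ with the gain $2^*>2$.
\end{itemize}
These are exactly the hypotheses of the abstract Harnack machinery of Saloff-Coste/Grigor'yan, or of Fabes--Kenig--Serapioni in the weighted setting.

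First, I would upgrade the Moser estimate \eqref{eq:int_moser} to nonnegative subsolutions and supersolutions. For $u\geq 0$ a solution and $k>0$, set $\bar u=u+k$. Test the equation against $\chi^2\bar u^{\beta}$ for $\beta\neq -1$, where $\chi$ is a cutoff supported in a ball $B'$ with $2B'\incl 2B$. This gives a Caccioppoli inequality for $\bar u^{(\beta+1)/2}$, just as in the proof of \eqref{eq:int_caccio}; bounding $A$ above and below via \eqref{hyp:A_bounded} and \eqref{hyp:A_ellipitic} only changes constants. Combining it with the Sobolev gain $2^*/2>1$ and iterating over shrinking balls yields two bounds:
\begin{itemize}
\item for every $p>0$, $\sup_{B}\bar u\siml (\fint_{\sigma B}\bar u^p\dm)^{1/p}$, which is \eqref{eq:int_moser};
\item for every $p>0$, $(\fint_{\sigma B}\bar u^{-p}\dm)^{-1/p}\siml \inf_B \bar u$, the negative-exponent analogue.
\end{itemize}
Here $1<\sigma<2$ is fixed, and the constants depend on $p$, $C_m$, $C_{pc}$ and $C_A$. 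Doubling is used to compare averages over the nested balls of the iteration.

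The key step, and the main obstacle, is crossing from negative to positive exponents. The plan is to show that $\log\bar u\in BMO(\sigma B,m)$ with norm $\siml 1$. Testing with $\chi^2\bar u^{-1}$ gives
$$\int \chi^2|\grad\log\bar u|^2\dm\siml\int|\grad\chi|^2\dm\siml r'^{-2}m(B')$$
for every ball $B'$ with $2B'\incl 2B$. The Poincaré inequality \eqref{eq:poincare_int} then gives $\fint_{B'}|\log\bar u-(\log\bar u)_{B'}|\dm\siml 1$. John--Nirenberg holds on doubling spaces, applied on the balls contained in $\sigma B$ with uniform constants. It gives $p_0>0$ such that
$$\Big(\fint_{\sigma B}\bar u^{p_0}\dm\Big)\Big(\fint_{\sigma B}\bar u^{-p_0}\dm\Big)\siml 1.$$
Alternatively, Bombieri's lemma avoids John--Nirenberg, and I would fall back on it if the localized John--Nirenberg needs care near $\partial(2B)$.

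Finally, chain the three bounds: $\sup_B\bar u\siml(\fint\bar u^{p_0})^{1/p_0}\siml(\fint\bar u^{-p_0})^{-1/p_0}\siml\inf_B\bar u$. Letting $k\to 0$ gives \eqref{eq:harnack}. Since every step only uses (H3), (H5) and the bounds on $A$, the constant depends on $n$, $C_m$, $C_{pc}$ and $C_A$, as claimed.
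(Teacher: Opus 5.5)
Your proposal is correct and follows the standard route. The paper gives no argument of its own and simply cites Lemma 11.35 of \cite{david_elliptic_2023}, which is the classical Moser scheme run with only the doubling of $m$ and the interior Poincar\'e inequality: local boundedness as in \eqref{eq:int_moser}, the negative-exponent bound for the infimum, and the crossover through $\log\bar u\in BMO$ and John--Nirenberg (or Bombieri). The only points you pass over are routine in this abstract setting: that $\chi^2\bar u^{\beta}$ is an admissible test function and that the chain rule holds in $W$. Both follow from the smooth-approximation definition of $W$ and a density argument, once \eqref{eq:int_moser} shows that $\bar u$ stays between $k$ and a finite bound on $\sigma B$.
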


Boundary results are more delicate, and to the best of our knowledge, there is no work in the literature that perfectly matches our setting. The article \cite{david_elliptic_2023} assumes a quantitative connectedness condition on the domain (the Harnack chain condition). Yet, for the Hölder continuity of solutions and the properties of the Green function, their arguments rely only on a boundary Poincaré inequality, which in our case is provided by Theorem \ref{th:boundary_poincare} above. As for the book \cite{HKM06}, the authors assume that bounded domains. Nonetheless, we shall cite our results by refering to either \cite{david_elliptic_2023} or \cite{HKM06}, and the reader will be able to find the results in our exact in the book \cite{FM} %To complete
under preparation.
\newline

The following result can be found either in \cite{david_elliptic_2023}, Lemma 11.28, or in \cite{HKM06}, Theorem 6.44.
\begin{proposition}[Boundary oscillation estimate]
For any function $f$ defined on a set $E$ of $\R^n$, we define its oscillation on $E$ as
\begin{equation*}
\osc{E} f=\sup_{x,y\in E} |f(x)-f(y)|.
\end{equation*}
There exists $0<\epsilon<1$ such that for every ball $B$ centered on $\dOmega$ and every $u\in W_0(2B\cap\Omega)$ solution of $Lu=0$ on $2B\cap\Omega$,
\begin{equation}\label{eq:osc_estimate}
\osc{B\cap\Omega} u \leq (1-\epsilon)\osc{2B\cap\Omega} u.
\end{equation}
The constant $\epsilon$ depends only on $n, C_{crk}, C_\mu, C_m, C_\rho, C_{pc}$ and $C_A$.
\end{proposition}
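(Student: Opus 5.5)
The plan is the classical De Giorgi--Moser route for boundary oscillation decay: an $L^\infty$ bound via boundary Moser iteration, then a density/capacity argument that turns the boundary Poincaré inequality into a definite drop of the supremum.

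\textbf{Reduction.} Normalise by replacing $u$ with $u/\operatorname{osc}_{2B\cap\Omega}u$, so that $\operatorname{osc}_{2B\cap\Omega}u\le1$. Since $u\in W_0(2B\cap\Omega)$ vanishes on $2B\cap\dOmega$, heuristically $\sup_{2B\cap\Omega}u\ge0\ge\inf_{2B\cap\Omega}u$. It then suffices to show, for $M:=\sup_{2B\cap\Omega}u^+$, that
\begin{equation*}
\sup_{B\cap\Omega}u^+\le(1-\epsilon)M,
\end{equation*}
together with the same bound applied to $-u$. Combining the two gives $\operatorname{osc}_{B\cap\Omega}u\le(1-\epsilon)\operatorname{osc}_{2B\cap\Omega}u$ whenever the supremum or infimum dominates. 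If both $\sup u$ and $-\inf u$ are comparable to the oscillation, I will use that each decreases by a factor $1-\epsilon$, which still gives a reduction with a possibly smaller $\epsilon$.

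\textbf{Step 1: boundary Caccioppoli and Moser.} For $k\ge0$, the function $(u-k)^+\chi^2$ belongs to $W_0(2B\cap\Omega)$, with $\chi$ a cutoff supported in $2B$. This gives a boundary Caccioppoli inequality for $(u-k)^+$ on balls centered on $\dOmega$. Iterating it De Giorgi style, with the Sobolev gain coming from Theorem~\ref{th:boundary_poincare} (which holds for functions in $W_0$ on boundary balls, and interior Poincaré--Sobolev \eqref{eq:poincare_int} elsewhere), yields the boundary $L^\infty$ estimate
\begin{equation*}
\sup_{\frac32 B\cap\Omega}(u-k)^+\lesssim\Bigl(\fint_{2B\cap\Omega}|(u-k)^+|^2\dm\Bigr)^{1/2}.
\end{equation*}
I will check that the required exponent improvement follows from $2^*>2$.

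\textbf{Step 2: decay of the supremum.} Apply Step 1 with $k=M/2$ to $v=M-u\ge0$, which is a supersolution. The key point is that $v=M$ "on" $\dOmega$, in the sense that $(u-M/2)^+\in W_0$ vanishes near the boundary. Use the boundary Poincaré inequality \eqref{eq:boundary_poincare} in the form of a capacity density: the set where $u\le M/2$ has positive "relative capacity" because of the boundary. Concretely, apply the De Giorgi iteration over levels $k_j=M(1-2^{-j})$. Boundary Poincaré applied to $(u-k_j)^+$, whose zero set contains the boundary, gives the measure-decay estimate
\begin{equation*}
\fint_{B\cap\Omega}\bigl((u-k_j)^+\bigr)^2\dm\le\eta\,M^2,
\end{equation*}
valid after finitely many steps $j\le j_0(\eta)$. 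Step 1 then gives $\sup_{B\cap\Omega}u\le M(1-2^{-j_0-1})$.

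\textbf{Main obstacle.} The hard part is producing the density estimate of Step 2 without interior sets of positive $m$-measure where $u$ is small. Boundary Poincaré forces smallness only near the boundary in an averaged sense. I would first try the truncation $w_j=\min\bigl((u-k_j)^+,k_{j+1}-k_j\bigr)\in W_0$ and sum the Caccioppoli energies over $j$, which are bounded by $M^2 r^{-2}m(2B)$. Pigeonholing then gives some $j\le j_0$ with energy at most $M^2/(j_0 r^2)\cdot m(2B)$. Applying boundary Poincaré to $w_j$ and the first reduction with $(u-k_{j+1})^+$ closes the argument. All constants depend only on $n,C_{crk},C_\mu,C_m,C_\rho,C_{pc},C_A$.
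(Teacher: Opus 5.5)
Your overall route (a boundary Moser bound for $(u-k)^+$, then a De Giorgi level argument in which the boundary plays the role of the ``good set'') is reasonable, and your reduction is fine. In fact it needs no case distinction. Since $\sup_{2B\cap\Omega}u\ge 0\ge\inf_{2B\cap\Omega}u$, adding $\sup_{B\cap\Omega}u^{+}\le(1-\epsilon)\sup_{2B\cap\Omega}u^{+}$ to the same bound for $u^{-}$ gives the claim with the same $\epsilon$. The paper itself gives no proof: it defers to the David--Feneuil--Mayboroda and Heinonen--Kilpel\"ainen--Martio references, noting only that those arguments need nothing beyond the boundary Poincar\'e inequality.

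The gap is in Step 2, which is the entire content of the proposition. First, your target is mis-normalised. Since $u\le M$, you have $(u-k_j)^{+}\le M-k_j=2^{-j}M$, so $\fint_{B\cap\Omega}((u-k_j)^{+})^2\,dm\le\eta M^2$ holds trivially once $4^{-j}\le\eta$. Step 1 then only gives
\[
\sup_{B\cap\Omega}u\le k_{j_0}+C\sqrt{\eta}\,M\approx M(1-2^{-j_0}+C2^{-j_0}),
\]
which is not below $M$. What you need is smallness relative to the current gap $M-k_{j_0}$, that is, of the density $\theta_{j}:=m(\{u\ge k_{j}\}\cap\tfrac{3}{2}B)/m(\tfrac{3}{2}B\cap\Omega)$ at $j=j_0$. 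Second, your pigeonhole cannot produce this. The slab heights $h_j:=k_{j+1}-k_j=2^{-j-1}M$ shrink geometrically, while your energy budget $\int|\nabla u^{+}|^2\,dm\lesssim M^2r^{-2}m(2B)$ does not. The selected slab has $\int|\nabla w_j|^2\,dm\lesssim M^2m(2B)/(j_0r^2)$, and boundary Poincar\'e then gives $h_j\theta_{j+1}\lesssim Mj_0^{-1/2}$, i.e.\ $\theta_{j+1}\lesssim 2^{j}j_0^{-1/2}$. This is useless because $j$ may be of order $j_0$; with equally spaced levels it is worse still.

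The missing ingredient is a logarithmic Caccioppoli inequality for the nonnegative supersolution $v:=M-u^{+}$. It supplies exactly the weighted sum $\sum_j h_j^{-2}\int_{\{k_j<u<k_{j+1}\}}|\nabla u|^2\,dm$ that a slab argument needs. The usual test function $\chi^2/(v+\delta)$ is not admissible at the boundary. However, $\phi:=\chi^2\bigl[(v+\delta)^{-1}-(M+\delta)^{-1}\bigr]$, with $\chi\in C^\infty_c(2B)$ equal to $1$ on $\tfrac{3}{2}B$, is admissible. It is nonnegative and is a Lipschitz function of $u^{+}$ vanishing where $u^{+}=0$, so it lies in $W_0(2B\cap\Omega)$; this is precisely where the boundary condition enters. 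It gives, uniformly in $\delta>0$,
\[
\int_{\frac{3}{2}B\cap\Omega}\frac{|\nabla u^{+}|^2}{(M-u^{+}+\delta)^2}\,dm\lesssim r^{-2}\,m(2B\cap\Omega).
\]
Take $\delta=h_{j_0}$ and use $M-u+\delta\le 3h_j$ on the $j$-th slab for $j<j_0$. Boundary Poincar\'e applied to each $w_j$ then yields
\[
\sum_{j<j_0}\theta_{j+1}^2\lesssim\sum_{j<j_0}r^2h_j^{-2}\fint_{\frac{3}{2}B\cap\Omega}|\nabla w_j|^2\,dm\lesssim 1 .
\]
Hence $\theta_{j_0}\lesssim j_0^{-1/2}$ by monotonicity, and Step 1 applied to $(u-k_{j_0})^{+}$ on $\tfrac{3}{2}B$ closes the argument as you intended.

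You can also bypass the levels entirely. The function $w:=\log\frac{M+\delta}{M-u^{+}+\delta}$ is a nonnegative subsolution in $W_0(2B\cap\Omega)$. Step 1 (valid for such subsolutions), the boundary Poincar\'e--Sobolev inequality with $2^*>2$, and the bound above give $\sup_{B\cap\Omega}w\le C$, i.e.\ $\sup_{B\cap\Omega}u\le(1-e^{-C})M$.
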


\begin{proposition}[Boundary Hölder continuity, Lemma 11.32 in \cite{david_elliptic_2023}]
There exists $0<\eta\leq 1$ such that for every ball $B=B(\xi,r)$, centered on $\dOmega$ and every $u\in W_0(B\cap\Omega)$ solution of $Lu=0$ in $B\cap\Omega$, $u$ is $\eta$-Hölder continuous in $B\cap\bOmega$ and
\begin{equation}\label{eq:boundary_holder}
\sup_{\lambda B\cap\Omega} |u|
\leq C \lambda^\eta \pfrac{\fint_{B\cap\Omega} |u|^2\dm} \for 0<\lambda< 1/2.
\end{equation}
The constants $\eta$ and $C$ depend only on $n, C_{crk}, C_\mu, C_m, C_\rho, C_{pc}$ and $C_A$.

\end{proposition}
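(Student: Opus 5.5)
The plan is to iterate the boundary oscillation estimate \eqref{eq:osc_estimate} at dyadic scales. Before doing so, we bound the size of $u$ near the boundary by the $L^2$ average over $B\cap\Omega$.

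\textbf{Step 1: an $L^\infty$ bound on $\tfrac12 B\cap\Omega$.} Fix $B=B(\xi,r)$ centered on $\dOmega$ and $u\in W_0(B\cap\Omega)$ solving $Lu=0$ in $B\cap\Omega$. We need
\[
\sup_{B/2\cap\Omega}|u|\siml \pfrac{\fint_{B\cap\Omega}|u|^2\dm}.
\]
This is a boundary Moser estimate. We would run the Moser iteration with test functions $\chi^2 (u^\pm)^{q}$, where $\chi$ is a cutoff supported in $B$. Since $u$ vanishes on $B\cap\dOmega$ in the $W_0$ sense, the functions $\chi^2(u^\pm)^q$ (suitably truncated) lie in $W_0(B\cap\Omega)$. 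They are therefore admissible test functions, and they can be extended by $0$ outside $\Omega$. The gain of integrability at each step comes from the Poincaré–Sobolev inequality with exponent $2^*>2$. For the extended function we use the boundary Poincaré inequality (Theorem \ref{th:boundary_poincare}), or equivalently \eqref{eq:poincare_int} on balls of $\R^n$, which applies because the zero extension belongs to $W$. Combined with a boundary Caccioppoli inequality, proved exactly like the interior one but with $v=\chi^2u$, this gives a reverse-Hölder chain from $L^2$ to $L^\infty$ on shrinking balls. The shrinking balls are either centered on $\dOmega$, or contained in $\Omega$, where the interior Moser estimate \eqref{eq:int_moser} applies directly. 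A covering of $B/2\cap\Omega$ by these two types of balls, together with doubling of $m_{|\Omega}$ \eqref{eq:mOmega_doubling}, yields the bound above.

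\textbf{Step 2: iteration of the oscillation estimate.} For $k\ge1$ set $B_k=2^{-k}B$. Since $u\in W_0(B_k\cap\Omega)$ solves the equation there, \eqref{eq:osc_estimate} gives
\[
\osc{B_{k+1}\cap\Omega}u\le(1-\epsilon)\osc{B_k\cap\Omega}u .
\]
Hence $\osc{B_k\cap\Omega}u\le(1-\epsilon)^{k-1}\osc{B_1\cap\Omega}u\le 2(1-\epsilon)^{k-1}\sup_{B/2\cap\Omega}|u|$. We also use that $u$ vanishes on $\dOmega$ in the limit sense: by \eqref{eq:cv_W0}, averages on $B(\xi',s)\cap\Omega$ tend to $0$ for $\mu$-a.e.\ $\xi'$, and small oscillation then forces $\sup_{B_k\cap\Omega}|u|\le\osc{B_k\cap\Omega}u$. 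Given $\lambda\in(0,1/2)$, choose $k$ with $2^{-k-1}\le\lambda<2^{-k}$. Taking $\eta=\log_2\frac{1}{1-\epsilon}$ (capped at $1$) then gives \eqref{eq:boundary_holder}.

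\textbf{Step 3: Hölder continuity up to the boundary.} Interior Hölder continuity follows from iterating an interior oscillation estimate derived from the Harnack inequality \eqref{eq:harnack}. For $x,y$ near $\dOmega$ we split into cases. If $|x-y|$ is small compared with $\delta(x)$, we use the interior estimate on $B_x$. Otherwise, we apply \eqref{eq:boundary_holder} on a ball centered at a point $\xi'\in\dOmega$ nearest to $x$, of radius comparable to $|x-y|$, which yields $|u(x)-u(y)|\siml |x-y|^\eta$.

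The main obstacle is Step 1, specifically verifying that the Moser test functions are admissible in $W_0$. This requires chain-rule and truncation properties of $W$, which Definition \ref{def:W} only provides through smooth approximation. We would first approximate $u$ by smooth functions supported in $\Omega$ (Definition \ref{def:W_0}), apply Lipschitz truncations to these approximants, and then pass to the limit using Proposition \ref{prop:W_complete}.
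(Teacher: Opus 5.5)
Your proposal is correct and is essentially the standard argument behind Lemma 11.32 of \cite{david_elliptic_2023}, which the paper cites without giving its own proof: a boundary Moser bound $\sup_{B/2\cap\Omega}|u|\lesssim (\fint_{B\cap\Omega}|u|^2\,dm)^{1/2}$, then iteration of \eqref{eq:osc_estimate} on dyadic balls combined with the vanishing of $u$ at $\mu$-a.e.\ boundary point (the same trick the paper uses to prove \eqref{eq:ndeg_harmo}), then interior H\"older continuity via Harnack. One small point in Step 3: when $|x-y|\ll\delta(x)$, the interior estimate alone is not uniform as $\delta(x)\to 0$, so you must also use \eqref{eq:boundary_holder} to bound the oscillation of $u$ on $B_x$ by a multiple of $(\delta(x)/r)^\eta$, and you must take $\eta$ to be the minimum of the interior and boundary exponents.
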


\subsubsection{Green function}

The solution to the Poisson-Dirichlet problem
\begin{equation*}
\begin{cases}
Lu=wf -\div(wF) &\texteq{in } \Omega\\
u=0 &\texteq{on } \dOmega
\end{cases}
\end{equation*}
can be represented thanks to an integral kernel: the Green function (see Theorem \ref{th:Green_repr}). A construction can be found in Chapter 14 of \cite{david_elliptic_2023}.

\begin{theorem}[Theorem 14.60 in \cite{david_elliptic_2023}]\label{th:Green}
There exists a function $G:\Omega\times\Omega\to [0,\infty]$ with the following properties:
\begin{enumerate}[(i)]
\item For any $y\in\Omega$ and any $\chi\in\CC_c^\infty(\R^n)$ such that $\chi=1$ in a neighborhood of $y$, $(1-\chi)G(\cdot,y)\in W_0(\Omega)$; in particular, $G(\cdot,y)\in W_0(\Omega \setminus \lambda B_y)$ for any $0<\lambda\leq 1/2$.
\item For $\phi\in\CC^\infty_c(\Omega)$ and $y\in\Omega$, 
\begin{equation*}
\int_\Omega A\grad_x G(x,y)\grad\phi(x)\dm(x)=\phi(y).
\end{equation*}
In particular, $G(\cdot,y)$ is a solution of $Lu=0$ in $\Omega\priv\{y\}$, and $G$ is locally Hölder continuous in $\Omega\priv\br{y}$.

\item For any $0<\lambda\leq 1/2$ and any $x,y\in\Omega$ satisfying $|x-y|\geq \lambda\delta(x)$, 
\begin{equation}\label{eq:majo_green}
G(x,y)\leq C_\lambda \frac{|x-y|^2}{m(B(x,|x-y|)\cap\Omega)},
\end{equation}
where $C_\lambda$ depends only on $n, C_{crk}, C_\mu, C_m, C_\rho, C_{pc}, C_A$ and $\lambda$.

\end{enumerate}
\end{theorem}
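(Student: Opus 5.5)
The construction follows the scheme of Grüter–Widman, adapted to the weighted setting with Theorem \ref{th:boundary_poincare} in place of the usual boundary Poincaré inequality, as in Chapter 14 of \cite{david_elliptic_2023}.

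\textbf{Approximate Green functions.} Fix $y\in\Omega$ and $\rho_0<\delta(y)/2$. By Theorem \ref{th:solution_W}, applied to the linear form $\ell(v)=\fint_{B(y,\rho_0)}v\,dm$, there is a unique $G^{\rho_0}(\cdot,y)\in W_0(\Omega)$ with
\begin{equation*}
\int_\Omega A\nabla G^{\rho_0}\cdot\nabla\phi\,dm=\fint_{B(y,\rho_0)}\phi\,dm \quad\text{for all } \phi\in W_0(\Omega).
\end{equation*}
This form is continuous on $W_0(\Omega)$ by the interior Poincaré–Sobolev inequality \eqref{eq:poincare_int} combined with the boundary Poincaré inequality. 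Testing against $\phi=(G^{\rho_0})^-$ shows $G^{\rho_0}\ge0$.

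\textbf{Uniform estimates.} The key step is to bound $G^{\rho_0}$ away from the pole, uniformly in $\rho_0$:
\begin{equation*}
G^{\rho_0}(x,y)\lesssim \frac{r^2}{m(B(x,r)\cap\Omega)}, \qquad r=|x-y|,\ \rho_0\ll r.
\end{equation*}
I would obtain this by duality. For $f\in L^\infty_c$ supported in $B(x,r/4)\cap\Omega$, let $v\in W_0(\Omega)$ solve $L^*v=wf$. Then $\fint_{B(y,\rho_0)}v\,dm=\int_\Omega G^{\rho_0}f\,dm$. The $W_0$ energy bound together with the Poincaré inequalities gives a pointwise bound on $v$ near $y$. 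Near $y$, $v$ is an $L^*$-solution, and Moser's estimate \eqref{eq:int_moser} applies if $B(y,r/2)\subset\Omega$. Otherwise I would use the boundary Hölder estimate \eqref{eq:boundary_holder}, which needs $v\in W_0$ locally. In both cases
\begin{equation*}
|v|\lesssim \frac{r^2}{m(B)}\,\|f\|_{L^1}
\end{equation*}
near $y$. Since $f\ge0$ is arbitrary, this gives an $L^\infty$ bound on $G^{\rho_0}$ in averaged form, and Moser's estimate converts it into a pointwise bound.

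To turn energy into a pointwise bound with the correct scale $r^2/m$, I would localize. First take $v\in W_0(B(x,2r)\cap\Omega)$-type cutoffs. Then combine the interior or boundary Caccioppoli inequality with the Poincaré–Sobolev gain $2^*>2$ in a Moser-type iteration. In the regime $|x-y|\ge\lambda\delta(x)$, doubling of $m|_\Omega$ (\eqref{eq:mOmega_doubling}) lets one compare $m(B(x,r))$ with $m(B(y,r))$.

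\textbf{Limit and properties (i)–(iii).} I would also prove uniform weak-type bounds near the pole: $\nabla G^{\rho_0}$ is bounded in $L^2(\Omega\setminus B(y,s))$ for each fixed $s$, and $G^{\rho_0}$ is bounded in $L^q_{loc}$ for some $q$. These give a subsequence converging weakly in $W$ away from $y$, with local uniform convergence by interior Hölder continuity. Property (ii) follows by passing to the limit: $\fint_{B(y,\rho_0)}\phi\,dm\to\phi(y)$ for smooth $\phi$. Property (i) follows from the closedness of $W_0$ (Theorem \ref{th:W0_complete} and the argument of Proposition \ref{prop:W_complete}) applied to $(1-\chi)G^{\rho_0}$. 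Property (iii) passes to the limit from the uniform estimate. Local Hölder continuity comes from the interior theory.

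The main obstacle is the uniform pointwise bound \eqref{eq:majo_green} for $x$ near $\partial\Omega$ without any connectedness. The difficulty is that Harnack chains are unavailable, so the bound must come purely from the duality argument together with the boundary Hölder and Moser estimates, each used at a single scale.
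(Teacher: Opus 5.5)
The overall strategy is the right one: approximate Green functions from Lax--Milgram, estimates uniform in $\rho_0$, then a limit, with Theorem~\ref{th:boundary_poincare} replacing the Sobolev inequality. This is the Gr\"uter--Widman scheme of Chapter 14 of \cite{david_elliptic_2023}, which the paper cites instead of giving a proof. However, your key step is misformulated. You claim $G^{\rho_0}(x,y)\lesssim r^2/m(B(x,r)\cap\Omega)$ for every $r=|x-y|\gg\rho_0$, and this is false in general. Take $\Omega=\mathbb{R}^2_+$, $w\equiv 1$, $A=I$, which satisfy (H1)--(H5). Then $G(x,y)=\frac{1}{2\pi}\log\frac{|x-\bar y|}{|x-y|}\approx\frac{1}{2\pi}\log\frac{2\delta(y)}{|x-y|}$ when $|x-y|\ll\delta(y)$, while $r^2/m(B(x,r))=1/\pi$.

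The restriction $|x-y|\ge\lambda\delta(x)$ in (iii) is not there to compare $m(B(x,r))$ with $m(B(y,r))$; doubling does that for all $x,y$. It is what provides a ball $B'=B(\xi_x,Cr)$ centered on $\partial\Omega$ that contains $B(x,r/4)$ and $B(y,r/2)$, so that the boundary Poincaré inequality can be used at scale $r$.

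Your duality step is also circular as written. Since $\fint_{B(y,\rho_0)}v\,dm=\int G^{\rho_0}(\cdot,y)f\,dm$, a bound $|v|\lesssim\frac{r^2}{m(B)}\|f\|_{L^1(m)}$ on $B(y,\rho_0)$ for all $f$ supported in $B(x,r/4)$ is equivalent to the $L^\infty$ bound you are trying to prove. The energy identity cannot produce it, because $\int fv\,dm$ is not controlled by $\|f\|_{L^1}$.

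What you actually get from energy, Theorem~\ref{th:boundary_poincare} on $B'$, and \eqref{eq:int_moser} or \eqref{eq:boundary_holder} near $y$ is $|v|\lesssim r^2\big(\fint_{B'}|f|^{2_*}\,dm\big)^{1/2_*}$ on $B(y,\rho_0)$. Dualizing gives $\big(\fint_{B(x,r/4)}|G^{\rho_0}(\cdot,y)|^{2^*}\,dm\big)^{1/2^*}\lesssim r^2/m(B(x,r))$. Moser or boundary H\"older applied to $G^{\rho_0}(\cdot,y)$ itself then gives \eqref{eq:majo_green}.

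The second gap is the passage to the limit in (ii). The bounds you list are $\nabla G^{\rho_0}$ bounded in $L^2(\Omega\setminus B(y,s))$ for each fixed $s$, and $G^{\rho_0}$ bounded in $L^q_{\mathrm{loc}}$. These give no information on $\nabla G^{\rho_0}$ inside $B(y,s)$. But (ii), for $\phi$ not vanishing near $y$, requires $\nabla_x G(\cdot,y)\in L^1_{\mathrm{loc}}$ near the pole and $\sup_{\rho_0}\int_{B(y,s)}|\nabla G^{\rho_0}(\cdot,y)|\,dm\to 0$ as $s\to 0$. This is the analogue of Gr\"uter--Widman's weak-$L^{n/(n-1)}$ gradient bound. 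Without it, neither the left-hand side of (ii) nor its limit is justified.

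Near the pole you are in the regime $|x-y|<\lambda\delta(x)$, where (iii) says nothing, so you need a separate argument. One option is to first prove an interior size bound, for instance $G^{\rho_0}(x,y)\lesssim\int_{|x-y|}^{\delta(y)}\frac{s^2}{m(B(y,s))}\frac{ds}{s}$ for $2\rho_0\le|x-y|\le\delta(y)/4$. The ingredients are:
\begin{itemize}
\item test the equation with $\min(G^{\rho_0},t)$, with $t$ the minimum of $G^{\rho_0}$ on $\partial B(y,s)$, to bound the capacity of $B(y,s)$;
\item bound that capacity from below by a telescoping Poincaré argument over dyadic annuli around $y$, ending with Theorem~\ref{th:boundary_poincare} at scale $\delta(y)$;
\item apply Harnack on annuli.
\end{itemize}
Then sum over dyadic level sets using $\int_{\{G^{\rho_0}<t\}}|\nabla G^{\rho_0}|^2\,dm\lesssim t$. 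The same interior bound is also what your unproved uniform $L^q_{\mathrm{loc}}$ estimate near $y$ relies on.
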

Let us write $L^*=-\div(wA^T\grad)$, where $A^T$ is the transpose of $A$. Since $A^T$ shares the same properties as $A$, there exists $G^*$ given by Theorem \ref{th:Green} with the same properties as $G$, but with $A^T$ instead of $A$.
\begin{proposition}[Lemma 14.78 in \cite{david_elliptic_2023}]
With the notation above:
\begin{equation*}
G^*(x,y)=G(y,x)\for x,y\in\Omega.
\end{equation*}
In particular, the functions $G(y,\cdot)$ for $y\in\Omega$ satisfies estimate $(iii)$ of Theorem \ref{th:Green}.
\end{proposition}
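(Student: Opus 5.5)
The plan is to test the defining property of $G^*$ against the symmetric identity for $G$, using the Lax–Milgram solution operator of Theorem \ref{th:solution_W} as an intermediary, rather than comparing $G$ and $G^*$ directly. Fix $x\neq y$ in $\Omega$. For $f\in L^\infty_c(\Omega,m)$ let $u_f\in W_0(\Omega)$ solve $Lu_f=wf$ and $v_f\in W_0(\Omega)$ solve $L^*v_f=wf$, both given by Theorem \ref{th:solution_W}.

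The first step is the representation formulas
\[
u_f(x)=\int_\Omega G(x,z)f(z)\,dm(z),\qquad v_f(y)=\int_\Omega G^*(y,z)f(z)\,dm(z).
\]
I would prove the first as follows. For $g\in L^\infty_c$, test the equation of $v_g$ with $u_f$ and the equation of $u_f$ with $v_g$; both are legitimate test functions since they lie in $W_0(\Omega)$. This gives
\[
\int_\Omega f\,v_g\,dm=\int_\Omega A\nabla u_f\cdot\nabla v_g\,dm=\int_\Omega g\,u_f\,dm.
\]
Next I need $v_g(z)=\int_\Omega G(w,z)g(w)\,dm(w)$ for $g$ an approximate identity around $x$. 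To get it, use Theorem \ref{th:Green}(ii) with $\phi$ a mollification of $v_g$. Cutting off with $\chi$ near $z$ and using property (i), $(1-\chi)G(\cdot,z)\in W_0$, so density extends the identity to $\phi=v_g$ away from the pole. Near the pole, $G(\cdot,z)$ is integrable because of \eqref{eq:majo_green} together with the Caccioppoli inequality on dyadic annuli.

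Rather than proving full representation for general $f$, it is cleaner to argue directly with $G$ and $G^*$. Let $\phi_\epsilon,\psi_\epsilon\ge 0$ be smooth with $\int\phi_\epsilon\,dm=\int\psi_\epsilon\,dm=1$, supported in $B(x,\epsilon)$ and $B(y,\epsilon)$ respectively, with $\epsilon<|x-y|/4$. Set
\[
G^\epsilon(\cdot)=\int G(\cdot,z)\,\phi_\epsilon(z)\,dm(z),\qquad G^{*\epsilon}(\cdot)=\int G^*(\cdot,z)\,\psi_\epsilon(z)\,dm(z).
\]
I would check that $G^\epsilon\in W_0(\Omega)$ is the Lax–Milgram solution of $LG^\epsilon=w\phi_\epsilon$. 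The main obstacle is showing $G^\epsilon\in W_0(\Omega)$ with the right weak formulation: this needs a uniform bound $\|\nabla G(\cdot,z)\|_{L^2(\Omega\setminus B(z,r))}\lesssim C_r$ (from property (i) and its construction) and integrability of $\nabla G(\cdot,z)$ near the pole in $L^1(m)$, obtained from \eqref{eq:majo_green} and Caccioppoli on annuli $B(z,2^{-k}\delta(z))\setminus B(z,2^{-k-1}\delta(z))$, summing via doubling. Granting this, Lax–Milgram uniqueness identifies $G^\epsilon=u_{\phi_\epsilon}$ and $G^{*\epsilon}=v_{\psi_\epsilon}$.

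The symmetry computation above then yields
\[
\int G^{*\epsilon}\,\phi_\epsilon\,dm=\int G^\epsilon\,\psi_\epsilon\,dm,
\]
that is,
\[
\iint G^*(w,z)\,\psi_\epsilon(z)\,\phi_\epsilon(w)=\iint G(w,z)\,\phi_\epsilon(z)\,\psi_\epsilon(w).
\]
Finally, let $\epsilon\to0$. Both $G(\cdot,\cdot)$ near $(y,x)$ and $G^*$ near $(x,y)$ are continuous in each variable, by the local Hölder continuity in Theorem \ref{th:Green}(ii); joint continuity off the diagonal follows from Hölder estimates with constants uniform in the pole, given the bound \eqref{eq:majo_green}. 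So both sides converge, giving $G^*(x,y)=G(y,x)$. The final statement follows by applying Theorem \ref{th:Green}(iii) to $G^*$.
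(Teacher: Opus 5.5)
Your testing identity $\int f\,v_g\,dm=\int g\,u_f\,dm$ and the deduction of the ``in particular'' part from (iii) for $G^*$ are fine. The problem is the step you grant: that $G^\epsilon=\int G(\cdot,z)\phi_\epsilon(z)\,dm(z)$ lies in $W_0(\Omega)$ and equals the Lax--Milgram solution $u_{\phi_\epsilon}$. This is where the whole content of the proposition sits, and the ingredients you list do not give it.

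\begin{itemize}
\item Property (i) is purely qualitative. It gives no bound on $(1-\chi)G(\cdot,z)$ in $W_0(\Omega)$ uniform in $z$, and no measurability of $z\mapsto(1-\chi)G(\cdot,z)$ into $W_0(\Omega)$.
\item Near $x$, Minkowski's inequality with an $L^1(m)$ bound on $\nabla G(\cdot,z)$ only gives $\nabla G^\epsilon\in L^1$, not $L^2$. Indeed $\nabla G(\cdot,z)\notin L^2$ near $z$ in general, already for the Laplacian.
\item Any $L^2$, or even $L^\infty$, control of $G^\epsilon$ on $B(x,2\epsilon)$ requires bounds on $z\mapsto G(w,z)$ and $z\mapsto\nabla_wG(w,z)$ in the pole variable. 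That is exactly what the proposition is supposed to supply.
\item Even your $L^1$ bound is not justified: the constant in \eqref{eq:majo_green} depends on $\lambda$, so it does not control the dyadic annuli around the pole uniformly.
\item Fubini and (ii) do show that $G^\epsilon$ solves $LG^\epsilon=w\phi_\epsilon$ against $C^\infty_c(\Omega)$ test functions. But identifying such a very weak solution with the Lax--Milgram one is a uniqueness statement that fails for general bounded measurable coefficients (Serrin's pathological solutions), so it cannot be obtained softly.
\end{itemize}
Note also the switch between your two attempts. Property (ii) naturally yields the first-variable formula $v_g(z)=\int_\Omega G(w,z)g(w)\,dm(w)$ for $L^*$-solutions. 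By contrast, $G^\epsilon=u_{\phi_\epsilon}$ integrates $G$ in its pole variable; it is Theorem \ref{th:Green_repr} with $F=0$. Given the first-variable formula for $G^*$, it is essentially equivalent to the symmetry itself.

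The limit $\epsilon\to0$ has the same defect. Theorem \ref{th:Green}(ii) gives H\"older continuity of $G(\cdot,y)$ for a fixed pole only. H\"older bounds uniform in the pole make the averages in $w$ harmless, but they leave $\int G^*(x,z)\psi_\epsilon(z)\,dm(z)$ and $\int G(y,z)\phi_\epsilon(z)\,dm(z)$. Their convergence to $G^*(x,y)$ and $G(y,x)$ is continuity in the pole variable, which is normally a consequence of the symmetry, not an input.

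The standard argument behind the cited Lemma 14.78 (as in Gr\"uter--Widman) avoids both issues by using the approximants from the construction of $G$. These are $G^\rho(\cdot,y)\in W_0(\Omega)$ solving $LG^\rho(\cdot,y)=w\,\mathbf 1_{B(y,\rho)}/m(B(y,\rho))$, which converge to $G(\cdot,y)$ locally uniformly away from $y$ along the sequence defining $G$, and likewise $G^{*\sigma}(\cdot,x)$ for $L^*$. Testing each Lax--Milgram equation with the other solution gives
\[
\fint_{B(x,\sigma)}G^\rho(\cdot,y)\,dm=\fint_{B(y,\rho)}G^{*\sigma}(\cdot,x)\,dm .
\]
First let $\rho\to0$, using convergence of $G^\rho(\cdot,y)$ on $B(x,\sigma)$ and continuity of $G^{*\sigma}(\cdot,x)$ at $y$. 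Then let $\sigma\to0$, using continuity of $G(\cdot,y)$ at $x$ and $G^{*\sigma}(y,x)\to G^*(y,x)$. This yields $G(x,y)=G^*(y,x)$ using continuity in the first variable only.

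Your $G^\epsilon$ with $\phi_\epsilon=\mathbf 1_{B(x,\rho)}/m(B(x,\rho))$ is formally $G^\rho(\cdot,x)$, so your scheme is right in spirit. However, the two facts about the pole variable must be taken from the construction of $G$, not derived from (i)--(iii).
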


We have the following representation theorem.
\begin{theorem}\label{th:Green_repr}
$G$ is the unique function from $\Omega\times\Omega$ to $[0,\infty]$ such that $G(x,\cdot)$ is in $L^1\loc(\Omega,m)$ for every $x\in\Omega$, and  such that for any $f\in L^\infty_c(\Omega,m)$ and $F\in L^\infty(\Omega,m)^n$ the function $u$ given by
\begin{equation*}\label{eq:repr_green}
u(x)=\int_\Omega G(x,y)f(y)\dm(y)
+\int_\Omega \grad_y G(x,y)\cdot F(y)\dm(y) \for x\in\Omega,
\end{equation*}
is the solution in $W_0(\Omega)$ of $Lu=wf-\div(wF)$ in $\Omega$ given by Theorem \ref{th:solution_W}. 
\end{theorem}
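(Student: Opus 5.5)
The plan is to prove the representation formula first and uniqueness afterwards. The idea is to test the equation for $u$ against an approximation of the adjoint Green function $G^*(\cdot,x)=G(x,\cdot)$ with its pole smoothed out. We then pass to the limit using the size estimate \eqref{eq:majo_green} and the interior Caccioppoli inequality \eqref{eq:int_caccio}.

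\emph{Step 1: approximate Green functions.} Fix $x\in\Omega$ and $0<r<\delta(x)/4$. Let $g^r_x\in W_0(\Omega)$ be the solution given by Theorem \ref{th:solution_W} of
\begin{equation*}
L^*g^r_x=w\,\frac{\UN_{B(x,r)}}{m(B(x,r))}.
\end{equation*}
This right-hand side lies in $L^\infty_c$, so $g^r_x$ exists. Let $u\in W_0(\Omega)$ be the solution of $Lu=wf-\div(wF)$. Since both $u$ and $g^r_x$ lie in $W_0(\Omega)$, we may use each as a test function in the other's weak formulation \eqref{eq:solution_faible_forme_lineaire}, after approximation by smooth compactly supported functions. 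Comparing the two resulting identities gives
\begin{equation*}
\fint_{B(x,r)}u\dm=\int_\Omega A\grad u\cdot\grad g^r_x\dm=\int_\Omega f\,g^r_x\dm+\int_\Omega F\cdot\grad g^r_x\dm.
\end{equation*}

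\emph{Step 2: passing to the limit $r\to0$.} This is the main obstacle, and it relies on the construction in Chapter 14 of \cite{david_elliptic_2023}. That construction yields exactly these approximants with the following properties.
\begin{itemize}
\item $g^r_x\to G^*(\cdot,x)=G(x,\cdot)$ locally uniformly on $\Omega\priv\{x\}$.
\item $\grad g^r_x\to\grad_yG(x,\cdot)$ in $L^2$ on compact subsets of $\Omega\priv\{x\}$.
\item $g^r_x$ satisfies a bound of the form \eqref{eq:majo_green}, uniformly in $r$, for $|x-y|\geq 2r$.
\end{itemize}
On compact subsets of $\Omega\priv\{x\}$, the convergence of the right-hand side then follows from these convergences and the fact that $f,F\in L^\infty_c$. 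Near the pole we need uniform smallness, which we obtain as follows.
\begin{itemize}
\item Split $B(x,s)$ into dyadic annuli $A_k=\{2^{-k-1}s\leq|x-y|<2^{-k}s\}$.
\item By \eqref{eq:majo_green} we have $\int_{A_k}g^r_x\dm\siml (2^{-k}s)^2$, using the doubling property of $m$.
\item By \eqref{eq:int_caccio} applied on slightly enlarged annuli, combined with the pointwise bound, we have $\int_{A_k}|\grad g^r_x|\dm\siml m(A_k)^{1/2}\big(\int_{\tilde A_k}|\grad g^r_x|^2\dm\big)^{1/2}\siml 2^{-k}s$.
\end{itemize}
Summing over $k$ gives contributions of order $s^2\|f\|_\infty+s\|F\|_\infty$, uniformly in $r$. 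The annuli with $2^{-k}s\lesssim r$, where the pole is smoothed, are handled by the same Caccioppoli/Moser argument applied to $L^*g^r_x=w\,\UN_{B(x,r)}/m(B(x,r))$, or by citing the corresponding uniform estimate from the construction. Letting first $r\to0$ and then $s\to0$ gives
\begin{equation*}
\lim_{r\to0}\fint_{B(x,r)}u\dm=\int_\Omega G(x,y)f(y)\dm(y)+\int_\Omega\grad_yG(x,y)\cdot F(y)\dm(y).
\end{equation*}
The left-hand side equals $u(x)$ at every Lebesgue point of $u$, hence for $m$-a.e.\ $x$. The right-hand side depends continuously on $x$ by the same uniform estimates, so it furnishes the canonical representative of $u$. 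The same annulus estimates also show that $G(x,\cdot)\in L^1\loc(\Omega,m)$ and that both integrals in the formula converge absolutely.

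\emph{Step 3: uniqueness.} Suppose $\tilde G$ is another kernel with the stated properties. Take $F=0$ in the representation formula. For every $x\in\Omega$ and every $f\in L^\infty_c(\Omega,m)$ we get
\begin{equation*}
\int_\Omega\big(G(x,y)-\tilde G(x,y)\big)f(y)\dm(y)=0.
\end{equation*}
Both $G(x,\cdot)$ and $\tilde G(x,\cdot)$ lie in $L^1\loc(\Omega,m)$, so this forces $\tilde G(x,\cdot)=G(x,\cdot)$ $m$-a.e., and therefore as elements of $L^1\loc(\Omega,m)$. This is the sense in which uniqueness is to be understood. If pointwise uniqueness is wanted, it follows from the local Hölder continuity of $G(x,\cdot)=G^*(\cdot,x)$ away from $x$ given by Theorem \ref{th:Green}(ii).
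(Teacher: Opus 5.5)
Your overall strategy, pairing the $W_0(\Omega)$ solution with approximate Green functions of $L^*$ and letting the smoothing radius tend to $0$, is the standard duality argument the paper defers to (Lemma 10.7 in \cite{DFM21}). The genuine gap is in the step you call the main obstacle: the control near the pole. Estimate \eqref{eq:majo_green} only holds for $|x-y|\geq\lambda\delta(x)$, with a constant $C_\lambda$ depending on $\lambda$, and this restriction is essential, since near the pole the bound $G(x,y)\lesssim|x-y|^2/m(B(x,|x-y|)\cap\Omega)$ is false in general. Take $n=2$, $w\equiv1$, $A=I$, $\Omega=\mathbb{R}^2_+$ and $\mu$ the length measure on $\partial\Omega$. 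Then (H1)--(H5) hold ($\rho$ is constant). But $G(x,y)=\frac{1}{2\pi}\log\frac{|x-\bar y|}{|x-y|}$, with $\bar y$ the reflection of $y$, is unbounded as $y\to x$, while the right-hand side equals $1/\pi$ when $|x-y|<\delta(x)$. In this example the mean value property gives $g^r_x=G(x,\cdot)$ on $\{|x-y|\geq r\}$. So no bound of the form \eqref{eq:majo_green} holds uniformly in $r$ on $\{|x-y|\geq2r\}$, and $\int_{A_k}g^r_x\,dm\lesssim(2^{-k}s)^2$ fails. Your gradient bound, which feeds the size of $g^r_x$ into Caccioppoli, fails for the same reason.

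What the limit actually needs is uniform integrability of $g^r_x$ and $\nabla g^r_x$ near $x$, uniformly in $r$. This follows from the paper's tools via the Gr\"uter--Widman truncation trick. Testing the equation of $g^r_x\geq0$ with $\min(g^r_x,k)$ gives $\int_{\{g^r_x<k\}}|\nabla g^r_x|^2\,dm\lesssim k$, independently of $r$. The boundary Poincar\'e--Sobolev inequality \eqref{eq:boundary_poincare}, on a ball of radius $2\delta(x)$ centered on $\partial\Omega$ and containing $B(x,\delta(x))$, then gives $m(\{g^r_x\geq k\}\cap B(x,\delta(x)))\lesssim k^{-2^*/2}$. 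Splitting $\{|\nabla g^r_x|>t\}$ along $\{g^r_x\geq k\}$ and optimizing in $k$ yields a uniform weak-$L^q$ bound for $\nabla g^r_x$, with $q=2\cdot2^*/(2+2^*)>1$. Since $2^*/2>1$ and $q>1$, both families are uniformly integrable. Hence the contribution of $B(x,s)$ is small uniformly in $r$, and $G(x,\cdot)\in L^1_{\mathrm{loc}}(\Omega,m)$ by Fatou, with no pointwise information near the pole.

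A second, smaller point is the passage from $m$-a.e.\ $x$ to every $x$. This is exactly where the paper places its one extra ingredient: interior continuity of weak solutions of $Lu=wf-\operatorname{div}(wF)$, proved by Moser iteration, so that $\fint_{B(x,r)}u\,dm\to u(x)$ at every point. Your alternative, continuity of the right-hand side in the pole, is workable, but it does not follow from estimates for a fixed pole. You need three further inputs:
\begin{enumerate}
\item the near-pole bounds, uniformly for poles $x'$ near $x$;
\item continuity of $G(\cdot,y)$ off the diagonal (Theorem \ref{th:Green}(ii)) together with \eqref{eq:majo_green}, to get $G(x',\cdot)\to G(x,\cdot)$ in $L^2$ on compact sets avoiding a neighbourhood of $x$;
\item Caccioppoli applied to $G(x',\cdot)-G(x,\cdot)$, which solves $L^*v=0$ away from $\{x,x'\}$, to get $\nabla_yG(x',\cdot)\to\nabla_yG(x,\cdot)$ in $L^1$ on the support of $F$.
\end{enumerate}
The everywhere version is the one used later. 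For example, the formula is evaluated at the pole in the proof of (iii)$\Rightarrow$(i) of Theorem \ref{th:eqvlce_DPD}, and at $x$ in the identity $v(x)=-u(x)$ in Step 3 of Lemma \ref{lemma:estimee_green}.

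Similarly, in your uniqueness step the identity $\int_\Omega(G-\tilde G)(x,y)f(y)\,dm(y)=0$ is only available for $m$-a.e.\ $x$, with an $f$-dependent null set, since $\tilde G$ is not assumed continuous. Running it over a countable dense family of $f$ gives uniqueness modulo null sets, which is the only sense the statement can have.
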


\begin{proof}
The proof is like Lemma 10.7 in \cite{DFM21} once we can prove that a weak solution to $Lu= wf-\div(wF)$ in $\Omega$ is continuous inside the domain. The later can be proved with Moser iterations, see the book \cite{FM} % The book in preparation
in preparation.
\end{proof}

\subsubsection{Non-homogeneous boundary conditions and elliptic measure}

We now turn to the study of the Dirichlet problem
\begin{equation}\label{eq:dir_pb}
\begin{cases}
Lu=0 &\texteq{in } \Omega\\
u=g &\texteq{on } \dOmega.
\end{cases}
\end{equation}

If $g$ is regular enough, Theorem \ref{th:solution_W} a unique solution to \eqref{eq:dir_pb} in the following sense.

\begin{proposition}\label{prop:solution_W_chi}
For any $\chi\in\CC^\infty_c(\R^n)$ there exists a unique solution in $W(\Omega)$ of $Lu=0$ satisfying $u-\chi\in W_0(\Omega)$. 
\end{proposition}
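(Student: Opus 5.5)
The plan is to reduce to Theorem \ref{th:solution_W} by subtracting $\chi$. We look for $u$ of the form $u=\chi+v$ with $v\in W_0(\Omega)$. Then $u$ is a weak solution of $Lu=0$ in $\Omega$ if and only if
\begin{equation*}
\int_\Omega A\grad v\cdot\grad\phi\dm = -\int_\Omega A\grad\chi\cdot\grad\phi\dm \for \phi\in\CC^\infty_c(\Omega).
\end{equation*}
So we need the linear form $\ell(\phi):=-\int_\Omega A\grad\chi\cdot\grad\phi\dm$ to be continuous on $W_0(\Omega)$.

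First we check that $\chi$, restricted to $\Omega$, lies in $W(\Omega)$ with $\grad\chi$ its classical gradient. This holds because $\chi$ is smooth, so the constant sequence $\phi_k=\chi$ works, provided $\grad\chi\in L^2(\Omega,m)^n$. That is true since $\grad\chi$ is bounded with compact support and $m$ is locally finite by (H3).

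Continuity of $\ell$ then follows from \eqref{hyp:A_bounded} and Cauchy--Schwarz:
\begin{equation*}
|\ell(\phi)|\leq C_A\|\grad\chi\|_{L^2(\Omega,m)}\|\phi\|_{W(\Omega)}.
\end{equation*}
This estimate extends $\ell$ to all of $W_0(\Omega)$, because $\grad$ is continuous from $W_0$ to $L^2$. By Theorem \ref{th:solution_W} (Lax--Milgram on the Hilbert space $W_0(\Omega)$ given by Theorem \ref{th:W0_complete}), there is a unique $v\in W_0(\Omega)$ with $\int A\grad v\cdot\grad\psi\dm=\ell(\psi)$ for all $\psi\in W_0$.

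Set $u=\chi+v$. This lies in $W(\Omega)$: sum the approximating sequences, noting that $W_0(\Omega)\subset W(\Omega)$ directly from the definitions. Testing against $\phi\in\CC^\infty_c(\Omega)\subset W_0(\Omega)$ gives $Lu=0$ weakly, and $u-\chi=v\in W_0(\Omega)$ by construction.

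For uniqueness, suppose $u_1,u_2$ are two such solutions. Then $w=u_1-u_2=(u_1-\chi)-(u_2-\chi)\in W_0(\Omega)$ and $\int A\grad w\cdot\grad\phi\dm=0$ for all $\phi\in\CC_c^\infty(\Omega)$. By density of $\CC^\infty_c(\Omega)$ in $W_0(\Omega)$ (the definition of $W_0$ as a completion), this identity extends to all test functions in $W_0(\Omega)$. Taking $\phi=w$ and using \eqref{hyp:A_ellipitic} gives $\|\grad w\|_{L^2}=0$. By Theorem \ref{th:W0_complete}, $\|\cdot\|_{W(\Omega)}$ is a norm on $W_0$, so $w=0$.

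The main point needing care is the density and extension step. Definition \ref{def:W_0} uses smooth functions supported in $\Omega$, not necessarily compactly supported. So one must check that the weak formulation, stated only for $\CC^\infty_c(\Omega)$ tests, extends to all of $W_0(\Omega)$, and that $W_0(\Omega)$ coincides with the completion of $\CC^\infty_c(\Omega)$ used in Theorem \ref{th:solution_W}. I would handle this with a cutoff argument: multiply $\phi_k$ by cutoffs $\theta_R$ equal to $1$ on $B(0,R)$ with $|\grad\theta_R|\lesssim 1/R$. The error term $\phi_k\grad\theta_R$ is controlled in $L^2$ using the boundary Poincaré inequality \eqref{eq:boundary_poincare} on large balls centered on $\dOmega$. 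If $\Omega$ is bounded, this step is trivial.
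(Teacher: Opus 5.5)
Your proposal is correct and follows the paper's own argument: both write $u=\chi+v$ and get existence and uniqueness of $v$ from Theorem \ref{th:solution_W} with $f=0$ and $F=-A\grad\chi\in L^2(\Omega,m)^n$. Your version also spells out checks the paper leaves implicit ($\chi\in W(\Omega)$, continuity of $\ell$, $W_0(\Omega)\subset W(\Omega)$, coercivity).

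Your density remark addresses a point the paper glosses over, since it describes $W_0(\Omega)$ both as the completion of $\CC^\infty_c(\Omega)$ and via Definition \ref{def:W_0}. One caution on how you would close it. The boundary Poincaré inequality on $B(\xi_0,2R)$ with $\xi_0\in\partial\Omega$ directly gives only the bound $\|\phi\grad\theta_R\|_{L^2(\Omega,m)}\lesssim\|\grad\phi\|_{L^2(\Omega,m)}$, uniformly in $R$, not decay. You would therefore finish with a weak-compactness argument in the Hilbert space $W_0(\Omega)$: the family $\theta_R\phi$ is bounded, any weak limit must equal $\phi$, and the closure of $\CC^\infty_c(\Omega)$ is weakly closed, so $\phi$ lies in it.
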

We restricted ourselves to smooth boundary data, as it will be enough to construct the harmonic measure. There is no doubt that it would be possible to obtain the same result for much less regular functions $g$.
\begin{proof}
If $u\in W(\Omega)$ is solution to $Lu=0$ and satisfies $u-\chi\in W_0(\Omega)$, then $u-\chi$ is the solution in $W_0(\Omega)$ of $Lv=-\div(-wA\grad\chi)$ given by Theorem \ref{th:solution_W}. Conversely, $v+\chi$ satisfies $L(v+\chi)=0$ and $(v+\chi)-\chi\in W_0(\Omega)$. 
\end{proof}

This result will allow us to find solutions of \eqref{eq:dir_pb} for boundary data in $\CC_c(\dOmega)$. It is well known that such solutions can be represented through the elliptic measure. Let us give a definition of this object, as well as some fundamental properties.

\begin{theorem}\label{th:elliptic_measure^}
For every $x\in\Omega$, there exists a positive Borel probability measure $\omega^x$ on $\dOmega$ such that for any $g\in\CC_c(\dOmega)$, if we write 
\begin{equation*}
u(x):=\int_\dOmega g\domega^x \for x\in\Omega,
\end{equation*}
then $u$ can be extended into a continuous bounded function on $\bOmega$ satisfying $u=g$ on $\dOmega$, and is a weak solution in $W\loc(\Omega)$ of $Lu=0$. Furthermore: 
\begin{enumerate}[(i)]
\item if $g$ is in $\CC^\infty_c(\R^n)$, then $u$ is the solution given by Proposition \ref{prop:solution_W_chi};
\item if $B$ is a ball centered on $\dOmega$ such that $g=0$ on $2B$, then $u\in W_0(B\cap\Omega)$.
\end{enumerate}
\end{theorem}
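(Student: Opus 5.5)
The plan is the classical construction via the Riesz representation theorem and the maximum principle. First we build the solution map on smooth data and show it is positive and bounded in sup norm; then we extend it by density to $\CC_c(\dOmega)$ and read off $\omega^x$.

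\textbf{Smooth data.} For $\chi\in\CC^\infty_c(\R^n)$, let $u_\chi$ be the solution of Proposition \ref{prop:solution_W_chi}. We prove a weak maximum principle:
\begin{equation*}
\inf_{\dOmega}\chi\wedge 0\;\le\; u_\chi\;\le\;\sup_{\dOmega}\chi\vee 0 .
\end{equation*}
To get the upper bound, set $M=\sup_\dOmega\chi^+$ and test the equation against $(u_\chi-M)^+$. This test function lies in $W_0(\Omega)$ because $(\chi-M)^+$ vanishes in a neighbourhood of $\dOmega$ in the relevant sense. The positivity of $A$ then forces $\grad(u_\chi-M)^+=0$, and $\|\cdot\|_{W}$ is a norm on $W_0$ (Theorem \ref{th:W0_complete}), so $(u_\chi-M)^+=0$. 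The main obstacle is justifying that truncation preserves $W_0$ when $\chi$ is only nonpositive near $\dOmega$ up to $\varepsilon$. I would handle this by replacing $M$ with $M+\varepsilon$: then $(\chi-M-\varepsilon)^+$ has support at positive distance from $\dOmega\cap\supp$, and truncations of smooth approximants stay in the approximating class, via a standard Lipschitz-composition argument applied to $\phi_k$.

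\textbf{Continuity up to the boundary.} Next we show that $u_\chi$ extends continuously to $\bOmega$ with $u_\chi=\chi$ on $\dOmega$. Fix $\xi\in\dOmega$ and $\varepsilon>0$, and choose $r$ with $|\chi-\chi(\xi)|<\varepsilon$ on $B(\xi,2r)$. The function $v=u_\chi-\chi(\xi)$ solves $Lv=0$ in $\Omega$. Apply the maximum principle to $v$ restricted to a ball: the parts $(v-\varepsilon)^+$ and $(-v-\varepsilon)^-$ lie in $W_0(B(\xi,2r)\cap\Omega)$. The boundary oscillation estimate \eqref{eq:osc_estimate}, or the Hölder estimate \eqref{eq:boundary_holder} applied to these truncated subsolutions, then gives
\begin{equation*}
\limsup_{x\to\xi}|v(x)|\le\varepsilon+C\lambda^\eta\|\chi\|_\infty .
\end{equation*}
Letting $\lambda\to0$ and then $\varepsilon\to0$ yields the claim. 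Combined with the maximum principle, the map $\chi|_\dOmega\mapsto u_\chi(x)$ depends only on the trace of $\chi$, since two extensions with the same trace give solutions whose difference has zero boundary data. This map is therefore a positive linear functional on the dense subspace $\{\chi|_{\dOmega}\}\subset\CC_c(\dOmega)$, with norm at most $1$.

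\textbf{Extension and the measures $\omega^x$.} Extend the functional by density (Stone--Weierstrass together with Tietze) to all of $\CC_c(\dOmega)$. Riesz representation then gives Radon measures $\omega^x$. For general $g$, take smooth $\chi_k\to g$ uniformly. Then $u_{\chi_k}\to u$ uniformly on $\bOmega$, so $u$ is continuous, bounded, and equal to $g$ on $\dOmega$. Moreover $u\in W_\text{loc}(\Omega)$ solves $Lu=0$: by the interior Caccioppoli inequality \eqref{eq:int_caccio}, $\grad u_{\chi_k}$ is Cauchy in $L^2$ of compact subsets, and Proposition \ref{prop:W_complete} applies. To see that $\omega^x$ is a probability measure, take $\chi_k\uparrow1$ on large balls. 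The functions $1-u_{\chi_k}$ are nonnegative solutions, and they tend to $0$ by the boundary Hölder estimate on balls of radius $R\to\infty$; this uses that $1-\chi_k$ vanishes on $B(\xi,R)$, while \eqref{eq:boundary_holder} gives decay of order $(\delta(x)/R)^\eta$.

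\textbf{Properties (i) and (ii).} Property (i) holds by construction. For (ii), suppose $g=0$ on $2B$, and choose the approximants $\chi_k$ vanishing on $\frac32B$. Then each $u_{\chi_k}\in W_0(\frac32B\cap\Omega)$, since $u_{\chi_k}-\chi_k\in W_0(\Omega)$. By a boundary Caccioppoli estimate (tested with a cutoff times $u_{\chi_k}$), the gradients are uniformly bounded in $L^2(B\cap\Omega)$ by $\|g\|_\infty$, and in fact Cauchy by linearity. The $W_0$ analogue of Proposition \ref{prop:W_complete}, as in the proof of Theorem \ref{th:W0_complete}, then gives $u\in W_0(B\cap\Omega)$.
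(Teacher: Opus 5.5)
Your construction follows the paper's route: solve with smooth data via Proposition \ref{prop:solution_W_chi}, bound in sup norm by a maximum principle, get continuity up to $\partial\Omega$ from the boundary H\"older estimate, extend by uniform density, and apply Riesz. The paper imports the maximum principle, boundary continuity, the probability property and (ii) from Lemmas 12.8, 11.32, 12.13 and 12.15 of David--Feneuil--Mayboroda, whereas you sketch them. Your arguments for dependence only on the trace, for $\omega^x(\partial\Omega)=1$ (via $1-u_{\chi_k}\in W_0(B(\xi,R)\cap\Omega)$ and \eqref{eq:boundary_holder}), and for (ii) (boundary Caccioppoli plus closedness of $W_0$) are sound.

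The continuity step, however, does not go through as written. You apply \eqref{eq:osc_estimate} or \eqref{eq:boundary_holder} to the truncations $(v-\varepsilon)^+$ and $(-v-\varepsilon)^+$ (your $(-v-\varepsilon)^-$ is presumably a typo). Those estimates are available only for \emph{solutions} of $Lu=0$ lying in $W_0$, and truncations of solutions are merely subsolutions. A De Giorgi-type boundary decay for nonnegative subsolutions vanishing on $\partial\Omega$ does hold under (H1)--(H5), but it is not among the tools at hand, so you would have to prove it.

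The cheap fix is linearity, the same device you already use for the probability property. Choose $r$ with $|\chi-\chi(\xi)|\le\varepsilon$ on $B(\xi,r)$. Take $\psi\in C^\infty_c(B(\xi,r))$ with $0\le\psi\le 1$ and $\psi=1$ on $B(\xi,r/2)$. Set $\tilde\chi:=(\chi-\chi(\xi))\psi$ and $w:=u_\chi-\chi(\xi)-u_{\tilde\chi}$.
\begin{itemize}
\item The maximum principle gives $|u_{\tilde\chi}|\le\varepsilon$ in $\Omega$.
\item $Lw=0$, and $w-(\chi-\chi(\xi))(1-\psi)=(u_\chi-\chi)-(u_{\tilde\chi}-\tilde\chi)\in W_0(\Omega)$.
\item $(\chi-\chi(\xi))(1-\psi)$ vanishes on $B(\xi,r/2)$, hence $w\in W_0(B(\xi,r/2)\cap\Omega)$.
\end{itemize}
Now \eqref{eq:boundary_holder} applies to $w$ itself, with the maximum principle bounding its $L^2$ average. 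This gives $|w|\le C\lambda^\eta(\|\chi\|_\infty+\varepsilon)$ on $\lambda B(\xi,r/2)\cap\Omega$, which is your displayed estimate with no local truncation at all. The only place you then need stability of $W_0$ under truncation is the global maximum principle.
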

\begin{proof}
These results are included in the statement of Lemmas 12.13 and 12.15 in \cite{david_elliptic_2023}; see also Lemmas 9.4 and 9.6 in \cite{DFM21}. We will not give a full proof, and will only explain how to adapt the proof given in these references to our setting. First, for any $g\in\CC^\infty_c(\R^n)$, Lemma 11.32 in \cite{david_elliptic_2023} implies that the solution to \eqref{eq:dir_pb} given by Proposition \ref{prop:solution_W_chi} can be extended into a continuous function in $\bOmega$: let us write it $Ug$. This defines a  continuous linear operator 
\begin{equation*}
U: \CC^\infty_c(\R^n)\longrightarrow \CC_b(\bOmega),
\end{equation*} 
where $\CC_b(\bOmega)$ is the space of bounded continuous functions on $\bOmega$. Indeed, for any $g\in\CC^\infty_c(\R^n)$, the maximum principle (Lemma 12.8 in \cite{david_elliptic_2023}) tell us that
%\begin{equation}\label{eq:elliptic_measure:sup_principle}
%\sup_{\Omega} Ug \leq \sup_{\dOmega} g
%\end{equation}
%and
%\begin{equation}\label{eq:elliptic_measure:inf_principle}
%\inf_{\Omega} Ug \leq \inf_{\dOmega} g;
%\end{equation}
%in particular,
\begin{equation*}
\sup_{\Omega} |Ug| \leq \sup_{\dOmega} |g|.
\end{equation*}
Note that since $Ug-g\in W_0(\Omega)$, \eqref{eq:cv_W0} and the continuity of $Ug$ implies that
\begin{equation}\label{eq:elliptic_measure:boundary_cd}
Ug=g \texteq{ on } \dOmega.
\end{equation}
Moreover, $\CC^\infty_c(\R^n)$ (or, more precisely, the restrictions of functions in $\CC^\infty_c(\R^n)$ to $\dOmega$) is dense in $\CC_c(\dOmega)$ for the norm $\|\cdot\|_\infty$. Indeed, we know that any function in $\CC_c(\dOmega)$ can be extended into a function of $\CC_c(\R^n)$ (see Proposition VI.2.2 in \cite{Ste70} for instance), and that $\CC^\infty_c(\R^n)$ is dense in $\CC_c(\R^n)$ for $\|\cdot\|_\infty$. This allow us to uniquely extend $U$ into a continuous linear operator $U:\CC_c(\dOmega)\to\CC_b(\bOmega)$. Then, for any $x\in\Omega$, the Riesz representation theorem give us the existence of a bounded Borel measure on $\dOmega$ such that
\begin{equation*}
Ug(x)=\int_\dOmega g\domega^x.
\end{equation*}
Now that the elliptic measure is constructed, its properties can be proven in the exact same way as in Lemmas 12.13 and 12.15 in \cite{david_elliptic_2023}.

%The positivity of $\omega^x$ and the fact that $\omega^x(\dOmega)\leq 1$ is a consequence of \eqref{eq:elliptic_measure:inf_principle} and \eqref{eq:elliptic_measure:sup_principle}, respectively. The proof that $\omega^x\geq 1$ uses the H\"older continuity at the boundary: we refer to the proof of Lemma 9.6 in \todo{ref higher codim} for further details.
%\newline
%
%Finally, let us fix $g\in\CC_c(\dOmega)$, and explain briefly why $u:=Ug$ satisfy the properties of the statement. the property \eqref{eq:elliptic_measure:boundary_cd} is preserved by extension. The fact that $u$ is a solution in $W\loc(\Omega)$ to $Lu=0$ is true by definition of $U$ if $g\in\CC^\infty_c(\R^n)$, and extends to all $g\in\CC_c(\Omega)$ thanks to the interior Caccioppoli inequality \eqref{eq:int_caccio}. Property (i) is satisfied by definition of $U$. As for property (ii), if $g\in\CC^\infty_c(\R ^n)$ it is a direct consequence of the fact that $u-g\in W_0(\Omega)$, and can be extended to any $g\in\CC_c(\Omega)$ using a boundary Caccioppoli inequality (see Lemma 11.15 in \cite{david_elliptic_2023}) to control the gradient.

\end{proof}

For any Borel set $E\incl\dOmega$, the function $x\mapsto\omega^x(E)$ satisfies the following properties:

\begin{proposition}[Lemma 12.19 in \cite{david_elliptic_2023}]\label{prop:omega_sol}

Let $E\incl\dOmega$ be a Borel set, and define $u_E$ on $\Omega$ by $u_E(x)=\omega^x(E)$. Then $u_E$ belongs to $W\loc(\Omega)$ and is a solution of $Lu_E=0$ in $\Omega$. Furthermore, if $B$ is a ball centered on $\dOmega$ such that $2B\cap E=\emptyset$, then $u_E\in W_0(B\cap\Omega)$.
\end{proposition}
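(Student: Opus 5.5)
The proof follows the construction of harmonic measure for sets: approximate $\UN_E$ monotonically by continuous compactly supported boundary data, and pass the properties of Theorem \ref{th:elliptic_measure^} to the limit.

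\emph{Step 1: compact $E$.} Choose $g_k\in\CC_c(\dOmega)$ with $0\leq g_k\leq 1$, $g_k\downarrow\UN_E$ pointwise, and $\supp g_k\incl E_k:=\{\xi\in\dOmega\tq \textnormal{dist}(\xi,E)<1/k\}$. Set $u_k(x)=\int g_k\domega^x$. By Theorem \ref{th:elliptic_measure^}, each $u_k$ lies in $W\loc(\Omega)$, solves $Lu_k=0$, and satisfies $0\leq u_k\leq1$. Monotone convergence gives $u_k\downarrow u_E$ pointwise.

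On each ball $2B'\incl\Omega$, the interior Caccioppoli inequality \eqref{eq:int_caccio} (with $f=F=0$) bounds $\|\grad u_k\|_{L^2(B',m)}$ uniformly. We take a weakly convergent subsequence in $L^2$. The weak formulation is linear, so it passes to the limit, and hence $u_E$ is a solution. To show that $u_E\in W(B')$, we need the strong completeness of Proposition \ref{prop:W_complete}, not only weak limits. The trick is to apply Caccioppoli to the differences $u_k-u_\ell$, which are also solutions. Dominated convergence gives $u_k\to u_E$ in $L^2\loc$, so $(u_k)$ is Cauchy in $L^2(2B',m)$. Caccioppoli then makes $(\grad u_k)$ Cauchy in $L^2(B',m)$, and Proposition \ref{prop:W_complete} gives $u_E\in W(B')$. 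A cover of each compact subset of $\Omega$ by finitely many such balls yields $u_E\in W\loc(\Omega)$.

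\emph{Step 2: the boundary statement for compact $E$.} Suppose $2B\cap E=\emptyset$, with $B=B(\xi,r)$. Then $E$ has positive distance from $\overline{2B}$ only if we shrink slightly, so we work with $B_t=tB$ for $t<1$ close to $1$ and recover $B$ at the end by a covering argument. For large $k$, $g_k=0$ on $2B_t$, so Theorem \ref{th:elliptic_measure^}(ii) gives $u_k\in W_0(B_t\cap\Omega)$.

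We next need a uniform gradient bound for $u_k$ up to the boundary. This comes from a boundary Caccioppoli inequality (Lemma 11.15 in \cite{david_elliptic_2023}), which follows from the same cutoff argument as \eqref{eq:int_caccio}: since $u_k\in W_0$, the test function $\chi^2u_k$ is admissible even when $\chi$ does not vanish near $\dOmega$. Applying this to $u_k-u_\ell$ makes $(\grad u_k)$ Cauchy in $L^2(B_t\cap\Omega,m)$, while $u_k\to u_E$ in $L^1\loc$. The completeness argument of Proposition \ref{prop:W_complete}, adapted to $W_0$ exactly as in Theorem \ref{th:W0_complete}, then gives $u_E\in W_0(B_t\cap\Omega)$.

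\emph{Step 3: general Borel $E$.} Both $\omega^x$ and $\mu$-type regularity are available here: $\omega^x$ is a Radon measure on a $\sigma$-compact set, hence inner regular. For open $E$, exhaust by increasing compacts $K_j$, so that $u_{K_j}\uparrow u_E$. Steps 1 and 2 apply verbatim to this monotone bounded sequence of solutions; the boundary statement needs $2B\cap K_j=\emptyset$, which holds automatically. For a general Borel set $E$, the map $x\mapsto\omega^x(E)$ is not a monotone limit at a single pole. We use instead the class of Borel sets for which the conclusion holds. This class contains compacts and open sets and is closed under monotone limits and differences (via Caccioppoli and the completeness proposition), so it is all Borel sets by a monotone class argument.

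\emph{Main obstacle.} The main obstacle is the $W_0$ property, since it requires the boundary Caccioppoli estimate and the careful separation $2B\cap E=\emptyset$ versus support of $g_k$. When $E$ touches $\partial(2B)$, I would handle this by proving the statement for $tB$, $t<1$, and using that $W_0(B\cap\Omega)$ membership follows from a local-to-global gluing with cutoffs.
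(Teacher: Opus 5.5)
Steps 1 and 2 work for compact $E$: you approximate $\UN_E$ by boundary data, apply Caccioppoli to the differences $u_k-u_\ell$, and then run the completeness argument of Proposition~\ref{prop:W_complete}. The Dynkin-class argument also works for the \emph{interior} part of the statement. The class of Borel $E$ with $u_E\in W\loc(\Omega)$ and $Lu_E=0$ contains $\partial\Omega$ (where $u_E\equiv 1$) and the compact sets. It is closed under proper differences by linearity, and under monotone limits by Caccioppoli plus completeness, so the $\pi$--$\lambda$ theorem applies. This route never compares $\omega^x$ at different poles, which suits the present non-connected setting. The paper itself gives no proof; it only cites Lemma 12.19 of \cite{david_elliptic_2023}, which is written under a Harnack chain hypothesis.

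The gap is the boundary clause in Step 3. For the class of sets satisfying the \emph{whole} conclusion, closure under differences and decreasing limits is not justified. Suppose $F\subset E$ are in the class and $2B\cap(E\setminus F)=\emptyset$. Nothing prevents $2B\cap E=2B\cap F\neq\emptyset$, and then membership of $E$ and $F$ says nothing about $u_E$, $u_F$ near $B\cap\partial\Omega$. So you cannot conclude $u_E-u_F\in W_0(B\cap\Omega)$. Similarly, $E_j\downarrow E$ with $2B\cap E=\emptyset$ does not force $2B\cap E_j=\emptyset$.

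The fix is to fix $B=B(\xi,r)$, put $F_B:=\partial\Omega\setminus 2B$ (closed, hence $\sigma$-compact), and run the $\pi$--$\lambda$ argument only among Borel subsets of $F_B$. Use the class $\{E\subset F_B \text{ Borel} : u_E\in W_0(tB\cap\Omega) \text{ for every } t<2\}$. Differences now stay inside $F_B$, so linearity suffices. For increasing unions, take $t<t'<2$ and apply the boundary Caccioppoli inequality to $u_{E_j}-u_{E_i}\in W_0(t'B\cap\Omega)$, with a cutoff equal to $1$ on $tB$ and supported in $t'B$; this makes the gradients Cauchy in $L^2(tB\cap\Omega,m)$. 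The compact subsets of $F_B$ form a generating $\pi$-system, and any Borel $E$ with $2B\cap E=\emptyset$ lies in $F_B$.

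Relatedly, the shrinking-and-gluing in Step 2 is unnecessary. A compact $K\subset F_B$ satisfies $\mathrm{dist}(K,\overline{t'B})\geq(2-t')r>0$. So you may take $g_k\in\CC^\infty_c(\R^n)$ with $0\leq g_k\leq 1$, $g_k\to\UN_K$ pointwise on $\partial\Omega$, and $g_k=0$ on $t'B$ for large $k$. Theorem~\ref{th:elliptic_measure^}(i) gives $u_k-g_k\in W_0(\Omega)$, hence $u_k\in W_0(t'B\cap\Omega)$. The same cutoff Caccioppoli then controls $\nabla(u_k-u_\ell)$ on all of $tB\cap\Omega$, with no covering or gluing needed.
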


We then present a classical result of non-degeneracy of the elliptic measure. For completeness, we will give a proof that works in our setting.
\begin{proposition}
There exists $c>0$ such that for any ball $B$ centered on $\dOmega$,
\begin{equation}\label{eq:ndeg_harmo}
\omega^x(4B\cap\dOmega)\geq c \for x\in B.
\end{equation}
\end{proposition}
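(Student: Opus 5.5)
The plan is to compare $\omega^x(4B\cap\dOmega)$ with $1-v$, where $v$ is a solution that vanishes on the boundary near $B$, and to use the boundary Hölder estimate \eqref{eq:boundary_holder}. Write $B=B(\xi,r)$ and fix a cut-off $g\in\CC_c(\dOmega)$ with $0\leq g\leq 1$, $g=1$ on $3B\cap\dOmega$ and $\supp g\incl 4B$. By positivity of $\omega^x$, $\omega^x(4B\cap\dOmega)\geq \int g\domega^x=:u_g(x)$, so it suffices to show $u_g\geq c$ on $B\cap\Omega$. Set $v:=1-u_g$. Since $\omega^x$ is a probability measure, $v(x)=\int(1-g)\domega^x$, with $0\leq v\leq 1$ and $Lv=0$ in $\Omega$. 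The data $1-g$ is not compactly supported, so I will either approximate it by $\CC_c$ functions that equal $0$ on $3B$, or write $v=\omega^x(\dOmega\setminus E)$-type expressions via Proposition \ref{prop:omega_sol}.

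The key point is that $v\in W_0(B'\cap\Omega)$ for a ball $B'$ that still contains $B$. I would apply Proposition \ref{prop:omega_sol} and Theorem \ref{th:elliptic_measure^}(ii). Write $v(x)=\omega^x(\dOmega\setminus 3B)+\int_{3B\setminus\cdot}$, or more simply note that $1-g\leq \UN_{\dOmega\setminus 3B}$, so $0\leq v\leq u_E$ with $E=\dOmega\setminus 3B$. The set $E$ avoids $2B'$ for $B'=B(\xi,3r/2)$, and Proposition \ref{prop:omega_sol} gives $u_E\in W_0(B'\cap\Omega)$ with $Lu_E=0$ there, and $0\leq u_E\leq 1$. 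It is therefore enough to bound $u_E\leq 1-c$ on $B\cap\Omega$, since then $\omega^x(4B\cap\dOmega)\geq\omega^x(3B\cap\dOmega)=1-u_E(x)\geq c$. This route actually bypasses $g$ altogether.

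Now I would apply \eqref{eq:boundary_holder} to $u_E$ on balls centered at boundary points. The main obstacle is that $B\cap\Omega$ is not contained in a small dilate $\lambda B'$, and \eqref{eq:boundary_holder} only gives smallness on $\lambda B'$. Points of $B$ far from $\dOmega$ need separate treatment. For $x\in B\cap\Omega$, pick $\zeta\in\dOmega$ with $|x-\zeta|=\delta(x)\leq r$. There are two cases, governed by a small $\lambda_0$ fixed so that $C\lambda_0^\eta\leq 1/2$.

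\emph{Case $\delta(x)\leq \lambda_0 r/4$.} Take the ball $B''=B(\zeta,r/4)$. Then $2B''\cap E=\emptyset$ because $2B''\incl B(\xi,5r/2)$, so $u_E\in W_0(B''\cap\Omega)$. Since $x\in\lambda_0 B''$, \eqref{eq:boundary_holder} together with $u_E\leq 1$ gives $u_E(x)\leq C\lambda_0^\eta\leq 1/2$.

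\emph{Case $\delta(x)>\lambda_0 r/4$.} Here I would use the oscillation estimate \eqref{eq:osc_estimate} instead. Apply it on $B(\zeta,\delta(x)+)$-sized balls: since $u_E\in W_0(2B(\zeta,s)\cap\Omega)$ for $s\leq r/4$ and is continuous up to the boundary with $u_E=0$ on $\dOmega\cap 2B(\zeta,s)$, the oscillation equals essentially $\sup u_E$. Iterating \eqref{eq:osc_estimate} $k$ times from scale $r/4$ down to scale $2\delta(x)$ gives $\sup u_E\leq(1-\epsilon)^{k}$ on the small ball. But $x$ lies at distance about $\delta(x)\gtrsim\lambda_0 r$, so $k$ is bounded and this only gives $u_E(x)\leq 1-\epsilon$ after one step, provided $x\in B(\zeta,r/8)$. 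If $\delta(x)>r/8$, I would use a larger admissible ball centered at $\zeta$, e.g. $B(\zeta,r/2)$ with $2B(\zeta,r/2)\incl B(\xi,3r)$, after enlarging $3B$ to $3.5B$ in the choice of $E$, which is still inside $4B$. The single step \eqref{eq:osc_estimate} then gives $\sup_{B(\zeta,r/2)\cap\Omega}u_E\leq(1-\epsilon)\osc{}\leq 1-\epsilon$, with $x\in B(\zeta,r/2)$ since $\delta(x)\leq r$... To make the radii fit, I would take the ball $B(\zeta,r)$ with $2B(\zeta,r)\incl B(\xi,4r)$ and $E=\dOmega\setminus 4B$, accepting an open-versus-closed $4B$ issue that is harmless up to slightly shrinking. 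With this choice the oscillation estimate alone covers every $x\in B$ uniformly, which gives $c=\epsilon$.

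Choosing the radii so that $x\in B(\zeta,\rho)$, $2B(\zeta,\rho)$ misses $E$, and $E$ still lies outside $4B$ is the only delicate bookkeeping.
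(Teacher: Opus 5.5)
Your final strategy is the paper's argument: set $u_E(x)=\omega^x(E)$ with $E=\partial\Omega\setminus 4B$, get $u_E\in W_0$ from Proposition \ref{prop:omega_sol}, and apply the oscillation estimate \eqref{eq:osc_estimate} once, using $0\le u_E\le 1$ and $u_E=0$ on $\partial\Omega$. However, the step fails as you set it up, because you recenter the ball at the nearest boundary point $\zeta$ of $x$.

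The oscillation estimate on a ball $B^*$ needs $u_E\in W_0(2B^*\cap\Omega)$. Proposition \ref{prop:omega_sol} gives this only when $4B^*\cap E=\emptyset$, not when $2B^*\cap E=\emptyset$ as your bookkeeping assumes. For $B^*=B(\zeta,\rho)$, containing $x$ requires $\rho>\delta(x)$, and keeping $E$ outside $4B$ requires $|\zeta-\xi|+4\rho\le 4r$. Since $\delta(x)$ can be close to $r$ and $|\zeta-\xi|$ close to $2r$, these two conditions are incompatible. With your choice $\rho=r$, the ball $B(\zeta,4r)$ reaches almost $6r$ from $\xi$. Applying the estimate on $B(\zeta,r/2)$ instead loses the points with $\delta(x)\ge r/2$. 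So with your radii you only get $\omega^x(6B\cap\partial\Omega)\ge c$. To rescue the $\zeta$-centered version you would need a separate localization lemma for $W_0$ (that $u_E\in W_0(D\cap\Omega)$ whenever $\overline{D}$ is at positive distance from $E$), which the paper does not provide.

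The fix is simply not to recenter. Since $x\in B$ already, apply \eqref{eq:osc_estimate} on $B$ itself. Then $2(2B)=4B$ misses $E$, so Proposition \ref{prop:omega_sol} gives $u_E\in W_0(2B\cap\Omega)$. This yields
$u_E(x)\le\operatorname{osc}_{B\cap\Omega}u_E\le(1-\epsilon)\operatorname{osc}_{2B\cap\Omega}u_E\le 1-\epsilon$.
The first inequality uses that $u_E$ tends to $0$ at the center $\xi\in\partial\Omega$, so $\omega^x(4B\cap\partial\Omega)\ge\epsilon$. This is exactly the paper's proof. In particular, your detour through $g$, the two-case split, and the use of the boundary H\"older estimate \eqref{eq:boundary_holder} in Case 1 are all unnecessary.
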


\begin{proof}
Take $B=B(\xi,r)$ centered on $\dOmega$. Let us define $u(x):=1-\omega^x(4B)=\omega^x(\dOmega\priv 4B)$ for $x\in\Omega$. According to Proposition \ref{prop:omega_sol}, $u$ is a solution in $W_0(2B\cap\Omega)$ of $Lu=0$. Then, for any $x\in B$:
\begin{equation*}
u(x)
=\limstack{y\to\xi}{y\in \gamma(\xi)} \abs{u(x)-\fint_{B_y/2} u\dm}
\leq  \limstack{y\to\xi}{y\in \gamma(\xi)} \fint_{B_y/2} |u(x)-u(z)|\dm(z)
\leq \osc{B\cap\Omega} u;
\end{equation*}
we can then use \eqref{eq:osc_estimate} together with the fact that $0\leq u\leq 1$ to obtain
\begin{equation*}
u(x)
\leq (1-\epsilon)\osc{2B\cap\Omega}u
\leq 1-\epsilon,
\end{equation*}
i.e. $\omega^x(4B)\geq \epsilon$, which is the desired result.
\end{proof}

%From the non-degeneracy of the elliptic measure, we can deduce the following result (without using connectedness) thanks to a classical argument involving the maximum principle and the upper estimate \eqref{eq:estimee_green} on the Green function; see lemma 15.28 in \cite{david_elliptic_2023}.

\begin{proposition}\label{prop:comp_green_harmo}
For any $0<\kappa<1$ and any $0<\lambda\leq 1/2$, any ball $B=B(\xi,r)$ centered on the boundary and any $y\in B$ such that $\delta(y)>\kappa r$, 
\begin{equation}\label{eq:comp_green_harmo}
G(x,y)\leq C_{\kappa,\lambda}\frac{r^2}{m(B\cap\Omega)}\omega^x(8B) \for x\in\Omega\priv \lambda B_y,
\end{equation}
where $C_{\kappa,\lambda}>0$ depends only on $n, C_{crk}, C_\mu, C_m, C_\rho, C_{pc}, C_A, \lambda$ and $\kappa$.
\end{proposition}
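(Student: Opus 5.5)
The plan is the classical maximum principle comparison between the Green function and the elliptic measure, carried out on the open set $D:=\Omega\setminus\overline{\lambda' B_y}$ with $\lambda'$ a fixed fraction of $\lambda$. Fix $B=B(\xi,r)$ and $y\in B$ with $\delta(y)>\kappa r$, and set
\begin{equation*}
v(x):=\frac{m(B\cap\Omega)}{r^2}G(x,y),\qquad u(x):=\omega^x(8B\cap\dOmega).
\end{equation*}
The aim is $v\leq C u$ on $D$, and then to treat $\lambda B_y\setminus \lambda' B_y$ (or simply take $\lambda'=\lambda$ from the start).

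\textbf{Step 1: bounds on $\partial(\lambda B_y)$.} Let $x\in\Omega$ with $|x-y|=\lambda\delta(y)$. By the Green estimate \eqref{eq:majo_green}, applied to $G(\cdot,y)$ through the symmetry $G^*(x,y)=G(y,x)$ so that the estimate is centered at $y$,
\begin{equation*}
G(x,y)\lesssim \frac{(\lambda\delta(y))^2}{m(B(y,\lambda\delta(y))\cap\Omega)}\lesssim_{\kappa,\lambda}\frac{r^2}{m(B\cap\Omega)}.
\end{equation*}
The last inequality uses $\kappa r<\delta(y)\le r$, the inclusion $B(y,\lambda\delta(y))\subset B(\xi,2r)\subset C_{\kappa,\lambda}B(y,\lambda\delta(y))$, and the doubling property of $m$ (H3). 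Hence $v\le C$ there. On the other hand, such $x$ lies in $B(\xi,2r)$, so the non-degeneracy \eqref{eq:ndeg_harmo}, applied to the ball $2B$, gives $u(x)=\omega^x(8B)\ge c$. Therefore $v\le (C/c)\,u$ on $\partial(\lambda B_y)\cap\Omega$.

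\textbf{Step 2: comparison in $D=\Omega\setminus\overline{\lambda B_y}$.} Set $h:=v-(C/c)\,u$. It solves $Lh=0$ in $D$. The difficulty is that $D$ is unbounded and possibly disconnected, and the boundary data are not continuous, so I would argue with the $W_0$ framework rather than pointwise limits.

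Near $\dOmega\setminus 16B$ both $G(\cdot,y)$ and $u$ vanish. For $G(\cdot,y)$ this follows from Theorem \ref{th:Green}(i). For $u$ this follows from Proposition \ref{prop:omega_sol}, which gives $u\in W_0$ near balls avoiding $8B$.

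Near $\dOmega\cap 16B$ we have $u\ge 0$ and $G(\cdot,y)\in W_0$, so $h^+\le v$ there. I would show that $h^+\in W_0(D)$, meaning it vanishes on $\partial D$ in the Sobolev sense. This rests on $h^+\le v$, $v\in W_0(\Omega\setminus\lambda B_y)$, and $h^+=0$ on $\partial(\lambda B_y)$, using truncation and lattice properties of $W_0$. I would then test the equation with $h^+$, cut off by $\chi_R$ and letting $R\to\infty$, to get $\int_D A\nabla h\cdot\nabla h^+\,dm=0$. This forces $\nabla h^+=0$, and the boundary Poincaré inequality (Theorem \ref{th:W0_complete}) then gives $h^+=0$.

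The justification for $u$ is the main obstacle, since $u$ is not globally in $W(\Omega)$. The cleanest way around it is to replace $\mathbf 1_{8B}$ by a continuous $g\in\CC_c^\infty(\R^n)$ with $\mathbf 1_{4B}\le g\le \mathbf 1_{8B}$, so that by Theorem \ref{th:elliptic_measure^}(i) $Ug$ is the variational solution with $Ug-g\in W_0(\Omega)$. The lower bound of Step 1 still holds with $4B$, again by \eqref{eq:ndeg_harmo} applied to $B$ for $x\in 2B$ after adjusting radii. Then $h=v-CUg$ satisfies $h^+\le (v-Cg)^+ +$ (an element of $W_0$), and the maximum principle (Lemma 12.8 in \cite{david_elliptic_2023}) applies to $D$ directly.

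\textbf{Step 3: conclusion.} Step 2 gives $v\le C\,Ug\le C\,\omega^x(8B)$ on $\Omega\setminus\lambda B_y$, which is \eqref{eq:comp_green_harmo} with a constant depending only on the allowed parameters, $\kappa$, and $\lambda$.
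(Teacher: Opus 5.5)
Your proposal is essentially the paper's proof: an upper bound for $G(\cdot,y)$ near the pole via \eqref{eq:majo_green} applied to $G^*$ plus doubling, then a smooth cutoff $g$ of $\mathbf{1}_{8B}$ so that $Ug\gtrsim 1$ near $y$ by \eqref{eq:ndeg_harmo}, then a maximum principle on $\Omega$ minus a small ball around $y$, and finally $Ug\le\omega^x(8B)$. Two details need fixing: first, points of $\partial(\lambda B_y)$ lie in $\frac32 B$ rather than in $B$, so take $\mathbf{1}_{6B}\le g\le\mathbf{1}_{8B}$ and apply \eqref{eq:ndeg_harmo} to $\frac32 B$, as the paper does; second, the tool you need is the localized maximum principle of Lemma \ref{lemma:max_principle} (Lemma 14.33 of \cite{david_elliptic_2023}) rather than the global one on $\Omega$, and its hypothesis (ii) asks for the sign condition on an open annulus such as $\lambda B_y\setminus\frac{\lambda}{2}B_y$, not only on the sphere---your Step 1 estimates give exactly this if you work on $\Omega\setminus\frac{\lambda}{2}\overline{B_y}$.
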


\begin{proof}
We will closely follow the first part of the proof of Lemma 15.28 in \cite{david_elliptic_2023}. We will use the following version of the maximum principle (Lemma 14.33 in \cite{david_elliptic_2023}, proven in \cite{DFM21}, Lemma 11.3):
\begin{lemma}\label{lemma:max_principle}
Let $D$ be an open set in $\R^n$ and $F$ a closed set in $R^n$ such that $F\incl D$ and $\texteq{dist}(F,\R^n\priv D)>0$. Let $u\in W(D\cap\Omega)$ be a solution of $Lu=0$ in $D\cap\Omega$ such that
\begin{enumerate}[(i)]
\item there exists a function $\chi\in\CC^\infty_c(\R^n)$ such that $\chi\geq 0$ $\mu$-a.e. and $u-\chi\in W_0(D\cap\Omega)$,
\item $u\geq 0$ a.e. in $D\priv F\cap\Omega$.
\end{enumerate} 
Then, $u\geq 0$ a.e. in $D\cap\Omega$.
\end{lemma}
Assumption $(i)$ is a way to impose that $u\geq 0$ on $D\cap\dOmega$, while assumption $(ii)$ imposes that $u\geq 0$ in a neighborhood of $\partial D\cap\Omega$.
\newline

Let us begin the proof of Proposition \ref{prop:comp_green_harmo}. take $B=B(\xi,r)$ centered on the boundary and $y\in B$ such that $\delta(y)>\kappa r$. First, note that
\begin{equation}\label{eq:comp_green_harmo:majo_green}
G(x,y)
\siml \frac{|x-y|^2}{m(B(y,|x-y|)\cap\Omega)}
\siml \frac{r^2}{m(B\cap\Omega)} \for x\in \lambda B_y\priv \frac{\lambda}{2} B_y,
\end{equation}
using \ref{eq:majo_green} (on $G^*$) for the first estimate and the doubling properties of $m$ for the second one. Next, we take $\chi\in \CC^\infty_c(\R^n)$ such that $\UN_{6B}\leq\chi\leq \UN_{8B}$. Let us write $u(x)=\int_\dOmega \chi\domega^x$ for $x\in\Omega$. By non-degeneracy of the harmonic measure \eqref{eq:ndeg_harmo}, we have
\begin{equation*}
u(x)\geq \omega^x(6B)\gtrsim 1 \for x\in \frac{3}{2}B.
\end{equation*}
Together with \eqref{eq:comp_green_harmo:majo_green}, this implies the existence of a constant $C_{\kappa,\lambda}>0$ such that
\begin{equation}\label{eq:comp_green_harmo:mino_v}
v(x):=C_{\kappa,\lambda} \frac{r^2}{m(B\cap\Omega)}u(x)-G(x,y) \geq 0 \for x\in \lambda B_y\priv \frac{\lambda}{2} B_y.
\end{equation}
By Theorem \ref{th:Green} and Theorem \ref{th:elliptic_measure^}, $v$ is a solution in $W(\Omega\priv\frac{\lambda}{2}\bar B_y)$ of $Lu=0$, and $v-\Ext\chi_{|\dOmega} \in W_0(\Omega\priv\frac{\lambda}{2}\bar B_y)$, so we can use Lemma \ref{lemma:max_principle} with $D=\R^n\priv\frac{\lambda}{2}\bar B_y$ and $F=\R^n\priv \lambda B_y$, extending \eqref{eq:comp_green_harmo:mino_v} to all $x\in\Omega\priv\frac{\lambda}{2}\bar B_y$. Since $u(x)\leq \omega^x(8B)$ for $x\in\Omega$, we obtain the desired result.
\end{proof}

\subsection{Tent spaces}

Take $1\leq p\leq\infty$. From now on, $\|\cdot\|_p$ will always refer to $\|\cdot\|_{L^p(\dOmega,\mu)}$. The functionals $\NNt$ and $\AAt$ introduced in \eqref{eq:def_Nt} and \eqref{eq:def_At} can be used to define the spaces
\begin{equation}\label{eq:def_modified_spaces}
\Nt^p(\Omega)=\br{u\in L^2\loc(\Omega,m)\tq \|\NNt u\|_p<\infty}
\texteq{ and } \At^p(\Omega)=\br{f\in L^2\loc(\Omega,m)\tq\|\AAt f\|_p<\infty},
\end{equation}
equipped with the obvious norm. These are a modified version of the tent spaces introduced in the case of $\Omega=\R^n_+$ by Coifman, Meyer and Stein in \cite{coifman_new_1985}. 
\newline

We have equivalence under changes of parameters for $\NN$, $\NNt$ and $\AAt$ (lemmas \ref{lemmaA:eqvlce_N} and \ref{lemmaA:eqvlce_A} in the appendix).

\begin{lemma}\label{lemma:eqvlce_NtAt}
Take $1\leq p \leq\infty$. The $L^p$ norms of $\NN^{(\alpha)}$, $\NNt^{(\alpha,\lambda)}$ and $\AAt^{\alpha,\lambda)}$ are equivalent under change of $\alpha$ and $\lambda$, i.e. for any $\alpha,\alpha'>0$, any $0<\lambda,\lambda'\leq 1/2$ and any $u\in L^2\loc(\Omega,m)$:
\begin{equation*}
\|\NN^{(\alpha)} u\|_p \leq C_{\alpha,\alpha'} \|\NN^{(\alpha')} u\|_p,
\end{equation*}
where $C_{\alpha,\alpha'}$  depends only on $C_\mu, \alpha, \alpha'$ \emph{(and only on $C_\mu$ and $\alpha/\alpha'$ if we further impose $\alpha'>\alpha\geq 1$)}, and
\begin{equation*}
\|\NNt^{(\alpha,\lambda)} u\|_p \leq C_{\alpha,\alpha',\lambda/\lambda'} \|\NNt^{(\alpha',\lambda')} u\|_p,
\end{equation*}
\begin{equation*}
\|\AAt^{(\alpha,\lambda)} u\|_p \leq C_{\alpha,\alpha',\lambda/\lambda'} \|\AAt^{(\alpha',\lambda')} u\|_p,
\end{equation*}
where $C_{\alpha,\alpha',\lambda/\lambda'}$ depends only on $C_{crk}, C_m, C_\mu, \alpha,\alpha'$, and $\lambda/\lambda'$.
\end{lemma}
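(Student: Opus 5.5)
The plan is to handle the three functionals separately. In each case a change of $\lambda$ is reduced to a \emph{pointwise} comparison at the cost of enlarging the aperture, and the change of aperture is then done by a separate argument using only the doubling of $\mu$ (H2) and of $m_{|\Omega}$ (see \eqref{eq:mOmega_doubling}), together with the corkscrew condition (H1).

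\textbf{Step 1: $\NN^{(\alpha)}$.} Assume $\alpha>\alpha'$; the other direction is trivial since cones are nested. I would show the level-set inclusion
\begin{equation*}
\{\NN^{(\alpha)}u>t\}\subset\{\mathcal M_\mu(\mathds 1_{\{\NN^{(\alpha')}u>t\}})>c\},
\end{equation*}
with $c$ depending on $C_\mu$ and $\alpha/\alpha'$. Here $\mathcal M_\mu$ is the centered Hardy--Littlewood maximal function on $(\dOmega,\mu)$, which is weak $(1,1)$ because $\mu$ is doubling. The inclusion comes from the following observation. If $x\in\gamma_\alpha(\xi)$ with $|u(x)|>t$, take $\hat x\in\dOmega$ with $|x-\hat x|=\delta(x)$. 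Then every $\zeta\in B(\hat x,\alpha'\delta(x))\cap\dOmega$ satisfies $x\in\gamma_{\alpha'}(\zeta)$. This ball is contained in $B(\xi,C\delta(x))$, where $C$ depends on $\alpha,\alpha'$, and doubling of $\mu$ gives the ratio of measures. Integrating $\|\NN^{(\alpha)}u\|_p^p=p\int t^{p-1}\mu(\{\NN^{(\alpha)}u>t\})\,dt$ and using the weak $(1,1)$ bound (or boundedness of $\mathcal M_\mu$ on $L^{p/q}$ for $q<p$ when $p=\infty$) gives the estimate. The case $p=\infty$ is immediate. When $\alpha'>\alpha\ge1$, everything depends only on the ratio $\alpha/\alpha'$.

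\textbf{Step 2: $\NNt^{(\alpha,\lambda)}$.} For the change of $\lambda$ with $\lambda>\lambda'$, I would cover $\lambda B_x$ by a bounded number $N(\lambda/\lambda',C_m)$ of balls $\lambda'B_z$ with $z\in\lambda B_x$. For such $z$ we have $\delta(z)\simeq\delta(x)$ (because $\lambda\le1/2$), so $z\in\gamma_{\alpha''}(\xi)$ with $\alpha''$ depending on $\alpha,\lambda$. Doubling of $m$ gives $m(\lambda'B_z)\simeq m(\lambda B_x)$. Hence
\begin{equation*}
\NNt^{(\alpha,\lambda)}u\lesssim\NNt^{(\alpha'',\lambda')}u
\end{equation*}
pointwise. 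Then $\NNt^{(\alpha'',\lambda')}u=\NN^{(\alpha'')}v$ with $v(x)=(\fint_{\lambda'B_x}|u|^2\dm)^{1/2}$, and Step 1 changes the aperture back to $\alpha'$.

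\textbf{Step 3: $\AAt^{(\alpha,\lambda)}$.} For $\lambda$ I would use Fubini. Writing $\fint_{\lambda B_x}|f|^2\dm$ and swapping the integrals, each $y\in\lambda B_x$ lies in $\lambda' B_z$ for all $z\in(\lambda'/2)B_y$. Such $z$ satisfy $\delta(z)\simeq\delta(x)$ and lie in a wider cone $\gamma_{\alpha''}(\xi)$, and their mass $m((\lambda'/2)B_y)\simeq m(B_x)$. The $L^2$ average inside the area integrand is handled by first passing to squares inside. The $\AAt$ integrand is a square root of an $L^2$ average, so instead I would directly cover $\lambda B_x$ by finitely many $\lambda'B_z$ as in Step 2 and average over $z$ in a small ball around each centre, obtaining
\begin{equation*}
\AAt^{(\alpha,\lambda)}f\lesssim\AAt^{(\alpha'',\lambda')}f
\end{equation*}
pointwise.

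For the aperture I would follow Coifman--Meyer--Stein. With $g(x)=(\fint_{\lambda B_x}|f|^2\dm)^{1/2}/m(B_x)$, the functional $\AAt^{(\alpha)}f$ is a cone integral of $g\,dm$. For $p=1$, Fubini gives $\|\AAt^{(\alpha)}f\|_1\simeq\int_\Omega g\,\mu(B(x,C_\alpha\delta(x))\cap\dOmega)\dm$, and doubling of $\mu$ handles the change of $\alpha$. For general $p$, I would split the argument by range:
\begin{itemize}
\item For $p\ge1$, use duality with the Carleson-type functional, i.e.\ $\int\AAt^{(\alpha)}f\cdot h\,d\mu\lesssim\int\AAt^{(\alpha')}f\cdot\mathcal M_\mu h\,d\mu$. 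This follows from Fubini plus the fact that the average of $h$ over $B(\hat x,\alpha\delta(x))$ is controlled by $\mathcal M_\mu h(\zeta)$ for every $\zeta$ in the smaller ball $B(\hat x,\alpha'\delta(x))$. Combined with $L^{p'}$-boundedness of $\mathcal M_\mu$, this gives $1<p\le\infty$ (for $p=\infty$, take $h\in L^1$ localized, or argue directly through a Carleson box).
\item For $p=1$, use the direct computation above.
\end{itemize}

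\textbf{Main obstacle.} I expect the main obstacle to be the area-function aperture change. The $L^1$-type area function is not subadditive in a way that makes the good-$\lambda$ argument straightforward, so the duality inequality with $\mathcal M_\mu h$ must be set up carefully. In particular, the weight $1/m(B_x)$ must be matched against $\mu$-averages, and doubling of $\mu$ (not of $m$) is what makes the ball-swap work. Without connectedness assumptions, I also need to check that the "shadow" $B(\hat x,\alpha'\delta(x))\cap\dOmega$ has $\mu$-mass comparable to $\mu(B(\xi,C\delta(x)))$; this follows from (H2) alone since both balls are centred on $\dOmega$.
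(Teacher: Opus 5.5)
Your Steps 1 and 2 are the paper's argument (Lemma~\ref{lemmaA:eqvlce_N}). Step 1 is the level-set inclusion into $\{\MM_\mu \UN_{\{\NN^{(\alpha')}u>t\}}>c\}$ plus the weak-$(1,1)$ bound. Step 2 covers $\lambda B_x$ by finitely many $\lambda' B_{y_i}$, which gives $\NNt^{(\alpha,\lambda)}u\siml\NNt^{(\alpha'',\lambda')}u$ pointwise.

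For $\AAt$ you take a genuinely different route. The paper (Lemma~\ref{lemmaA:eqvlce_A}) never compares area functions directly. It deduces their equivalence from tent-space duality, namely the norming estimate \eqref{eqA:dual_Atsup} and the pairing \eqref{eqA:dual_int_NtAt}, combined with the $\NNt$ equivalence. This reuses machinery the paper needs anyway, but it only yields $1<p<\infty$. Your argument is self-contained and has two parts:
\begin{itemize}
\item a pointwise change of $\lambda$ at the cost of a wider cone: cover $\lambda B_x$ by balls $\frac{\lambda'}{2}B_{y_i}$, average over $z$ close to $y_i$ so that $\frac{\lambda'}{2}B_{y_i}\subset\lambda'B_z$, then apply Fubini;
\item the boundary pairing $\int_{\dOmega}\AAt^{(\alpha,\lambda)}f\,h\,d\mu\siml\int_{\dOmega}\AAt^{(\alpha',\lambda)}f\,\MM_\mu h\,d\mu$ for $h\geq0$.
\end{itemize}
The pairing is correct, provided you use $(1+\alpha)B_x\cap\dOmega\subset B(\hat x,(2+\alpha)\delta(x))$ rather than $B(\hat x,\alpha\delta(x))$. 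It is more elementary than the paper's route.

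\textbf{The gap is the range of $p$.} The pairing needs $\MM_\mu$ bounded on $L^{p'}(\mu)$, which holds for $p'\in(1,\infty]$. So it proves the aperture change for $1\le p<\infty$, not for $1<p\le\infty$ as you wrote, and your separate $p=1$ computation is redundant.

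At $p=\infty$ your fallback (``$h\in L^1$ localized, or a Carleson box'') cannot be completed, because the assertion is false there. Here is a counterexample.
\begin{itemize}
\item Take $\Omega=\R^2_+$ with Lebesgue measures, where $\gamma_\alpha(\xi)=\{(y,t):|y-\xi|<a_\alpha t\}$ and $a_\alpha=\sqrt{\alpha^2+2\alpha}$.
\item For $\alpha>\alpha'$, choose $a_{\alpha'}<c<b<a_\alpha$, then small $\lambda=\lambda'$, $\epsilon$ and $\theta$. Set $u=\sum_{k=1}^N\UN_{B(P_k,\epsilon\theta^k)}$ with $P_k=(c\theta^k,\theta^k)$.
\item The function $x\mapsto(\fint_{\lambda B_x}|u|^2\,dm)^{1/2}$ is supported in $\bigcup_k B(P_k,3(\lambda+\epsilon)\theta^k)$. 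By dilation invariance, each piece carries the same mass $M>0$ for $dm(x)/m(B_x)$.
\item Each cone $\gamma_{\alpha'}(\xi)$ meets at most one piece, so $\AAt^{(\alpha',\lambda)}u\le M$ everywhere.
\item For every $\xi\in((c-b)\theta^N,(c+b)\theta^N)$, the cone $\gamma_\alpha(\xi)$ contains all $N$ pieces. Hence $\AAt^{(\alpha,\lambda)}u\ge NM$ on a set of positive $\mu$-measure.
\end{itemize}
So the $\AAt$ part of the statement can only hold for $p<\infty$; the paper's own proof gives it only for $1<p<\infty$. At $p=\infty$ the appropriate object is the Carleson functional $\CCt$, as in Coifman--Meyer--Stein. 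The $\NN$ and $\NNt$ parts at $p=\infty$ are fine, as your Step 1 shows.
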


\begin{lemma}\label{lemma:tent_space_complete}
Take $1<p<\infty$. $\Nt^p(\Omega)$ and $\At^p(\Omega)$ are Banach spaces.
\end{lemma}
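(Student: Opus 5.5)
The plan is the classical Riesz--Fischer argument. Both $\|\NNt\cdot\|_p$ and $\|\AAt\cdot\|_p$ are clearly seminorms; they are sublinear by Minkowski's inequality in $L^2(\lambda B_x,m)$. They are in fact norms: if $\|\NNt u\|_p=0$ or $\|\AAt f\|_p=0$, then for $\mu$-a.e. $\xi$ the averages $\fint_{B_x/2}|u|^2\dm$ vanish for every $x\in\gamma(\xi)$. Every $x\in\Omega$ belongs to $\gamma(\xi)$ for all $\xi$ in the boundary ball $B(\xi_x,\delta(x))\cap\dOmega$, where $\xi_x$ is a closest boundary point. This ball has positive $\mu$-measure, because $\mu$ is doubling and nonzero. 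It follows that $u=0$ $m$-a.e.; for $\AAt$ the same argument applies, using that the integrand is continuous in $x$ in the appropriate averaged sense, or simply that the integral over the cone vanishes. It then suffices to show that every absolutely convergent series converges.

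The key preliminary step is a local embedding: for each compact $K\subset\Omega$ there is a constant $C_K$ with $\|u\|_{L^2(K,m)}\le C_K\|\NNt u\|_p$ and $\|f\|_{L^2(K,m)}\le C_K\|\AAt f\|_p$. To prove it, cover $K$ by finitely many balls $B_{x_i}/4$. For $\xi\in\Delta_i:=B(\xi_{x_i},\delta(x_i)/2)\cap\dOmega$, every $z\in B_{x_i}/4$ lies in $\gamma(\xi)$ with $\delta(z)\simeq\delta(x_i)$. Hence $\fint_{B_{x_i}/4}|u|^2\dm\lesssim\NNt^{(\alpha,\lambda)}u(\xi)^2$ for a suitable aperture, using doubling of $m$ and the inclusion $B_{x_i}/4\subset C\lambda B_z$ after adjusting parameters. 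Averaging over $\Delta_i$ in $L^p(\mu)$ and applying Lemma \ref{lemma:eqvlce_NtAt} to return to the standard parameters gives the bound. For $\AAt$, the $\frac{\dm(z)}{m(B_z)}$-mass of $B_{x_i}/8\subset\gamma(\xi)$ is $\simeq1$. Combined with Fubini over $z\in B_{x_i}/8$, since $\lambda B_z\supset$ a fixed fraction of $B_{x_i}$ after adjusting $\lambda$, this gives $\int_{B_{x_i}/16}|f|^2\dm\lesssim_K\AAt^{(\alpha,\lambda)}f(\xi)$ for $\xi\in\Delta_i$, and we conclude in the same way.

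With the embedding in hand, let $\sum_k\|u_k\|_{\Nt^p}<\infty$. The partial sums are Cauchy in $L^2_{\mathrm{loc}}(\Omega,m)$, so they converge to some $u$ in $L^2_{\mathrm{loc}}$. For each fixed $x$, the averages $\big(\fint_{\lambda B_x}|\cdot|^2\big)^{1/2}$ are continuous with respect to $L^2_{\mathrm{loc}}$ convergence. We therefore obtain pointwise $\NNt(u-\sum_{k\le N}u_k)(\xi)\le\sum_{k>N}\NNt u_k(\xi)$, with the supremum handled by taking the sup after passing to the limit. Minkowski in $L^p(\mu)$ then gives $\|\NNt(u-\sum_{k\le N}u_k)\|_p\le\sum_{k>N}\|u_k\|_{\Nt^p}\to0$. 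For $\At^p$ the argument is identical, except that Fatou's lemma replaces the pointwise limit inside the cone integral, and one uses $(\int g)^{}$ subadditivity. The main obstacle, though a mild one, is the local embedding step: because the theory works only through averaged quantities, the parameter juggling there is what forces the appeal to Lemma \ref{lemma:eqvlce_NtAt}. The restriction $1<p<\infty$ is not essential to this argument.
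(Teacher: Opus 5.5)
Your argument is correct and follows essentially the paper's route. You first bound local $L^2$ norms by averaging the tent functional over a boundary ball; the paper does this directly at the standard parameters via $\xi\in 2B_x\cap\partial\Omega\Rightarrow x\in\gamma(\xi)$, so your appeal to Lemma~\ref{lemma:eqvlce_NtAt} is unnecessary. You then identify the $L^2_{\mathrm{loc}}$ limit and show convergence in the tent norm, using absolutely convergent series and countable Minkowski where the paper uses an a.e.\ convergent subsequence and its Fatou lemma for tent spaces. One slip to fix: since $\AAt$ is homogeneous of degree one, your local bound should read $\bigl(\int_{B_{x_i}/16}|f|^2\,dm\bigr)^{1/2}\lesssim_K\AAt f(\xi)$.
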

\begin{proof}
See Theorem \ref{thA:Banach} in the Appendix.
\end{proof}

Then, we present a theorem encompassing all the results on duality between $\NNt^p(\Omega)$ and $\AAt^p(\Omega)$ which are established in the appendix (Theorems \ref{thA:dual_intNA}, \ref{thA:dual_A'N} and \ref{thA:dual_Asup}).

\begin{theorem}
Take $1<p<\infty$. The dual space of $\At^p(\Omega)$ is homeomorphic to $\Nt^{p'}(\Omega)$. Furthermore, we have the following estimates:

\begin{equation}\label{eq:dual_prodNtAt}
\int_\Omega uf \frac{\dm}{\rho\delta}
\leq C\int_\dOmega (\NNt u)(\AAt f)\dmu
\leq C\|\NNt u\|_p\|\AAt f\|_{p'} \for u\in\Nt^p(\Omega), f\in\At^p(\Omega),
\end{equation}

\begin{equation}\label{eq:dual_Ntsup}
\|\NNt u\|_{p'} \leq C \sup_{\|\AAt f\|_p\leq 1} \int_\Omega uf\frac{\dm}{\rho\delta}\for u\in\Nt^p(\Omega),
\end{equation}

\begin{equation}\label{eq:dual_Atsup}
\|\AAt f\|_{p'} \leq C \sup_{\|\NNt u\|_p\leq 1} \int_\Omega uf\frac{\dm}{\rho\delta}\for f\in\At^p(\Omega),
\end{equation}
where $C>0$ depends only on $C_{crk}, C_m$ and $C_\mu$.
\end{theorem}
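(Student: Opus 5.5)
The plan is to reduce everything to a Fubini identity that turns the measure $\frac{\dm}{\rho\delta}$ on $\Omega$ into an integral over cones. Then Cauchy--Schwarz on Whitney balls gives the pairing bound. The two sup estimates come from linearizing $\NNt$ and $\AAt$ and controlling the resulting test functions by the Hardy--Littlewood maximal function on $(\dOmega,\mu)$. The homeomorphism then follows from these estimates together with a Hahn--Banach and vector-valued representation argument.

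\emph{Step 1 (Fubini identity).} For $x\in\Omega$, the set $\{\xi\in\dOmega: x\in\gamma_\alpha(\xi)\}$ lies in $B(x,(2+\alpha)\delta(x))\cap\dOmega$. It contains $B(\xi_x,\alpha\delta(x)/2)\cap\dOmega$, where $\xi_x$ is a closest boundary point. By (H2), its $\mu$-measure is therefore comparable to $\mu(B(x,2\delta(x)))$. By \eqref{eq:mOmega_eqvt_m} and the doubling of $m$, $m(B_x)\simeq m(B(x,2\delta(x))\cap\Omega)$. Hence, for $g\ge0$,
\begin{equation*}
\int_\Omega g\,\frac{\dm}{\rho\delta}\simeq\int_\dOmega\int_{\gamma_\alpha(\xi)} g(x)\,\frac{\dm(x)}{m(B_x)}\dmu(\xi).
\end{equation*}
Next I replace $g$ by its Whitney averages. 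For $y\in\lambda B_x$ one has $\delta(y)\simeq\delta(x)$, $m(B_y)\simeq m(B_x)$ and $\rho(y)\simeq\rho(x)$ (by \eqref{eq:rho_doubl}). A second Fubini argument then gives $\int_\Omega g\frac{\dm}{\rho\delta}\simeq\int_\Omega\big(\fint_{\lambda B_x}g\,\dm\big)\frac{\dm(x)}{\rho(x)\delta(x)}$. Applying this with $g=|uf|$ and Cauchy--Schwarz, $\fint_{\lambda B_x}|uf|\le(\fint_{\lambda B_x}|u|^2)^{1/2}(\fint_{\lambda B_x}|f|^2)^{1/2}$. The first factor is at most $\NNt u(\xi)$ for every $\xi$ with $x\in\gamma(\xi)$. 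Integrating yields $\int_\dOmega(\NNt u)(\AAt f)\dmu$, and Hölder gives \eqref{eq:dual_prodNtAt}. If the apertures or radii do not match exactly, I use slightly larger apertures and then Lemma \ref{lemma:eqvlce_NtAt}.

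\emph{Step 2 (the two sup estimates).}

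For \eqref{eq:dual_Ntsup}, I truncate $u$ first so that everything is finite. I then pick a measurable selection $\xi\mapsto x(\xi)\in\gamma(\xi)$ with $(\fint_{\lambda B_{x(\xi)}}|u|^2)^{1/2}\ge\frac12\NNt u(\xi)$. Set $a_\xi:=\bar u\,\UN_{\lambda B_{x(\xi)}}/\big(m(\lambda B_{x(\xi)})(\fint|u|^2)^{1/2}\big)$, so that $\int u a_\xi\dm\gtrsim\NNt u(\xi)$. Take $h\ge0$ with $\|h\|_{p}=1$ and $\int h\,\NNt u\dmu\simeq\|\NNt u\|_{p'}$, and define
\begin{equation*}
f(y):=\rho(y)\delta(y)\int_\dOmega h(\xi)a_\xi(y)\dmu(\xi).
\end{equation*}
Then $\int_\Omega uf\frac{\dm}{\rho\delta}\gtrsim\|\NNt u\|_{p'}$. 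The condition $y\in\lambda B_{x(\xi)}$ forces $|\xi-y|\lesssim\delta(y)$. Combining Minkowski's inequality with the same Fubini bookkeeping as in Step 1, I expect $\AAt f(\zeta)$ to be controlled by an $L^p$-bounded operator applied to $h$, essentially the maximal function $\mathcal M_\mu h$ or a Carleson-type average. The boundedness of $\mathcal M_\mu$ on $L^p(\mu)$ for $p>1$ (from (H2)) then gives $\|\AAt f\|_p\lesssim1$.

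For \eqref{eq:dual_Atsup}, the construction is symmetric. I use the $L^p(\ell^1)$--$L^{p'}(\ell^\infty)$ duality. With $h$ normalized in $L^{p}$ against $\AAt f$, set
\begin{equation*}
u(y):=\frac{\bar f(y)}{|f|_{x}}\int_{\{\xi:\,x\in\gamma(\xi)\}}h\dmu\Big/\mu(\cdots),
\end{equation*}
where $|f|_x$ denotes the $L^2$ Whitney average, averaged smoothly over Whitney cubes. Then $\NNt u\lesssim\mathcal M_\mu h$, while the pairing reproduces $\int h\,\AAt f$.

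\emph{Step 3 (homeomorphism).} Step 1 shows that $u\mapsto\int uf\frac{\dm}{\rho\delta}$ maps $\Nt^{p'}$ boundedly into $(\At^p)'$, and \eqref{eq:dual_Ntsup} shows that this map is injective with closed range. For surjectivity, I embed $\At^p$ isometrically, via $f\mapsto\big(\xi\mapsto\UN_{\gamma(\xi)}(x)(\fint_{\lambda B_x}|f|^2)^{1/2}\big)$ or rather via the $L^2$-valued version $f\mapsto\UN_{\gamma(\xi)}(x)f|_{\lambda B_x}$, into a mixed-norm space $L^p(\dOmega;L^1(\tfrac{\dm}{m(B_x)};L^2))$. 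I extend the given functional by Hahn--Banach and represent it by an element of $L^{p'}(L^\infty(L^2))$. I then average this element over $\xi$ and over Whitney balls to produce $u$, and show $\NNt u\lesssim$ its norm as in Step 2. Density of $L^\infty_c$ in $\At^p$ (Lemma \ref{lemma:tent_space_complete} and truncation) identifies the functional.

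The main obstacle is this last step, together with the measurability of the selections in Step 2. The vector-valued representation requires care because $L^\infty(L^2)$-duals are not Bochner spaces. I would first try to avoid this by working with finitely many Whitney cubes and truncations, and passing to the limit with the uniform bounds from Steps 1--2.
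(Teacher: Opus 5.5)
Your Step 1 is the paper's argument: the Fubini identity $\int_\Omega h\,\frac{dm}{\rho\delta}\simeq\int_{\partial\Omega}\int_{\gamma(\xi)}h\,\frac{dm}{m(B_x)}\,d\mu(\xi)$, then Whitney averaging, then Cauchy--Schwarz. Linearizing $\NNt$ directly with the $L^2$-normalized pieces $a_\xi$ is a legitimate shortcut. The paper takes two stages: first $\mathcal N$ against $\mathcal A$ with an $L^1$-kernel $\mathcal K(\xi,x)$, then a second linearization $H(x,y)$ of the Whitney averages.

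\textbf{Gap in Step 2.} The decisive bound $\|\AAt f\|_p\lesssim 1$ is only announced, and the pointwise control by $\mathcal M_\mu h$ you have in mind is false. Minkowski and the localization $\lambda B_z\cap\lambda B_{x(\xi)}\neq\emptyset$ give, with $t(\xi):=\delta(x(\xi))$,
\[
\AAt f(\zeta)\lesssim\int_{\partial\Omega}h(\xi)\,\frac{\mathbf 1_{\{|\zeta-\xi|\le C t(\xi)\}}}{\mu(B(\xi,t(\xi)))}\,d\mu(\xi).
\]
This is not pointwise bounded by $\mathcal M_\mu h$. In $\mathbb R^n_+$, take $u$ concentrated near a cone with vertex $\zeta_0$ and a selection with $t(\xi)\approx|\xi-\zeta_0|$. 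Each dyadic scale then contributes a fixed amount, so $\AAt f(\zeta_0)=\infty$, while $\mathcal M_\mu h(\zeta_0)$ stays bounded.

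The missing idea is to estimate $\AAt f$ only in norm, by duality. Pair with $\psi\ge0$, $\|\psi\|_{p'}\le1$, and swap the integrals. The inner quantity $\mu(B(\xi,t(\xi)))^{-1}\int_{B(\xi,Ct(\xi))}\psi\,d\mu$ is $\lesssim\mathcal M_\mu\psi(\xi)$, so $\int(\AAt f)\psi\,d\mu\lesssim\|h\|_p\|\mathcal M_\mu\psi\|_{p'}\lesssim1$. This is precisely the $\psi$-duality, with the dyadic weights $\kappa_k$, $\sum_k\kappa_k=1$, used in the paper.

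For \eqref{eq:dual_Atsup} your formula mixes $x$ and $y$. A correct version is
\[
u(y)=\rho(y)\delta(y)\int_\Omega\frac{f(y)\,\mathbf 1_{\lambda B_x}(y)\,\Phi(x)}{m(\lambda B_x)\,F(x)\,m(B_x)}\,dm(x),\qquad \Phi(x)=\int_{\{\xi:\,x\in\gamma(\xi)\}}h\,d\mu,\quad F(x)=\Big(\fint_{\lambda B_x}|f|^2\,dm\Big)^{1/2}.
\]
It satisfies $\int_\Omega uf\,\frac{dm}{\rho\delta}=\int_{\partial\Omega}h\,\AAt f\,d\mu$. Here a genuine pointwise bound $\NNt u\lesssim\mathcal M_\mu h$ does hold, as in the paper.

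\textbf{Step 3.} Drop Hahn--Banach and the vector-valued representation. The obstruction you noticed is real, but it is avoidable, and your Step 2 estimate is all that is needed.
\begin{enumerate}
\item For $g$ supported in a compact $K\subset\Omega$ one has $\|\AAt g\|_p\le C_K\|g\|_{L^2(K,m)}$. Riesz on $L^2(K,m)$, plus uniqueness across different $K$, gives $u\in L^2_{\mathrm{loc}}(\Omega,m)$ with $\ell(g)=\int_\Omega ug\,\frac{dm}{\rho\delta}$ for all $g\in L^2_c$.
\item Apply \eqref{eq:dual_Ntsup} to $u\mathbf 1_K$. Since $\AAt(g\mathbf 1_K)\le\AAt g$, this gives $\|\NNt(u\mathbf 1_K)\|_{p'}\lesssim\sup_{\|\AAt g\|_p\le1}\ell(g\mathbf 1_K)\le\|\ell\|$, uniformly in $K$.
\item Exhausting $\Omega$ and applying Fatou gives $u\in\Nt^{p'}$.
\item The identity extends to all of $\At^p$ by density of $L^2_c$ together with \eqref{eq:dual_prodNtAt}. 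The density comes from truncation and dominated convergence; Lemma \ref{lemma:tent_space_complete} gives completeness, not density.
\end{enumerate}
This is the paper's argument, and it is what your fallback with truncations amounts to. Make it the main line.
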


Finally, we will also make use of the following density result:
\begin{theorem}\label{th:densite_At}
The subspace of Lipschitz continuous, compactly supported functions on $\Omega$ is dense in $\At^p(\Omega)$.
\end{theorem}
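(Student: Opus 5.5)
We prove density of Lipschitz compactly supported functions in $\At^p(\Omega)$ by truncation followed by mollification, with norms controlled via the pointwise domination $\AAt$ and dominated convergence on $\dOmega$. Here $1\leq p<\infty$ (the same range as the tent spaces; for $p=\infty$ density fails in general).

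\textbf{Step 1: truncation.} Fix $f\in\At^p(\Omega)$ and a point $\xi_0\in\dOmega$. For $k\in\N$ define the compact set
\begin{equation*}
K_k:=\{x\in\Omega \tq \delta(x)\geq 2^{-k},\ |x-\xi_0|\leq 2^k\},
\end{equation*}
and set $f_k:=f\UN_{K_k}\UN_{\{|f|\leq k\}}$. Then $|f-f_k|\leq |f|$ pointwise, so $\AAt(f-f_k)(\xi)\leq \AAt f(\xi)$ for every $\xi\in\dOmega$.

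To show $\AAt(f-f_k)(\xi)\to 0$ for $\mu$-a.e. $\xi$, fix $\xi$ with $\AAt f(\xi)<\infty$. For each $x\in\gamma(\xi)$, the inner average $\fint_{B_x/2}|f-f_k|^2\dm$ tends to $0$ by dominated convergence in $L^2(B_x/2,m)$, since $f\in L^2\loc$ and $f-f_k\to 0$ pointwise. The outer integral over $x$ then converges to $0$ by dominated convergence, with dominating function the integrand of $\AAt f(\xi)$. Applying dominated convergence once more, now in $L^p(\dOmega,\mu)$ with dominator $(\AAt f)^p$, gives $\|\AAt(f-f_k)\|_p\to 0$.

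\textbf{Step 2: mollification.} Each $f_k$ is bounded and supported in the compact set $K_k$. Let $\eta_\varepsilon$ be a standard mollifier and put $g_\varepsilon:=f_k*\eta_\varepsilon$ with respect to Lebesgue measure, taking $\varepsilon<2^{-k-2}$. Then $g_\varepsilon$ is Lipschitz, has compact support in $\Omega$ (inside a slightly enlarged $K_k$), and satisfies $g_\varepsilon\to f_k$ in $L^2(dx)$ on a fixed compact set $K'\subset\Omega$.

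The Lebesgue convergence must be transferred to $L^2(K',m)$. This is the main obstacle, since $w$ is only $L^1\loc$ and $m$ is not comparable to Lebesgue measure. We avoid it as follows: $|g_\varepsilon|\leq k$ uniformly, and along a subsequence $g_\varepsilon\to f_k$ Lebesgue-a.e., hence $m$-a.e. because $m\ll dx$ on $\Omega$. Dominated convergence with dominator $k^2\UN_{K'}\in L^1(m)$ (finite since $m$ is locally finite) then gives $g_\varepsilon\to f_k$ in $L^2(K',m)$.

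\textbf{Step 3: from local $L^2$ convergence to the tent norm.} For $h$ supported in a compact $K'\subset\Omega$, the only $x$ contributing to $\AAt h(\xi)$ satisfy $\lambda B_x\cap K'\neq\emptyset$. For such $x$, $\delta(x)$ is comparable to a constant (since $\delta\simeq c_{K'}$ on $K'$ and $\delta$ varies by at most a factor on $\lambda B_x$), and $m(B_x)\geq c>0$ by doubling. The integral over $x$ therefore ranges over a bounded region, and
\begin{equation*}
\AAt h(\xi)\leq C_{K'}\,\|h\|_{L^2(K',m)}\,\UN_{\Delta}(\xi)
\end{equation*}
for some bounded set $\Delta\subset\dOmega$ with $\mu(\Delta)<\infty$. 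Taking $h=f_k-g_\varepsilon$ gives $\|\AAt(f_k-g_\varepsilon)\|_p\leq C\|f_k-g_\varepsilon\|_{L^2(K',m)}\to 0$.

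Combining Steps 1–3 with a diagonal choice of $k$ and $\varepsilon$ yields Lipschitz, compactly supported approximants of $f$ in $\At^p(\Omega)$.
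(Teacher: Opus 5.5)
Your proof is correct and follows the same outline as the paper: dominated convergence first reduces to compactly supported data (you go slightly further, to bounded compactly supported $f_k$), and these are then approximated by Lipschitz, compactly supported functions. The paper only cites Lemma 2.6 of \cite{mourgoglou_solvability_2023} for the second stage, whereas you carry it out yourself. Truncating to $|f|\leq k$ is what lets Lebesgue mollification plus bounded convergence work even though $w$ is only locally integrable, since a general element of $L^2_c(\Omega,m)$ need not be locally Lebesgue integrable. Your Step 3 bound $\AAt h\leq C_{K'}\|h\|_{L^2(K',m)}\UN_{\Delta}$ then converts $L^2(K',m)$ convergence into convergence in $\At^p(\Omega)$.
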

\begin{proof}
It is easy to see that $L^2_c(\Omega,m)$ is dense in $\At^p(\Omega)$, using Lebesgue's dominated convergence theorem, so it is enough to prove that functions of $L^2_c(\Omega,m)$ can be approximated in $\At^p(\Omega)$ by Lipschitz continuous, compactly supported functions. But this is proven in \cite{mourgoglou_solvability_2023} (lemma 2.6), in a way that can be generalized to our geometric setting with no difficulties.
\end{proof}

%%%%%%%%%%%%%%%%%%%%%%%%%%%%%%%%%%%%%%%%%%%%%%%%%%%%%%%%%%%%%%%%%%%%%
%D_p
%%%%%%%%%%%%%%%%%%%%%%%%%%%%%%%%%%%%%%%%%%%%%%%%%%%%%%%%%%%%%%%%%%%%%

\section{Characterization of \texorpdfstring{$(D_p)$}{(Dp)}}\label{s:Dp}

\subsection{Characterization using the elliptic measure}
It is well known that the solvability of $(D_p)$ for some $1<p<\infty$ is equivalent to the fact that the elliptic measure $\omega$ satisfies a (weak) $L^\pp$-$L^1$ reverse Hölder inequality with respect to the boundary measure (see for example \cite{DahlbergBjörn}). Our aim in this subsection is to prove this result in our geometric setting. First, let us give a precise statement.

\begin{theorem}\label{th:DpRHp'}
If $(D_p)$ is solvable for some $1<p<\infty$, then $\omega$ is \emph{absolutely continuous} with respect to $\mu$ (noted $\omega\ll\mu$), i.e.
\begin{equation*}
\mu(E)=0 \Longrightarrow \omega^x(E)=0 \texteq{ for any Borel set } E\incl \dOmega \emph{ and any } x\in\Omega.
\end{equation*}
This implies that for any $x\in\Omega$ there exists a positive measurable function on $\dOmega$, the \emph{Radon-Nikodym derivative} of $\omega^x$ with respect to $\mu$, noted $\frac{d\omega^x}{d\mu}$, such that $d\omega^x=\frac{d\omega^x}{d\mu}d\mu$. Furthermore, for any $0<\kappa<1$, any ball $B=B(\xi,r)$ centered on $\dOmega$ and any $x\in B$ satisfying $\delta(x)>\kappa r$,
\begin{equation}\label{eq:RHp'}
\pfrac[p']{\fint_{B\cap \partial \Omega} \abs{\frac{d\omega^x}{d\mu}}^\pp \dmu }
\leq \frac{C_\kappa}{\mu(B\cap \partial \Omega)},
\end{equation} 
Where $C_\kappa>0$ depends only on $C_\mu, \kappa$ and the solvability constant in $(D_p)$.
Conversely, there exists $0<\kappa_0<1$ such that if $\omega\ll\mu$ and \eqref{eq:RHp'} holds for $\kappa_0$ and some $1<p<\infty$, then $(D_p)$ is solvable. 
\end{theorem}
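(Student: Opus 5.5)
We prove the two directions separately, in the spirit of Dahlberg's argument, using only non-degeneracy \eqref{eq:ndeg_harmo} and the doubling of $\mu$, since no Harnack chain or change of pole is available.

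\emph{($D_p$) implies absolute continuity and \eqref{eq:RHp'}.} Fix $B=B(\xi,r)$ centered on $\dOmega$ and $x\in B$ with $\delta(x)>\kappa r$. Take $g\in\CC_c(\dOmega)$ with $g\geq 0$ and $\supp g\incl B\cap\dOmega$, and let $u(x)=\int g\,d\omega^x$.

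Since $\delta(x)>\kappa r$, every $\zeta\in B\cap\dOmega$ satisfies $|x-\zeta|<2r<(1+\alpha)\delta(x)$ once we pick $\alpha=\alpha(\kappa)=2/\kappa$. Hence $x\in\gamma_\alpha(\zeta)$ for all such $\zeta$, and
\begin{equation*}
|u(x)|\leq \inf_{\zeta\in B\cap\dOmega}\NN^{(\alpha)}u(\zeta)\leq \mu(B)^{-1/p}\|\NN^{(\alpha)}u\|_p \siml_\kappa \mu(B)^{-1/p}\|g\|_p ,
\end{equation*}
using Lemma \ref{lemma:eqvlce_NtAt} to pass from $\NN^{(\alpha)}$ to $\NN$, and then $(D_p)$. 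Thus $\int g\,d\omega^x\leq C_\kappa\mu(B)^{-1/p}\|g\|_{L^p(B\cap\dOmega,\mu)}$.

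By density of $\CC_c$ in $L^p$ and outer regularity, this gives $\omega^x(E)\leq C\mu(B)^{-1/p}\mu(E)^{1/p}$ for Borel $E\incl B\cap\dOmega$. So $\omega^x\ll\mu$ on $B$. For arbitrary balls, covering $\dOmega$ by balls $B(\xi,R)$ with $R$ large enough that $\delta(x)>\kappa R$ fails is a problem, so we argue differently: every bounded Borel set lies in some ball $B(\xi,R)$ containing $x$, and we use the bound with $\kappa$ replaced by $\delta(x)/2R$. The constant then depends on $R$, which is harmless for absolute continuity alone. Duality $(L^p)'=L^{p'}$ then gives \eqref{eq:RHp'}, with the stated dependence of $C_\kappa$.

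\emph{Converse.} Assume $\omega\ll\mu$ and \eqref{eq:RHp'} at $\kappa_0$, and set $k^x=d\omega^x/d\mu$. Fix $\zeta\in\dOmega$, $x\in\gamma(\zeta)$, and $r=\delta(x)$. We choose $\kappa_0$ small (say $\kappa_0=1/8$) so that $x$ is a valid pole for the ball $B_0=B(\zeta,4r)$; indeed $x\in B_0$ and $\delta(x)=r>\kappa_0\cdot4r$. Decompose
\begin{equation*}
|u(x)|\leq\int_{B_0}|g|k^x\,d\mu+\sum_{j\geq1}\int_{2^{j}B_0\setminus2^{j-1}B_0}|g|k^x\,d\mu .
\end{equation*}
The local term is at most $C\big(\fint_{B_0}|g|^p\big)^{1/p}\leq CM(|g|^p)^{1/p}(\zeta)$ by \eqref{eq:RHp'} and Hölder.

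\emph{Main obstacle: the far annuli.} Here $x$ is not a corkscrew pole for $B_j=2^jB_0$. To handle this we need decay of $\omega^x(2^jB_0\setminus2^{j-1}B_0)$ together with an $L^{p'}$ bound for $k^x$ on the annulus.

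The tool is boundary Hölder continuity \eqref{eq:boundary_holder}. Applied to $v(y)=\omega^y(\dOmega\setminus 2^{j-1}B_0)$, which lies in $W_0$ of $2^{j-2}B_0\cap\Omega$ by Proposition \ref{prop:omega_sol} and satisfies $0\le v\le 1$, it gives $v(x)\siml2^{-j\eta}$.

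For the $L^{p'}$ bound on the annulus $A_j$, we would cover it by balls $\Delta$ of radius comparable to $2^jr$ centered on $\dOmega$, with corkscrew points $y_\Delta$ chosen via (H1) such that $\delta(y_\Delta)\gtrsim 2^j r$. By the maximum principle (Lemma \ref{lemma:max_principle}), the comparison $\omega^x(E)\siml \omega^{y_\Delta}(E)\,\sup_{z\in\partial(\ldots)}\omega^z(\ldots)$ would have to be obtained without Harnack chains. This is the delicate point. What we would try first is a localization: the function $y\mapsto\omega^y(E)$ for $E\incl\Delta$ is a solution vanishing on $\dOmega\setminus 2\Delta$, and on the boundary of a ball of radius $2^{j-2}r$ around $\zeta$ it is bounded by quantities controlled via \eqref{eq:RHp'} at corkscrew poles of dilated balls, using Proposition \ref{prop:comp_green_harmo}. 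Iterating through the dyadic scales should give $\|k^x\|_{L^{p'}(A_j)}\siml 2^{-j\eta'}\mu(B_j)^{-1/p}$.

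Summing, $\sum_j 2^{-j\eta'}\big(\fint_{B_j}|g|^p\big)^{1/p}\siml M(|g|^p)^{1/p}(\zeta)$. This yields $\NN u\siml M(|g|^p)^{1/p}$, which is insufficient at exponent $p$ because $M$ is not bounded on $L^1$. We would therefore first establish self-improvement of \eqref{eq:RHp'} to some exponent $q>p'$ by Gehring's lemma (valid since $\mu$ is doubling), rerun the argument with $q'<p$, obtain $\NN u\siml M(|g|^{q'})^{1/q'}$, and conclude by the $L^{p/q'}$-boundedness of $M$.
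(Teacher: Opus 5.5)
Your forward direction matches the paper's argument and is fine. The paper gets absolute continuity a little more directly, because the bound $\int g\,d\omega^x\lesssim \mu(B)^{-1/p}\|g\|_p$ never uses $\supp g\subset B$.

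\textbf{The gap in the converse.} It sits exactly where you flagged it: the bound $\|k^x\|_{L^{p'}(A_j)}\lesssim 2^{-j\eta'}\mu(B_j)^{-1/p}$ is never proved, and nothing available here yields it.

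\emph{Why it is out of reach.} The bound controls the density of $\omega^x$ on sets at distance about $2^j\delta(x)$ from $x$. There $x$ is not an admissible pole for \eqref{eq:RHp'}.
\begin{itemize}
\item Boundary H\"older continuity only gives decay of the total mass $\omega^x(A_j)$. That says nothing about the $L^{p'}$ norm of the density.
\item Transferring the reverse H\"older bound from a corkscrew point $y_\Delta$ at scale $2^j\delta(x)$ to the pole $x$ is a change-of-pole/comparison principle. That relies on Harnack chains, which is precisely what is unavailable here (see the remark before Lemma \ref{lemma:uNu}).
\item Neither Lemma \ref{lemma:max_principle} nor Proposition \ref{prop:comp_green_harmo} supplies it. The latter bounds $G$ by $\omega$; it does not bound $d\omega^x/d\mu$ from above on far sets.
\item Localizing does not help either. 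Suppose you bound $w(y)=\int_{A_j}f\,d\omega^y$ on $2^{j-2}B_0\cap\Omega$ and then apply \eqref{eq:boundary_holder} at $x$. The points of $2^{j-2}B_0\cap\Omega$ near $\dOmega$ are again not admissible poles. The only Carleson-type bound available for $\sup w$, Lemma \ref{lemma:uNu}, has $\NN w$ on its right-hand side, so the argument is circular.
\end{itemize}
With a fixed $\kappa_0=1/8$ the argument therefore cannot close. Note also that it would give a formally stronger converse than the paper's.

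\textbf{How the paper proceeds.} It turns that circularity into an absorption argument.
\begin{itemize}
\item Take $B=B(\zeta,\delta(x))$ and $K$ large. Assume \eqref{eq:RHp'} at $\kappa_0=1/(16K)$, so that $x$ is an admissible pole for the single ball $8KB$.
\item Split $g=\chi g+(1-\chi)g$ with $\UN_{4KB}\le\chi\le\UN_{8KB}$.
\item The near part gives $u_0(x)\lesssim_K \MM_{\mu,q}g(\zeta)$, by H\"older and the self-improved exponent $q<p$.
\item The far solution $u_1$ vanishes on $4KB\cap\dOmega$. So \eqref{eq:boundary_holder} gives $u_1(x)\lesssim K^{-\eta}\sup_{KB\cap\Omega}u_1$.
\item Lemma \ref{lemma:uNu} bounds this supremum by $\fint_{8KB}\NN(u_1\UN_{KB})\,d\mu\le \MM_\mu(\NN u)(\zeta)$ when $g\ge 0$.
\item Hence $\|\NN u\|_p\le C_K\|g\|_p+CK^{-\eta}\|\NN u\|_p$, and one absorbs by taking $K$ large.
\end{itemize}
The idea your plan is missing is this: never estimate $d\omega^x/d\mu$ away from $x$. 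Instead, bound the whole far contribution by $\NN u$ itself with a small constant. This is what forces $\kappa_0$ to depend on the absorption parameter $K$.
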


As stated, this is a characterization of the solvability of $(D_p)$ by a weak reverse Hölder inequality on the elliptic measure: indeed, thanks to the non-degeneracy of $\omega$, \eqref{eq:RHp'} is equivalent to
\begin{equation}\label{eq:true_RHp'}
\pfrac[p']{\fint_{B\cap \partial \Omega} \abs{\frac{d\omega^x}{d\mu}}^\pp \dmu }
\leq C_\kappa\frac{\omega^x(8B \cap \partial \Omega)}{\mu(8B \cap \partial \Omega)}
=C_\kappa\fint_{8B \cap \partial \Omega} \frac{d\omega^x}{d\mu}\dmu.
\end{equation}
It is well known that if the reverse Hölder inequality \eqref{eq:true_RHp'} holds for $p'$, then it also holds for some $q'>p'$ close to $p'$ (see \cite{giaquinta_multiple_83}, Chapter 5, Proposition 1.1). In particular, we have that if $(D_p)$ is solvable, then there exists some $q<p$ such that $(D_q)$ is solvable.
\newline

Before proving the theorem, let us give a useful lemma.
\begin{lemma}\label{lemma:uNu}
For any ball $B$ centered on $\dOmega$ and any $u\in W_0(2B\cap\Omega)$ solution of $Lu=0$ in $2B\cap\Omega$, we have
\begin{equation}\label{eq:uNu}
\sup_{B\cap\Omega} |u|
\leq C\fint_{8B\cap \partial \Omega} \NN (u\UN_{B}) \dmu,
\end{equation}
where $C>0$ depends only on $n, C_{crk}, C_\mu, C_m, C_\rho, C_{pc}$ and $C_A$.
\end{lemma}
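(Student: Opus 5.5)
We aim to bound $\sup_{B\cap\Omega}|u|$ by the values of $\NN(u\UN_B)$ on a set of boundary points of measure comparable to $\mu(8B\cap\dOmega)$. Write $B=B(\xi_0,r)$ and $M:=\sup_{B\cap\Omega}|u|$, which is finite by boundary Hölder continuity \eqref{eq:boundary_holder} applied in $2B$. The proof has two steps: a reduction of $M$ to the values of $u$ at interior points far from $\dOmega$, and a check that those points lie in many cones.

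\emph{Step 1 (reduction to interior points).} By \eqref{eq:boundary_holder}, rescaled to the ball $2B$, for a small $\lambda_0$ we have $\sup_{\lambda_0 B'\cap\Omega}|u|\le \tfrac12 \sup_{B'\cap\Omega}|u|$ for suitable balls $B'$ centered on $\dOmega$. Instead of iterating this estimate, we argue directly. Pick $x_0\in B\cap\Omega$ with $|u(x_0)|\ge M/2$.

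If $\delta(x_0)\ge \kappa r$ for a small $\kappa$ to be fixed, keep $x_0$. Otherwise let $\xi\in\dOmega$ be a closest point to $x_0$ and set $s=\delta(x_0)$. Apply the oscillation estimate \eqref{eq:osc_estimate} iteratively on the balls $B(\xi,2^k s)$, which lie in $2B$ as long as $2^k s\lesssim r$. Using that $u$ vanishes on the boundary (as in the proof of \eqref{eq:ndeg_harmo}, $\sup_{B(\xi,\rho)\cap\Omega}|u|\le \osc{B(\xi,\rho)\cap\Omega}u$), this shows that the supremum of $|u|$ on $B(\xi,2^k s)$ grows geometrically in $k$, until $2^k s\approx r$.

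This gives a cleaner alternative: choose $\lambda_0$ so that $\sup_{\lambda_0 B\cap\Omega}|u|\le M/4$. Here we take the normalization $\sup_{\lambda B\cap\Omega}|u|\le C\lambda^\eta \sup_{B\cap\Omega}|u|$, obtained from \eqref{eq:boundary_holder} on balls centered at points $\zeta\in B\cap\dOmega$ with radius $r$, so that they stay inside $2B$. Consequently $|u(x)|\le M/4$ whenever $\delta(x)<\lambda_0 r$ and $x\in B$. Hence $\delta(x_0)\ge\lambda_0 r$: the near-maximum is attained at a point a fixed proportion of $r$ away from $\dOmega$.

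\emph{Step 2 (cones).} At this point it would be convenient to have $|u|\ge M/4$ on a whole ball around $x_0$, but $u$ is only known to be continuous, so we avoid this. Since $\NN$ is a pointwise supremum, $\NN(u\UN_B)(\zeta)\ge |u(x_0)|\ge M/2$ for every $\zeta\in\dOmega$ with $x_0\in\gamma(\zeta)$, i.e. $|x_0-\zeta|<2\delta(x_0)$. Let $\xi_1\in\dOmega$ realize $\delta(x_0)$. For $\zeta\in B(\xi_1,\delta(x_0)/2)\cap\dOmega$ we get $|x_0-\zeta|<\tfrac32\delta(x_0)$, so $x_0\in\gamma(\zeta)$. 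Since $\delta(x_0)\ge\lambda_0 r$ and $|\xi_1-\xi_0|\le 2r$, the doubling property of $\mu$ gives $\mu(B(\xi_1,\delta(x_0)/2))\gtrsim \mu(8B)$, and $B(\xi_1,\delta(x_0)/2)\subset 8B$. Integrating over this set yields
\[
M\lesssim \fint_{8B\cap\dOmega}\NN(u\UN_B)\,d\mu .
\]

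The main obstacle is Step 1: making sure \eqref{eq:boundary_holder} is applied on balls contained in $2B$ and centered on $\dOmega$, and that its right-hand side can be bounded by $M$ itself, i.e. by the supremum over $B\cap\Omega$ rather than over a larger set. If this fails, we would instead take the supremum over $\tfrac12 B$ or use a localized version of the maximum principle (Lemma \ref{lemma:max_principle}) to transfer the bound.
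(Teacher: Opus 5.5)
There is a genuine gap, and it is the one you flag in Step~1. To get $|u(x)|\le M/4$ for $x\in B$ with $\delta(x)<\lambda_0 r$, you apply \eqref{eq:boundary_holder} on $B(\zeta,r)$ with $\zeta\in\dOmega$ close to $x$. Its right-hand side involves $|u|$ on $B(\zeta,r)\cap\Omega$, and for $\zeta$ near $\partial B$ this ball leaves $B$. So you only get $C\lambda_0^\eta\sup_{2B\cap\Omega}|u|$, and nothing bounds this by $M=\sup_{B\cap\Omega}|u|$ (think of harmonic polynomials of high degree in a half-plane). Shrinking to $\tfrac12B$ reproduces the same mismatch one scale down. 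The maximum principle only moves the problem to $\partial B\cap\Omega$, whose points near $\dOmega$ are precisely the bad ones. So no one-shot argument of the form ``the near-maximum sits at height $\gtrsim r$'' can work.

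The missing idea is the contradiction/iteration argument used in the paper (after Lemma~4.4 of \cite{JK82}). First, $|u(x)|\le M^k\fint_{8B\cap\dOmega}\NN(\cdot)\,d\mu$ whenever $x\in 2B$ and $\delta(x)\ge\epsilon^k r$. This holds because such $x$ lies in the cones of aperture $2^{ak}$ of every $\zeta\in B$, and doubling the aperture costs a constant independent of the aperture. Then normalize the average to $1$ and assume $|u(x_1)|>M^{i+3}$ with $2^i>M$. The interior bound forces $\delta(x_k)<\epsilon^{i+2+k}r$, and $i$ applications of the Hölder decay on $B(\xi_k,\epsilon^{k+2}r)$ yield $x_{k+1}$ with $|u(x_{k+1})|>M^{i+3+k}$. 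The displacements are summable, so all $x_k$ stay in $\tfrac32B$, and $|u(x_k)|\to\infty$ contradicts continuity up to $\dOmega$ in $2B$. You never compare with $\sup_B|u|$, only with the fixed normalized average, and that is what removes the scale mismatch. Your abandoned remark about geometric growth of suprema on dyadic boundary balls is the seed of this argument. Your Step~2 is correct but only handles the easy case.

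Be aware, however, that this iteration evaluates the interior bound at points of $\tfrac32B\setminus B$, where $u\UN_B=0$. What it really proves is $\sup_{B\cap\Omega}|u|\le C\fint_{8B\cap\dOmega}\NN(u\UN_{2B})\,d\mu$, which suffices for the later applications. With $\UN_B$ and the supremum over all of $B\cap\Omega$ the inequality is actually false, so your Step~1 cannot be repaired as stated. Here is a counterexample:
\begin{itemize}
\item Take $\Omega=\R^2_+$, $L=-\Delta$, $\mu$ the length measure on $\R$, $B=B(0,1)$, and $u(z)=\mathrm{Im}\,\big((z-\tfrac32)^k\big)$. This $u$ is harmonic, vanishes on $\R$, and belongs to $W_0(2B\cap\Omega)$.
\item Points of $B\cap\Omega$ near $-1$ at height $\simeq 1/k$ give $\sup_{B\cap\Omega}|u|\ge c\,(5/2)^k$.
\item For $z\in\overline B$ one has $|z-\tfrac32|^2\le\tfrac{25}{4}-\tfrac32|z+1|^2$. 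Also $z\in\gamma(\zeta)$ implies $|\zeta+1|<3|z+1|$, since $\delta(z)\le|z+1|$.
\item Hence $\NN(u\UN_B)(\zeta)\le(5/2)^k e^{-k|\zeta+1|^2/75}$, and so $\fint_{8B\cap\dOmega}\NN(u\UN_B)\,d\mu\le C\,(5/2)^k k^{-1/2}$.
\end{itemize}
The ratio grows like $\sqrt k$. The near-maximum sits at height $\simeq 1/k$ next to the ``corner'' $\partial B\cap\dOmega$, and only cones from a boundary set of length $\simeq k^{-1/2}$ see it. This is exactly the scenario your Step~1 claims cannot occur. So aim for the $\UN_{2B}$ version (or, after rescaling, $\sup_{\frac12B\cap\Omega}|u|$ on the left) and prove it by the iteration.
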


\begin{proof}
Let us make two remarks before beginning the proof.
\begin{itemize}
\item since we do not have (quantitative) connectedness of $\dOmega$, we cannot prove a change of pole theorem such as Lemma 15.14 of \cite{david_elliptic_2023}. Nevertheless, this weaker statement holds, and can be proven in a similar fashion.
\item This result is far easier to prove in the case of, for instance, $(n-1)$-Ahlfors regular boundaries. Indeed, in this case the estimate
\begin{equation*}
\fint_{B \cap \Omega} u\dm
\siml \fint_{8B \cap \partial \Omega} \NN (u\UN_B)\dmu
\end{equation*}
holds for \emph{any} measurable $u$, using $\NN$-$\AA$ duality; then using some Moser estimate on the boundary we obtain the result. If we had imposed a lower bound in (H4), say
\begin{equation*}
\rho(\Lambda B)\geq C_4\inv \Lambda^{\epsilon-1} \rho(B),
\end{equation*}
we could proceed in the same way; but we do not assume such bound, thus more work is required.
\end{itemize}

Let us start the proof. First, recall that the Hölder regularity of $u$ at the boundary \eqref{eq:boundary_holder} gives us $0<\epsilon<1/2$ such that
\begin{equation}\label{eq:uNu_eps_Holder}
\sup_{\epsilon B'\cap\Omega} |u|
\leq \frac{1}{2}\sup_{B'\cap\Omega} |u|,
\end{equation}
for any ball $B'\incl 2B$ centered on $\dOmega$.
\newline

Now, we will prove that there exists $M>0$ such that for any integer $k$ and any $x\in 2B$,
\begin{equation}\label{eq:uNu_prelim}
\delta(x)\geq \epsilon^kr
\Longrightarrow |u(x)|\leq M^k \fint_{8B \cap \partial \Omega} \NN (u\UN_B) \dmu.
\end{equation}
In order to do so, we will use the fact that for any $v$ measurable and any $\alpha>0$,
\begin{equation}
\int_\dOmega \NN^{(2\alpha)} v \dmu
\leq C \int_\dOmega \NN^{(\alpha)} v \dmu,
\end{equation}
with a constant $C>0$ independent from $\alpha$ (see Lemma \ref{lemma:eqvlce_NtAt}). Fix an integer $a$ such that $2^a\geq 4/\epsilon$, and consider $x\in 2B$ satisfying $\delta(x)\geq \epsilon^kr$ for some integer $k$.  Then $x\in \gamma^{(2^{ak})}(\zeta)$ for any $\zeta\in B$, implying that 
\begin{equation*}
|u(x)|\leq \NN^{(2^{ak})} (u\UN_B) (\zeta) \for \zeta\in B,
\end{equation*}
which in turn gives us
\begin{align*}
|u(x)|
\leq \fint_{B \cap \partial \Omega} \NN^{(2^{ak})} (u\UN_B) \dmu
\leq \frac{1}{\mu(B)} \int_\dOmega \NN^{(2^{ak})} (u\UN_B) \dmu
&\leq \frac{C^{ak}}{\mu(B)} \int_\dOmega \NN (u\UN_B) \dmu\\
&\siml C^{ak}\fint_{8B \cap \partial \Omega} \NN (u\UN_B) \dmu,
\end{align*}
by the doubling property of $\mu$ (assumption (H2)). The bound \eqref{eq:uNu_prelim} follows.
\newline

With this result proven, we can conclude in exactly the same manner as in Lemma 4.4 of \cite{JK82} or Lemma 15.14 of \cite{david_elliptic_2023}. Nonetheless, we will reproduce it for completeness.
\newline

Consider an integer $i$ such that $2^i>M$, and set $M'=M^{i+3}$. We can rescale $u$ and assume that $\fint_{8B} \NN (u\UN_B)\dmu=1$. We will prove by contradiction that 
\begin{equation*}
\sup_{B\cap\Omega} |u|
\leq M'; 
\end{equation*}
thus, let us assume that there exists $x_1\in B\cap\Omega$ such that $|u(x_1)|> M'$. We will prove by induction that for any integer $k\geq 1$,
\begin{equation}\label{eq:uNu_rec}
\texteq{there exists } x_k\in\Omega \texteq{ such that } |u(x_k)|>M^{i+2+k} \texteq{ and } x_k\in (\frac{3}{2}-2^{-k})B
\end{equation}
The first step is given by our assumption, so let us prove the induction. Take an integer $k\geq 1$ and assume \eqref{eq:uNu_rec} true. From the contraposition of \eqref{eq:uNu_prelim}, we deduce that $\delta(x_k)<\epsilon^{i+2+k}r$. Choose $\xi_k\in\dOmega$ such that $|x_k-\xi_k|=\delta(x_k)$. Using the induction hypothesis and the fact that $\epsilon\leq 1/2$, we have
\begin{equation*}
|\xi_k-\xi|
\leq \delta(x_k) + |x_k-\xi|
\leq \p{\frac{3}{2}-2^{-k}+2^{-2-k}}r,
\end{equation*}
which implies that
\begin{equation}\label{eq:uNu_incl_boules}
B(\xi_k,\epsilon^{2+k}r)\incl (\frac{3}{2}-2^{-k-1})B;
\end{equation}
in particular $B(\xi_k,\epsilon^{2+k}r)\incl 2B$. By iterating \eqref{eq:uNu_eps_Holder}, we have that 
\begin{equation*}
\sup_{B(\xi_k,\epsilon^{2+k}r)\cap\Omega} |u|
\geq 2^i\sup_{B(\xi_k,\epsilon^{i+2+k}r)\cap\Omega} |u|,
\end{equation*}
and since $x_k\in B(\xi_k,\epsilon^{i+2+k}r)$,
\begin{equation*}
\sup_{B(\xi_k,\epsilon^{2+k}r)\cap\Omega} |u|
\geq 2^i |u(x_k)|
>2^iM^{i+2+k}
>M^{i+2+k+1}.
\end{equation*}
We can thus find $x_{k+1}\in B(\xi_k,\epsilon^{2+k}r)\cap\Omega$ such that $|u(x_{k+1})|>M^{i+2+k+1}$. Furthermore, \eqref{eq:uNu_incl_boules} implies that $x_{k+1}\in (\frac{3}{2}-2^{-k-1})B$. This complete the induction step.
\newline

To sum up, we have proven that if there exists $x_1\in B$ such that $|u(x_1)|>M'$, then there exists a sequence $(x_k)$ in $\frac{3}{2}B\cap\Omega$ such that $|u(x_k)|$ goes to infinity. In this case, we can extract a subsequence to find a point in $\frac{3}{2}\bar B\cap\bOmega\incl 2B\cap\bOmega$ where $u$ is not continuous, which contradicts the (Hölder) continuity of the solution. Hence the desired result.
\end{proof}

Now we return to the proof of the main theorem.

\begin{proof}[Proof of Theorem \ref{th:DpRHp'}]

Our proof is inspired by Theorem 9.2 in \cite{mourgoglou_regularity_2023}; see also Theorem 1.3 in \cite{cao_absolute_2022}. We begin by proving the first statement: let us take $0<\kappa<1$, a ball $B=B(\xi,r)$ centered on $\dOmega$ and $x\in B$ such that $\delta(x)>\kappa r$. We also fix $g\in \CC_c(\dOmega)$ such that $\supp g \incl B$. We will first show that

\begin{equation}\label{eq:Dp_RHp':dual}
\int_\dOmega g\domega^x \siml \frac{1}{\mu(B)^{1/p}}\Lpd{g}.
\end{equation}
Write $u(x)=\int_\dOmega g\domega^x$. We fix $\alpha=2/\kappa$, so that $B\incl (1+\alpha) B_x$; since $u(x)\leq \NN^{(\alpha)}u(\zeta)$ for $\zeta\in (1+\alpha) B_x$, we have

\begin{equation*}
u(x)
\leq \fint_{(1+\alpha)B_x} \NN^{(\alpha)} u \dmu
\siml \frac{1}{\mu(B)^{1/p}} \|\NN u\|_p
\siml \frac{1}{\mu(B)^{1/p}} \|g\|_p,
\end{equation*} 
where we used the equivalence under change of parameter (Lemma \ref{lemma:eqvlce_NtAt}) for the second estimate and the solvability of $(D_p)$ for the third one. This gives us \eqref{eq:Dp_RHp':dual}.
\newline

Next, let us prove that $\omega\ll\mu$. Fix $x\in\Omega$, $\xi\in 2B_x\cap\dOmega$ and write $B=B(\xi,2\delta(x))$. Take a Borel set $E\incl\dOmega$ such that $\mu(E)=0$; we will prove that $\omega^x(E)=0$. Since $\mu$ and $\omega^x$ are Borel measures on the separable complete metric space $\dOmega$, they are regular. This means that if we take $\epsilon>0$, there exists $K\incl E\incl U\incl \dOmega$ with $K$ compact, $U$ open in $\dOmega$ such that $\mu(U\priv K) + \omega^x(U\priv K) <\epsilon$. By Urysohn's lemma, there exists $g\in\CC_c(\dOmega)$ such that $\UN_K\leq g \leq \UN_U$. Then, using \eqref{eq:Dp_RHp':dual}:

\begin{align*}
\omega^x(E)
\leq \epsilon + \omega^x(K)
\leq \epsilon + \int_\Omega g\domega^x
\siml \epsilon + \mu(B)^{-1/p} \|g\|_p
&\siml \epsilon + \mu(B)^{-1/p} \mu(U)^{1/p}\\
&\siml \epsilon + \mu(B)^{-1/p}\epsilon^{1/p};
\end{align*}
since the constant is independent of $\epsilon$, we obtain that $\omega^x(E)=0$.
\newline

Finally, now that we know that $\omega\ll\mu$, \eqref{eq:Dp_RHp':dual} gives us 
\begin{equation*}
\pfrac[\pp]{\int_\dOmega \abs{\frac{d\omega^x}{d\mu}}^\pp\dmu}
=\sup_{\|g\|_p\leq 1}\int_\dOmega \frac{d\omega^x}{d\mu} g \dmu
=\sup_{\|g\|_p\leq 1}\int_\dOmega g\domega^x
\siml \frac{1}{\mu(B)^{1/p}},
\end{equation*}
since the supremum can be taken by density on the set of functions $g\in\CC_c(\dOmega)$ satisfying $\|g\|_p\leq 1$. This implies \eqref{eq:RHp'}.
\newline

Now, let us prove the converse statement. Take $0<\kappa_0<1$ to be fixed later, and let us assume that $\omega\ll\mu$ and that the reverse Hölder inequality \eqref{eq:RHp'} holds for $\kappa_0$ and some $1<p<\infty$. Then, Proposition 1.1 in Chapter 5 of \cite{giaquinta_multiple_83} tells us that it also holds for some $q<p$, which only depends on $p$. Fix $g\in \CC_c(\dOmega)$, and consider $u$ the solution given by Theorem \ref{th:elliptic_measure^}. We can assume that $g\geq 0$: indeed, by the triangular inequality and the decomposition of $g$ in positive and negative parts, if we show that $\|\NN u\|_p\siml \|g\|_p$ then $(D_p)$ will be solvable. Take $\xi\in\dOmega$ and $x\in\gamma(\xi)$, and write $B=B(\xi,\delta(x))$. Consider $K>1$ to be fixed later, and take $\chi\in\CC^\infty_c(\R^n)$ such that $\UN_{4KB}\leq \chi\leq \UN_{8KB}$. We will then decompose $g=\chi g + (1-\chi)g =: g_0+g_1$, and consider the corresponding solutions $u_0$ and $u_1$; we have $u=u_0+u_1$, $u_0\geq 0$, and $u_1\geq 0$. First, let us estimate $u_0(x)$: using the Hölder inequality,

\begin{align}\label{eq:DpRHp'_est_near_x}
0\leq u_0(x)
=\int_{8KB} g_0 \frac{d\omega^x}{d\mu} \dmu
&\leq \pfrac[q]{\int_{8KB} |g|^q\dmu} \pfrac[\qp]{\int_{8KB} \abs{\frac{d\omega^x}{d\mu}}^\qp\dmu}\nonumber\\ 
&\siml \pfrac[q]{\fint_{8KB} |g|^q\dmu} 
\leq \MM_{\mu,q} g(\xi),
\end{align}
if we assume that \eqref{eq:RHp'} holds for $\kappa_0=\frac{1}{16K}$, and where $\MM_{\mu,q} g$ is the centered $L^q$ Hardy-Littlewood maximal function, defined by 
\begin{equation*}
\MM_{\mu,q} g(\xi):=\sup_{r>0} \pfrac[q]{\fint_{B(\xi,r)}|g|^q\dmu} \for \xi\in\dOmega.
\end{equation*}
Then, let us estimate $u_1(x)$: if we write $\MM_\mu:=\MM_{\mu,1}$ the centered Hardy-Littlewood maximal function on $L^1\loc(\dOmega,\mu)$,

\begin{equation}\label{eq:DpRHp'_est_far_x}
0\leq u_1(x)
\siml K^{-\eta} \pfrac{\fint_{KB} u_1 \dm}
\siml K^{-\eta} \fint_{8KB} \NN (u_1\UN_{KB}) \dmu
\siml K^{-\eta} \MM_\mu \NN (u_1\UN_{KB}) (\xi),
\end{equation}
where we used the fact that $u_1\in W_0(2KB\cap\Omega)$ (Theorem \ref{th:elliptic_measure^}) to apply the Hölder regularity estimate \eqref{eq:boundary_holder} for the first estimate, and Lemma \ref{lemma:uNu} for the second. Piecing together \eqref{eq:DpRHp'_est_near_x} and \eqref{eq:DpRHp'_est_far_x}, we obtain that

\begin{equation}
\NN u(\xi)
\leq C_K\MM_{\mu,q} g(\xi) + \frac{C}{K^\eta} \MM_\mu\NN (u_1\UN_{KB})(\xi) \for \xi\in\dOmega,
\end{equation}
which implies, using the $L^p$-boundedness of $\MM_{\mu,q}$ when $p>q$:

\begin{equation}\label{eq:Dp_RHp':Nu}
\|\NN u\|_p \leq C'_K\|g\|_p + \frac{C'}{K^\eta}\|\NN (u_1\UN_{KB})\|_p.
\end{equation}
Thanks to the Hölder regularity estimate \ref{eq:boundary_holder}, we have that $u_1\UN_{KB}\in L^\infty_c(\Omega,m)$. This implies by a simple estimate that $\NN(u_1\UN_{KB})\in L^\infty_c(\dOmega,\mu)\incl L^p(\dOmega,\mu)$. Then, \eqref{eq:Dp_RHp':Nu} give us that $\|\NN u\|_p$ is finite and
\begin{equation*}
\|\NN u\|_p \leq C'_K\|g\|_p + \frac{C'}{K^\eta}\|\NN u\|_p,
\end{equation*}
since $0\leq u_1\leq u$. Thus, we can choose $K$ big enough with respect to $C'$ so that $\|\NN u\|_p\siml \|g\|_p$, which proves the result with $\kappa_0=\frac{1}{16K}$.
\end{proof}

\subsection{Characterization using the Green function}

Now, we will use Theorem \ref{th:DpRHp'} to prove another characterization of the solvability of $(D_p)$, this time using estimates on the Green function. This is a generalized version of the end of Theorem 1.22 in \cite{mourgoglou_solvability_2023}.

\begin{lemma}\label{lemma:estimee_green}
Take $1<p<\infty$. If $(D_p)$ is solvable, then for any $0<\kappa<1$, any ball $B=B(\xi,r)$ centered on $\dOmega$ and any $x\in B$ satisfying $\delta(x) >\kappa r$, we have these two estimates:
\begin{equation}\label{eq:estimee_green_grad}
\Lpd{\NNt\p{\rho\grad G(x,\cdot)\UN_{B\priv B_x/2}}}
\siml \frac{C_\kappa}{\mu(B\cap\dOmega)^{1/p}},
\end{equation}
\begin{equation}\label{eq:estimee_green}
\Lpd{\NNt\p{\frac{\rho}{\delta} G(x,\cdot)\UN_{B\priv B_x/8}}}
\siml \frac{C_\kappa}{\mu(B\cap\dOmega)^{1/p}},
\end{equation}
where $C_\kappa>0$ depends only on $n, C_{crk}, C_\mu, C_m, C_\rho, C_{pc}, C_A,\kappa$ and the solvability constant in $(D_p)$. Conversely, if either one of these estimates holds for $\kappa_1:=\kappa_0/4$ (where $\kappa_0$ is given in Theorem \ref{th:DpRHp'}), then $(D_p)$ is solvable.

\end{lemma}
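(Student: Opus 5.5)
The plan is to pass through the reverse Hölder characterization of Theorem \ref{th:DpRHp'}. Both directions rest on one duality identity. By \eqref{eq:dual_Ntsup}, the norm $\|\NNt(\rho\grad G(x,\cdot)\UN_{B\priv B_x/2})\|_p$ is controlled by
\[
\sup_{\|\AAt F\|_{p'}\leq 1}\int_{B\priv B_x/2}\rho\,\grad_y G(x,y)\cdot F(y)\frac{\dm(y)}{\rho\delta}
=\sup \int \grad_y G(x,y)\cdot \frac{F}{\delta}\UN_{B\priv B_x/2}\dm .
\]
By Theorem \ref{th:Green_repr}, the last integral equals $v(x)$, where $v\in W_0(\Omega)$ solves $Lv=-\div(wF')$ with $F'=\delta^{-1}F\UN_{B\priv B_x/2}$. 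By density (Theorem \ref{th:densite_At}) we may take $F$ Lipschitz with compact support. The same holds for \eqref{eq:estimee_green}, with $f'=\delta^{-2}f\UN_{B\priv B_x/8}$ and $Lv=wf'$.

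\textbf{Direct implication.} We must bound $|v(x)|\lesssim \mu(B)^{-1/p}\|\AAt F\|_{p'}$, a pointwise bound on a Poisson--Dirichlet solution at the corkscrew point.

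First, use $G^*(y,x)=G(x,y)$: then $y\mapsto G(x,y)$ is an adjoint solution away from $x$ that vanishes on $\dOmega$. Decompose $B\priv B_x/2$ into Whitney balls $W$ with $2W\incl\Omega$. On each $W$, apply the interior Caccioppoli inequality \eqref{eq:int_caccio} to $G(x,\cdot)$. This gives
\[
\Big(\fint_W|\grad G|^2\Big)^{1/2}\lesssim \delta(W)^{-1}\sup_{2W}G(x,\cdot).
\]
Next, Proposition \ref{prop:comp_green_harmo} (applied to $G^*$, i.e. the adjoint elliptic measure, or directly in the form $G(x,y)\lesssim \frac{\delta(y)^2}{m(B_y)}\omega^x(C\Delta_y)$ via the maximum principle Lemma \ref{lemma:max_principle} run at scale $\delta(y)$) gives
\[
G(x,y)\lesssim \frac{\delta(y)^2}{m(B(y,\delta(y))\cap\Omega)}\,\omega^x(\Delta_y),
\]
with $\Delta_y$ a boundary ball of radius comparable to $\delta(y)$. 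This comparison step is exactly where I expect the main obstacle: we need $G(x,\cdot)$ near the boundary controlled by $\omega^x$ at the boundary point below $y$ (the usual CFMS-type estimate). Without Harnack chains only the upper bound is needed, so it should follow from the maximum principle comparison of Proposition \ref{prop:comp_green_harmo}, with the roles of the variables adapted and $\rho(y)\delta(y)\mu(\Delta_y)\simeq m(B_y)$ used.

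Summing over Whitney balls then yields
\[
|v(x)|\lesssim \int_{B}\Big(\fint_{B_y/2}|F|^2\Big)^{1/2}\frac{\omega^x(\Delta_y)}{\mu(\Delta_y)}\frac{\dm(y)}{\rho(y)\delta(y)}\cdots,
\]
and by Fubini this is $\lesssim\int_{CB}\AAt F\, k^x\dmu$, where $k^x=d\omega^x/d\mu$. Finish with Hölder and \eqref{eq:RHp'} (from Theorem \ref{th:DpRHp'}, with $B$ enlarged) to obtain $\|k^x\|_{L^{p'}(CB)}\lesssim\mu(B)^{-1/p}$. The version with $G$ itself is identical, without the Caccioppoli step.

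\textbf{Converse.} Assume the estimate for $\kappa_1$. We show \eqref{eq:RHp'} for $\kappa_0$, so that Theorem \ref{th:DpRHp'} applies. Take $g\in\CC_c(\dOmega)$ supported in $B$, $x$ with $\delta(x)>\kappa_0 r$, and a cutoff $\chi\in\CC^\infty_c$ equal to $1$ on $\supp g$ and supported in $2B$, with $\chi$ vanishing near $x$. Taking $g$ smooth (by density) and extended, the solution is $u=Ug$. Write $u-\chi g$ in $W_0$ as the solution of $L(u-\chi g)=\div(wA\grad(\chi g))$, so that
\[
u(x)=-\int \grad_yG(x,y)\cdot A\grad(\chi g)\dm .
\]
A cleaner route is a Whitney-scale smooth extension $\tilde g$ of $g$ with $\|\AAt(\delta\grad\tilde g)\|_{p'}$ replaced by the \emph{$\NNt$ side}. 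Here we need the pairing in the form \eqref{eq:dual_prodNtAt}, namely
\[
|u(x)|\le \int |\rho\grad G|\,|\delta\grad\tilde g|\frac{dm}{\rho\delta}\lesssim \|\NNt(\rho\grad G\UN)\|_p\,\|\AAt(\delta\grad \tilde g)\|_{p'},
\]
with $\|\AAt(\delta\grad\tilde g)\|_{p'}\lesssim\|g\|_{p'}$ for a Whitney (dyadic averaging) extension. The radius change from $\kappa_0$ to $\kappa_1=\kappa_0/4$ comes from enlarging $B$ to $4B$. Dualizing in $g$ then gives $\|k^x\|_{L^{p'}(B)}\lesssim\mu(B)^{-1/p}$, which is \eqref{eq:RHp'}. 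The boundedness of the extension operator into $\At^{p'}$ is the secondary technical point.
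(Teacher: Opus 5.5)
\textbf{The converse has a genuine gap.} It rests on the claim that a Whitney (dyadic-averaging) extension $\tilde g$ of $g$ satisfies $\|\AAt(\delta\nabla\tilde g)\|\lesssim\|g\|$. This is false, not a secondary technicality.

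$\AAt$ integrates over the cone in $L^1$ against $dm/m(B_y)$, which gives mass $\approx 1$ to every dyadic layer of the cone. So $\AAt(\delta\nabla\tilde g)(\xi)$ is an $\ell^1$-sum over scales of Littlewood--Paley-type differences of averages of $g$ near $\xi$, whereas $\|g\|_p$ only controls the $\ell^2$ (square-function) version. Already for $\Omega=\mathbb{R}^2_+$, take $g=\phi\sum_{k=1}^N\pm\cos(2^kx)$ with $\phi$ a bump. Then $\|g\|_p\approx\sqrt N$, while for suitable signs $\|\AAt(\delta\nabla\tilde g)\|_p\gtrsim N$. So you cannot test $\omega^x$ against a general $g$ this way.

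The paper instead works one boundary ball at a time. For $\zeta\in B$, $s<\delta(x)/4$ and $\chi$ a cutoff adapted to $B(\zeta,s)$, one has $\omega^x(B(\zeta,s))\le\int\chi\,d\omega^x=-\int\nabla_yG(x,y)\cdot A\nabla\chi\,dm\lesssim s^{-1}\int_{2B(\zeta,s)}|\nabla_yG(x,\cdot)|\,dm\lesssim\int_{16B(\zeta,s)}\NNt(\rho\nabla G(x,\cdot)\UN_{2B\setminus B_x/2})\,d\mu$. The pairing closes here because $\AAt(\delta\UN_{2B(\zeta,s)})\lesssim s$ and is supported in $16B(\zeta,s)$: the factor $\delta$ makes the sum over scales geometric. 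This bounds the maximal function of $\omega^x$ at scales below $\delta(x)/4$ by $\MM_\mu(\NNt(\cdots))$ on $B$, which gives both $\omega^x\ll\mu$ and \eqref{eq:RHp'}.

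To get $(D_p)$ from the non-gradient estimate \eqref{eq:estimee_green}, you also need the Caccioppoli step: \eqref{eq:estimee_green} for $\kappa$ implies \eqref{eq:estimee_green_grad} for $4\kappa$. That step, not an enlargement of $B$, is where $\kappa_1=\kappa_0/4$ comes from.

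\textbf{Your exponents are also inconsistent.} The norms in the lemma have to be $L^{p'}(\partial\Omega)$ norms; the $L^p$ in the display is a typo. Indeed $\frac{\rho}{\delta}G(x,\cdot)\approx\mu(B)^{-1}$ on $B_x/4\setminus B_x/8$, so in general only an $L^{p'}$ norm can be $\lesssim\mu(B)^{-1/p}$. The paper's proof, and its later use with $q'$ in $(i)\Rightarrow(iii)$, are in $L^{p'}$.

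In your direct part you dualize against $\|\AAt F\|_{p'}\le 1$ but close with the $L^{p'}$ bound on $k^x$, which pairs with $\|\AAt F\|_p$. In the converse, your duality would yield an $L^p$ bound on $k^x$, not an $L^{p'}$ bound.

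With the $\NNt$ side in $L^{p'}$ and $\|\AAt F\|_p\le 1$, your direct implication is correct. It is a dualized form of the paper's argument, which bounds $\NNt(\frac{\rho}{\delta}G(x,\cdot)\UN_{B\setminus B_x/8})(\zeta)\lesssim\MM_\mu(\omega^x|_{16B})(\zeta)$ pointwise using Harnack and Proposition \ref{prop:comp_green_harmo}. It then uses the $L^{p'}$-boundedness of $\MM_\mu$ and \eqref{eq:RHp'}, and obtains \eqref{eq:estimee_green_grad} from \eqref{eq:estimee_green} by Caccioppoli. Proposition \ref{prop:comp_green_harmo} is exactly the CFMS-type bound you flagged as the obstacle, already in the required form and without Harnack chains.
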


\begin{proof}
\textbf{First step.} We begin by proving that if \eqref{eq:estimee_green} holds for some $0<\kappa<1/4$, then \eqref{eq:estimee_green_grad} holds for $4\kappa$. Fix $0<\kappa<1$ and write $\lambda=\kappa/32$. Take $B=B(\xi,r)$ centered on $\dOmega$ and $x\in B$ such that $\delta(x)>4\kappa r$ ; for any $\zeta\in \dOmega$ and $z\in\gamma(\zeta)$, 

\begin{align*}
\pfrac{\fint_{\lambda B_z} |\rho(y)\grad_y G(x,y)\UN_{B\priv B_x/2}(y)|^2\dm(y)}
&\siml \rho(z)\UN_{2B\priv  B_x/4}(z) \pfrac{\fint_{\lambda B_z} |\grad_yG(x,y)|^2\dm(y)}\\
&\siml \rho(z)\UN_{2B\priv  B_x/4}(z) \frac{1}{\delta(z)}\pfrac{\fint_{2\lambda B_z} |G(x,y)|^2\dm(y)}\\
&\siml \pfrac{\fint_{2\lambda B_z} \abs{\frac{\rho(y)}{\delta(y)}G(x,y)\UN_{4B\priv B_x/8}(y)}^2\dm(y)},
\end{align*}
Which directly implies the desired result. For the first estimate, we use that $\rho(y)\simeq\rho(z)$ and $\delta(y)\simeq\delta(z)$ for $y\in\lambda 2B_z$. We also use that if $y\in\lambda B_z\cap B\priv B_x/2$, then $z\in 2B\priv B_x/4$ since $r>|x-\xi|>\delta(x)>4\kappa r$. For the second estimate, we use that $x\notin\lambda B_z$ if $z\in 2B\priv B_x/4$, since $\delta(x)>4\kappa r$; thanks to Theorem \ref{th:Green}, this allow us to use the Caccioppoli inequality \eqref{eq:int_caccio}. The third estimate uses computations analogous to the ones needed for the first estimate.
\newline

\textbf{Second step}. Now, let us assume that $(D_p)$ is solvable, and prove \eqref{eq:estimee_green}. By the result of the first step, this will also imply \eqref{eq:estimee_green_grad}. Fix $0<\kappa<1$ and write $\lambda=\kappa/32$. Take $B=B(\xi,r)$ centered on $\dOmega$, $x\in B$ such that $\delta(x)>\kappa r$, $\zeta\in\dOmega$ and $z\in\gamma(\zeta)$.
\newline

First, we can assume that $z\in 2B\priv B_x/16$, since the integral below is null otherwise. Using the Harnack inequality \eqref{eq:harnack},
\begin{equation*}
\pfrac{\fint_{\lambda B_z} \abs{\frac{\rho(y)}{\delta(y)} G(x,y) \UN_{B\priv B_x/8}(y)}^2\dm(y)}
\siml \frac{\rho(z)}{\delta(z)}G(x,z).
\end{equation*}
Then, since $x\notin \lambda B_z$, we can use \eqref{eq:comp_green_harmo} with the ball $B_\zeta=B(\zeta,16\delta(z))$, giving us
\begin{equation*}
\frac{\rho(z)}{\delta(z)} G(x,z)
\siml\frac{\rho(z)\delta(z)}{m(B_\zeta)}\omega^x(B_\zeta)
\siml \frac{\omega^x(B_\zeta)}{\mu(B_\zeta)}
\siml \MM_\mu \omega^x_{|16B}(\zeta),
\end{equation*}
where in the last estimate we used the fact that $B_\zeta\incl 16B$. Finally, since $(D_p)$ is solvable:
\begin{equation*}
\|\NNt \left(\frac{\rho}{\delta}G(x,\cdot)\UN_{B\setminus B_x/8}\right)\|_\pp
\siml \Npp{\MM_\mu\omega^x_{|16B}}
\siml \pfrac[\pp]{\int_{16B} \abs{\frac{d\omega^x}{d\mu}}^\pp\dmu}
\siml \frac{1}{\mu(B)^{1/p}},
\end{equation*}
which is the desired estimate.
\newline

\textbf{Third step.} Finally, we prove the converse statement. Thanks to the first step, it is enough to prove that \eqref{eq:estimee_green_grad} with $\kappa=\kappa_0$ implies $(D_p)$ is solvable. Take $B=B(\xi,r)$ centered on $\dOmega$ and $x\in B$ satisfying $\delta(x)>\kappa_0 r$. Thanks to Theorem \ref{th:DpRHp'}, it is enough to prove that
\begin{equation}\label{eq:RHp'_fct_max}
\|\MM_\mu^x \omega^x\|_{L^{p'}(B,\mu)} \siml \frac{1}{\mu(B)^{1/p}},
\end{equation}
where $\MM_\mu^x\omega^x$ is the restricted maximal function of $\omega^x$ defined by
\begin{equation*}
\MM_\mu^x\omega^x(\zeta) = \sup_{0<s<\delta(x)/4} \frac{\omega^x(B(\zeta,s))}{\mu(B(\zeta,s))} \for \zeta\in\dOmega.
\end{equation*}
Indeed, if $\MM_\mu^x\omega^x$ is finite almost everywhere, then $\omega^x\ll\mu$ (see \cite{mattila_geometry_95}, Theorem 2.12), and \eqref{eq:RHp'_fct_max} implies \eqref{eq:RHp'}, since $\frac{d\omega^x}{d\mu}\leq \MM_\mu^x\p{\frac{d\omega^x}{d\mu}}=\MM_\mu^x\omega^x$ by Lebesgue's differentiation theorem.
\newline

Thus, we consider $\zeta\in B\cap\dOmega$, $0<s<\delta(x)/4$ and write $B_\zeta=B(\zeta,s)$. Then, we can take $\chi\in\CC^\infty_c(\R^n)$ such that $\chi=1$ in $B_\zeta$, $\chi=0$ outside of $2B_\zeta$, and $\|\grad\chi\|_\infty\siml\frac{1}{s}$. We first have:
\begin{equation*}
\omega^x(B_\zeta)
\leq \int_{\dOmega} \chi \domega^x =: u(x),
\end{equation*}
and if we set 
\begin{equation*}
v(x):=\int_\Omega A(y)^T \grad_y G(x,y)\cdot\grad\chi(y)\dm(y)
=\int_\Omega \grad_y G(x,y)\cdot A(y)\grad\chi(y)\dm(y),
\end{equation*}
we have that $v=\chi-u$. Indeed, by theorem \ref{th:Green_repr}, $v$ is in $W_0(\Omega)$ and is solution of $Lv=-\div(wA\grad\chi)$. On the other hand, by Theorem \ref{th:elliptic_measure^} and Proposition \ref{prop:solution_W_chi}, $\chi-u$ is also in $W_0(\Omega)$ and solution of $Lu=-\div(wA\grad\chi)$.  By uniqueness in Theorem \ref{th:solution_W}, we have the equality.
\newline

In particular, since $x\notin 4B_\zeta$, $v(x)=-u(x)$; hence
\begin{align*}
\omega^x(B_\zeta)
&\leq -\int_\Omega A(y)^T \grad_y G(x,y)\cdot\grad\chi(y)\dm(y)\\
&\siml \frac{1}{s}\int_{2B_\zeta} |\grad_y G(x,y)|\dm(y)\\
&\siml \frac{1}{s}\int_{2B_\zeta} \rho(y)|\grad_y G(x,y)|\UN_{2 B\priv B_x/2}(y)\delta(y)\frac{\dm(y)}{\rho(y)\delta(y)}\\
&\siml \int_{16B_\zeta} \NNt \p{\rho\grad_yG(x,\cdot)\UN_{2B\priv B_x/2}} \dmu,
\end{align*}
where for the last estimate we used duality from \eqref{eq:dual_prodNtAt} and the fact that $\AAt(\delta\UN_{2B})\siml s$ on $\dOmega$. This yields:
\begin{equation*}
\MM^x_\mu \omega^x(\zeta)
\siml \MM_\mu\p{\NNt \left(\rho\grad_yG(x,\cdot)\UN_{2B\setminus B_x/2}\right)}(\zeta) \for \zeta\in B \cap \dOmega;
\end{equation*}
passing to the $L^\pp$ norm, we can finally use \eqref{eq:estimee_green_grad} to obtain
\begin{equation*}
\|\MM^x_\mu \omega^x\|_{L^{p'}(\Lambda B,\mu)}
\siml \|\NNt \left(\rho\grad_yG(x,\cdot)\UN_{2B\setminus B_x/2}\right) \|_\pp
\siml \unsur{\mu(B)^{1/p}},
\end{equation*}
which is the desired estimate \eqref{eq:RHp'_fct_max}.

\end{proof}

%%%%%%%%%%%%%%%%%%%%%%%%%%%%%%%%%%%%%%%%%%%%%%%%%%%%%%%%%%%%%%%%%%%%%
%PD_p
%%%%%%%%%%%%%%%%%%%%%%%%%%%%%%%%%%%%%%%%%%%%%%%%%%%%%%%%%%%%%%%%%%%%%

\section{Solvability of the Poisson-Dirichlet problem}\label{s:PDp}

We will now turn to our main subject and study the solvability of $(PD_p)$. Our main goal here is to give a proof of Theorem \ref{th:eqvlce_DPD}. As a corollary, if the equivalent conditions of this theorem are satisfied, we will obtain the existence of solutions with general boundary data $g\in L^p(\dOmega,\dmu)$ and general interior data $f$ and $F$ belonging to suitable tent spaces.

\subsection{Proof of Theorem \ref{th:eqvlce_DPD}}
We now have enough elements to tackle the proof of the main theorem. Notice that the implications $(ii)\Rightarrow(iii)$, $(ii)\Rightarrow(iv)$, and $(v)\Rightarrow(vi)$ are automatic. We will prove $(i)\Leftrightarrow(iii)$, $(i)\Leftrightarrow(iv)$ (giving us $(i)\Leftrightarrow(ii)$ by linearity), $(vi)\Rightarrow(iv)$ and $(iv)\Rightarrow(v)$.
\newline

\textbf{Proof of $(i) \Rightarrow (iii)$.} The principle of this proof is similar to the one of the converse statement in Theorem \ref{th:DpRHp'} : we decompose our solution to estimate it near the pole of the Green function, then near the vertex of a cone on the boundary, and finally far from this vertex.
\newline

Fix $f\in L^\infty_c(\Omega,m)$, and consider the solution $u$ of
\begin{equation}
\begin{cases}
Lu=wf &\texteq{in } \Omega\\
u=0 &\texteq{on } \dOmega
\end{cases}
\end{equation}
given by Theorem \ref{th:solution_W}. We can assume that $f\geq 0$: indeed, by the triangular inequality and the decomposition of $f$ in positive and negative parts, if we show that $\|\NNt u\|_p\siml \|\AAt(\delta^2f)\|_p$ then $(PD_p)$ will be solvable. We will estimate $\NNt^{(1,1/16)} u (\xi) = \sup_{x\in\gamma(\xi)} \pfrac{\fint_{B_x/16} |u|^2\dm}$ for $\xi\in\dOmega$.
\newline

Take $K\geq 2$ to be fixed later. Fix $\xi\in\dOmega$ and $x\in\gamma(\xi)$, and write $B=B(\xi,4\delta(x))$. Next, decompose $f$ into $f=f\UN_{B_x/8} + f\UN_{KB\priv B_x/8} + f\UN_{\Omega\priv KB}=:f_0+f_1+f_2$, and write $u_0,u_1,u_2$ the corresponding solutions: $u=u_0+u_1+u_2$. We will estimate each term independently.
\newline

First, let us estimate $\pfrac{\fint_{B_x/8} |u_0|^2\dm}$. Remark that
\begin{align*}
\int_{B_x/8} |u_0|^2\dm
\leq \int_{B} |u_0|^2\dm
&\siml\delta(x)^2\int_\Omega|\grad u_0|^2 \dm\\
&\siml \delta(x)^2\int_\Omega A\grad u_0\cdot\grad u_0\dm\\
&\siml\delta(x)^2\int_{B_x/8} fu_0\dm\\
&\siml \pfrac{\int_{B_x/8} |\delta^2f|^2 \dm} \pfrac{\int_{B_x/8} |u_0|^2 \dm},
\end{align*}
using the boundary Poincaré inequality \eqref{eq:boundary_poincare}; hence

\begin{align*}
\pfrac{\fint_{B_x/16} |u_0|^2\dm}
\siml \pfrac{\fint_{B_x/8} |u_0|^2\dm}
&\siml \pfrac{\fint_{B_x/8} |\delta^2 f|^2\dm}\\
&\siml \int_{B_x/8} \pfrac{\fint_{B_x/8}|\delta^2 f|^2\dm} \frac{dm(y)}{m(B_y)}\\
&\siml \int_{B_x/8} \pfrac{\fint_{B_y/2}|\delta^2 f|^2\dm} \frac{dm(y)}{m(B_y)}\\
&\siml \AAt^{(2,1/2)} (\delta^2 f)(\xi).
\end{align*}

Then we estimate $u_1$ on $B_x/16$. Take $q<p$ such that $(D_q)$ is solvable. For any $z\in B_x/16$:
\begin{align*}
|u_1(z)|
=\abs{\int_{KB\priv B_x/8} G(z,y) f(y) \dm(y)}
&\siml \int_{KB\priv B_z/32} \abs{\frac{\rho(y)}{\delta(y)} G(z,y)}|\delta(y)^2f(y)|\frac{\dm(y)}{\rho(y)\delta(y)}\\
&\siml \Nqp{\NNt\p{\frac{\rho}{\delta} G(z,\cdot)\UN_{KB\priv B_z/32}}} \Nq{\AAt(\delta^2 f\UN_{KB})}\\
&\siml \frac{1}{\mu(KB)^{1/q}}\pfrac[q]{\int_{8KB}\AAt(\delta^2 f)^q\dmu}\\
&\siml \pfrac[q]{\fint_{8KB} \AAt(\delta^2 f)^q\dmu}
\leq \MM_{\mu,q}\AAt(\delta^2 f)(\xi),
\end{align*}
where we used Theorem \ref{th:Green_repr} for the first equality, duality from \eqref{eq:dual_prodNtAt} for the second estimate, and Lemma \ref{lemma:estimee_green} for the third one. Here, the constant depends on $K$.
\newline

Finally, we estimate $u_2$ on $B_x/16$: for any $z\in B_x/16$,
\begin{equation*}
|u_2(z)|
\siml K^{-\eta} \fint_{KB} |u_2|\dm
\siml K^{-\eta} \fint_{8KB} \NNt (u\UN_{KB})\dmu
\siml K^{-\eta} \MM_\mu \NNt (u\UN_{KB}) (\xi), 
\end{equation*}
where we used the Hölder regularity estimate \eqref{eq:boundary_holder} and Lemma \ref{lemma:uNu}, and the fact that $u_2\leq u$.
\newline

Now, we can piece together our three estimates, which yields
\begin{equation*}
\NNt^{(1,1/16)} u(\xi)
\leq  C_K \p{\AAt^{(2,1/2)}(\delta^2f)(\xi)
+ \MM_{\mu,q}\AAt(\delta^2f)(\xi) }
+ \frac{C}{K^\eta}\MM\NNt (u\UN_{KB}) (\xi) \for \xi\in\dOmega.
\end{equation*}
This implies, using the $L^p$-boundedness of $M_{\mu,q}$ when $p>q$ and the equivalence under change of parameters (Lemma \ref{lemma:eqvlce_NtAt}):
\begin{equation}\label{eq:DPD:Ntu}
\|\NNt u\|_p
\leq C'_K\|\AAt(\delta^2f)\|_p + \frac{C'}{K^\eta}\|\NNt(u\UN_{KB})\|_p.
\end{equation}
Thanks to the boundary Hölder estimate \ref{eq:boundary_holder}, we have that $u\UN_{KB}\incl L^\infty_c(\bOmega,m)$. This implies by a simple estimate that $u\in L^\infty_c(\dOmega,\mu)\incl L^p(\dOmega,\mu)$. Then, \eqref{eq:DPD:Ntu} give us that $\|\NNt u\|_p$ is finite and
\begin{equation*}
\|\NNt u\|_p
\leq C'_K\|\AAt(\delta^2f)\|_p + \frac{C'}{K^\eta}\|\NNt u\|_p.
\end{equation*}
Thus, we can choose $K$ big enough with respect to $C'$ so that $\|\NNt u\|_p \siml \|\AAt(\delta^2f)\|_p$, which is the desired estimate.
\newline

\textbf{Proof of $(i)\Rightarrow (iv)$.}
The proof for $-\div(wF)$ is strongly analogous to the one for $wf$; we will only highlight the small adjustments required. Keeping the same notations, we decompose $F:=F_0+F_1+F_2$ in exactly the same way, giving us a corresponding decomposition of the solution $u=u_0+u_1+u_2$. 
\newline 

For the estimate on $u_0$, we only need to notice that 
\begin{equation*}
\int_{B_x/8} |\grad u_0|^2\dm 
\siml \int_\Omega A\grad u_0\cdot\grad u_0\dm
=\int_{B_x/8} F\cdot\grad u_0\dm 
\end{equation*} 
implies
\begin{equation*}
\pfrac{\int_{B_x/8} |\grad u_0|^2\dm}
\siml \pfrac{\int_{B_x/8} |F|^2\dm }.
\end{equation*}
We can now use the boundary Poincaré inequality and conclude with the same calculation. For $u_1$, the only change is that we now use \eqref{eq:estimee_green_grad} instead of \eqref{eq:estimee_green}. Finally, no change is needed to estimate $u_2$, and we can conclude in the exact same manner to finish the proof.
\newline

\textbf{Proof of $(iii)\Rightarrow(i)$.}
To obtain the result, it is enough to prove the estimate \eqref{eq:estimee_green} for $\kappa_1$. Consider $B=B(\xi,r)$ centered on $\dOmega$ and $x\in B$ satisfying $\delta(x)>\kappa_1 r$. Using the duality estimate \eqref{eq:dual_Ntsup},
\begin{align*}
\|\NNt \left(\frac{\rho}{\delta}G(x,\cdot)\UN_{B\setminus B_x/2}\right)\|_\pp
&\siml \sup_{\|\AAt(\ft)\|_p\leq 1} \int_\Omega \frac{1}{\delta(y)^2}G(x,y)\UN_{B\setminus B_x/2}(y) \ft(y)\dm(y)\\
&\siml \sup_{\|\AAt(\delta^2f)\|_p\leq 1} \int_\Omega G(x,y)\UN_{B\setminus B_x/2}(y) f(y)\dm(y),
\end{align*}
We know thanks to Theorem \ref{th:densite_At} that $L^\infty_c(\Omega)$ is dense in $\At^p(\Omega)$, and $G\geq 0$, so the inequality is still true if the supremum on the right hand side only runs over functions $f\geq 0$ in $L^\infty_c(\Omega)$ such that $\|\AAt (\delta^2f)\|_p\leq 1$; take one of these functions $f$. Then

\begin{equation*}
\int_\Omega G(x,y)\UN_{B\setminus B_x/2}(y) f(y)\dm(y)
=u(x),
\end{equation*}
where $u$ is the solution in $\Omega$ of 
\begin{equation*}
\begin{cases}
Lu=wf\UN_{B\setminus B_x/2} &\texteq{in } \Omega\\
u=0 &\texteq{on } \dOmega.
\end{cases}
\end{equation*}
Since $Lu=0$ in $B_x/2$, we can use the Moser estimate \eqref{eq:int_moser} and obtain  
\begin{equation*}
u(x)
\siml\fint_{B_x/2} u\dm
\leq \pfrac{\fint_{B_x/2} u^2\dm}
\leq \NNt u(\zeta) \for \zeta\in 2B_x\cap\dOmega,
\end{equation*}
and since $(PD_p)$ is solvable when $F=0$, 
\begin{equation*}
u(x)
\siml \fint_{2B_x\cap\dOmega} \NNt u\dmu
\leq \frac{1}{\mu(2B_x)^{1/p}} \Np{\NNt u}
\siml \frac{1}{\mu(B)^{1/p}} \Np{\AAt (\delta^2f)}
\leq \frac{1}{\mu(B)^{1/p}},
\end{equation*}

which is the desired estimate.
\newline

\textbf{Proof of $(iv)\Rightarrow(i)$.}
The proof is the same as the proof of $(iii)\Rightarrow(i)$ except the fact that we use \eqref{eq:estimee_green_grad} instead of \eqref{eq:estimee_green} to obtain that $(D_p)$ is solvable.
\newline

\textbf{Proof of $(vi)\Rightarrow(iv)$.} This proof and the following ones are the same as in Theorem 1.22 in \cite{mourgoglou_solvability_2023}. Take $F\in L^\infty_c(\Omega,m)$, and consider $u\in W_0$ solution to $Lu=-\div (wF)$. using the duality estimate \eqref{eq:dual_Ntsup}, 
\begin{equation*}
\Np{\NNt u}
\siml \sup_{\|\AAt\tilde h\|_\pp\leq 1} \int_\Omega u\tilde h\frac{\dm}{\rho\delta} 
= \sup_{\Npp{\AAt(\rho\delta h)}\leq 1} \int_\Omega uh\dm. 
\end{equation*}
By density (Theorem \ref{th:densite_At}), it is enough to estimate the term in the supremum for any $h\in L^\infty_c(\Omega,m)$ such that $\Npp{\AAt(\rho\delta h)}\leq 1$. Take one such $h$, and consider $u^*\in W_0$ solution to $L^*u^*=wh$. Using the definition of weak solutions:
\begin{equation*}
\int_\Omega uh \dm
=\int_\Omega A^T\grad u^*\cdot\grad u \dm
=\int_\Omega A\grad u\cdot\grad u^*\dm
=\int_\Omega F\cdot\grad u^*\dm,
\end{equation*}
and we can estimate:
\begin{equation*}
\int_\Omega F\cdot\grad u^*\dm
\siml \Np{\AAt(\delta F)} \Npp{\NNt(\rho\grad u^*)}
\siml \Np{\AAt(\delta F)} \Npp{\AAt(\rho\delta h)}
\siml \Np{\AAt(\delta F)},
\end{equation*}
where we used the solvability of $(PD^*_\pp)$ for $wf$. This gives us that $\Np{\NNt u}\siml \Np{\AAt(\delta F)}$.
\newline

\textbf{Proof of $(iv)\Rightarrow(v)$.} If we take $f\in L^\infty_c(\Omega,m)$ and $u^*\in W_0$ solution to $L^*u^*= wf$, we can again use duality:

\begin{equation*}
\Npp{\NNt(\rho\grad u^*)}
\siml \sup_{\Np{\AAt(\delta H)}\leq 1} \int_\Omega \grad u^*\cdot H\dm.
\end{equation*}
If we take $H\in L^\infty_c(\Omega,m)$ such that $\Np{\AAt(\delta H)}\leq 1$ and consider $u\in W_0$ solution to $Lu=-\div(wH)$, we can estimate in the same way to obtain

\begin{equation*}
\int_\Omega \grad u^*\cdot H\dm
=\int_\Omega fu\dm
\siml \Npp{\AAt(\rho\delta f)}\Np{\NNt u}
\siml \Npp{\AAt(\rho\delta f)}\Np{\AAt(\delta H)}
\siml \Npp{\AAt(\rho\delta f)}.
\end{equation*}
From this we conclude that $(PR^*_\pp)$ is solvable with $F=0$.
\newline

Now, let us prove that $(PR^*_\pp)$ is solvable for $-\div(wF)$. Take $F\in L^\infty_c(\Omega,m)$, and consider $u^*\in W_0$ solution to $L^*u^*=-\div(wF)$. Again, by duality,
\begin{equation*}
\Npp{\NNt(\rho\grad u^*)}
\siml \sup_{\Np{\AAt(\delta H)}\leq 1} \int_\Omega \grad u^*\cdot H\dm.
\end{equation*}
Let us take $H\in L^\infty_c(\Omega,m)$ such that $\Np{\AAt(\delta H)}\leq 1$ and consider $u\in W_0$ solution to $Lu=-\div(wH)$. Then:
\begin{equation*}
\int_\Omega \grad u^*\cdot H\dm
=\int_\Omega F\cdot \grad u\dm
\siml \Npp{\AAt(\rho F)} \Np{\NNt(\delta\grad u)}.
\end{equation*}

Let us show that $\Np{\NNt(\delta\grad u)}\siml 1$; then we will have our result. Take $\xi\in\dOmega$ and $x\in\gamma(\xi)$. Using the Caccioppoli estimate \eqref{eq:int_caccio}, we can obtain
\begin{align*}
\pfrac{\fint_{B_x/16} |\delta\grad u|^2\dm}
&\siml \delta(x)\pfrac{\fint_{B_x/16} |\grad u|^2\dm}\\
&\siml \pfrac{\fint_{B_x/8} |u|^2\dm} + \delta(x)\pfrac{\fint_{B_x/8} |H|^2\dm}.
\end{align*}
We estimate the second term:
\begin{align*}
\delta(x)\pfrac{\fint_{B_x/8} |H|^2\dm}
\siml \pfrac{\fint_{B_x/8} |\delta H|^2\dm}
&\siml \int_{B_x/8} \pfrac{\fint_{B_x/8} |\delta H|^2\dm} \frac{\dm(y)}{m(B_y)}\\
&\siml \int_{B_x/8} \pfrac{\fint_{B_y/2} |\delta H|^2\dm} \frac{\dm(y)}{m(B_y)}\\
&\siml \AAt^{(2,1/2)}(\delta H)(\xi).
\end{align*}
This give us the estimate
\begin{equation*}
\NNt^{(1,1/16)} (\delta\grad u) (\xi)
\siml \NNt^{(1,1/8)} u (\xi) + \AAt^{(2,1/2)}(\delta H)(\xi) \for \xi\in\dOmega;
\end{equation*}
thus, since $(PD_p)$ is solvable with $f=0$,
\begin{equation*}
\Np{\NNt(\delta\grad u)}
\siml \Np{\NNt u} + \Np{\AAt(\delta H)}
\siml \Np{\AAt(\delta H)}
\leq 1,
\end{equation*}
which concludes the proof.

\subsection{Existence of solutions with general data}

We conclude this paper by proving the existence of solutions with general boundary data $g\in L^p(\dOmega,\dmu)$ and general interior data $f$ and $F$ belonging to suitable tent spaces.

\begin{corollary}\label{cor:DPD_general}
Assume that (H1)-(H6) are satisfied. Let $1<p<\infty$. If any of the equivalent conditions of Theorem \ref{th:eqvlce_DPD} holds, then there exists $C>0$ such that for any functions $g\in L^p(\dOmega,\mu)$, $f\in L^2\loc(\Omega,m)$ satisfying $\|\AAt(\delta^2 f)\|_p<\infty$ and $F\in L^2\loc(\Omega,m)^n$ satisfying $\|\AAt(\delta F)\|_p<\infty$, there exists a weak solution $u\in W\loc(\Omega)$ to
\begin{equation}\label{eq:Poisson_Dirichlet_géné}
\begin{cases}
Lu=wf-\div(wF) &\texteq{in }\Omega\\
u=g &\texteq{on }\dOmega
\end{cases}
\end{equation}
satisfying the estimate
\begin{equation}\label{eq:pd_estimate}
\Lpd{\NNt u} \leq C\cro{\Lpd{g} + \Lpd{\AAt(\delta^2f)}+ \Lpd{\AAt(\delta F)}}.
\end{equation}
\end{corollary}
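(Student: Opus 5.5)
By linearity, I will solve separately for the boundary datum $g$ and for the interior data $(f,F)$, and then add the two solutions. In each case I first solve for dense data and then pass to a limit.

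\textbf{Boundary part.} Take $g\in L^p(\dOmega,\mu)$ and choose $g_k\in\CC_c(\dOmega)$ with $g_k\to g$ in $L^p$; this is possible because $\mu$ is a doubling Radon measure. Let $u_k(x)=\int_\dOmega g_k\domega^x$. By $(D_p)$, $\|\NN(u_k-u_\ell)\|_p\lesssim\|g_k-g_\ell\|_p$. For any ball $B'$ with $2B'\incl\Omega$, $\sup_{B'}|u_k-u_\ell|$ is bounded by $\NN(u_k-u_\ell)$ averaged over a boundary ball of radius comparable to $\delta$, which is in turn controlled by $\mu(\cdot)^{-1/p}\|g_k-g_\ell\|_p$. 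Hence $(u_k)$ converges locally uniformly in $\Omega$. The interior Caccioppoli inequality \eqref{eq:int_caccio} then makes $(\grad u_k)$ Cauchy in $L^2$ on compact sets, so the completeness property (Proposition \ref{prop:W_complete}, applied on interiors of compacts) gives a limit $u^g\in W\loc(\Omega)$ solving $Lu^g=0$. Fatou/lower semicontinuity then yields $\|\NN u^g\|_p\lesssim\|g\|_p$, and hence the same bound for $\NNt$. The boundary condition $u^g=g$ is understood in the non-tangential sense. I would prove it by the standard maximal-function argument: the set where $\limsup|u^g-g|>\lambda$ is contained in the set where $\NN(u^g-u_k)+|g-g_k|>\lambda$, whose measure is at most $\lambda^{-p}\|g-g_k\|_p^p\to0$.

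\textbf{Interior part.} The quantities $\|\AAt(\delta^2f)\|_p$ and $\|\AAt(\delta F)\|_p$ are finite, so Theorem \ref{th:densite_At} (applied to $\delta^2 f$ and $\delta F$, using that multiplication by powers of $\delta$ preserves compact support in $\Omega$ and boundedness) gives $f_k,F_k\in L^\infty_c$ with $\AAt(\delta^2(f-f_k))\to0$ and $\AAt(\delta(F-F_k))\to0$ in $L^p$. Let $u_k\in W_0(\Omega)$ be the Lax–Milgram solutions. By $(PD_p)$ (condition (ii)), $\|\NNt(u_k-u_\ell)\|_p\to0$. This again gives $L^2$ convergence on each $B_x/2$, with bounds depending only on $\delta(x)$ and $\mu$ of a boundary ball. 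Combining \eqref{eq:int_caccio} with the local $L^2$ smallness of $\delta^2(f_k-f_\ell)$ and $\delta(F_k-F_\ell)$, which is controlled pointwise by $\AAt$ on a boundary ball, I get $\grad u_k$ Cauchy in $L^2\loc$. The limit $u^{f,F}\in W\loc(\Omega)$ is then a weak solution of $Lu=wf-\div(wF)$, because the right-hand sides converge in $L^2\loc$ (for $f$ one needs $L^{2_*}\loc$, which follows from $L^2\loc$ and the doubling property). Lower semicontinuity of $\NNt$ gives the bound by $\|\AAt(\delta^2f)\|_p+\|\AAt(\delta F)\|_p$. The boundary value zero holds in the averaged non-tangential sense by the same exceptional-set argument, combined with \eqref{eq:cv_nt_W0} for each $u_k$.

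\textbf{Main obstacle.} The delicate step is the passage from tent-space convergence of the data to local convergence in $\Omega$ of $f_k$ in the norm needed for Caccioppoli; the pointwise control $\fint_{B_x/2}|h|^2\dm\lesssim$ (a boundary average of $\AAt h$)$^2$ handles this. The precise sense in which $u=g$ on $\dOmega$ should also be fixed: I would state it as non-tangential convergence of $\fint_{B_x/2}u\dm$ to $g$ $\mu$-a.e.
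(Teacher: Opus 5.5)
Your proposal is correct and follows the paper's proof. You split by linearity into the cases $f=F=0$ and $g=0$, approximate the data (by $\CC_c(\dOmega)$ functions, respectively via Theorem \ref{th:densite_At}), and use $(D_p)$ or $(PD_p)$ to get a Cauchy sequence in the tent norm. You then upgrade to $W\loc$ convergence through the interior Caccioppoli inequality and Proposition \ref{prop:W_complete}, and recover the boundary values $\mu$-a.e.

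The only differences are minor. You work with $\NN$, locally uniform convergence and Fatou, where the paper uses completeness of $\Nt^p(\Omega)$. You also use a weak-type exceptional-set argument for the boundary values, where the paper extracts an a.e.\ convergent subsequence.
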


\begin{proof}
The proof follows from Theorem \ref{th:eqvlce_DPD} and a density argument. By linearity it suffices to prove the result in the cases $f=F=0$ and $g=0$. Let us begin with the first case and consider $g\in L^p(\dOmega,\mu)$. By density of $\CC_c(\dOmega)$ in $L^p(\Omega,\mu)$, there is a sequence $(g_k)$ in $\CC_c(\dOmega)$ such that $g_k\to g$ in $L^p(\dOmega)$. For each $k$, let $u_k$ be the corresponding solution given by Theorem \ref{th:elliptic_measure^}; we then have
\begin{equation*}
u_k(x)-u_\ell(x)=\int_\dOmega (g_k-g_\ell)d\omega^x \for x\in\Omega \texteq{ and }k,l\in\N.
\end{equation*}
Any of the equivalent conditions of Theorem \ref{th:eqvlce_DPD} holds, so $(D_p)$ is solvable; hence 
\begin{equation} \label{ukCauchy}
\|\NNt(u_k-u_\ell)\|_p \siml \|g_k-g_\ell\|_p,
\end{equation}
which implies that $(u_k)$ is a Cauchy sequence in $\Nt^p(\Omega)$, which is complete by Lemma \ref{lemma:tent_space_complete}, so it has a limit $u$ in $\Nt^p(\Omega)$. Since $\Nt^p(\Omega) \subset L^2\loc(\Omega,m)$ is a continuous injection, $(u_k)$ is a Cauchy sequence in $L^2\loc(\Omega,m)$ converging to $u$ in $L^2\loc(\Omega,m)$, which implies thanks to the Caccioppoli inequality \eqref{eq:int_caccio} that $(\grad u_k)$ is a Cauchy sequence in $L^2\loc(\Omega,m)$. Using Proposition \ref{prop:W_complete}, we obtain that $u\in W\loc(\Omega)$ with $u_k\to u$ in $L^1\loc(\Omega,m)$ and $\grad u_k\to u\in L^2\loc(\Omega,m)$; it is then easy to check that $Lu=0$ weakly. Finally, the fact that $u_k\to u$ in $\N^p(\Omega)$ and that $(g_k\to g$ in $L^p(\dOmega,\mu)$ implies that, up to a subsequence and $\mu$-a.e.,
\begin{equation}\label{eq:DPD_general:cv_pp}
\sup_{y\in\gamma_2(\xi)}\pfrac{\fint_{B_y/2}|u_k-u|^2\dm}\longrightarrow 0 
\texteq{ and  } g_k\longrightarrow g
\formuae \xi\in\dOmega,
\end{equation}
and for any $\xi\in\dOmega$ and $x\in \gamma(\xi)$, 
\begin{align*}
\fint_{B_x/2} |u-g(\xi)|\dm
\leq& \sup_{y\in\gamma_2(\xi)}\pfrac{\fint_{B_y/2}|u_k-u|^2\dm} + \fint_{B_x/2}|u_k-g_k(\xi)|\dm\\
&+ |g_k-g|(\xi).
\end{align*}
using \eqref{eq:DPD_general:cv_pp} and the fact that $u_k(y)\to g_k(\xi)$ when $y\to\xi$ thanks to Theorem \ref{th:elliptic_measure^}, we deduce that
\begin{equation*}
\fint_{B_x/2} |u-g(\xi)|\dm \cvstack{x\to\xi}{x\in\gamma(\xi)} 0 \formuae \xi\in\dOmega.
\end{equation*}
This give us the result for $f=F=0$.
\newline

To prove the result for $g=0$, consider $f\in L^2\loc(\Omega,m)$ such that $\delta^2f\in\At^p(\Omega)$ and $F\in L^2\loc(\Omega,m)^n$ such that $\delta F\in\At^p(\Omega)^n$. By Theorem \ref{th:densite_At}, we can take sequences $(f_k)$ in $L^\infty_c(\Omega,m)$ and $(F_k)$ in $L^\infty_c(\Omega)^n$ such that $\delta^2 f_k\to\delta^2 f$ in $\At^p(\Omega)$ and $\delta F\to\delta F$ in $\At^p(\Omega)^n$, which implies in particular that $f_k\to f$ in $L^2\loc(\Omega,m)$ and $F_k\to F$ in $L^2\loc(\Omega,m)^n$. For each $k$, let $u_k$ be the corresponding solution in $W$ of \ref{eq:Poisson_Dirichlet_géné} given by Theorem \ref{th:solution_W}. We have
\begin{equation*}
	\begin{cases}
		L(u_k-u_\ell)=w(f_k-f_\ell) -\div\p{w(F_k-F_\ell)}& \texteq{in } \Omega\\
		u_k-u_\ell=0 & \texteq{on } \dOmega
	\end{cases}
	\for k,l\in\N.
\end{equation*}
Any of the equivalent conditions of Theorem \ref{th:eqvlce_DPD} holds, so $(PD_p)$ is solvable; hence 
\begin{equation}
`	\|\NNt(u_k-u_\ell)\|_p \siml \|\AAt(\delta^2f_k-\delta^2f_\ell)\|_p + \|\AAt(\delta F_k-\delta F_\ell)\|_p \for k,\ell\in\N,
\end{equation}
implying that $(u_k)$ is a Cauchy sequence in $\Nt^p(\Omega)$. Using the same reasoning as for the previous case, the fact that $f_k\to f$ in $L^2\loc(\Omega,m)$ and $F_k\to F$ in $L^2\loc(\Omega,m)^n$ and the Caccioppoli inequality with source terms \eqref{eq:int_caccio}, we get that $(u_k)$ has a limit $u\in W\loc(\Omega)$ that is a weak solution of $Lu=f-\div(wF)$. Finally, for any $\xi\in\dOmega$ and $x\in\gamma(\xi)$,
\begin{equation*}
	\fint_{B_x/2}|u|\dm
	\leq \sup_{y\in\gamma_2(\xi)}\pfrac{\fint_{B_y/2}|u_k-u|^2\dm} + \fint_{B_x/2}|u_k|\dm,
\end{equation*}
$u_k\in W_0(\Omega)$ and up to extraction
\begin{equation*}
	\sup_{y\in\gamma_2(\xi)}\pfrac{\fint_{B_y/2}|u_k-u|^2\dm}\longrightarrow 0 \formuae \xi\in\dOmega,
\end{equation*}
so
\begin{equation*}
\fint_{B_x/2} |u|\dm \cvstack{x\to\xi}{x\in\gamma(\xi)} 0 \formuae \xi\in\dOmega.
\end{equation*}
\end{proof}

Of course, we have the same result for the Poisson-regularity problem.

\begin{corollary}
Assume that (H1)-(H6) are satisfied. Let $1<p<\infty$. If any of the equivalent conditions of Theorem \ref{th:eqvlce_DPD} holds, then there exists $C>0$ such that for any functions $f\in L^2\loc(\Omega,m)$ satisfying $\|\AAt(\rho\delta f)\|_\pp<\infty$ and $F\in L^2\loc(\Omega,m)^n$ satisfying $\|\AAt(\rho F)\|_\pp<\infty$, there exists a weak solution $u\in W\loc(\Omega)$ to
\begin{equation}
\begin{cases}
Lu=wf-\div(wF) &\texteq{in }\Omega\\
u=0 &\texteq{on }\dOmega
\end{cases}
\end{equation}
satisfying the estimate
\begin{equation}
\Lpd[\pp]{\NNt(\rho\grad u)} \leq C\cro{\Lpd[\pp]{\AAt(\rho\delta f)}+ \Lpd[\pp]{\AAt(\rho F)}}.
\end{equation}
\end{corollary}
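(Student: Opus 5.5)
Throughout, $u$ solves $Lu=wf-\div(wF)$ for the operator $L$ itself, as in the statement.

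The plan has two parts: first get the \emph{a priori} estimate for $L$ with bounded, compactly supported data, then extend it by density, as in the proof of Corollary~\ref{cor:DPD_general}.

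\textbf{A priori estimate.} The Poisson-regularity estimate \eqref{eq:solv_poisson_reg} for $L$ is condition $(v)$ of Theorem~\ref{th:eqvlce_DPD} applied to the operator $L^*=-\div(wA^T\grad)$, whose adjoint is $L$. This is legitimate: $A^T$ satisfies \eqref{hyp:A_bounded} and \eqref{hyp:A_ellipitic} with the same constant, and (H1)--(H5) do not involve $A$. So I read ``any of the equivalent conditions'' for the pair $(L^*,L)$. Then $(v)$ gives, for all $f\in L^\infty_c(\Omega,m)$ and $F\in L^\infty_c(\Omega,m)^n$, that the $W_0(\Omega)$ solution $u$ of $Lu=wf-\div(wF)$ satisfies
\[
\Lpd[\pp]{\NNt(\rho\grad u)}\leq C\cro{\Lpd[\pp]{\AAt(\rho\delta f)}+\Lpd[\pp]{\AAt(\rho F)}}.
\]

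\textbf{Density.} For general data, Theorem~\ref{th:densite_At} (with exponent $\pp$) gives $f_k\in L^\infty_c(\Omega,m)$ and $F_k\in L^\infty_c(\Omega,m)^n$ with $\rho\delta f_k\to\rho\delta f$ and $\rho F_k\to\rho F$ in $\At^{\pp}(\Omega)$. Since $\rho$ and $\delta$ are locally bounded above and below in $\Omega$ by \eqref{eq:rho_doubl}, this gives $f_k\to f$ and $F_k\to F$ in $L^2\loc(\Omega,m)$.

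Let $u_k\in W_0(\Omega)$ be the corresponding solutions. By linearity and the \emph{a priori} estimate, $(\rho\grad u_k)$ is Cauchy in $\Nt^{\pp}(\Omega)$, which is complete by Lemma~\ref{lemma:tent_space_complete}. A compact $K\subset\Omega$ meets only finitely many Whitney balls $B_x/2$. Hence the supremum defining $\NNt$ controls $\|\grad(u_k-u_\ell)\|_{L^2(K,m)}$, and $(\grad u_k)$ is Cauchy in $L^2\loc(\Omega,m)$.

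\textbf{Main obstacle: convergence of $u_k$ itself.} Proposition~\ref{prop:W_complete} also requires $(u_k)$ to be Cauchy in $L^1\loc$, and the estimate controls only gradients. I would get this from the Green representation (Theorem~\ref{th:Green_repr}):
\[
u_k(x)-u_\ell(x)=\int_\Omega G(x,y)(f_k-f_\ell)\dm+\int_\Omega\grad_yG(x,y)\cdot(F_k-F_\ell)\dm .
\]
Away from the pole, I would bound the two integrals by \eqref{eq:dual_prodNtAt}, pairing against $\NNt$ of $G(x,\cdot)/\delta$ and $\grad_yG(x,\cdot)$ in $L^p$. These are finite by Lemma~\ref{lemma:estimee_green} applied to $L^*$, since $(D_p)$ for $L^*$ holds under our reading. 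The truncation of that lemma to $B\setminus B_x/2$ is handled as follows:
\begin{itemize}
\item near the pole, by the local bound \eqref{eq:majo_green} and Caccioppoli;
\item far away, by \eqref{eq:comp_green_harmo} and the non-degeneracy/Hölder decay of $\omega^x$, as in the far-field estimates of $(i)\Rightarrow(iii)$.
\end{itemize}
This should make $u_k(x)$ Cauchy locally uniformly in $x$. If it fails, the fallback is a chained boundary Poincaré argument using \eqref{eq:boundary_poincare} and \eqref{eq:fubini_decr} on $u_k-u_\ell\in W_0$.

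\textbf{Conclusion.} Proposition~\ref{prop:W_complete} then gives $u\in W\loc(\Omega)$ with $u_k\to u$ and $\grad u_k\to\grad u$ locally. Passing to the limit in the weak formulation shows $Lu=wf-\div(wF)$. The limit of $\rho\grad u_k$ in $\Nt^{\pp}$ is $\rho\grad u$, which yields the estimate. The boundary condition $u=0$ follows exactly as at the end of the proof of Corollary~\ref{cor:DPD_general}, using \eqref{eq:cv_nt_W0} for each $u_k$.
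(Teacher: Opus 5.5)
Your skeleton is the paper's own: the a priori bound from condition $(v)$ for $L^*$, density, completeness of the tent space, and Cauchy gradients. The paper's proof says only ``completely analogous to Corollary~\ref{cor:DPD_general}''. Reading the hypothesis for the pair $(L^*,L)$ is a fine fix of the $L$/$L^*$ mismatch. You also rightly notice what the paper glosses over: the estimate controls only $\grad u_k$, so neither $L^1_{\mathrm{loc}}$ convergence of $u_k$ nor the boundary condition is automatic. Your resolution of that step, however, fails as written.

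\textbf{Wrong weights.} Since $\int_\Omega G(x,y)f\,dm=\int_\Omega G(x,\cdot)\,(\rho\delta f)\,\frac{dm}{\rho\delta}$ and $\int_\Omega \grad_yG(x,y)\cdot F\,dm=\int_\Omega \delta\grad_yG(x,\cdot)\cdot(\rho F)\,\frac{dm}{\rho\delta}$, pairing via \eqref{eq:dual_prodNtAt} requires $\NNt(G(x,\cdot))$ and $\NNt(\delta\grad_yG(x,\cdot))$ in $L^p$. It does not involve $G/\delta$ or $\grad_yG$.

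\textbf{Wrong lemma.} Lemma~\ref{lemma:estimee_green} for $L^*$ bounds $\NNt(\frac{\rho}{\delta}G^*(x,\cdot))=\NNt(\frac{\rho}{\delta}G(\cdot,x))$ in $L^{p'}$. That is the wrong variable, exponent and weight; it is the estimate suited to $(PD_p)$ data. Away from the pole, the correct bound does follow from \eqref{eq:comp_green_harmo} for $L^*$ together with $(D_p)$ for $L^*$.

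\textbf{The pole.} Near the pole, \eqref{eq:majo_green} gives nothing, and $G(x,\cdot)$ need not be in $L^2$ (think of $|x-y|^{2-n}$, $n\geq 4$). That piece needs an energy estimate, and ``Cauchy locally uniformly'' is unjustified.

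\textbf{The fallback.} The boundary Poincar\'e inequality needs $\grad(u_k-u_\ell)\to 0$ in $L^2(B\cap\Omega)$ up to $\partial\Omega$. The quantity $\|\NNt(\rho\grad(u_k-u_\ell))\|_{p'}$ does not control this in general when $p'<2$.

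\textbf{Boundary condition.} It does not follow ``exactly as'' in Corollary~\ref{cor:DPD_general}. That argument used $\sup_{y\in\gamma_2(\xi)}\big(\fint_{B_y/2}|u_k-u|^2\,dm\big)^{1/2}\to 0$ a.e., i.e.\ tent-space control of $u_k-u$ itself, which you never obtain.

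Both gaps close with the duality used in $(vi)\Rightarrow(iv)$ and $(iv)\Rightarrow(v)$, now with $(PD_p)$ for $L^*$, which holds under your reading. For $h\in L^\infty_c(\Omega,m)$ let $v\in W_0(\Omega)$ solve $L^*v=wh$. Testing each equation with the other solution gives
\[
\int_\Omega (u_k-u_\ell)h\,dm=\int_\Omega (f_k-f_\ell)v\,dm+\int_\Omega (F_k-F_\ell)\cdot\grad v\,dm\lesssim\big(\|\NNt v\|_p+\|\NNt(\delta\grad v)\|_p\big)\,\varepsilon_{k\ell},
\]
where $\varepsilon_{k\ell}:=\|\AAt(\rho\delta(f_k-f_\ell))\|_{p'}+\|\AAt(\rho(F_k-F_\ell))\|_{p'}$. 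Both norms of $v$ are $\lesssim\|\AAt(\delta^2h)\|_p$ by $(PD_p)$ for $L^*$ and Caccioppoli. Since $\int_\Omega uh\,dm=\int_\Omega\frac{\rho}{\delta}u\,(\delta^2h)\,\frac{dm}{\rho\delta}$, \eqref{eq:dual_Ntsup} (after truncation and Fatou) gives $\|\NNt(\frac{\rho}{\delta}(u_k-u_\ell))\|_{p'}\lesssim\varepsilon_{k\ell}$.

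Hence $u_k\to u$ in $L^2_{\mathrm{loc}}$, Proposition~\ref{prop:W_complete} applies, and $\NNt(\frac{\rho}{\delta}u)\in L^{p'}$. For $x\in\gamma(\xi)$ this gives $\fint_{B_x/2}|u|\,dm\lesssim\frac{\delta(x)}{\rho(x)}\NNt(\frac{\rho}{\delta}u)(\xi)$. By (H4), $t/\rho(B(\xi,t))\lesssim t^{\epsilon}R^{1-\epsilon}/\rho(B(\xi,R))\to 0$ as $t\to 0$, so $u$ vanishes nontangentially $\mu$-a.e.
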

\begin{proof}
The proof is completely analogous to the proof of Corollary \ref{cor:DPD_general}. 
\end{proof}

%%%%%%%%%%%%%%%%%%%%%%%%%%%%%%%%%%%%%%%%%%%%%%%%%%%%%%%%%%%%%%%%%%%%%
%Appendix
%%%%%%%%%%%%%%%%%%%%%%%%%%%%%%%%%%%%%%%%%%%%%%%%%%%%%%%%%%%%%%%%%%%%%

\appendix
\renewcommand{\thetout}{\Alph{section}.\arabic{tout}}
\renewcommand{\theequation}{\Alph{section}.\arabic{equation}}

\section{Appendix: results on tent spaces}\label{s:appendix}

We detail here some fundamental results on tent spaces. All of the proofs below are an adaptation to our needs of arguments found in \cite{coifman_new_1985}. We feel the need to rewrite the arguments due our setting is getting far from the case $\Omega=\R^n \times (0,\infty)$ that was considered in \cite{coifman_new_1985}. 
\newline

In this appendix, we only need to assume that {\bf $m_{|\Omega}$ and $\mu$ are doubling} (with constants $C_m'$ and $C_\mu$); in particular, we do not assume the corkscrew point condition in $\Omega$.

\begin{definition}
Take $\alpha>0$ and $0<\lambda\leq 1/2$. Let us define:
\begin{itemize}
\item for any measurable function $f:\Omega\to\R$,
\begin{equation*}
\NN^{(\alpha)} f (\xi):= \sup_{\gamma_\alpha(\xi)} |f|, \for \xi\in\dOmega.
\end{equation*}

\item for any function $f\in L^2\loc(\Omega,m)$,
\begin{equation*}
\NNt^{(\alpha,\lambda)} f (\xi):= \sup_{\gamma_\alpha(\xi)} \pfrac{\fint_{\lambda B_x} |f|^2\dm}, \for \xi\in\dOmega.
\end{equation*}

\item for any measurable function $g:\Omega\to\R$,
\begin{equation*}
\AA^{(\alpha)} g(\xi):= \int_{\gamma_\alpha(\xi)} |g(x)| \frac{\dm(x)}{m(B_x)}, \for \xi\in\dOmega.
\end{equation*}

\item for any function $g\in L^2\loc(\Omega,m)$,
\begin{equation*}
\AAt^{(\alpha,\lambda)} g(\xi):= \int_{\gamma_\alpha(\xi)} \pfrac{\fint_{\lambda B_x} |f|^2\dm} \frac{\dm(x)}{m(B_x)}, \for \xi\in\dOmega.
\end{equation*}
\end{itemize}
\end{definition}

For any $\phi\in L^1\loc(\dOmega,\mu)$, recall that we write
\begin{equation*}
\MM_\mu f(\xi) := \sup_{r>0} \fint_{B(x,r)} f\dmu \for \xi\in\dOmega,
\end{equation*}
the centered Hardy-Littlewood maximal function.

\begin{lemma}\label{lemmaA:eqvlce_N}
Let $1\leq p\leq \infty$. The $L^p$ norms of $\NN^{(\alpha)}$ and are equivalent under change of $\alpha$, i.e. for any $\alpha,\alpha'> 0$,
\begin{equation*}
\|\NN^{(\alpha)}f\|_p \leq C_{\alpha,\alpha'} \|\NN^{(\alpha')}f\|_p,
\end{equation*}
for any $f$ measurable, where $C_{\alpha,\alpha'}>0$ depends only on $C_\mu$, $\alpha$ and $\alpha'$; furthermore, if $1\leq \alpha'<\alpha$, $C$ depends only on $C_\mu$ and $\alpha/\alpha'$. Likewise, the $L^p$ norms of $\NNt^{(\alpha,\lambda)}$ are equivalent under change of $\alpha$ and $\lambda$, i.e. for any $\alpha,\alpha'>0$ and any $0<\lambda,\lambda'\leq 1/2$,
\begin{equation*}
\|\NNt^{(\alpha,\lambda)} f\|_p \leq C_{\alpha,\alpha',\lambda/\lambda'} \|\NNt^{(\alpha',\lambda')} f\|_p
\end{equation*}
for any $f\in L^2\loc(\Omega,m)$, where $C_{\alpha,\alpha',\lambda/\lambda'}>0$ depends only on $C_\mu, C_m',\alpha,\alpha'$ and $\lambda/\lambda'$.
\end{lemma}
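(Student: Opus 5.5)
Only the case $\alpha>\alpha'$ needs proof, since $\gamma_\alpha(\xi)$ increases with $\alpha$ and the reverse inequality is trivial. I will use the classical Coifman--Meyer--Stein level-set argument for $\NN$ and reduce $\NNt$ to it.

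\textbf{Step 1 (geometric inclusion).} Fix $t>0$ and set $E_t=\{\NN^{(\alpha)}f>t\}$ and $O_t=\{\NN^{(\alpha')}f>t\}$. If $\xi\in E_t$, there is $x\in\gamma_\alpha(\xi)$ with $|f(x)|>t$. Let $\zeta\in\dOmega$ satisfy $|x-\zeta|=\delta(x)$. For every $\eta\in B(\zeta,\alpha'\delta(x))\cap\dOmega$ we have $|x-\eta|<(1+\alpha')\delta(x)$, so $x\in\gamma_{\alpha'}(\eta)$, and hence $B(\zeta,\alpha'\delta(x))\cap\dOmega\subset O_t$. Since $|\xi-\zeta|\le |\xi-x|+\delta(x)<(2+\alpha)\delta(x)$, the ball $B(\xi,(2+\alpha)\delta(x))$ contains $B(\zeta,\alpha'\delta(x))$ and is contained in $B(\zeta,(3+\alpha)\delta(x))$. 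The doubling property (H2) then gives
\begin{equation*}
\MM_\mu\UN_{O_t}(\xi)\ge \frac{\mu(B(\zeta,\alpha'\delta(x)))}{\mu(B(\xi,(2+\alpha)\delta(x)))}\ge c,
\end{equation*}
where $c>0$ depends only on $C_\mu$ and $(3+\alpha)/\alpha'$. Consequently
\begin{equation*}
E_t\subset\{\MM_\mu\UN_{O_t}>c/2\}.
\end{equation*}

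\textbf{Step 2 (integration over levels).} By the weak $(1,1)$ bound for $\MM_\mu$ on the doubling space $(\dOmega,\mu)$, we get $\mu(E_t)\le C\mu(O_t)$. Integrating $pt^{p-1}\,dt$ gives the result for $p<\infty$. For $p=\infty$ the same inclusion shows that $O_t$ has positive measure whenever $E_t$ does, so the $L^\infty$ norms coincide.

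When $1\le\alpha'<\alpha$, the ratio $(3+\alpha)/\alpha'$ is at most $4\alpha/\alpha'$, so the constant depends only on $C_\mu$ and $\alpha/\alpha'$. Here I rely on the standard $L^p$ bound being polynomial in $c^{-1}$, which grows like $C_\mu^{\log_2(\alpha/\alpha')}$.

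\textbf{Step 3 ($\NNt$).} The change of $\alpha$ at fixed $\lambda$ is identical: apply Steps 1--2 to $F_\lambda(x)=\big(\fint_{\lambda B_x}|f|^2\dm\big)^{1/2}$, since $\NNt^{(\alpha,\lambda)}f=\NN^{(\alpha)}F_\lambda$. For the change of $\lambda$, take $\lambda>\lambda'$. Cover $\lambda B_x$ by balls $\lambda' B_{y_i}$ with $y_i\in\lambda B_x$. Because $\delta(y_i)\simeq\delta(x)$, the number of balls needed is bounded in terms of $C_m'$ and $\lambda/\lambda'$ (by a packing argument with doubling), and each has measure comparable to $m(\lambda B_x\cap\Omega)$. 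This yields
\begin{equation*}
F_\lambda(x)\lesssim \max_i F_{\lambda'}(y_i).
\end{equation*}
Since $|y_i-x|\le\lambda\delta(x)\le\delta(x)/2$, each $y_i$ lies in $\gamma_{\alpha''}(\xi)$ for some $\alpha''$ depending on $\alpha$ whenever $x\in\gamma_\alpha(\xi)$. Thus $\NNt^{(\alpha,\lambda)}f\lesssim\NNt^{(\alpha'',\lambda')}f$, and the first part converts $\alpha''$ back to $\alpha'$.

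The main obstacle is this covering step. The doubling constant to use is that of $m_{|\Omega}$ from \eqref{eq:mOmega_doubling}, and I must check that $\lambda B_x\subset\Omega$ so the averages are over full balls. This holds because $\lambda\le1/2$ and $B_x\subset\Omega$.
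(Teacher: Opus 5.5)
Your argument is essentially the paper's proof. It uses the same level-set comparison $\mu(\{\text{aperture }\alpha > t\})\lesssim\mu(\{\text{aperture }\alpha' > t\})$ through the weak-type bound for the centered maximal function. It reduces the modified functional to this case through $F_\lambda$, and it covers $\lambda B_x$ by boundedly many balls $\lambda' B_{y_i}$ that land in a wider cone (the paper passes from $\gamma_1$ to $\gamma_4$).

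There is one radius slip in Step 1. Since $|\xi-\zeta|$ can be nearly $(2+\alpha)\delta(x)$, the ball $B(\xi,(2+\alpha)\delta(x))$ need not contain $B(\zeta,\alpha'\delta(x))$. It is also only contained in $B(\zeta,2(2+\alpha)\delta(x))$, not in $B(\zeta,(3+\alpha)\delta(x))$. The fix is to take the radius $R=(2+\alpha+\alpha')\delta(x)$, as the paper does. Then $B(\zeta,\alpha'\delta(x))\subset B(\xi,R)\subset B(\zeta,2R)$, and the ratio $2R/(\alpha'\delta(x))$ is at most $8\alpha/\alpha'$ when $1\le\alpha'<\alpha$. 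So $c$ still depends only on $C_\mu$ and $\alpha/\alpha'$.

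Centering the small ball at the nearest boundary point $\zeta$ is slightly more careful than the paper. The paper applies the doubling of $\mu$ to $(1+\alpha')B_x$, which is a ball centered in $\Omega$ rather than on the boundary.
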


The same result holds for $\AA$ and $\AAt$, but it will be easier to prove it later from this lemma and duality  (Lemma \ref{lemmaA:eqvlce_A}).

\begin{proof}
Let us prove the first estimate. Take $f\geq 0$ measurable, and $t>0$; we write $E^\alpha_t:=\{\xi\in\dOmega\tq \NN^{(\alpha)} f(\xi)>t \}$. to prove the lemma, it is enough to show that $\mu(E^\alpha_t)\siml \mu(E^{\alpha'}_t)$, with a constant independent from $f$ and $t$.
\newline

Take $\xi\in E^\alpha_t$: $\sup_{\gamma_\alpha(\xi)} f > t$. There exists $x\in\gamma_\alpha(\xi)$ such that $f(x)>t$, so if $\zeta\in (1+\alpha')B_x \cap \partial \Omega$, we have $x\in \gamma_{\alpha'}(\zeta)$, or 
\[\NN^{(\alpha')}f(\zeta) > t \qquad \text{ for } \zeta\in (1+\alpha')B_x \cap \partial \Omega.\]
Take $B=B_{\xi,t}:= B(\xi,(2+\alpha + \alpha')\delta(x))$ so that  $B\incl (1+\alpha')B_x$, and we get
\begin{equation*}
\fint_{B\cap \partial \Omega} \UN_{E_t^{\alpha'}}\dmu
=\frac{\mu(E_t^{\alpha'}\cap B)}{\mu(B\cap \partial \Omega)}
\geq \frac{\mu((1+\alpha')B_x)}{\mu(B \cap \partial \Omega)}
> \epsilon,
\end{equation*}
with a constant $\epsilon>0$ depending only $\alpha$, $\alpha'$ ($\alpha/\alpha'$ if we further assume $1\leq\alpha'<\alpha$) and $C_\mu$. This implies that $\MM_\mu\UN_{E_t^{\alpha'}} > \epsilon$, and finally by the Hardy–Littlewood maximal inequality
\begin{equation*}
\mu(E_t^\alpha)
\leq \mu\p{\{\xi\in\dOmega\tq \MM_\mu\UN_{E_t^{\alpha'}}(\xi) > \epsilon\}}
\siml \frac{1}{\epsilon}\mu(E_t^{\alpha'}) \for t>0,
\end{equation*}
which is enough to obtain the first estimate.
\newline

Now, Let us prove the equivalence for $\NNt$. By the first estimate, the equivalence holds under change of $\alpha$ when $\lambda$ is fixed,  hence it is enough to prove the equivalence for some fixed $\alpha,\alpha'$; we will take $\alpha=1$. Take $0<\lambda,\lambda'\leq 1/2$, $\xi\in\dOmega$ and $x\in\gamma(\xi)$. There exists an integer $N\geq 1$ independent from $x$ (but dependent on $\lambda/\lambda'$) and $y_1,\ldots,y_N$ in $\lambda B_x$ such that $\lambda B_x\incl \bigcup_{i=1}^N \lambda'B(y_i)$. Then for any $f\in L^2\loc(\Omega)$,

\begin{equation*}
\pfrac{\fint_{\lambda B_x} f^2\dm}
\siml \sum_{i=1}^N \pfrac{\fint_{\lambda'B(y_i)} f^2\dm}
\siml \sup_{y\in\gamma_4(\xi)} \pfrac{\fint_{\lambda'B_y}f^2\dm},
\end{equation*}
hence $\NNt^{(1,\lambda)} f\siml\NNt^{(4,\lambda')} f$, which is enough to conclude.
\end{proof}

We will note $\NN:=\NN^{(1)}$, $\NNt=\NNt^{(1,1/2)}$, $\AA:=\AA^{(1)}$ and $\AAt=\AAt^{(1,1/2)}$. For $1<p<\infty$, define $\N^p(\Omega)$ (resp. $\A^p(\Omega)$) as the vector space of measurable function $f:\Omega\to\R$ (resp. $g:\Omega\to\R$) such that $\|\NN f\|_p$ is finite (resp. $\|\AA g\|_p$ is finite), equipped with the obvious norms. Likewise, define $\Nt^p(\Omega)$ (resp. $\At^p(\Omega)$) as the vector space of function $f\in L^2\loc(\Omega,m)$ (resp. $g\in L^2\loc(\Omega,m)$) such that $\|\NNt f\|_p$ is finite (resp. $\|\AAt g\|_p$ is finite).
\newline

First, let us observe that these norms satisfy a version of Fatou's lemma:
\begin{lemma}[Fatou's lemma for tent spaces]
Take $1\leq p\leq\infty$ and a sequence $(f_k)$ of measurable non-negative functions. then
\begin{equation*}
\|T(\liminf f_k)\|_p\leq \liminf \|Tf_k\|_p,
\end{equation*}
whenever $T=\NN,\NNt,\AA$ or $\AAt$.
\end{lemma}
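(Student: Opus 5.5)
The plan is to reduce everything to the classical Fatou lemma applied at two levels: first pointwise on $\dOmega$, then in $L^p(\dOmega,\mu)$. Write $f:=\liminf_k f_k$. It suffices to prove the pointwise bound
\begin{equation*}
Tf(\xi)\leq \liminf_{k\to\infty} Tf_k(\xi) \for \xi\in\dOmega .
\end{equation*}
Indeed, since all these functions are non-negative, the classical Fatou lemma on $(\dOmega,\mu)$ applied to $(Tf_k)^p$ then gives $\|Tf\|_p\leq\|\liminf_k Tf_k\|_p\leq \liminf_k\|Tf_k\|_p$ when $p<\infty$. When $p=\infty$, monotonicity is enough: if $Tf_k\leq s_k:=\|Tf_k\|_\infty$ $\mu$-a.e., we pass to a subsequence realizing $\liminf_k s_k$ and use that a countable union of null sets is null.

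For the pointwise bound, I would treat the four functionals in turn, using only the monotonicity of $\sup$ and of the integrals.

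\textbf{Case $T=\NN$.} For each $x\in\gamma(\xi)$ we have $f_k(x)\leq \NN f_k(\xi)$, so taking $\liminf$ gives $f(x)\leq\liminf_k \NN f_k(\xi)$. Taking the supremum over $x\in\gamma(\xi)$ yields the bound.

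\textbf{Case $T=\NNt$.} Fix $x\in\gamma(\xi)$. The classical Fatou lemma on $(\lambda B_x,m)$ applied to $f_k^2$ gives
\begin{equation*}
\Big(\fint_{\lambda B_x} f^2\dm\Big)^{1/2}\leq \liminf_k\Big(\fint_{\lambda B_x} f_k^2\dm\Big)^{1/2}\leq \liminf_k \NNt f_k(\xi).
\end{equation*}
Taking the supremum over $x\in\gamma(\xi)$ then concludes this case.

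\textbf{Case $T=\AA$.} This is a direct application of Fatou's lemma on $\gamma(\xi)$ with the measure $\dm(x)/m(B_x)$.

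\textbf{Case $T=\AAt$.} Apply Fatou twice. First, as in the $\NNt$ case, the inner average at each $x$ satisfies $\big(\fint_{\lambda B_x} f^2\dm\big)^{1/2}\leq \liminf_k \big(\fint_{\lambda B_x} f_k^2\dm\big)^{1/2}$. Second, apply Fatou on $\gamma(\xi)$ with the measure $\dm(x)/m(B_x)$ to these non-negative functions of $x$.

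The only step needing care is measurability, since Fatou on $\dOmega$ requires $Tf_k$ and $Tf$ to be measurable. I expect this to be routine. $\NN f$ and $\NNt f$ are lower semicontinuous on $\dOmega$, because $\gamma(\xi)$ is open and varies continuously with $\xi$: if $x\in\gamma(\xi)$, then $x\in\gamma(\zeta)$ for $\zeta$ near $\xi$. The functions $\AA f$ and $\AAt f$ are measurable by Tonelli, as integrals of the jointly measurable function $\UN_{\{|x-\xi|<2\delta(x)\}}$ times a non-negative integrand. If measurability were in doubt, the argument still goes through with upper integrals, so this is not a real obstacle.
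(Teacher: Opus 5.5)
Your proof is correct and follows the paper's route. The paper's one-line argument amounts to the pointwise bound $Tf(\xi)\le\liminf_k Tf_k(\xi)$, obtained from $\sup_A\liminf_k f_k\le\liminf_k\sup_A f_k$ and Fatou's lemma for the inner integrals, followed by Fatou's lemma on $(\partial\Omega,\mu)$; you simply spell out what the paper leaves implicit, namely the inner Fatou step for $\NNt$ and $\AAt$, the case $p=\infty$, and the measurability of $Tf_k$.
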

\begin{proof}
This is a direct consequence of Fatou's lemma and of the fact that
\begin{equation}\label{eqA:fatou}
\sup_{A}\liminf f_k \leq \liminf \sup_A f_k
\end{equation}
 for any set $A\incl\R^n$.
\end{proof}

\begin{theorem}\label{thA:Banach}
Take $1<p<\infty$. All tent spaces $\N^p(\Omega)$, $\Nt^p(\Omega)$, $\A^p(\Omega)$ and $\At^p(\Omega)$ are Banach spaces.
\end{theorem}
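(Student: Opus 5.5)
We first check that the four functionals give norms on the respective spaces. The triangle inequality is clear for $\NN$ and $\AA$. For $\NNt$ and $\AAt$ it follows from Minkowski's inequality in $L^2(\lambda B_x,m)$, applied before taking the supremum or the integral over cones. For definiteness: if $\|\NNt f\|_p=0$ or $\|\AAt g\|_p=0$, then the inner averages vanish for $\mu$-a.e. vertex $\xi$ and every $x$ in the cone. Every $x\in\Omega$ lies in $\gamma(\xi)$ for all $\xi$ in a set of positive $\mu$-measure: take $\xi\in 2B_x\cap\dOmega$, which has positive measure by doubling and since $\supp\mu=\dOmega$. Hence $f=0$ $m$-a.e. on each $\lambda B_x$, so $f=0$ a.e. For $\N^p$ we must identify functions that agree pointwise in the relevant sense, so "measurable" is understood modulo null sets of the appropriate kind.

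For completeness we use the standard criterion: a normed space is complete if and only if every absolutely convergent series converges. Take $(f_k)$ with $\sum_k\|Tf_k\|_p<\infty$, where $T$ is one of $\NN,\NNt,\AA,\AAt$. The key step is to show that the series converges locally, so that we have a candidate limit. We prove a local control estimate of the form
\begin{equation*}
\Big(\fint_{\lambda B_x}|f|^2\dm\Big)^{1/2}\leq C_x\|\NNt f\|_p,
\end{equation*}
valid for every $x\in\Omega$. To obtain it, note that the left-hand side is at most $\NNt f(\zeta)$ for all $\zeta\in 2B_x\cap\dOmega$; averaging over that set gives the bound with $C_x=\mu(2B_x)^{-1/p}$. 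The analogous estimate for $\AAt$ uses that a small ball around $x$ lies in $\gamma(\zeta)$ for $\zeta\in 2B_x\cap\dOmega$, together with the fact that the $L^2$ averages over $\lambda B_y$ for nearby $y$ cover $\lambda' B_x$ (doubling of $m_{|\Omega}$). It follows that $\sum_k f_k$ converges in $L^2\loc(\Omega,m)$ (or in $L^1\loc$ for $\AA$). For $\NN$ we get instead absolute pointwise convergence on $\Omega$ off a null set. Call the limit $f$.

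It remains to show that $\|T(f-\sum_{k\leq K}f_k)\|_p\leq\sum_{k>K}\|Tf_k\|_p\to 0$. We use the Fatou lemma for tent spaces stated just above, applied to the nonnegative partial sums $\sum_{K<k\leq N}|f_k|$, which increase to a function dominating $|f-\sum_{k\leq K}f_k|$. For $\NNt$ and $\AAt$ we also need that $L^2$ averages are monotone and lower semicontinuous along $L^2\loc$ limits, which follows from the monotone convergence theorem inside each average. Subadditivity of $T$ then bounds $\|T(\sum_{K<k\leq N}|f_k|)\|_p$ by $\sum_{K<k\leq N}\|Tf_k\|_p$. In the same way $f$ itself has finite norm.

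The main obstacle is the local control estimate for $\AAt$. Because the cones may degenerate near $x$, we have to check that the integral over $\gamma(\zeta)$ sees a neighbourhood of $x$ of comparable $m$-measure. This follows from $B(x,\delta(x)/2)\subset\gamma_\alpha(\zeta)$ for $\zeta\in B(x,(1+\alpha/2)\delta(x))$, combined with the doubling of $m_{|\Omega}$.
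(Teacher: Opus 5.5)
Your argument is correct and is essentially the paper's proof. It uses the same local bound $\big(\fint_{\lambda B_x}|f|^2\,dm\big)^{1/2}\le\mu(2B_x\cap\partial\Omega)^{-1/p}\|\NNt f\|_p$ obtained by averaging over $2B_x\cap\partial\Omega$, an $L^2_{\mathrm{loc}}$ limit, and the Fatou lemma for tent spaces; the absolutely-convergent-series criterion only replaces the paper's Cauchy-sequence and a.e.-subsequence bookkeeping, and you also spell out the norm axioms and the area-function cases, which the paper only asserts.

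Two small repairs are needed. First, $B(x,\delta(x)/2)$ is not contained in $\gamma_\alpha(\zeta)$ for all $\zeta\in B(x,(1+\alpha/2)\delta(x))$, since points of that ball near $\partial\Omega$ can leave the cone; the smaller ball $B(x,c_\alpha\delta(x))$ with $c_\alpha=\alpha/(2(2+\alpha))$ is contained in it, and nothing else in your $\AAt$ argument changes. Second, for $\N^p$ no quotient by null sets is needed, and the pointwise supremum $\NN$ would not even be well defined on such classes. The bound $|f(x)|\le\mu(2B_x\cap\partial\Omega)^{-1/p}\|\NN f\|_p$ holds at every $x\in\Omega$, which gives both definiteness and convergence everywhere, not just off a null set.
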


\begin{proof}
We will only prove the theorem for $\Nt^p(\Omega)$ since it is the result that we use in the main article, but the proofs for the other spaces use the same ideas. Take a Cauchy sequence $(f_k)$ in $\Nt^p(\Omega)$. First, for any $x\in\Omega$, $\xi\in 2B_x\cap\dOmega$ and $k,\ell\in\N$:
\begin{equation*}
\pfrac{\fint_{B_x/2}|f_k-f_\ell|^2\dm}
\leq \NNt(f_k-f_\ell)(\xi);
\end{equation*}
averaging over $\xi\in 2B_x\cap\dOmega$, we obtain
\begin{equation*}
\pfrac{\fint_{B_x/2}|f_k-f_\ell|^2\dm}
\leq \pfrac[p]{\fint_{2B_x}\NNt(f_k-f_\ell)^p\dmu}.
\end{equation*}
Thus, $(f_k)$ is a Cauchy sequence in $L^2\loc(\Omega,m)$: there exists $f\in L^2\loc(\Omega,m)$ such that $f_k\to f$ in $L^2\loc(\Omega,m)$, and we can extract a subsequence $(\ft_k)$ such that $\ft_k\to f$ a.e. on $\Omega$. Then, for any $k\in\N$ Fatou's lemma \eqref{eqA:fatou} applied to $|f_k-\ft_\ell|$ give us that
\begin{equation*}
\|\NNt(f_k-f)\|_p
\leq \liminf_\ell\|\NNt(f_k-\ft_\ell)\|_p,
\end{equation*}
which implies that $f\in\Nt^p(\Omega)$ and, passing to the limit as $k$ goes to infinity, that $f_k\to f$ in $\Nt^p(\Omega)$. 
\end{proof}

\begin{theorem}\label{thA:dual_intNA}
Let $\alpha>0$ and $0<\lambda\leq 1/2$. For any $f,g$ measurable,
\begin{equation}\label{eqA:dual_intNA}
\int_\Omega fg \frac{\dm}{\rho\delta} 
\leq C_\alpha \int_\dOmega (\NN^{(\alpha)}f) (\AA^{(\alpha)}g) \dmu,
\end{equation}
where $C_\alpha>0$ depends only on $C_m', C_\mu$ and $\alpha$. Likewise, for any $f,g\in L^2\loc(\Omega,m)$,
\begin{equation}\label{eqA:dual_int_NtAt}
\int_\Omega fg \frac{\dm}{\rho\delta} 
\leq C_{\alpha,\lambda} \int_\dOmega (\NNt^{(\alpha,\lambda)}f) (\AAt^{(\alpha,\lambda)}g) \dmu,
\end{equation}
where $C_{\alpha,\lambda}>0$ depends only on $C_m', C_\mu$, $\alpha$ and $\lambda$.
\end{theorem}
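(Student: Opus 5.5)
We will prove the inequality by the standard Fubini argument of Coifman--Meyer--Stein: rewrite the interior integral as a boundary integral over cones, then bound $|f|$ by $\NN f$ inside each cone. We may assume $f,g\geq 0$.

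\textbf{Step 1: the pointwise identity.} For $x\in\Omega$ set $\Delta_x:=\{\xi\in\dOmega \tq x\in\gamma_\alpha(\xi)\}=B(x,(1+\alpha)\delta(x))\cap\dOmega$. We claim
\begin{equation*}
\mu(\Delta_x)\simeq \mu(B(\xi_x,\delta(x))),
\end{equation*}
where $\xi_x\in\dOmega$ realizes $\delta(x)$. This follows since $\Delta_x$ contains $B(\xi_x,\alpha\delta(x))\cap\dOmega$ and is contained in $B(\xi_x,(2+\alpha)\delta(x))$, together with the doubling property of $\mu$. The implicit constants depend on $\alpha$ and $C_\mu$.

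Next we compare $\rho(x)\delta(x)=m(B(x,2\delta(x))\cap\Omega)/\mu(B(x,2\delta(x))\cap\dOmega)$. The ball $B(x,2\delta(x))$ contains $B(\xi_x,\delta(x))$ and is contained in $B(\xi_x,3\delta(x))$, and $m(B(x,2\delta(x))\cap\Omega)\simeq m(B_x)$ by the doubling property of $m_{|\Omega}$. Hence
\begin{equation*}
\frac{1}{\rho(x)\delta(x)}\simeq \frac{\mu(\Delta_x)}{m(B_x)}.
\end{equation*}
This is the only place where the geometry enters, and the only place where care is needed. We use doubling of $m_{|\Omega}$ for balls that are not centered on the boundary but contain a boundary-centered ball of comparable radius; $B_x\subset\Omega$ itself, so this is fine.

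\textbf{Step 2: Fubini and the bound for \eqref{eqA:dual_intNA}.} By Step 1,
\begin{equation*}
\int_\Omega fg\frac{\dm}{\rho\delta}\siml \int_\Omega f(x)g(x)\frac{\mu(\Delta_x)}{m(B_x)}\dm(x)=\int_\dOmega\int_{\gamma_\alpha(\xi)} f(x)g(x)\frac{\dm(x)}{m(B_x)}\dmu(\xi).
\end{equation*}
Bounding $f(x)\le \NN^{(\alpha)}f(\xi)$ for $x\in\gamma_\alpha(\xi)$ then gives \eqref{eqA:dual_intNA}. Measurability of $\{(x,\xi)\tq x\in\gamma_\alpha(\xi)\}$ is clear, since this set is open.

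\textbf{Step 3: the bound for \eqref{eqA:dual_int_NtAt}.} First we average. Since $\delta$, $\rho$ and $m(B_y)$ are essentially constant for $y\in\lambda B_x$, Fubini with the symmetric relation $y\in\lambda B_x\Leftrightarrow x\in\lambda' B_y$ (with $\lambda'\simeq\lambda$) yields
\begin{equation*}
\int_\Omega fg\frac{\dm}{\rho\delta}\siml \int_\Omega\Big(\fint_{\lambda B_x}|fg|\dm\Big)\frac{\dm(x)}{\rho(x)\delta(x)}.
\end{equation*}
Here we use doubling of $m_{|\Omega}$ to compare $m(\lambda B_x)$ and $m(\lambda B_y)$. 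Applying Cauchy--Schwarz inside the average gives
\begin{equation*}
\fint_{\lambda B_x}|fg|\dm\le F(x)\,\Gamma(x),
\end{equation*}
where $F(x):=\big(\fint_{\lambda B_x}|f|^2\dm\big)^{1/2}$ and $\Gamma(x):=\big(\fint_{\lambda B_x}|g|^2\dm\big)^{1/2}$.

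We then apply Step 2 to $F$ and $\Gamma$. By definition $\NN^{(\alpha)}F=\NNt^{(\alpha,\lambda)}f$ and $\AA^{(\alpha)}\Gamma=\AAt^{(\alpha,\lambda)}g$, which concludes the proof. The minor shrinkage of $\lambda$ in the averaging step, if it is needed, is absorbed by replacing $\lambda$ with $\lambda/2$ and enlarging the aperture. This uses the pointwise inclusions from Lemma~\ref{lemmaA:eqvlce_N}, since the integral is over all of $\Omega$ and no cone change is needed at the level of norms.
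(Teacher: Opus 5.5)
Your proposal is correct and follows the same route as the paper. The paper also rewrites $\int_\Omega h\,\frac{dm}{\rho\delta}\simeq\int_{\partial\Omega}\AA^{(\alpha)}h\,d\mu$ by Fubini, applies this to $|fg|$, and for the modified functionals proves the same averaging lower bound, then uses Cauchy--Schwarz and the first estimate on the local $L^2$ averages; your Step 1 comparison through a nearest boundary point is just a more explicit version of its doubling step. The averaging step needs only the inclusion $\frac{\lambda}{1+\lambda}B_y\subset\{x : y\in\lambda B_x\}$, not an equivalence, and it keeps the same $\lambda$, so your final paragraph about shrinking $\lambda$ and enlarging the aperture is unnecessary.
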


\begin{proof}
To prove the first estimate, notice that for any $h\geq 0$ measurable,
\begin{equation}\label{eqA:dual_intA}
\int_{\Omega} h \frac{\dm}{\rho\delta}
\simeq \int_{\dOmega} \AA^{(\alpha)}h \dmu; 
\end{equation}
indeed,
\begin{align*}
\int_\dOmega \AA^{(\alpha)} h \dmu
&=\int_\dOmega \int_{\gamma_\alpha(\xi)} h(x) \frac{\dm(x)}{m(B_x)} \dmu(\xi)\\
&\simeq\int_\Omega \int_{(1+\alpha)B_x\cap\dOmega} \dmu(\xi) \unsur{\mu(2B_x)}\unsur{\rho(x)\delta(x)} h(x)\dm(x)\\
&\simeq \int_\Omega h \frac{\dm}{\rho\delta},
\end{align*}
where we used Fubini's theorem and the doubling property of $m$ in the second line, and the doubling property of $\mu$ in the third one. Then, \eqref{eqA:dual_intA} applied to  $|fg|$ easily give us the result.
\newline

Next, to prove the second estimate, notice that for any $h\geq 0$ in $L^1\loc(\Omega,m)$,
\begin{equation*}
\int_\Omega \fint_{\lambda B_x} h(y)\dm(y) \frac{\dm(x)}{\rho(x)\delta(x)}
\geq \int_\Omega h(y) \int_{\frac{\lambda}{1+\lambda}B_y}\frac{1}{m(\lambda B_x)\rho(x)\delta(x)}\dm(x)\dm(y)
\simeq \int_\Omega h(y)\frac{\dm(y)}{\rho(y)\delta(y)},
\end{equation*}
where we used Fubini's theorem and the fact that $x\in\frac{\lambda}{1+\lambda}B_y$ implies $y\in\lambda B_x$ for the first estimate, and the doubling property of $m$ and $\rho$ for the second one.
Then, for any $f,g\in L^2\loc(\Omega,m)$,
\begin{align*}
\int_\Omega |fg|\frac{\dm}{\rho\delta}
\siml \int_\Omega \fint_{\lambda B_x} |fg| \dm \frac{\dm(x)}{\rho(x)\delta(x)}
&\leq \int_\Omega \pfrac{\fint_{\lambda B_x} f^2\dm} \pfrac{\fint_{\lambda B_x} g^2\dm} \frac{\dm(x)}{\rho(x)\delta(x)}\\
&\siml \int_\dOmega (\NNt^{(\alpha,\lambda)} f)(\AAt ^{(\alpha,\lambda)} g) \dmu,
\end{align*}
using \eqref{eqA:dual_intNA}. This give us the second estimate.
\end{proof}

Thanks to the H\"older inequality, we obtain the following corollary.
\begin{corollary}
Let $1< p<\infty$. We have
\begin{equation}\label{eqA:dual_prodNA}
\int_\Omega fg \frac{\dm}{\rho\delta} \leq C\|\NN f\|_p\|\AA g\|_{p'} \for f\in \N^p(\Omega), g\in \A^\pp(\Omega)
\end{equation}
and
\begin{equation}\label{eqA:dual_prodNtAt}
\int_\Omega fg \frac{\dm}{\rho\delta} \leq C\|\NNt f\|_p\|\AAt g\|_{p'} \for f\in \Nt^p(\Omega), g\in \At^\pp(\Omega),
\end{equation}
where $C>0$ depends only on $C_m'$ and $C_\mu$.
\end{corollary}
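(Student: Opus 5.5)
The plan is to combine Theorem \ref{thA:dual_intNA} with H\"older's inequality on $\dOmega$. The constants must depend only on $C_m'$ and $C_\mu$, so we work with the default parameters $\alpha=1$ and $\lambda=1/2$. With these choices $C_\alpha$ and $C_{\alpha,\lambda}$ are absolute functions of $C_m'$ and $C_\mu$.

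For \eqref{eqA:dual_prodNA}, take $f\in\N^p(\Omega)$ and $g\in\A^\pp(\Omega)$. First, bound the left-hand side by $\int_\Omega |fg|\frac{\dm}{\rho\delta}$. Then apply \eqref{eqA:dual_intNA} to $|f|$ and $|g|$; this is allowed because $\NN$ and $\AA$ only see absolute values. This gives
\[
\int_\Omega |fg|\frac{\dm}{\rho\delta}\leq C\int_\dOmega (\NN f)(\AA g)\dmu .
\]
H\"older's inequality with exponents $p$ and $\pp$ on $(\dOmega,\mu)$ then bounds this by $C\|\NN f\|_p\|\AA g\|_\pp$.

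The estimate \eqref{eqA:dual_prodNtAt} is proved the same way, using \eqref{eqA:dual_int_NtAt} with $\alpha=1$ and $\lambda=1/2$ in place of \eqref{eqA:dual_intNA}. The only point to check is that the integral on the left is well defined. Finiteness of the right-hand side gives absolute integrability of $fg/(\rho\delta)$ with respect to $m$, because the preceding theorem bounds $\int_\Omega |fg|\frac{\dm}{\rho\delta}$ itself and not just the signed integral.

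There is no real obstacle here. The one thing to watch is that the constants in Theorem \ref{thA:dual_intNA} do not secretly depend on the corkscrew condition, since this appendix assumes only that $m_{|\Omega}$ and $\mu$ are doubling. That theorem's proof used only Fubini and the doubling properties of $m$, $\mu$ and $\rho$. The doubling of $\rho$ at comparable points inside $\Omega$ follows from the doubling of $m_{|\Omega}$ and $\mu$ alone.
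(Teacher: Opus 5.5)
Your proof is correct and is exactly the paper's argument. The paper derives this corollary in one line by applying Theorem \ref{thA:dual_intNA} with the default parameters $\alpha=1$, $\lambda=1/2$, then H\"older's inequality on $(\dOmega,\mu)$ with exponents $p$ and $\pp$. Your observation that the theorem actually bounds $\int_\Omega |fg|\frac{\dm}{\rho\delta}$, so the left-hand side converges absolutely, is a useful detail the paper leaves implicit.
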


\begin{theorem}\label{thA:dual_A'N}
Let $1<p<\infty$. The dual space of $\A^p(\Omega)$ is homeomorphic to $\N^{p'}(\Omega)$ \textit{via} $f\mapsto\p{g\mapsto\int_\Omega fg\frac{\dm}{\rho\delta}}$. Likewise, the dual space of $\At^p(\Omega)$ is homeomorphic to $\Nt^{p'}(\Omega)$ \textit{via} the same homeomorphism. In particular,
\begin{equation}\label{eqA:dual_Nsup}
\|\NN f\|_{p'}
\leq C \sup_{\|\AA g\|_p\leq 1} \int_\Omega fg\frac{\dm}{\rho\delta}  \for f\in \N^{p'}(\Omega)
\end{equation}
and
\begin{equation}\label{eqA:dual_Ntsup}
\|\NNt f\|_{p'} \leq C \sup_{\|\AAt g\|_p\leq 1} \int_\Omega fg\frac{\dm}{\rho\delta} \for f\in \N^{p'}(\Omega),
\end{equation}
where $C>0$ depends only on $p,C_m'$ and $C_\mu$.

\end{theorem}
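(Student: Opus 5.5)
The plan is to follow the Coifman--Meyer--Stein duality argument, adapted to the weighted pairing $\langle f,g\rangle:=\int_\Omega fg\frac{\dm}{\rho\delta}$. By the corollary of Theorem \ref{thA:dual_intNA}, each $f\in\N^{p'}(\Omega)$ (resp. $\Nt^{p'}(\Omega)$) defines a bounded functional on $\A^p(\Omega)$ (resp. $\At^p(\Omega)$). What remains is surjectivity and the lower bounds \eqref{eqA:dual_Nsup}, \eqref{eqA:dual_Ntsup}.

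\textbf{Representation.} Let $\ell\in\A^p(\Omega)'$. For a compact $K\subset\Omega$, functions supported in $K$ satisfy $\|\AA g\|_p\simeq_K \int_K|g|\dm$. Indeed, $\AA g\le C_K\|g\|_{L^1(K,m)}$ on a bounded subset of $\dOmega$ and vanishes elsewhere. So $\ell$ restricts to a bounded functional on $L^1(K,m)$, and there is $f_K\in L^\infty(K,m)$ with $\ell(g)=\int fg\frac{\dm}{\rho\delta}$. These $f_K$ are compatible, which gives a single $f\in L^\infty_{loc}$.

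\textbf{Norm bound for $\N$.} To get $\|\NN f\|_{p'}\lesssim\|\ell\|$, I would linearize the sup. Choose a measurable selection $\xi\mapsto x(\xi)\in\gamma(\xi)$ with $|f(x(\xi))|\ge \frac12\NN f(\xi)$, after truncating $f$ to compact sets so that everything is finite. Then take $\psi\ge0$ on $\dOmega$ with $\|\psi\|_p\le1$ and $\int\psi\,\NN f\dmu\simeq\|\NN f\|_{p'}$. Smear the point $x(\xi)$ into the ball $\lambda B_{x(\xi)}$ and set
\[
g(y):=\mathrm{sgn} f(y)\int_\dOmega \psi(\xi)\frac{\rho(y)\delta(y)\UN_{\lambda B_{x(\xi)}}(y)}{m(\lambda B_{x(\xi)})}\dmu(\xi).
\]
Pairing $g$ with $f$ gives roughly $\int\psi\NN f\dmu$. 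Here the pointwise selection is exactly where the $\NN$ case needs care, since $f$ may be discontinuous; the $\Nt$ version avoids this.

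The main obstacle is bounding $\|\AA g\|_p\lesssim\|\psi\|_p$. I would dualize against $\phi\in L^{p'}(\dOmega)$ and apply Fubini together with \eqref{eqA:dual_intA}:
\[
\int\AA g\,\phi\dmu\simeq\int_\Omega |g|\,\frac{\fint_{2B_y}\phi\dmu\cdot\ldots}{\rho\delta}\dm\lesssim\int\psi\,\MM_\mu\phi\dmu,
\]
using that $\lambda B_{x(\xi)}\subset\gamma_\alpha(\zeta)$ for $\zeta$ in a boundary ball of size $\simeq\delta(x(\xi))$ around $\xi$. Boundedness of $\MM_\mu$ on $L^{p'}$ then closes the estimate.

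\textbf{The tent spaces $\Nt$, $\At$.} The same scheme works with $L^2$ averages. Select $x(\xi)$ nearly attaining $\NNt f(\xi)$ and set
\[
g:=\psi\text{-average of } \frac{\rho\delta\,\bar f\,\UN_{\lambda B_{x(\xi)}}}{m(\lambda B_{x(\xi)})\,(\fint_{\lambda B_{x(\xi)}}|f|^2)^{1/2}}.
\]
Its local $L^2$ averages are controlled by the same Fubini and maximal-function computation. Finally, the Hahn--Banach/density step (Theorem \ref{th:densite_At}) identifies the sup over $\|\AAt g\|_p\le1$ with $\|\ell\|$.
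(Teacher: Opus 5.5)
\textbf{Gap in the bound for $\N$.} Your representation step is the same as the paper's. The gap is in your bound for $\N$. With $g$ as you define it, the pairing is exactly
\[
\int_\Omega fg\,\frac{dm}{\rho\delta}=\int_{\dOmega}\psi(\xi)\fint_{\lambda B_{x(\xi)}}|f|\,dm\,d\mu(\xi).
\]
An average of $|f|$ over a ball of fixed relative size $\lambda\delta(x(\xi))$ has no lower bound in terms of $|f(x(\xi))|$. So smearing over $\lambda B_{x(\xi)}$ controls an averaged nontangential function, not $\NN f$. You say this ``needs care'' but never supply it.

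In fact no argument based on point values can work. With the literal pointwise supremum, \eqref{eqA:dual_Nsup} fails for $f=\UN_{\{x_0\}}$: $\NN f=\UN_{\{\xi:|\xi-x_0|<2\delta(x_0)\}}$ is nonzero on a set of positive $\mu$-measure, while every pairing vanishes because $m(\{x_0\})=0$. So $\NN$ must be read as an essential supremum over the cone, which is what the paper's linearization implicitly does. For $f\ge 0$ it uses $L^\infty$--$L^1$ duality to take a kernel $\KK(\xi,\cdot)\ge 0$ supported in $\gamma(\xi)$ with $\int\KK(\xi,\cdot)\,dm\le 1$ and $\int f\KK(\xi,\cdot)\,dm\ge\frac12\NN f(\xi)$. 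It then sets $g(x)=\rho(x)\delta(x)\int_{2B_x\cap\dOmega}\psi(\xi)\KK(\xi,x)\,d\mu(\xi)$.

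The idea you are missing is that the $\AA$-estimate never uses that the bump is a ball of size $\simeq\delta$; it uses only the $L^1(dm)$ normalization. On the layer $\Omega_k=\{2^{k-1}\le\delta<2^k\}$ of $\gamma(\zeta)$ one has $\rho\delta/m(B_x)\simeq 1/\mu(2B_x)\simeq 1/\mu(B(\zeta,2^k))$. Fubini then gives $\AA g(\zeta)\lesssim\sum_k\mu(B(\zeta,2^k))^{-1}\int_{B(\zeta,2^{k+2})\cap\dOmega}\psi\,\kappa_k\,d\mu$, where $\kappa_k(\xi)=\int_{\Omega_k}\KK(\xi,\cdot)\,dm$ and $\sum_k\kappa_k\le 1$. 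Testing against $\phi\in L^{p'}$ yields $\int\AA g\,\phi\,d\mu\lesssim\int\psi\,\MM_\mu\phi\,d\mu$.

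Alternatively, you could keep your construction but shrink the radius $r(\xi)\le\lambda\delta(x(\xi))$ around a Lebesgue point of $f$ until the average captures the essential supremum, modulo a measurable choice. Your single-scale Fubini bound does not depend on $r(\xi)$, because of the normalization by $m(B(x(\xi),r(\xi)))$.

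\textbf{The $\Nt$ case.} Your $\Nt$ argument is sound, and it takes a genuinely different route from the paper's. By Minkowski's inequality, each bump $h_\xi$ in your $g$ satisfies $(\fint_{B_z/2}|h_\xi|^2\,dm)^{1/2}\lesssim\mu(2B_{x(\xi)})^{-1}$. It is nonzero only for $z$ with $\delta(z)\simeq\delta(x(\xi))$ and $|z-x(\xi)|\lesssim\delta(x(\xi))$. Hence $\AAt g(\zeta)\lesssim\int\psi(\xi)\,\mu(B(\xi,\delta(x(\xi))))^{-1}\,\UN_{\{|\zeta-\xi|\lesssim\delta(x(\xi))\}}\,d\mu(\xi)$, and the same maximal-function duality closes the estimate. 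The selection is harmless: $x\mapsto(\fint_{\lambda B_x}|f|^2\,dm)^{1/2}$ is continuous in $\Omega$ (since $m\ll dx$ there), so the selection can be made from a countable dense set.

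The paper instead obtains \eqref{eqA:dual_Ntsup} by applying \eqref{eqA:dual_Nsup} to that continuous function, and then linearizes the $L^2$ average with a second kernel $H(x,y)$. Your direct version avoids the $\N$ case entirely, and with it the essential-supremum issue above.

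One correction there: for $\At^p$ the Riesz step must be done in $L^2(K,m)$, not $L^1(K,m)$. For $g$ supported in $K$ one only has $\|\AAt g\|_p\le C_K\|g\|_{L^2(K,m)}$. This yields $f\in L^2_{\mathrm{loc}}$, which is what $\Nt^{p'}$ requires.
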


\begin{proof}
We begin by proving \eqref{eqA:dual_Nsup}. Note that it is enough to prove it only for $f\geq 0$, and take $f\geq 0$ in $\N^\pp(\Omega)$. By duality,
\begin{equation}
\|\NN f\|_{p'}
=\sup_{\|\phi\|_p\leq 1} \int_\dOmega (\NN f)\phi\dmu.
\end{equation}
We thus take $\phi\in L^p(\dOmega,\mu)$ such that $\|\phi\|_p\leq 1$; since $\NN f\geq 0$, we can also assume that $\phi\geq 0$. Furthermore, also by duality:
\begin{equation}
\NN f(\xi)
=\sup_{\gamma(\xi)} f
=\sup_{\|\KK(\xi,\cdot)\|_{L^1(\gamma(\xi))}\leq 1} \int_\Omega f(x)\KK(\xi,x) \dm(x)
\for \xi\in\dOmega,
\end{equation}
so there exists $\KK:\dOmega\times\Omega\to\R$ such that $\KK\geq 0$, $\KK(\xi,\cdot)=0$ outside of $\gamma(\xi)$, $\|\KK(\xi,\cdot)\|_1\leq 1$, and 
\begin{equation*}
\frac{1}{2} \NN f(\xi)
\leq \int_\Omega f(x)\KK(\xi,x)\dm(x)
\leq \NN f(\xi),
\for\xi\in\dOmega.
\end{equation*}

Then, using Fubini's theorem,
\begin{align*}
\int_\dOmega (\NN f)\phi \dmu
&\siml \int_\dOmega \phi(\xi)\int_{\gamma(\xi)} f(x)\KK(\xi,x)\dm(x) \dmu(\xi)\\
&\siml \int_\Omega f(x)\int_{2B_x\cap\dOmega}\phi(\xi)\KK(\xi,x)\dmu(\xi)\dm(x)\\
&\siml \int_\Omega fg\frac{\dm}{\rho\delta},
\end{align*}
where $g(x)=\rho(x)\delta(x)\int_{2B_x\cap\dOmega} \phi(\xi)\KK(\xi,x)\dmu(\xi)$, for $x\in\Omega$. Next, we prove that $\|\AA g\|_p\siml 1$. For any $k\in\Z$, note $\Omega_k=\{x\in\Omega\tq 2^{k-1}\leq \delta(x)<2^k\}$. Then, for any $\zeta\in\dOmega$,
\begin{align*}
\AA g(\zeta)
&= \int_{\gamma(\zeta)} \rho(x)\delta(x) \int_{2B_x\cap\dOmega} \phi(\xi)\KK(\xi,x)\dmu(\xi)\frac{\dm(x)}{m(B_x)}\\
&\siml \sum_{k\in\Z} \int_{\Omega_k\cap\gamma(\zeta)} \frac{1}{\mu(2B_x)} \int_{2B_x\cap\dOmega} \phi(\xi)\KK(\xi,x)\dmu(\xi)\dm(x)\\
&\siml \sum_{k\in\Z} \frac{1}{\mu(B(\zeta,2^k))} \int_{\Omega_k\cap\gamma(\zeta)} \int_{2B_x\cap\dOmega} \phi(\xi)\KK(\xi,x)\dmu(\xi)\dm(x)\\
&\siml \sum_{k\in\Z} \frac{1}{\mu(B(\zeta,2^k))} \int_{B(\zeta,2^{k+2})\cap\dOmega} \phi(\xi)\kappa_k(\xi)\dmu(\xi),
\end{align*}
with $\kappa_k(\xi)=\int_{\Omega_k} \KK(\xi,\cdot)\dm$ for $\xi\in\dOmega$, where we used the doubling properties of $\mu$ for the second estimate and Fubini for the third one. Note that $\sum_{k\in\Z} \kappa_k =1$. In order to estimate $\|\AA g\|_p$, we use duality once again: for any $\psi\geq 0$ in $L^\pp(\dOmega,\mu)$ such that $\|\psi\|_\pp\leq 1$, 

\begin{align*}
\int_\dOmega (\AA g)\psi\dmu
&\siml \int_\dOmega \psi(\zeta) \sum_{k\in\Z} \frac{1}{\mu(B(\zeta,2^k))} \int_{B(\zeta,2^{k+2})\cap\dOmega} \phi(\xi)\kappa_k(\xi)\dmu(\xi) \dmu(\zeta)\\
&\siml \int_\dOmega \phi(\xi) \sum_{k\in\Z} \kappa_k(\xi) \frac{1}{\mu(B(\xi,2^k))} \int_{B(\xi,2^{k+2})\cap\dOmega} \psi(\zeta)\dmu(\zeta) \dmu(\xi)\\
&\siml \int_\dOmega \phi(\xi)\sum_{k\in\Z} \kappa_k(\xi) \MM_\mu\psi(\xi) \dmu(\xi)\\
&\siml \int_\dOmega\phi\MM_\mu\psi\dmu
\siml \|\phi\|_p\|\MM_\mu\psi\|_\pp
\siml \|\phi\|_p\|\psi\|_\pp
=1,
\end{align*}
where we used Fubini and the doubling properties of $\mu$ for the second estimate, and the $L^p$-boundedness of $\MM_\mu$ for the last one. This implies that $\|\AA g\|_p\siml 1$, as we desired, and proves \eqref{eqA:dual_Nsup}.
\newline

Now, let us prove \eqref{eqA:dual_Ntsup}. It is enough to prove it for $f\geq 0$ in $\Nt^\pp(\Omega)$. By \eqref{eqA:dual_Nsup}, 

\begin{equation}
\|\NNt f\|_\pp
\siml \sup_{\|\AA g\|_p\leq 1} \int_\Omega \pfrac{\fint_{B_x/2} f^2\dm} g(x) \frac{\dm(x)}{\rho(x)\delta(x)}.
\end{equation}
Take $g\geq 0$ in $A^p(\Omega)$ such that $\|\AA g\|_p\leq 1$. Furthermore, also by duality,

\begin{equation}
\pfrac{\int_{B_x/2} f^2\dm}
= \sup_{\|H(x,\cdot)\|_{L^2(B_x/8)}\leq 1} \int_{B_x/2} f(y)H(x,y)\dm(y) \for x\in\Omega,
\end{equation}
so there exists $H:\Omega\times\Omega\to\R$ such that $H\geq 0$, $H(x,\cdot)=0$ outside of $B_x/2$, $\|H(x,\cdot)\|_2\leq 1$ and

\begin{equation}
\frac{1}{2}\pfrac{\int f^2\dm} 
\leq \int_{B_x/2} f(y)H(x,y)\dm(y) 
\leq \pfrac{\int f^2\dm} \for x\in\Omega.
\end{equation}
Then,
\begin{align*}
\int_\Omega \pfrac{\fint_{B_x/2} f^2\dm} g(x)\frac{dm(x)}{\rho(x)\delta(x)}
&\siml \int_\Omega g(x)\frac{1}{m(B_x)^{1/2}}\int_{B_x/2} f(y)H(x,y)\dm(y)\frac{dm(x)}{\rho(x)\delta(x)}\\
&\siml \int_\Omega f(y)\int_{\Omega} g(x)H(x,y)\frac{\dm(x)}{m(B_x)^{1/2}} \frac{\dm(y)}{\rho(y)\delta(y)}
= \int_\Omega f\gt\frac{\dm}{\rho\delta},
\end{align*}
with $\gt(y):=\int_{\Omega} g(x)H(x,y)\frac{\dm(x)}{m(B_x)^{1/2}}$ for $y\in\Omega$, where we used Fubini's theorem and the doubling properties of $\rho$ for the second estimate. Now, let us show that $\|\AAt\gt\|_p\siml 1$. Just as above, there exists $K:\Omega\times\Omega\to\R$ such that $K\geq 0$, $K(z,\cdot)=0$ outside of $B_z/2$, $\|K(z,\cdot)\|_2\leq 1$ and 

\begin{equation*}
\pfrac{\int_{B_z/2} \p{\int_{\Omega} g(x)H(x,y)\frac{\dm(x)}{m(B_x)^2}}^2\dm(y)}
\simeq \int_{B_z/2} K(z,y)\int_{\Omega} g(x)H(x,y)\frac{\dm(x)}{m(B_x)^2} \dm(y)
\end{equation*}
for $z\in\Omega$. Then, for $\xi\in\dOmega$,

\begin{align*}
\AAt\gt(\xi)
&=\int_{\gamma(\xi)} \pfrac{\fint_{B_z/2} \p{\int_{\Omega} g(x)H(x,y)\frac{\dm(x)}{m(B_x)^{1/2}}}^2\dm(y)} \frac{\dm(z)}{m(B_z)}\\
&\siml \int_{\gamma(\xi)} \pfrac{\int_{B_z/2} \p{\int_{\Omega} g(x)H(x,y)\frac{\dm(x)}{m(B_x)^2}}^2\dm(y)}\dm(z)\\
&\siml \int_{z\in\gamma(\xi)} \int_{y\in B_z/2} K(z,y)\int_{x\in \Omega} g(x)H(x,y)\frac{\dm(x)}{m(B_x)^2} \dm(y) \dm(z)\\
&\siml \int_{x\in\gamma_{16}(\xi)} g(x) \int _{z\in 4B_x\cap\Omega} \int_{y\in \Omega} K(z,y)H(x,y)\dm(y)\dm(z)\frac{\dm(x)}{m(B_x)^2}\\
&\siml \int_{\gamma_{16}(\xi)} g(x) \int _{4B_x\cap\Omega} \|K(z,\cdot)\|_2\|H(x,\cdot)\|_2\dm(z)\frac{\dm(x)}{m(B_x)^2}\\
&\siml \int_{\gamma_{16}(\xi)} g(x) \frac{\dm(x)}{m(B_x)}
= \AA^{(10)} g(\xi);
\end{align*}
Where we used the fact that $H(x,y)=0$ if $|x-y|>\delta(x)$ and the doubling properties of $m$ for the first estimate, the same property of $H$ and Fubini for the third one, and H\"older's inequality for the fourth one. This implies that $\|\AAt \gt\|_p\siml \|\AA g\|_p \leq 1$, and proves \eqref{eqA:dual_Ntsup}.
\newline 

Finally, let us prove that $\A^p(\Omega)'$ is homeomorphic to $\N^\pp(\Omega)$; the characterization of $\At^p(\Omega)'$ can be proven using the exact same arguments. First, remark that \eqref{eqA:dual_prodNA} gives us that 
\begin{align*}
\N^\pp(\Omega) &\longrightarrow \A^p(\Omega)'\\
f&\longmapsto \p{g\mapsto\int fg \frac{\dm}{\rho\delta}}
\end{align*}
Is well-defined. Next, take $\ell\in \A^p(\Omega)'$; for any compact set $K$ of $\Omega$, there exists $C_K>0$ such that

\begin{equation*}
\|\AA g\|_p \leq C_K \|g\|_{L^1(K)}, \for g\in L^1(K,m).
\end{equation*}
By Riesz's theorem, there exists a unique $F_K\in L^p(K,m)$ such that $\ell(g)=\int_K F_K g \dm$ for any $g\in L^1(K,m)$. By taking $f=\rho\delta F_K$ on any compact $K$ of $\Omega$, we then obtain a well-defined and unique function $f\in L^{\infty}\loc(\Omega,m)$ such that 
\begin{equation}\label{eqA:dual_identification}
\ell(g)=\int_{\Omega} fg\frac{\dm}{\rho\delta},\for g\in L^1_c(\Omega,m).
\end{equation}
Let us prove that $\|\NN f\|_\pp\siml\|\ell\|$. Take a compact $K$ in $\Omega$, and write $f_K:=f\UN_K$. Then $f_K\in L^\infty_c(\Omega,m)\incl N^\pp(\Omega)$, and
\begin{equation}
\|\NN f_K\|_\pp
\siml \sup_{\|\AA g\|_p\leq 1} \int_\Omega f_Kg\frac{\dm}{\rho\delta}
= \sup_{\|\AA g\|_p\leq 1} \ell(g\UN_K)
\leq \|\ell\|,
\end{equation}
where we used \eqref{eqA:dual_Nsup} and \eqref{eqA:dual_identification}, and where the constant is independent from $K$. By Fatou's lemma, we obtain $\|\NN f\|_\pp\siml \|\ell\|$. Finally, now that we know that $f\in \N^\pp(\Omega)$, we can extend \eqref{eqA:dual_identification} to all $g\in \A^p(\Omega)$; indeed, it is easy to prove that $L^1_c(\Omega,m)$ is dense in $A^p(\Omega)$, and the continuity of the right hand sign is given by \eqref{eqA:dual_prodNA}. This implies simultaneously that our identification is in fact a bijection, and that $\|\ell\|\siml\|\NN f\|_\pp$. Piecing together all that information, we obtain that $\A^p(\Omega)'$ is homeomorphic to $\N^\pp(\Omega)$.
\end{proof}

\begin{theorem}\label{thA:dual_Asup}
Let $1<p<\infty$. We have
\begin{equation}\label{eqA:dual_Asup}
\|\AA g\|_{p'} 
\leq C \sup_{\|\NN f\|_p\leq 1} \int_\Omega fg\frac{\dm}{\rho\delta} \for g\in \A^{p'}(\Omega)
\end{equation}
and
\begin{equation}\label{eqA:dual_Atsup}
\|\AAt g\|_{p'} 
\leq C \sup_{\|\NNt f\|_p\leq 1} \int_\Omega fg\frac{\dm}{\rho\delta} \for g\in \At^{p'}(\Omega),
\end{equation}
where $C>0$ depends only on $p$, $C_m'$ and $C_\mu$.
\end{theorem}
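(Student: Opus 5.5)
We prove \eqref{eqA:dual_Asup} first; the estimate \eqref{eqA:dual_Atsup} will follow by the same averaging device used at the end of the proof of Theorem \ref{thA:dual_A'N}. We may assume $g\geq 0$, and by Fatou's lemma for tent spaces also that $g$ is bounded with compact support. By duality in $L^{p'}(\dOmega,\mu)$ we write $\|\AA g\|_{p'}=\int_\dOmega (\AA g)\phi\dmu$ for some $\phi\geq 0$ with $\|\phi\|_p\leq 1$. Fubini gives
\begin{equation*}
\int_\dOmega (\AA g)\phi\dmu = \int_\Omega g(x)\, \frac{1}{m(B_x)}\int_{2B_x\cap\dOmega}\phi\dmu\, \dm(x) \siml \int_\Omega g(x)\,\fint_{2B_x\cap\dOmega}\phi\dmu\;\frac{\dm(x)}{\rho(x)\delta(x)},
\end{equation*}
where we used $m(B_x)\simeq \rho(x)\delta(x)\mu(2B_x)$, which follows from the doubling properties of $m_{|\Omega}$ and $\mu$.

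We therefore set $f(x):=\fint_{2B_x\cap\dOmega}\phi\dmu$, and the key step is to show $\|\NN f\|_p\siml 1$. Take $\zeta\in\dOmega$ and $x\in\gamma(\zeta)$. Then $2B_x\subset B(\zeta,4\delta(x))$, and the doubling property of $\mu$ gives
\begin{equation*}
f(x)\siml \fint_{B(\zeta,4\delta(x))}\phi\dmu\leq \MM_\mu\phi(\zeta).
\end{equation*}
Hence $\NN f\siml \MM_\mu\phi$, and the $L^p$ boundedness of $\MM_\mu$ for $1<p<\infty$ yields $\|\NN f\|_p\siml\|\phi\|_p\leq 1$. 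This proves \eqref{eqA:dual_Asup}. Note that this argument is simpler than the one for \eqref{eqA:dual_Nsup}, because the area function is linear in $g$; no selection kernel $\KK$ is needed.

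For \eqref{eqA:dual_Atsup}, take $g\geq 0$. Since $\AAt g=\AA(\tilde g)$ with $\tilde g(x)=\bigl(\fint_{B_x/2}g^2\dm\bigr)^{1/2}$, the previous step gives
\begin{equation*}
\|\AAt g\|_{p'}\siml \int_\Omega \tilde g(x)\, f(x)\,\frac{\dm(x)}{\rho(x)\delta(x)},
\end{equation*}
with $\|\NN f\|_p\siml 1$. Next we linearize $\tilde g$ as in the previous proof. Choose $H\geq 0$ such that $H(x,\cdot)$ is supported in $B_x/2$, $\|H(x,\cdot)\|_{L^2(m)}\leq 1$, and $\tilde g(x)\simeq m(B_x)^{-1/2}\int g(y)H(x,y)\dm(y)$. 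By Fubini and the doubling of $\rho$ and $\delta$, we get $\int_\Omega g\,\tilde f\,\dm/(\rho\delta)$ with
\begin{equation*}
\tilde f(y):=\int_\Omega f(x)H(x,y)\,m(B_x)^{-1/2}\dm(x).
\end{equation*}

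The main obstacle is to show $\NNt\tilde f\siml \NN^{(c)} f$ pointwise. For $z\in\gamma(\xi)$ and $y\in B_z/2$, only points $x$ with $|x-y|<\delta(x)/2$ contribute. All such $x$ lie in $\gamma_{c}(\xi)$ and satisfy $\delta(x)\simeq\delta(z)$. Minkowski's inequality gives
\begin{equation*}
\Bigl(\fint_{B_z/2}|\tilde f|^2\dm\Bigr)^{1/2}\leq m(B_z)^{-1/2}\int_{CB_z} f(x)\,\|H(x,\cdot)\|_2\, m(B_x)^{-1/2}\dm(x)\siml \sup_{\gamma_c(\xi)}f,
\end{equation*}
using $m(CB_z\cap\Omega)\simeq m(B_z)\simeq m(B_x)$. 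Lemma \ref{lemmaA:eqvlce_N} then gives $\|\NNt\tilde f\|_p\siml\|\NN f\|_p\siml1$, which concludes the proof.
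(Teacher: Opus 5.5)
Your proof is correct and follows essentially the paper's route. The first estimate is exactly the paper's argument: duality with $\phi\geq 0$, Fubini, $f(x)=\fint_{2B_x\cap\dOmega}\phi\dmu$ and $\NN f\siml\MM_\mu\phi$. For the second, you spell out what the paper only alludes to ("in the exact same way" as Theorem \ref{thA:dual_A'N}): apply the first estimate to $\tilde g$, linearize with $H$, and prove $\NNt\tilde f\siml\NN^{(c)}f$ pointwise, with Minkowski's inequality taking the place of the second linearization $K$ and H\"older's inequality used there.
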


\begin{proof}
Let us begin by proving \eqref{eqA:dual_Asup}. As before, it is enough to consider $g\geq 0$ in $\A^\pp(\Omega)$. The proof uses the same ideas as in the one of \eqref{eqA:dual_Nsup}, although being much easier. By duality,
\begin{equation}
\|\AA g\|_{p'} = \sup_{\|\phi\|_p\leq 1} \int_\dOmega (\AA g) \phi \dmu.
\end{equation}
Take $\phi\geq 0$ in $L^p(\dOmega,\mu)$ such that $\|\phi\|_p\leq 1$. Using Fubini's theorem,

\begin{align*}
\int_\dOmega (\AA g)\phi \dmu
&=\int_\dOmega \phi(\xi)\int_{\gamma(\xi)} g(x) \frac{\dm(x)}{m(B_x)} \dmu(\xi)\\
&=\int_\Omega g(x)\unsur{m(B_x)}\int_{2B_x\cap\dOmega} \phi(\xi)\dmu(\xi) \dm(x)\\
&=\int_\Omega fg \frac{\dm}{\rho\delta},
\end{align*}
where $f(x):=\fint_{2B_x\cap\dOmega} \phi\dmu$ for $x\in\Omega$. Since

\begin{equation*}
\NN f(\xi)
= \sup_{x\in\gamma(\xi)} \fint_{2B_x\cap\dOmega} \phi\dmu
\siml \MM_\mu\phi (\xi) \for \xi\in\dOmega,
\end{equation*}
using the doubling properties of $\mu$, we have that $\|\NN f\|_p \siml \|\MM_\mu\phi\|_p\siml \|\phi\|_p\leq 1$, giving us \eqref{eqA:dual_Asup}. As for \eqref{eqA:dual_Atsup}, it is deduced from \eqref{eqA:dual_Asup} in the exact same way that we deduced \eqref{eqA:dual_Ntsup} from \eqref{eqA:dual_Nsup} in Theorem \ref{thA:dual_A'N}.
\end{proof}

At last, we can prove the equivalence of norms under change of parameters for $\AA$:

\begin{lemma}\label{lemmaA:eqvlce_A}
Let $1<p<\infty$. The $L^p$ norms of $\AA^{(\alpha)}$ are equivalent under change of $\alpha$, i.e. for any $\alpha,\alpha'>0$,
\begin{equation*}
\|\AA^{(\alpha)}g\|_p \leq C_{\alpha,\alpha'} \|\AA^{(\alpha')}g\|_p,
\end{equation*}
for any $g$ measurable, where $C_{\alpha,\alpha'}>0$ depends only on $p$, $C_\mu$, $C_m'$, $\alpha$ and $\alpha'$. Likewise, the $L^p$ norms of $\AAt^{(\alpha,\lambda)}$ are equivalent under change of $\alpha$ and $\lambda$, i.e. for any $\alpha,\alpha'>0$ and any $0<\lambda,\lambda'\leq 1/2$,
\begin{equation*}
\|\AAt^{(\alpha,\lambda)} g\|_p \leq C_{\alpha,\alpha',\lambda/\lambda'} \|\AAt^{(\alpha',\lambda')} g\|_p
\end{equation*}
for any $g\in L^2\loc(\Omega,m)$ ,where $C_{\alpha,\alpha',\lambda/\lambda'}>0$ depends only on $p$, $C_\mu$, $C_m'$, $\alpha$, $\alpha'$ and $\lambda/\lambda'$.
\end{lemma}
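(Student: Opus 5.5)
The plan is to deduce the equivalence for $\AA$ and $\AAt$ from the duality estimates already established, together with Lemma \ref{lemmaA:eqvlce_N} for $\NN$ and $\NNt$. The point is that the pairing $\int_\Omega fg\,\frac{\dm}{\rho\delta}$ does not depend on the aperture or on $\lambda$; all the parameter dependence sits on the boundary functionals.

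Fix $\alpha,\alpha'$ and $g\geq 0$. I would first apply \eqref{eqA:dual_Asup} with aperture $\alpha$ in place of $1$. Its proof goes through verbatim: one uses Fubini with $(1+\alpha)B_x$ in place of $2B_x$, and $\NN^{(\alpha)}$ of $x\mapsto\fint_{(1+\alpha)B_x\cap\dOmega}\phi\dmu$ is controlled by $C_\alpha\MM_\mu\phi$ by doubling. This gives
\[
\|\AA^{(\alpha)}g\|_p\leq C_\alpha\sup_{\|\NN^{(\alpha)}f\|_{p'}\leq1}\int_\Omega fg\frac{\dm}{\rho\delta}.
\]
Next, for each such $f$, \eqref{eqA:dual_intNA} with aperture $\alpha'$ followed by Hölder gives
\[
\int_\Omega fg\frac{\dm}{\rho\delta}\leq C_{\alpha'}\|\NN^{(\alpha')}f\|_{p'}\|\AA^{(\alpha')}g\|_p.
\]
Finally, Lemma \ref{lemmaA:eqvlce_N} (valid for all $p'$) gives $\|\NN^{(\alpha')}f\|_{p'}\leq C\|\NN^{(\alpha)}f\|_{p'}\leq C$. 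Combining the three displays yields the claim for $\AA$.

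For $\AAt$ I would argue in exactly the same way. The first step is \eqref{eqA:dual_Atsup} with parameters $(\alpha,\lambda)$, obtained from the $\AA^{(\alpha)}$ version by the same kernel-$H$ argument used in Theorem \ref{thA:dual_A'N}, with $\lambda B_x$ in place of $B_x/2$. The second step is \eqref{eqA:dual_int_NtAt} with parameters $(\alpha',\lambda')$. The third step is the $\NNt$ part of Lemma \ref{lemmaA:eqvlce_N}, passing from $(\alpha',\lambda')$ to $(\alpha,\lambda)$.

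The step I expect to be the main obstacle is checking that the duality suprema \eqref{eqA:dual_Asup} and \eqref{eqA:dual_Atsup} hold for general parameters rather than just $(1,1/2)$. The apertures and radii enter only through doubling constants and the inclusion $x\in\gamma_\alpha(\xi)\Leftrightarrow\xi\in(1+\alpha)B_x$, so this should be routine bookkeeping.

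A finiteness caveat also needs care: the supremum argument is only used when $\|\AA^{(\alpha)}g\|_p<\infty$. I would handle this by truncating, replacing $g$ by $g\UN_K\min(g,k)$ with $K$ compact, which makes everything finite, and then pass to the limit by monotone convergence on both sides.
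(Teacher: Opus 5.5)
Your proposal is correct and follows the paper's own argument, whose entire proof is the remark that the lemma follows from \eqref{eqA:dual_Asup} (resp.\ \eqref{eqA:dual_Atsup}), the pairing bound \eqref{eqA:dual_intNA} (resp.\ \eqref{eqA:dual_int_NtAt}) with H\"older, and Lemma \ref{lemmaA:eqvlce_N}. Your check that the duality suprema also hold at general parameters $(\alpha,\lambda)$ is a detail the paper leaves implicit; the truncation step is harmless but not needed, because $L^{p'}$-duality and Tonelli already apply to nonnegative measurable functions when the norms are infinite.
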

\begin{proof}
this is a direct consequence of \eqref{eqA:dual_Asup}, \eqref{eqA:dual_intNA} and Lemma \ref{lemmaA:eqvlce_N} for $\AA$, and \eqref{eqA:dual_Atsup}, \eqref{eqA:dual_int_NtAt} and Lemma \ref{lemmaA:eqvlce_N} for $\AAt$.
\end{proof}

\begin{definition}
Take $0<\lambda\leq 1/2$. For any measurable function $g:\Omega\fleche\R$, we define the Carleson function as 
\begin{equation*}
\CC g(\xi) := \sup_{r>0} \frac{1}{\mu(B(\xi,r))}\int_{B(\xi,r)\cap\Omega} |g| \frac{\dm}{\rho\delta}.
\end{equation*}
For any $g\in L^2\loc(\Omega,m)$, we define the modified Carleson function as
\begin{equation*}
\CCt^{(\lambda)} g(\xi) = \sup_{r>0} \frac{1}{\mu(B(\xi,r))}\int_{B(\xi,r)\cap\Omega} \pfrac{\fint_{B_x/2} |g|^2\dm} \frac{\dm(x)}{\rho(x)\delta(x)}.
\end{equation*}
We will note $\CCt=\CCt^{(1)}$.
\end{definition}

Carleson's duality theorem holds in our setting:
\begin{theorem}
For any $f,g\geq 0$ measurable,
\begin{equation}\label{eqA:carleson}
\int_\Omega fg\frac{\dm}{\rho\delta}
\leq C \int_\dOmega (\NN f)( \CC g)\dm,
\end{equation}
where $C>0$ depends only on $C_\mu$.
\end{theorem}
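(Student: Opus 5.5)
We prove the Carleson duality estimate \eqref{eqA:carleson} by the classical good-$\lambda$/level-set argument of Coifman--Meyer--Stein. We may assume $f,g\geq 0$ and $\NN f\in L^1$ (otherwise there is nothing to prove). The starting point is the layer-cake formula
\begin{equation*}
\int_\Omega fg\frac{\dm}{\rho\delta}=\int_0^\infty \int_{\{x\in\Omega\,:\,f(x)>t\}} g\frac{\dm}{\rho\delta}\,dt .
\end{equation*}
The right-hand side of \eqref{eqA:carleson} equals $\int_0^\infty\int_{E_t}\CC g\dmu\,dt$, where $E_t:=\{\xi\in\dOmega\tq \NN f(\xi)>t\}$. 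It is therefore enough to prove, for every $t>0$,
\begin{equation*}
\int_{\{f>t\}} g\frac{\dm}{\rho\delta}\leq C\int_{E_t}\CC g\dmu .
\end{equation*}

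\textbf{Step 1: geometric inclusion.} If $f(x)>t$, then $\NN f>t$ on $2B_x\cap\dOmega$. Indeed, any $\zeta$ with $|x-\zeta|<2\delta(x)$ satisfies $x\in\gamma(\zeta)$. Hence every $x\in\{f>t\}$ lies in a ball $B(\zeta_x,2\delta(x))$, where $\zeta_x\in\dOmega$ is a closest point to $x$. This ball satisfies $B(\zeta_x,\delta(x))\cap\dOmega\subset E_t$, since $|x-\zeta|\leq|x-\zeta_x|+|\zeta_x-\zeta|<2\delta(x)$. So $\{f>t\}$ is covered by the ``tent'' of $E_t$, namely the set of points $x$ whose boundary ball of radius comparable to $\delta(x)$ is contained in $E_t$.

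\textbf{Step 2: covering.} The balls $B(\zeta_x,\delta(x))$ are contained in the open set $E_t$; note that $E_t$ is open, since $\gamma(\zeta)$ varies nicely with $\zeta$. Apply a Vitali $5r$-covering to this family (when $\delta$ is unbounded on $\{f>t\}$, split first by dyadic size of $\delta(x)$). This gives disjoint balls $B_i=B(\zeta_i,r_i)$ with $B_i\cap\dOmega\subset E_t$ such that every $x\in\{f>t\}$ satisfies $x\in B(\zeta_i,C r_i)$ for some $i$. Then
\begin{equation*}
\int_{\{f>t\}} g\frac{\dm}{\rho\delta}\leq\sum_i\int_{B(\zeta_i,Cr_i)\cap\Omega} g\frac{\dm}{\rho\delta}\leq\sum_i \mu(B(\zeta_i,Cr_i))\inf_{B_i\cap\dOmega}\CC' g ,
\end{equation*}
where $\CC'$ denotes the Carleson function with enlarged radius. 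The infimum comes from the inclusion $B(\zeta_i,Cr_i)\subset B(\xi,2Cr_i)$ for $\xi\in B_i$, together with doubling of $\mu$. The doubling of $\mu$ (H2) also gives $\mu(B(\zeta_i,Cr_i))\lesssim\mu(B_i)$. Since the $B_i\cap\dOmega$ are disjoint subsets of $E_t$, the sum is bounded by $\int_{E_t}\CC g\dmu$. Here $\CC'g\lesssim \CC g$ pointwise, with the constant depending only on $C_\mu$, because the supremum over $r$ already absorbs the change of radius.

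\textbf{Main obstacle.} The delicate point is Step 2. The sets $\{f>t\}$ may be unbounded and the radii $\delta(x)$ unbounded, so Vitali must be applied carefully. I would handle this by first truncating $f$ to $f\UN_{\{\delta<R\}\cap B(0,R)}$, then using monotone convergence. After truncation, the radii are bounded and the Vitali lemma in the doubling metric space $(\dOmega,\mu)$ applies directly. It must also be checked that the dilated balls indeed contain the whole region. This is fine because $|x-\zeta_i|\leq|x-\zeta_x|+|\zeta_x-\zeta_i|\leq\delta(x)+5r_j$ with $\delta(x)\lesssim r_i$.
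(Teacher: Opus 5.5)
Your proof is correct, but the covering step differs from the paper's.

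\textbf{The paper's route.} It also reduces, via Cavalieri, to $\nu(\{f>t\})\lesssim\int_{E_t}\CC g\,d\mu$ with $d\nu=g\,dm/(\rho\delta)$. It then takes a Whitney decomposition of the open set $E_t\subsetneq\dOmega$ into boundary balls $B_k$ with bounded overlap, each with $4B_k$ meeting $\dOmega\setminus E_t$. This last property, combined with $2B_x\cap\dOmega\subset E_t$ whenever $f(x)>t$, forces $\delta(x)\lesssim r_k$, hence $\{f>t\}\subset\bigcup_k 16B_k$. The bound $\nu(16B_k\cap\Omega)\lesssim\int_{B_k}\CC g\,d\mu$ and bounded overlap then finish the argument. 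The case $E_t=\dOmega$, where no such decomposition exists, is handled separately by letting a ball grow.

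\textbf{Your route.} You cover $\{f>t\}$ directly by applying the $5r$-lemma to the balls $B(\zeta_x,\delta(x))$, whose traces on $\dOmega$ lie in $E_t$. This gives disjoint rather than boundedly overlapping balls. It uses only the elementary Vitali lemma instead of a Whitney decomposition in the space of homogeneous type $(\dOmega,\mu)$. It also needs neither the openness of $E_t$ (beyond measurability) nor the special case $E_t=\dOmega$.

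The price is that Vitali needs bounded radii. You correctly get this by replacing $f$ with $f\UN_{\{\delta<R\}}$, using $\NN(f\UN_{\{\delta<R\}})\le\NN f$ and monotone convergence; the extra cut-off by $B(0,R)$ is unnecessary. Both routes give a constant depending only on $C_\mu$.

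\textbf{Two small remarks.}
\begin{itemize}
\item Your opening reduction to $\NN f\in L^1(\mu)$ is not justified as stated. The right-hand side can be finite even when $\NN f\notin L^1$, since $\CC g$ may decay. You never use this reduction, so simply drop it.
\item The ``enlarged Carleson function'' $\CC'$ is not needed. For $\xi\in B_i\cap\dOmega$ you have $B(\zeta_i,5r_i)\subset B(\xi,6r_i)$ and $\mu(B(\xi,6r_i))\le C_\mu^3\,\mu(B_i)$. Hence $\nu(B(\zeta_i,5r_i)\cap\Omega)\le C_\mu^3\,\mu(B_i)\,\CC g(\xi)$ directly from the definition of $\CC g$.
\end{itemize}
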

\begin{proof}
Take $f,g\geq 0$ measurable. Thanks to Cavalieri's principle, it is enough to show that
\begin{equation}\label{eqA:carleson:weak_carleson}
\nu\{x\in\Omega\tq f(x)>t\}
\siml \int_{\{\xi\in\dOmega\tq \NN f>t\}} \CC g\dmu,
\end{equation}
for any $t>0$, where $d\nu:=g\frac{dm}{\rho\delta}$. Let us also observe that for any $B=B(\xi,r)$ centered on $\dOmega$,
\begin{equation}\label{eqA:carleson:nuB}
\nu(16 B\cap\Omega)\siml \int_B\CC g\dmu.
\end{equation}
Indeed, for any $\zeta\in B$,
\begin{equation*}
\nu(16 B\cap\Omega)
\leq \nu\p{B(\zeta,32 r)\cap\Omega}
=\int_{B(\zeta,32 r)\cap\Omega} g\frac{\dm}{\rho\delta}
\siml \frac{\mu(B)}{\mu(B(\zeta,32 r))}\int_{B(\zeta, 32 r)\cap\Omega} g\frac{\dm}{\rho\delta}
\leq\mu(B)\CC g(\zeta),
\end{equation*}
using the doubling property of $\mu$, which give us the estimate by averaging over $\zeta\in B$.
\newline

Let us prove \eqref{eqA:carleson:weak_carleson}.  Fix some $t>0$, and write $E_t:=\{\xi\in\dOmega\tq \NN f>t\}$. If $E_t=\dOmega$, then taking $\xi_0\in\dOmega$ and $r>0$, applying \eqref{eqA:carleson:nuB} to $B(\xi_0,r)$ and letting $r$ grow to infinity give us that $\nu(\Omega)\leq \int_\dOmega \CC g\dmu$, implying \eqref{eqA:carleson:weak_carleson}. Let us assume that $E_t\varsubsetneq \dOmega$. $E_t$ is open in $\dOmega$: indeed, if $\xi\in\dOmega$ satisfies $\NN f(\xi)>t$, then there exists $x\in\gamma(\xi)$ such that $f(x)>t$; let us note $r_x=2\delta(x)-|x-\xi|>0$. Since $x\in\gamma(\zeta)$ for any $\zeta\in B(\xi,r_x)\cap\dOmega$, we have $B(\xi,r_x)\cap\dOmega\incl E_t$.
\newline

Since $E_t$ is open and different from $\dOmega$, we can apply the Whitney Covering Theorem (see  for example \cite{CW77}) which give us a countable family of balls $B_k=B(\xi_k,r_k)$, for ${k\in\N}$, centered on $\dOmega$, and a constant $C>0$ such that
\begin{enumerate}[(i)]
\item $\sum_{k\in\N} \UN_{B_k\cap\dOmega}\leq C$,
\item $E_t=\bigcup_{k\in\N} B_k\cap\dOmega$,
\item $4B_k\cap\dOmega\priv E_t\neq\emptyset$.
\end{enumerate}
Now we will prove that
\begin{equation}\label{eqA:carleson:incl}
\{x\in\Omega\tq f(x)>t\} \incl \bigcup_{k\in\N} 16B_k.
\end{equation}
Take $x\in\Omega$ such that $f(x)>t$, and $\xi\in\dOmega$ such that $|x-\xi|=\delta(x)$. Then $\xi\in E_t$, so there exists $k\in\N$ such that $\xi\in B_k$; thanks to $(iii)$, we can take $\zeta\in 4B_k\cap\dOmega\priv E_t$. By choice of $x$ we have that $2B_x\cap\dOmega\incl E_t$, so $\zeta\notin 2B_x$; thus, 
\begin{equation*}
2\delta(x)
\leq |\zeta-\xi|
\leq |\zeta-\xi_k|+|\xi_k-\xi|
< 8r_k,
\end{equation*}
so $|x-\xi_k|<\delta(x)+r_k<16r_k$, proving \eqref{eqA:carleson:incl}. Then,
\begin{equation*}
\nu\{x\in\Omega\tq f(x)>t\}
\leq \sum_{k\in\N} \nu(16B_k)
\siml \sum_{k\in\N} \int_{B_k} \CC g\dmu
\siml \int_{E_t} \CC g\dmu,
\end{equation*}
using the inclusion \eqref{eqA:carleson:incl} for the first estimate, \eqref{eqA:carleson:nuB} for the  second one and properties $(i)$ and $(ii)$ of the Whitney decomposition for the last one. This concludes the proof.
\end{proof}

\begin{theorem}\label{thA:eqvlce_CA}
Take $1<p<\infty$. We have
\begin{equation}\label{eqA:dual_AeqC}
C^{-1}\|\AA g\|_p\leq \|\CC g\|_p \leq C\|\AA g\|_p,
\end{equation}
for any $g:\Omega\to\R$ measurable, and
\begin{equation}\label{eqA:dual_AtEqCt}
C^{-1}\|\AAt g\|_p\leq \|\CCt g\|_p \leq C\|\AAt g\|_p,
\end{equation}
for any $g\in L^2\loc(\Omega,m)$, where $C>0$ depends only on $p$, $C_\mu$ and $C_m'$.
\end{theorem}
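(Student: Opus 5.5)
The upper bound $\|\CC g\|_p\lesssim\|\AA g\|_p$ will come from duality, and the lower bound from the Carleson duality estimate \eqref{eqA:carleson} combined with \eqref{eqA:dual_Asup}. The plan handles $\AA,\CC$ first and then transfers to $\AAt,\CCt$ by applying everything to the function $x\mapsto(\fint_{\lambda B_x}|g|^2\dm)^{1/2}$.

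\textbf{Lower bound for $\CC$.} Take $g\geq 0$ and apply \eqref{eqA:dual_Asup}:
\[
\|\AA g\|_p\lesssim\sup_{\|\NN f\|_{p'}\leq 1}\int_\Omega fg\frac{\dm}{\rho\delta}.
\]
For each admissible $f$ (we may take $|f|$), the Carleson duality \eqref{eqA:carleson} followed by H\"older gives
\[
\int_\Omega fg\frac{\dm}{\rho\delta}\lesssim\int_{\dOmega}(\NN f)(\CC g)\dmu\leq\|\NN f\|_{p'}\|\CC g\|_p .
\]
This yields $\|\AA g\|_p\lesssim\|\CC g\|_p$. The same duality theorem also holds with exponents $p$ and $p'$ exchanged, so this step works for every $1<p<\infty$.

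\textbf{Upper bound for $\CC$.} I want the pointwise bound $\CC g\lesssim \MM_\mu(\AA g)$, which would follow from the $L^p$ boundedness of $\MM_\mu$ since $p>1$. Fix $\xi$ and $r>0$, and set $B=B(\xi,r)$. Using \eqref{eqA:dual_intA} and Fubini, the integral $\int_{B\cap\Omega}g\,\frac{\dm}{\rho\delta}$ is comparable to
\[
\int_{\dOmega}\AA(g\UN_{B})\dmu .
\]
The function $\AA(g\UN_B)$ is supported in $C B\cap\dOmega$: a cone $\gamma(\zeta)$ meets $B$ only if $|\zeta-\xi|\lesssim r$, because points $x\in B$ satisfy $\delta(x)\leq r$. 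Moreover $\AA(g\UN_B)\leq\AA g$. Hence
\[
\frac{1}{\mu(B)}\int_{B\cap\Omega}g\,\frac{\dm}{\rho\delta}\lesssim\fint_{CB}\AA g\,\dmu\lesssim\MM_\mu(\AA g)(\xi),
\]
by doubling of $\mu$. Taking the supremum over $r$ gives the claim. This pointwise route avoids any good-$\lambda$ argument, so I expect it to be straightforward.

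\textbf{Modified functionals.} Let $\tilde g(x):=(\fint_{B_x/2}|g|^2\dm)^{1/2}$. By definition $\CCt g=\CC\tilde g$. Also $\AA\tilde g=\AAt^{(1,1/2)}g$, with the normalisation $\dm/m(B_x)$ matching. So \eqref{eqA:dual_AtEqCt} follows directly from \eqref{eqA:dual_AeqC} applied to $\tilde g$, up to the change of parameters given by Lemma \ref{lemmaA:eqvlce_A} if a different $\lambda$ is used.

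\textbf{Main obstacle.} The only delicate points are bookkeeping. First, one must check that the Fubini identity \eqref{eqA:dual_intA} localizes correctly, i.e. that its constants do not depend on $B$. Second, one must verify the support claim for $\AA(g\UN_B)$, which uses only $\delta(x)\leq|x-\xi|$ for $x\in B$ and $\xi\in\dOmega$.
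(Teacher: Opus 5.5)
Your proposal is correct and follows the paper's proof essentially step by step. The paper also proves the lower bound from \eqref{eqA:dual_Asup} together with Carleson duality \eqref{eqA:carleson}, the upper bound from the pointwise estimate $\CC g\lesssim \MM_\mu(\AA g)$ obtained via \eqref{eqA:dual_intA}, and the modified version by applying everything to $x\mapsto(\fint_{B_x/2}|g|^2\dm)^{1/2}$. The only difference is that the paper first replaces $g$ by $g\UN_K$ for compact $K\subset\Omega$, so that \eqref{eqA:dual_Asup}, which is stated for $g$ with finite $\A^{p}$ norm, applies literally, and then concludes with Fatou's lemma. Your direct application is equally valid, since the duality argument behind \eqref{eqA:dual_Asup} never uses a priori finiteness of $\|\AA g\|_p$.
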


\begin{proof}
 Let us begin by proving \eqref{eqA:dual_AeqC}. By definition of $\AA$ and $\CC$, it is enough to prove the result for $g\geq 0$.
\newline

First let's prove that $\|\AA g\|_p\siml\|\CC g\|_p$. Take $g\geq 0$ measurable such that $\|\CC g\|_p$ is finite. This implies that $g\in L^1\loc(\Omega,m)$; take a compact $K$ in $\Omega$, and note $g_K:=g\UN_K$. Then $g_K\in \A^p(\Omega)$ and

\begin{align*}
\|\AA g_K\|_p
&\siml \sup_{\|\NN f\|_\pp\leq 1} \int_\Omega fg_K\frac{\dm}{\rho\delta}\\
&\siml \sup_{\|\NN f\|_\pp\leq 1} \|\NN f\|_\pp \|\CC g\|_p
\siml \|\CC g\|_p,
\end{align*}
where we used \eqref{eqA:dual_Asup} and Carleson's duality theorem \eqref{eqA:carleson}, and where the constants are independent from $K$. By Fatou's lemma, we obtain the result.
\newline

Now let's show the converse estimate. Take $g\geq 0$ measurable such that $\|\AA g\|_p$ is finite. For any ball $B$ centered on $\dOmega$, we compute similarly as in the proof of \eqref{eqA:dual_intA} and get

\begin{equation*}
\int_B g\frac{\dm}{\rho\delta}
\simeq \int_{2B\cap\dOmega} \AA g\dmu,
\end{equation*}
which implies that $\CC g(\xi) \siml \MM_\mu\AA g(\xi)$ for any $\xi\in\dOmega$. Thus $\|\CC g\|_p \siml \|\MM_\mu\AA g\|_p \siml \|\AA g\|_p$, which give us \eqref{eqA:dual_AeqC}. As for \eqref{eqA:dual_AtEqCt}, it is a direct consequence of \eqref{eqA:dual_AeqC}.

\end{proof}
Obviously, the equivalence holds for $\CCt^{(\lambda)}$ as well; as a direct consequence, the $L^p$ norms of $\CC^{(\lambda)}$ are equivalent under change of $\lambda$.

\addcontentsline{toc}{section}{Acknowledgements}
\section*{Acknowledgements}
This work has been carried out thanks to the grant "Programme blanc GS Mathématiques" of the \'Ecole doctorale Mathématiques Hadamard, at Université Paris-Saclay. The author would like to thank his advisor, Joseph Feneuil, for his help while establishing these results, and during the writing and proofreading of this article.

\addcontentsline{toc}{section}{References}
\printbibliography

\end{document}